\documentclass[12pt, reqno,oneside]{amsart}

\usepackage{amssymb, amsmath, amsthm, wasysym, mathrsfs}
\usepackage[backref]{hyperref}
\usepackage[alphabetic,backrefs,lite]{amsrefs}
\usepackage{fullpage}
\usepackage{setspace}
\usepackage{mathtools}
\usepackage{enumitem}

\setstretch{1.15}


\newtheorem{lemma}{Lemma}[section]
\newtheorem{lemma*}{Lemma}

\newtheorem{proposition}[lemma]{Proposition}
\newtheorem{prop}[lemma]{Proposition}
\newtheorem{cor}[lemma]{Corollary}
\newtheorem{conj}[lemma]{Conjecture}

\newtheorem{question}[lemma]{Question}
\newtheorem{claim*}{Claim}
\newtheorem{thm}[lemma]{Theorem}
\newtheorem{defn}[lemma]{Definition}

\theoremstyle{definition}
\newtheorem{remark}[lemma]{Remark}
\newtheorem{remarks}[lemma]{Remarks}
\newtheorem{rmk}[lemma]{Remark}


\newcommand{\G}{{\mathbb G}}

\newcommand{\PP}{{\mathbb P}}

\newcommand{\Q}{{\mathbb Q}}
\newcommand{\R}{{\mathbb R}}
\newcommand{\Z}{{\mathbb Z}}

\newcommand{\Qbar}{{\overline{\Q}}}

\newcommand{\calA}{{\mathcal A}}

\newcommand{\calD}{{\mathcal D}}

\newcommand{\calO}{{\mathcal O}}
\newcommand{\calP}{{\mathcal P}}

\newcommand{\calX}{{\mathcal X}}

\newcommand{\OO}{{\mathcal O}}


\DeclareMathOperator{\HH}{H}

\DeclareMathOperator{\Gal}{Gal}

\DeclareMathOperator{\Pic}{Pic}

\DeclareMathOperator{\Spec}{Spec}

\DeclareMathOperator{\Proj}{Proj}

\DeclareMathOperator{\Vol}{Vol}


\newcommand{\isom}{\simeq}

\newcommand{\coeffset}{\mathfrak W}

\numberwithin{equation}{section}
\numberwithin{table}{section}
\setcounter{tocdepth}{1}
\newcommand{\defi}[1]{\textsf{#1}} 

\title[Campana points]{Campana points of bounded height on vector group compactifications}
\author{Marta Pieropan}
\author{Arne Smeets}
\author{Sho Tanimoto}
\author{Anthony V\'arilly-Alvarado}

\address{Marta Pieropan, Utrecht University, Mathematical Institute, Budapestlaan 6, 3584 CD Utrecht, the Netherlands \emph{and} EPFL SB MATH CAG, B\^at. MA, Station 8, 1015 Lausanne, Switzerland}
\email{m.pieropan@uu.nl}
\urladdr{https://webspace.science.uu.nl/~piero001/}
\address{Arne Smeets, KU Leuven, Departement Wiskunde, Celestijnenlaan 200B, 3001 Heverlee, Belgium \emph{and} Radboud Universiteit Nijmegen, Heyendaalseweg 135, 6525 AJ Nijmegen, the Netherlands}
\email{arnesmeets@gmail.com}
\urladdr{https://sites.google.com/site/arnesmeets/}
\address{Sho Tanimoto, Department of Mathematics, Faculty of Science, Kumamoto University, Kurokami 2-39-1 Kumamoto 860-8555 Japan}
\address{Priority Organization for Innovation and Excellence, Kumamoto University}
\email{stanimoto@kumamoto-u.ac.jp}
\urladdr{http://shotanimoto.wordpress.com}
\address{Anthony V\'arilly-Alvarado, Department of Mathematics MS 136, Rice University, 6100 S.\ Main St., Houston, TX 77005, USA}
\email{av15@rice.edu}
\urladdr{http://math.rice.edu/\~{}av15}

\date{August 26, 2020}


\subjclass[2010]{Primary : 11G50. Secondary : 11G35, 14G05, 14G10.}

\begin{document}


	\begin{abstract}
        We initiate a systematic quantitative study of subsets 
        of rational points that are integral with respect to a weighted boundary divisor 
        on Fano orbifolds. 
        We call the points in these sets Campana points.
        Earlier work of Campana and subsequently Abramovich shows that there 
        are several reasonable competing definitions  for Campana points.
        We use a version that delineates well different types of behaviour of points as 
        the weights on the boundary divisor vary. This prompts a Manin-type conjecture on 
        Fano orbifolds for sets of Campana points that satisfy a klt (Kawamata log terminal) 
        condition. By importing work of Chambert-Loir and Tschinkel to our set-up, we prove 
        a log version of Manin's conjecture for klt Campana points on equivariant 
        compactifications of vector groups.        
	\end{abstract}
	

	\maketitle
	
	\tableofcontents
	

\newpage

\section{Introduction}

	Manin's conjecture for rational points, extensively studied now for more than three 
	decades, predicts an asymptotic formula for the counting function of rational points 
	of bounded height on rationally 
	connected algebraic varieties over number fields. 
	The class of equivariant compactifications of homogeneous spaces has 
	proved to be a particularly fertile testing ground for the conjecture
	\cites{FMT89, BT96a, BT98, CLT02, STBT07,GMO08, GO11, TT12,  GTBT15, ST16}.
	The related problem of counting integral points on homogeneous spaces has received 
	much attention as well, both classically (see, for example,  \cite{DRS93, EM93}), and  
	recently, as attested by \cites{CLT10b, BO12, CLT12, TBT13, TT15, Chow}. 
	By choosing a suitable compactification, one can identify the set of integral points on 
	the original variety with the set of rational points on the compactification that are 
	integral with respect to the boundary divisor. Hence,
	this latter body 
	of work represents progress towards a ``logarithmic version'' of Manin's conjecture 
	for integral points. Regrettably, subtleties of a mostly geometric nature have so far 
	prevented a general formulation of a Manin-type conjecture for integral points.
	
	In this paper we focus on an intermediate notion: sets of rational points that are 
	\emph{integral with respect to a weighted boundary divisor} \cite{Campana15},
	which we call \emph{Campana points}. Such sets depend on the choice of weights 
	and ``interpolate''
	between the set of integral points and the set of rational points, 
	which can both be recovered as sets of Campana points for suitable choices of weights.
	If the weighted boundary divisor is \emph{Kawamata log terminal} (klt for short), 
	we say that the Campana points are klt.
	The set of rational points is a set of klt Campana points, while the set of integral points is not.
	However, the set of 
	integral points can be written 
	as an infinite intersection of sets of klt Campana points.
	
	To date, Manin-type problems for sets of Campana points have not 
	been well-studied. The only results we are aware of are to be found in 
	\cite{BVV12}, \cite{VanV12} and \cite{BY19} 
	and we believe that this research direction is relatively new. 
	
	The purpose of this paper is to  propose a Manin-type conjecture for 
	the distribution of klt Campana points on Fano orbifolds. 
	We show that the conjecture holds for all smooth vector group compactifications 
	with a strict normal crossings boundary divisor for the 
	weighted log-anticanonical height and for many more choices of heights.
	We investigate also the case of non-klt Campana points, and we observe that 
	all the difficulties that one encounters when dealing with integral points 
	 appear also in this setting.
	 	 
\subsection{Campana points}	
	
	There are several ways to ``interpolate'' between the classical notions of rational and 
	integral points. Keeping Manin's conjecture in mind, this article  
	argues in favor of a compelling option that arises from Campana's theory of pairs, 
	which he baptised \defi{orbifoldes g\'eom\'etriques}\footnote{Unlike the name suggests, 
	such objects are not stacks, but simply pairs consisting of a variety equipped with a 
	$\mathbb{Q}$-divisor of a specific type.}. 	
	There are various competing notions of Campana points in the 
	literature \cites{Abramovich, AVA16}, 
	and they all agree with the original definition of Campana 
	\cites{MR2163487, Campana15} on curves.
	On higher dimensional varieties, the various notions can lead to significantly 
	different sets of points,
	manifestly affecting the counting problems addressed in this paper, 
	as we explain in \S \ref{subsec:Example}. 
	We choose to work with Campana's original definition \cites{Campana15}
	because it best allows us to formulate a Manin-type conjecture which shares many characteristics with the now classical conjectures for rational points \cite{BM90} and \cite{Pey95}. Our study of local height integrals and Euler products 
	for vector group compactifications 	
	shows that the notion considered
	 in this paper interacts well with the tools from harmonic 
	 analysis: the regularization of the Euler product of local height integrals 
	 looks similar to the one used for the study of Manin's conjecture for rational 
	 points~(see Proposition~\ref{prop:EulerOneFactor} and Corollary~\ref{cor:eulerproduct}). 
	 
	 The notion of Campana points appearing in \cite{AVA16} is different from the one 
	 considered here. That notion enjoys good functoriality properties, but it seems ill-suited 
	 to the study of points of bounded height: for example, if one were to use the height zeta 
	 function method to count points of bounded height on vector group compactifications, 
	 then the regularization of the 
	 Euler product of local height integrals for the main term 
	 would require a newfound set of ideas. 
	 We consider this clarification an important contribution of this paper.

\subsection{A log Manin conjecture}
\label{subsec:conjecture}

	Let $(X,D_\epsilon)$ be a Campana orbifold (see \S \ref{subsec:orbifolds}) 
	over a number field $F$. Assume 
	moreover that $X$ is projective and that $-(K_X + D_\epsilon)$ is ample; a pair $(X,D_\epsilon)$ with this 
	additional property is called a \defi{Fano orbifold}. 
	Recall that the effective cone $\mathrm{Eff}^1(X)$ is finitely 
	generated by~\cite{BCHM}. 
	Fix a finite set $S$ of places of $F$ containing all archimedean places, as well 
	as a good integral model $(\mathcal X, \mathcal D_{\epsilon})$ of $(X, D_\epsilon)$ over 
	the ring of $S$-integers $\mathcal O_{F, S}$ of $F$ (see \S\ref{subsec:orbifolds}). 
	Write $(\mathcal{X},\mathcal{D}_\epsilon)(\mathcal{O}_{F,S})$ for the set of 
	$\mathcal{O}_{F,S}$-Campana points of $(\mathcal{X},\mathcal{D}_\epsilon)$ 
	(see Definition \ref{def:Campanapoints}), and  assume that $\lfloor D_\epsilon\rfloor = 0$,
	i.e., every weight $\epsilon_\alpha$ is strictly smaller than $1$.
	This condition is equivalent to saying that $(X, D_\epsilon)$ is klt in the sense of birational 
	geometry (see \cite[Definition 2.34]{KM98} for a definition of klt singularities, 
	and \cite[Lemma 2.30]{KM98} for a characterization).
	Let 
	\[
	\mathsf H_{\mathcal L}: X(F) \rightarrow \mathbb R_{>0}.
	\]
	be the height function determined by an adelically metrized 
	big line bundle $\mathcal L = (L,\|\cdot\|)$ on $X$ as in \cite[\S1.3]{Pey95}.	
	For any subset $U\subset X(F)$ 
	and positive real  number $T$, we consider the counting function
	\[
	\mathsf N(U, \mathcal L, T) = \# \{P \in U \mid \mathsf H_{\mathcal L} (P) \leq T\}.
	\]

	\begin{conj}[Manin-type conjecture for Fano orbifolds]
	\label{conj: ManinforCampana}
	Suppose that in addition to being big, the divisor $L$ is nef, and that the set of klt 
	Campana points $(\mathcal{X},\mathcal{D}_\epsilon)(\mathcal{O}_{F,S})$ is not thin. 
	Then there exists a thin set 
	$Z \subset (\mathcal{X},\mathcal{D}_\epsilon)(\mathcal{O}_{F,S})$ as in \S \ref{sec:thin} 
	such that
	
	\begin{equation}
	\label{eq:asymptotic_conjecture}
	\mathsf N((\mathcal{X},\mathcal{D}_\epsilon)(\mathcal{O}_{F,S}) \setminus Z, 
	\mathcal L, T) 
	\sim c(F, S, (\mathcal X, \mathcal D_\epsilon), \mathcal L, Z)
	T^{a((X, D_\epsilon), L)}(\log T)^{b(F, (X,D_\epsilon), L) -1}
	\end{equation}
	as $T\to\infty$,
	where 	
	$$ a((X, D_\epsilon), L) 
	= \inf \{t \in \mathbb R \mid tL + K_X + D_\epsilon \in \mathrm{Eff}^1(X)\}$$
	is the Fujita invariant of $(X, D_\epsilon)$ with respect to $L$, 
	$b(F, (X,D_\epsilon), L)$ is the codimension of the minimal supported face of 
	$\mathrm{Eff}^1(X)$ that contains the class 
	$a((X, D_\epsilon), L)[L] + [K_X + D_\epsilon]$ (cf.~\cite[Definition 2.1]{HTT15}),
	and the leading constant
	$c(F, S, (\mathcal X, \mathcal D_\epsilon), \mathcal L, Z)$ is a 
	positive
	Tamagawa constant, described in \S \ref{subsec:leading_constant}.
	\end{conj}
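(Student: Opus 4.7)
The plan is to specialize Conjecture \ref{conj: ManinforCampana} to the case the paper actually treats, namely $X$ a smooth equivariant compactification of $G = \mathbb{G}_a^n$ with strict normal crossings boundary $D = \sum_\alpha D_\alpha$, and to prove it by the height zeta function method of Chambert-Loir and Tschinkel adapted to the Campana setting. The first step is to describe the Campana locus inside $G(F) = F^n$ as a product of local conditions: for a finite place $v \notin S$ and a weight $\epsilon_\alpha = 1 - 1/m_\alpha$, the condition at $v$ is $v(f_\alpha) = 0$ or $v(f_\alpha) \geq m_\alpha$, where $f_\alpha$ locally defines $\mathcal D_\alpha$. This gives the Campana set a product structure inside $G(\mathbb A_F)$ that is compatible with Fourier analysis on $\mathbb A_F^n / F^n$.

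Next I would form the height zeta function
\[
Z(s) = \sum_{x \in (\mathcal X, \mathcal D_\epsilon)(\mathcal O_{F,S})} \mathsf H_{\mathcal L}(x)^{-s}
\]
and, by Poisson summation applied to the characteristic function of the Campana set weighted by $\mathsf H_{\mathcal L}^{-s}$, decompose it as a sum over characters $\psi$ of $\mathbb A_F^n / F^n$. For the trivial character the contribution factors as an Euler product of local integrals $\int_{G(F_v)} H_{\mathcal L, v}^{-s} \cdot \mathbf 1_{\mathrm{Camp}_v} \, d\mu_v$; using Proposition \ref{prop:EulerOneFactor} and Corollary \ref{cor:eulerproduct}, one rewrites each local factor so that, after regularizing by an appropriate product of Dedekind zeta values attached to the boundary components that survive on the Fujita face, the whole Euler product becomes holomorphic near $s = a((X, D_\epsilon), L)$ with a pole of order exactly $b(F, (X, D_\epsilon), L)$. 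For every non-trivial character one would show the resulting Fourier integral extends holomorphically into a half-plane $\re(s) > a((X, D_\epsilon), L) - \eta$ with polynomial vertical growth, so the sum over non-trivial $\psi$ is negligible. A Delange--Ikehara Tauberian theorem then yields \eqref{eq:asymptotic_conjecture}, and the leading constant drops out as a product of normalized local densities times the global factor attached to the face of $\mathrm{Eff}^1(X)$ containing $a((X, D_\epsilon), L)[L] + [K_X + D_\epsilon]$, matching the Tamagawa description of \S \ref{subsec:leading_constant}. Under the klt hypothesis $\lfloor D_\epsilon \rfloor = 0$ one expects no accumulating subvarieties, so the thin set $Z$ can be taken empty.

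The principal obstacle is the non-trivial character analysis. In the pure rational-point setting of Chambert-Loir--Tschinkel, oscillatory local integrals are controlled by integration by parts on the smooth integrand $H_{\mathcal L, v}^{-s}$ in local coordinates adapted to the boundary. Here each local integrand is additionally multiplied by the \emph{piecewise constant}, non-smooth weight $\mathbf 1_{\mathrm{Camp}_v}$, so the integration-by-parts estimates must be rerun stratum by stratum, indexed by the tuples $(v(f_\alpha))_\alpha$ of admissible valuations; one then has to show the aggregate oscillatory contribution is uniformly summable over non-trivial characters. A secondary technical point is identifying the local measures so that the explicit Euler product constant matches the Peyre-type Tamagawa volume of \S \ref{subsec:leading_constant}, together with verifying, specifically for vector group compactifications, the non-thinness of $(\mathcal X, \mathcal D_\epsilon)(\mathcal O_{F,S})$ required as a hypothesis; the latter should follow from abundance of Campana points obtainable by integer scaling of a single well-chosen one.
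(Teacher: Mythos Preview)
First, a framing remark: the statement is a \emph{conjecture}, and the paper does not prove it in general.  What it proves is the special case of vector group compactifications (Theorems~\ref{thm:maintheorem1} and~\ref{thm:maintheorem2}), which is exactly the specialization you outline.  So your proposal should be read as a sketch of the proof of Theorem~\ref{thm:maintheorem1}, and on that level it follows the paper's strategy closely: height zeta function, Poisson summation, regularization of the trivial-character Euler product by Dedekind zeta factors, and a Tauberian theorem.

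There are, however, two genuine gaps.  The first is your treatment of non-trivial characters.  You claim that for $\bold a\neq 0$ the Fourier integral $\widehat{\mathsf H}_\epsilon(\bold a,sL)$ extends \emph{holomorphically} past $s=a$; this is false.  Proposition~\ref{prop:eulerproduct_estimates} shows that $\widehat{\mathsf H}_\epsilon(\bold a,sL)$ has a pole at $s=a$ of order $\#(\mathcal A^0(\bold a)\cap\mathcal A_\epsilon(L))$, where $\mathcal A^0(\bold a)$ is the set of boundary components along which $f_{\bold a}$ has no pole.  The crux of the paper's argument in \S\ref{subsec:rigid} is that this order is \emph{strictly less than} $b$ for every $\bold a\neq 0$, and this uses the rigidity hypothesis on the adjoint divisor $aL+K_X+D_\epsilon$: if some $\bold a\neq 0$ gave order $b$, then $E(f_{\bold a})$ would be linearly equivalent to a divisor supported on the support of the rigid divisor $aL+K_X+D_\epsilon$, which is impossible.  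You neither state the rigidity assumption nor explain why non-trivial characters cannot match the trivial one; without this step the proof collapses, and indeed in the non-rigid case (\S 9.2) infinitely many characters \emph{do} contribute to the leading pole.

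The second gap is your identification of the principal obstacle.  The Campana indicator $\delta_{\epsilon,v}$ is indeed non-smooth, but the paper does not deal with this by any ``stratum-by-stratum integration by parts'' on $\delta_{\epsilon,v}$.  At archimedean places and all $v\in S$ one has $\delta_{\epsilon,v}\equiv 1$, so the oscillatory estimates are exactly those of \cite{CLT02,CLT12} with no Campana modification at all (Proposition~\ref{prop:placesinS}).  At non-archimedean places of good reduction outside $S$, the indicator is $K_v$-invariant (Lemma~\ref{lemma: K-invariance of delta}) and the integral is computed directly by stratifying along fibres of the reduction map and using the elementary oscillatory integral of Lemma~\ref{lem:10.3_CLT02}; no integration by parts enters.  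Finally, your claim that $Z$ can be taken empty is slightly off: the paper takes $Z=(X\setminus G)\cap(\mathcal X,\mathcal D_\epsilon)(\mathcal O_{F,S})$, a type~I thin set, since the entire analysis is carried out on $G(F)$.
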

	
	The definition of the exponents $a((X, D_\epsilon), L)$ and $b(F, (X,D_\epsilon), L)$ 
	in the conjecture above is analogous to the case of rational points \cite{BM90}.
	This is the main reason for our choice among various possible definitions of Campana points.
	
	Although $a((X, D_\epsilon), L)$ and $b(F, (X,D_\epsilon), L)$ 
	do not depend on the choice of an integral model 
  	for $(X,D_\epsilon)$, the leading constant does depend on such a choice.		
	The description of the leading constant is analogous to 
  	Peyre's constant in \cite{Pey95} and \cite{TamagawaBT}. 

	The removal of a thin subset of rational points in order 
	to get a count that is not dominated by accumulating subvarieties  is a natural assumption, 
	which is already present in the case of Manin's conjecture for rational points
	(see for example~\cites{Pey17,LST18}). 
	In \S \ref{subsec:BY19} we explain 
	why a recent example of Browning 
	and Yamagishi~\cite{BY19} 
	whose exceptional set cannot be a proper closed subset
	is still compatible with
	Conjecture \ref{conj: ManinforCampana}.
		
	While the geometric properties of klt singularities are not used in this paper, we believe 
	that they will play a prominent role in the analysis of the exceptional sets for 
	Conjecture \ref{conj: ManinforCampana}.
	Indeed, in the classical case of rational points one of the key ingredients in the proof of thinness 
	of the conjectural exceptional set in \cite{LST18} is the BAB conjecture, which holds for klt 
	log Fano varieties (more precisely in the $\epsilon$-klt setting), proved in \cite{Bir19} 
	and \cite{Bir16}, but fails in the dlt case. This is one of the main reasons for expecting 
	that klt Campana points are easier to deal with compared to integral points.
		
	In attempting to formulate a conjecture for sets of Campana points that are not klt
	we encounter the same difficulties that have prevented the formulation of a conjecture in 
	the much more extensively studied case of integral points.
	For example, 
	the exponents appearing in the asymptotics of the counting functions in these results 
	depend heavily on the \emph{divisor} chosen for the counting function, and not only 
	on its numerical class 
	(see, e.g., \cite{CLT12} for integral points and \S \ref{sec:proofdlt} for Campana points).  
	It seems sensible to study explicit examples of sets of Campana points that are ``barely'' 
	non klt, e.g., when exactly one of the weights $\epsilon_\alpha$ is equal to $1$, as a 
	step towards a better understanding of the distribution of integral points 
	on Fano varieties. 
		
\subsection{Evidence} 
\label{subsec:results}

	We prove Conjecture \ref{conj: ManinforCampana} for equivariant 
	compactifications of vector groups. 
	This important class of varieties satisfies Manin's conjecture for rational 
	points \cite{CLT02} and  analogous asymptotics for integral 
	points \cite{CLT12}. It has also been studied for the motivic version of Manin's conjecture 
	in \cite{CLL16}, \cite{Bilu}. Hence, it provides an ideal testing ground for 
	Conjecture \ref{conj: ManinforCampana}.
	
	Let $F$ be a number field and let $G = \mathbb G_a^n$ be the $n$-dimensional 
	vector group. Let $X$ be a smooth, projective, equivariant compactification of $G$ 
	defined over $F$, such that the boundary divisor $D = X \setminus G$ is a strict 
	normal crossings divisor on $X$, with irreducible components $(D_\alpha)_{\alpha \in A}$. 
	Let $S $ be a finite set of places of $F$, containing all archimedean places, such that 
	there is a good integral model $(\mathcal X, \mathcal D)$
	for $(X,D)$ over the 
	ring of $S$-integers $\mathcal O_{F, S}$ of $F$ in the sense of \S\ref{subsec:campana}. 
	We choose a \defi{weight vector} $\epsilon = (\epsilon_\alpha)_{\alpha \in A},$ where 
	$$\epsilon_\alpha \in 
	\left\{\left. 1 - \frac1m\,\right|\, m \in \mathbb{Z}_{\geq 1}\right\} \cup \{1\}$$ 
	for all $\alpha$, and we set 
	$$D_\epsilon = \sum_{\alpha \in \mathcal A} \epsilon_\alpha D_\alpha,\ \ \ 
	\mathcal{D}_\epsilon = \sum_{\alpha \in \mathcal A} \epsilon_\alpha \mathcal{D}_\alpha,$$
	where $\mathcal D_\alpha$ denotes the closure of $D_\alpha$ in $\mathcal X$.
	Let $L$ be a big line bundle on $X$, and let $\mathcal{L}$ denote $L$ equipped 
	with a smooth adelic metrization. 

	\bigskip

	Our first main result addresses the situation where all $\epsilon_\alpha$ are strictly 
	smaller than $1$; we refer to this case as the \emph{klt case}. In this situation, we 
	get a precise result for ``many'' $L$. 
	We recall that a divisor is said to be \defi{rigid} if it has Iitaka dimension zero;
	see \cite[Section 2.1]{Laz04} for a definition of Iitaka dimension.
		
	\begin{thm} 
	\label{thm:maintheorem1} 
	With the notation above, assume that $(X,D_\epsilon)$ is klt. 
	Let $a = a((X, D_\epsilon), L)$ 
	be defined as in Conjecture \ref{conj: ManinforCampana}.
	If $aL + K_X + D_\epsilon$ is rigid, then the asymptotic formula in
	Conjecture \ref{conj: ManinforCampana} holds for 
	$(\mathcal X, \mathcal D_\epsilon, \mathcal L)$ 
	with exceptional set 
	$$Z = (X\setminus G) \cap (\mathcal{X},\mathcal{D}_\epsilon)(\mathcal{O}_{F,S}).$$ 
	\end{thm}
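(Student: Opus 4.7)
The strategy is to adapt the height zeta function and harmonic analysis machinery developed by Chambert-Loir and Tschinkel \cite{CLT02} for rational points, and extended in \cite{CLT12} to integral points, to the setting of klt Campana points. Since $G = \mathbb{G}_a^n$ is its own Pontryagin dual after fixing a non-trivial character, Poisson summation will be the central analytic tool. The new ingredient is the insertion, at each non-archimedean place $v \notin S$, of the local Campana condition at the boundary components $\mathcal{D}_\alpha$.

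\emph{Step 1: Reduction to a height zeta function on $G$.} Because the proposed exceptional set $Z$ consists precisely of the Campana points lying on the boundary, it suffices to count Campana points in $G(F)$. Define
\[
\mathsf Z(s) \;=\; \sum_{P\in (\mathcal{X},\mathcal{D}_\epsilon)(\mathcal{O}_{F,S})\cap G(F)} \mathsf H_{\mathcal L}(P)^{-s}.
\]
The goal is to establish a meromorphic continuation of $\mathsf Z(s)$ slightly past the line $\re(s)=a$, with a pole there of order exactly $b=b(F,(X,D_\epsilon),L)$, and then apply a standard Tauberian theorem (e.g.\ the one used in \cite{CLT02}) to deduce the asymptotic \eqref{eq:asymptotic_conjecture}. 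The rigidity hypothesis on $aL+K_X+D_\epsilon$ is what guarantees that the minimal supported face of $\mathrm{Eff}^1(X)$ containing the class in question has the expected codimension, so that $b$ matches the order of the pole produced by the analysis; it also rules out higher-dimensional families of ``accumulating'' contributions that would force a larger exceptional set.

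\emph{Step 2: Poisson summation and local height Fourier transforms.} Since $G(F) \subset G(\mathbb{A}_F) = \mathbb{A}_F^n$ is a cocompact discrete subgroup and the Campana condition is a local condition (at each $v \notin S$) on the coordinates, the characteristic function of the adelic set of Campana points factors as $\prod_v \mathbf 1_{\mathcal C_v}$. We therefore write
\[
\mathsf Z(s) \;=\; \sum_{\psi \in G(F)^\vee} \prod_v \widehat{\mathsf H}_v(s,\psi_v),
\]
where $\widehat{\mathsf H}_v(s,\psi_v) = \int_{G(F_v)} \mathsf H_v(x_v)^{-s}\,\mathbf 1_{\mathcal C_v}(x_v)\,\psi_v(x_v)\,dx_v$ is the local Fourier transform of the local height twisted by the Campana indicator. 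For $v \in S$ one has $\mathbf 1_{\mathcal C_v} \equiv 1$, and these factors behave as in \cite{CLT02}. The crucial computation is for the finite places $v \notin S$: decomposing $G(F_v)$ according to the residual stratum of the SNC model $(\mathcal X,\mathcal D)$ that a point reduces into, one expresses $\widehat{\mathsf H}_v(s,0)$ as a sum over subsets $A'\subseteq A$ of local integrals over tubes around the strata, where the Campana constraint forces the $v$-adic order of vanishing along each $\mathcal D_\alpha$ (for $\alpha\in A'$) to be either $0$ or at least $m_\alpha = (1-\epsilon_\alpha)^{-1}$. Summing the geometric series in the local uniformizer produces factors of the shape
\[
1 \;+\; \sum_{A'\neq\emptyset}\Bigl(\prod_{\alpha\in A'} \frac{q_v^{-m_\alpha(sd_\alpha-\kappa_\alpha)}}{1-q_v^{-(sd_\alpha-\kappa_\alpha)}}\Bigr)\,c_{A',v},
\]
with $d_\alpha$ and $\kappa_\alpha$ the coefficients of $\mathcal D_\alpha$ in $L$ and in $-K_X$, and $c_{A',v}$ the residual stratum count; this is the Campana analogue of \cite{CLT02}*{Theorem 9.1}.

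\emph{Step 3: Regularization of the Euler product and identification of the pole.} Writing $\widehat{\mathsf H}_v(s,0) = H_v^{\mathrm{reg}}(s)\cdot \prod_\alpha \zeta_{F,v}(m_\alpha(sd_\alpha-\kappa_\alpha))$, where the $H_v^{\mathrm{reg}}$ are the local factors appearing in Proposition~\ref{prop:EulerOneFactor} and Corollary~\ref{cor:eulerproduct}, one obtains a global factorization
\[
\prod_{v\notin S}\widehat{\mathsf H}_v(s,0) \;=\; \prod_{\alpha\in A}\zeta_F\bigl(m_\alpha(sd_\alpha-\kappa_\alpha)\bigr)\cdot H^{\mathrm{reg}}(s),
\]
with $H^{\mathrm{reg}}(s)$ holomorphic and bounded on a half-plane to the left of $\re(s)=a$. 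The rightmost poles of the $\zeta_F$-factors lie on $\re(s)=a$ precisely for those $\alpha$ with $m_\alpha d_\alpha \cdot a = m_\alpha \kappa_\alpha + 1$, i.e.\ those $D_\alpha$ whose class lies on the minimal face defining $b$; this identifies the pole of $\widehat{\mathsf H}(s,0) := \prod_v \widehat{\mathsf H}_v(s,0)$ at $s=a$ with order $b$ and computes its leading coefficient in terms of local densities. Non-trivial characters contribute only smaller poles by a bound analogous to \cite{CLT02}*{\S10}: integration by parts at archimedean places, together with uniform bounds on the non-archimedean local Fourier transforms, gives sufficient decay in the dual parameter to justify term-by-term meromorphic continuation and to interchange the sum over characters with the meromorphic continuation.

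\emph{Main obstacle.} The principal technical difficulty will be \emph{Step 3}: controlling the non-trivial characters and carrying out the Euler product regularization in the presence of the higher powers $q_v^{-m_\alpha(\cdots)}$ that arise from the Campana condition. The regularized local factors converge more slowly than in \cite{CLT02}, so I would need to prove that the bounds from the vector-group structure of the $D_\alpha$-strata in $G$ (the boundedness of \emph{pseudo-effective} truncations used in op.\ cit.) remain valid when each Campana exponent $m_\alpha$ is multiplied in, and then verify that the resulting Tamagawa-type leading constant matches the prediction of \S\ref{subsec:leading_constant}. Once the Euler product is understood and the Tauberian theorem is applied, the identification of $Z = (X\setminus G)\cap (\mathcal X,\mathcal D_\epsilon)(\mathcal O_{F,S})$ as the exceptional set is automatic, since the count on $G(F)$ furnishes the main term and the boundary is removed by construction.
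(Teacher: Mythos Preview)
Your outline is correct and follows the paper's approach closely: height zeta function on $G(F)_\epsilon$, Poisson summation, local Fourier transforms with the Campana indicator, Euler-product regularization via the factors $\zeta_{F_\alpha}(m_\alpha(s\lambda_\alpha-\rho_\alpha+1))$, and a Tauberian theorem. The local computation you sketch in Step~2 is exactly the content of \eqref{eq:localFTtrivialchar} and Proposition~\ref{prop:EulerOneFactor}.

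One point deserves sharpening, since you flag it as the main obstacle. The decay estimates you invoke (integration by parts at infinity, uniform non-archimedean bounds) are what justify absolute convergence and term-by-term continuation of $\sum_{\bold a}\widehat{\mathsf H}_\epsilon(\bold a,sL)$; they do \emph{not} by themselves show that non-trivial characters have strictly smaller pole order at $s=a$. The mechanism is more algebraic: for $\bold a\neq 0$, Proposition~\ref{prop:eulerproduct_estimates} shows that only the zeta factors indexed by $\alpha\in\mathcal A^0(\bold a)=\{\alpha: d_\alpha(f_{\bold a})=0\}$ appear in the regularization of $\widehat{\mathsf H}_\epsilon(\bold a,\cdot)$, so the pole order at $s=a$ is at most $\#(\mathcal A^0(\bold a)\cap\mathcal A_\epsilon(L))$. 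If this equalled $b=\#\mathcal A_\epsilon(L)$, then $E(f_{\bold a})\sim\sum_\alpha d_\alpha(f_{\bold a})D_\alpha$ would be an effective divisor supported entirely on the support of $aL+K_X+D_\epsilon$; rigidity of the adjoint divisor forbids this, since any positive combination of its components has a unique effective representative, and $E(f_{\bold a})$ is a hyperplane in $G$, not a boundary divisor. This is the precise role of the rigidity hypothesis---it is not about the codimension of the face per se, but about excluding non-trivial $\bold a$ from the leading pole.
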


	\begin{remarks}
	\begin{enumerate}[leftmargin=*]
	\item[] 
	\item The asymptotic \eqref{eq:asymptotic_conjecture} holds for a pair $(X,D_\epsilon)$ 
	in Theorem~\ref{thm:maintheorem1} even if the pair is not a Fano orbifold. 
	See Theorem \ref{theo:main_kltrigid}.
	\medskip 
  	\item If $L=-(K_X+D_\epsilon)$, the rigidity condition in the statement is trivially satisfied, 
  	since in that case $a=1$. In this case, $b$ is the Picard rank of $X$. 
  	\medskip
  	\item We prove the conclusion of Theorem~\ref{thm:maintheorem1} also when the 
  	adjoint divisor is not rigid, under additional technical assumptions.
  	See Theorem~\ref{theo:main_kltnonrigid}. 
	\end{enumerate}
	\end{remarks}

	The more general case where some of the weights $\epsilon_\alpha$ are allowed 
	to be equal to $1$ -- to which we refer as the \emph{dlt case} -- is more subtle. In 
	this case, we have to restrict our attention to the case where $L$ is the ``orbifold 
	anticanonical line bundle'', due to subtleties arising in the formulation of the main 
	term.

	\begin{thm} 
	\label{thm:maintheorem2} 
	With notation as above, let $L$ 
	be the line bundle $-(K_X + D_\epsilon)$, and let $\mathcal{L}$ denote $L$ equipped 
	with a smooth adelic metrization as above. There exists a geometric invariant 
	$b = b(F, S, (X, D_\epsilon), L) > 0$, defined in \S \ref{sec:proofdlt}, such that 
	\[
	 \mathsf N((\mathcal{X},\mathcal{D}_\epsilon)(\mathcal{O}_{F,S}) \cap G(F), \mathcal{L}, T)  
	\sim \frac{c}{(b-1)!}T(\log T)^{b-1} 
	\text{ as $T \rightarrow \infty$},
	\] 
	for some positive constant $c$ that depends on $F,S,(\mathcal{X},\mathcal{D}_\epsilon)$ 	
	and  $\mathcal{L}$.
	\end{thm}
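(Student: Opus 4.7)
The plan is to apply the height zeta function method of Chambert-Loir and Tschinkel \cite{CLT02, CLT10b} for equivariant compactifications of vector groups, adapted to incorporate the Campana condition. Consider the generating series
\[
Z(s) = \sum_{x \in (\mathcal{X}, \mathcal{D}_\epsilon)(\mathcal{O}_{F,S}) \cap G(F)} \mathsf{H}_{\mathcal{L}}(x)^{-s},
\]
so that, by a standard Tauberian argument as in \cite{CLT02}, the desired asymptotic follows once $Z(s)$ is shown to be meromorphic on a half-plane $\re(s) > 1 - \delta$ with a unique pole at $s = 1$, of order $b$ and positive leading Laurent coefficient.

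Since the Campana condition is a product of local conditions $\mathbf{1}_{\mathrm{Cmp},v}$ at the places $v \notin S$, trivial for $v \in S$, and since $G(F)$ is a cocompact lattice in the adelic vector group $G(\mathbb{A}_F)$, the first step is Poisson summation, yielding
\[
Z(s) = \sum_{\psi \in G(F)^\vee} \prod_v \widehat{H}_{\epsilon, v}(\psi_v, s),
\]
with local Fourier transforms $\widehat{H}_{\epsilon, v}(\psi_v, s) = \int_{G(F_v)} \mathbf{1}_{\mathrm{Cmp},v}(x_v)\, \mathsf{H}_v(x_v, s)^{-1}\, \psi_v(x_v)\, dx_v$. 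At each non-archimedean $v \notin S$, I would compute $\widehat{H}_{\epsilon,v}$ via the Denef--Loir stratification of $\mathcal{X}(\mathcal{O}_v)$ along the boundary components, summing only over admissible intersection multiplicities: multiples of $m_\alpha$ when $\epsilon_\alpha = 1 - 1/m_\alpha < 1$, and zero when $\epsilon_\alpha = 1$. For $L = -(K_X + D_\epsilon)$ the exponents in the resulting geometric series line up so that, after collecting terms, the Euler product $\prod_v \widehat{H}_{\epsilon,v}(0,s)$ for the trivial character regularizes as $\zeta_F(s)^b \cdot \varphi(s)$, with $\varphi$ holomorphic and non-vanishing on $\re(s) \geq 1$. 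This is the content hinted at in Proposition~\ref{prop:EulerOneFactor} and Corollary~\ref{cor:eulerproduct}; the invariant $b$, defined in \S\ref{sec:proofdlt}, is the total pole order contributed by the components $D_\alpha$ with $\epsilon_\alpha < 1$, while components with $\epsilon_\alpha = 1$ contribute only multiplicative factors to the constant $c$.

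For the non-trivial characters $\psi \neq 0$, the goal is to prove that $\sum_{\psi \neq 0} \prod_v \widehat{H}_{\epsilon,v}(\psi_v, s)$ is holomorphic in a neighborhood of $\re(s) = 1$ and hence does not affect the main term. At archimedean places, integration by parts in coordinates of $G(F_v) \simeq F_v^n$ yields Schwartz--Bruhat rapid decay of $\widehat{H}_{\epsilon,v}(\psi_v, s)$ in $\psi_v$, making the adelic sum absolutely convergent; at non-archimedean places where $\psi_v$ is non-trivial, the usual vanishing of character integrals on the relevant cells produces the extra factors that compensate for the loss of the main term. The Campana indicator merely restricts the domain of integration at $v \notin S$, so these estimates should transfer. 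Positivity of $c$ then follows from positivity of each local density, as in Peyre's construction \cite{Pey95}.

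The hardest step is the uniform control of the non-trivial character sum in the presence of the Campana indicator, especially in the dlt case: when some $\epsilon_\alpha = 1$, the requirement of avoiding $\mathcal{D}_\alpha$ at every $v \notin S$ is a strong global restriction, and I must verify that it neither destroys the regularization of the Euler product nor produces cancellation in the trivial-character term that would force $b = 0$ or $c = 0$. In other words, the care required to define $b$ in \S\ref{sec:proofdlt} reflects a genuinely subtle interaction between the $\epsilon_\alpha = 1$ components and the zeta-regularization, and isolating this interaction is the crux of the argument.
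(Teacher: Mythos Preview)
Your overall architecture is right --- Poisson summation, local Fourier transforms computed via the boundary stratification, regularization of the Euler product, decay for non-trivial characters, Tauberian theorem. But there is a genuine gap in your identification of $b$, and it is exactly the subtlety that makes the dlt case different from the klt one.

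You write that ``the invariant $b$ \dots\ is the total pole order contributed by the components $D_\alpha$ with $\epsilon_\alpha < 1$, while components with $\epsilon_\alpha = 1$ contribute only multiplicative factors to the constant $c$.'' This is not what happens. At places $v \notin S$ the Campana indicator does kill the contribution of the $\epsilon_\alpha = 1$ components, so the Euler product over $v \notin S$ is regularized by $\prod_{\alpha \in \mathcal A^{\mathrm{klt}}} \zeta_{F_\alpha}(\cdots)$, contributing $\#\mathcal A^{\mathrm{klt}}$ to the pole order. But at places $v \in S$ the indicator is identically $1$, so the local height integral $\widehat{\mathsf H}_v(0,sL)$ is the \emph{full} integral over $G(F_v)$, with no boundary restriction. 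For $L = -(K_X + D_\epsilon)$ and $\epsilon_\alpha = 1$, the exponent $(\rho_\alpha - 1)/\lambda_\alpha$ equals $1$, which is the location of the rightmost pole; hence each such local integral has a pole at $s = 1$ whose order is governed by the $F_v$-analytic Clemens complex of the $\epsilon_\alpha = 1$ components (this is Proposition~\ref{prop:badreduction_trivial}). The correct invariant is
\[
b = \#\mathcal A^{\mathrm{klt}} + \sum_{v \in S} b\bigl(F_v, (X, D_{\mathrm{red}}), L\bigr),
\]
and in particular $b$ depends on $S$ --- a feature the paper stresses immediately after stating the theorem. Your archimedean places lie in $S$ and therefore contribute to the pole of the \emph{trivial}-character term, not just to decay estimates for non-trivial characters.

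A smaller point: the non-trivial characters are not holomorphic near $s = 1$; they may well have poles there, just of order strictly smaller than $b$ (this is argued as in \cite[Lemma~3.5.4]{CLT12}). That is enough for the Tauberian theorem, but your claim of holomorphy is too strong.
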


	It is important to observe that the logarithmic exponent $b$ in Theorem 
	\ref{thm:maintheorem2} for dlt  points depends on the choice of $S$; this was not 
	the case in Theorem \ref{thm:maintheorem1} for klt Campana points. In essence, when 
	$\epsilon_\alpha = 1$ for at least one index $\alpha$, the local zeta functions 
	associated to places in $S$ can contribute positively to $b$. 
	This is a typical feature observed in the literature about integral points of bounded height.
	Moreover, if $\epsilon_\alpha\in\{0,1\}$ for all $\alpha$, 
	our result recovers \cite{CLT12}.

	We note that the pair $(X,D_\epsilon)$ in the statement of 
	Theorem \ref{thm:maintheorem2} is not 
	required
	to be a Fano orbifold. In particular, Theorem \ref{thm:maintheorem2} 
	holds for all smooth compactifications 
	of vector groups with strict normal crossing boundary, and
	there are numerous such compactifications:
	indeed,
	blowing-up invariant points always produces new examples. 
	See \S \ref{subsec:geometry} for more details. 

\subsection{Methods} 
\label{subsec:methods}

	To prove Theorems~\ref{thm:maintheorem1} and~\ref{thm:maintheorem2}, we use 
	the height zeta function method, as in the foundational papers \cite{CLT02} and 	
	\cite{CLT12}. 
	Let 
	$$G(F)_\epsilon = G(F) \cap (\mathcal{X},\mathcal{D}_\epsilon)(\mathcal{O}_{F,S})$$ 
	be the set of rational points in $G$ which extend to Campana $\mathcal{O}_{F,S}$-points 
	on $(\mathcal{X},\mathcal{D}_\varepsilon)$ in the sense of \S\ref{subsec:campana}. 
	Even though the notation may suggest otherwise, the set $G(F)_\epsilon$ \emph{does} 
	depend on the choice of $S$ and the $\mathcal{O}_{F,S}$-model 
	$(\mathcal{X},\mathcal{D})$, which we have fixed once and for all.
	Then the \defi{height zeta function} is given by
	\[
	\mathsf Z_\epsilon(\bold s) 
	= \sum_{\bold x \in G(F)_\epsilon} \mathsf H(\bold x, \bold s)^{-1}
	=  \sum_{\bold x \in G(F)} \mathsf H(\bold x, \bold s)^{-1}\delta_\epsilon(\bold x),
	\] 
	where $\delta_\epsilon(\bold{x})$ is the indicator function detecting whether a given 
	point in $G(F)$ belongs to $G(F)_\epsilon$. Our goal is to obtain a meromorphic continuation 
	of this analytic function, and to apply a Tauberian theorem. To this end, we consider the 
	Fourier transform over the ad\`eles: 
	\[
	\widehat{\mathsf H}_\epsilon(\bold a, \bold s) 
	= \int_{G(\mathbb A_F)} \mathsf H(\bold x, \bold s)^{-1}
	\delta_\epsilon (\bold x)\psi_{\bold a}(\bold x) \, \mathrm d \bold x,
	\]
	and we use the Poisson summation formula
	\[
	\sum_{\bold x \in G(F)} \mathsf H(\bold x, \bold s)^{-1}\delta_\epsilon(\bold x)
	= \sum_{\bold a \in G(F)} \widehat{\mathsf H}_\epsilon(\bold a, \bold s)
	\]
	to obtain a meromorphic continuation of $\mathsf Z_\epsilon (\bold s)$. To prove 
	the absolute convergence of the right hand side, we estimate 
	$\widehat{\mathsf H}_\epsilon(\bold a, \bold s)$ by combining work from 
	\cites{CLT02, CLT10, CLT12} on height integrals with oscillating phase. 
	
\subsection{Structure of the paper} 
\label{subsec:structure_paper}

	After setting up the notation in \S\ref{sec:notation}, we start \S \ref{subsec:orbifolds} 
	by recalling the notion of Campana orbifold.
	We discuss different notions of Campana points that appear 
	in the literature in \S \ref{subsec:campana} -- this is crucial,
	since only one of these works well for our purposes.
	We include an example in \S\ref{subsec:Example} that shows how different notions 
	lead to different asymptotics for point counts on a single orbifold.
	In \S \ref{subsec:campana} we discuss a Peyre-type description of the 
	leading constant in Conjecture~\ref{conj: ManinforCampana}, then
	we introduce a notion of thin 
	set in the context of Campana points in \S \ref{sec:thin};
	in \S\ref{subsec:BY19} we discuss the compatibility of Browning and Yamagishi's 
	example~\cite{BY19} with Conjecture~\ref{conj: ManinforCampana}. Finally, 
	in \S\ref{subsec:functoriality} we discuss the functoriality properties of Campana points 
	under birational transformations. 
		
	In \S\ref{subsec:Clemens} we review a type of simplicial complex, called the 
	\emph{Clemens complex}, that helps to keep track, in the presence of integrality 
	conditions, of the contribution of local height integrals to the pole of the height 
	zeta function. We then use these complexes to give birational invariance results 
	(Lemmas~\ref{lem:bbirational} and~\ref{lemm: birationalinvariance_f}) for the $a$ 
	and $b$-invariants that appear in the asymptotic formula of the counting function 
	for Campana points.

	In \S\ref{subsec:geometry}, we specialize to Campana orbifolds that are equivariant 
	compactifications of vector groups. We recall basic facts about their geometry such 
	as their Picard groups and effective cones of divisors, as well as results from harmonic 
	analysis. After a  discussion on local and global heights in \S\ref{sec:heightzeta}, 
	we define the height zeta function of an equivariant compactification of a vector group, 
	and explain how to reduce the Poisson summation formula to the convergence of a sum 
	of Fourier transforms of local height functions (local height integrals). 
	Sections~\ref{sec: height integral} and~\ref{sec: height integral II} contain the 
	necessary estimates of local height integrals; before carrying on these technical estimates, we have included an interlude with a detailed explanation of the calculations in dimension $1$, for the benefit of readers new to this type of analysis.

	Theorems~\ref{thm:maintheorem1} and~\ref{thm:maintheorem2} are established, 
	respectively, in \S\ref{sec:proofklt} and \S\ref{sec:proofdlt}.

\subsection{Acknowledgements} 
	
	The authors would like to thank Tim Browning, Fr\'ed\'eric Campana, Ulrich Derenthal, Yoshishige Haraoka, 
	and Brian {Leh\-mann} for useful discussions and for their feedback. 
	We thank Dan Loughran for his valuable comments and for pointing out 
	a mistake in an early version of this paper. 
	We also thank the referee for very careful and thoughtful comments 
	which significantly improved the exposition of the paper and generalized our main theorems.
	
	We thank for their hospitality the organizers of the trimester program 
	``Reinventing Rational Points'' at the Institut Henri Poincar\'e, 
	Daniel Huybrechts at the Universit\"at Bonn, and 
	Michael Stoll, organizer of the workshop ``Rational Points 2019'' at Schney, where parts 
	of this paper were completed.
	
	Arne Smeets was supported by a Veni grant from NWO. Sho Tanimoto was partially 
	supported by Lars Hesselholt's Niels Bohr professorship, by MEXT Japan, Leading Initiative 
	for Excellent Young Researchers (LEADER), by Inamori Foundation, and by JSPS KAKENHI 
	Early-Career Scientists Grant numbers 19K14512. Anthony V\'arilly-Alvarado was partially 
	supported by NSF grants DMS-1352291 and DMS-1902274. 

\section{Notation} 
\label{sec:notation}

\subsection{Number fields, completions, and zeta functions}

	Let $F$ be an arbitrary number field. Denote by $\mathcal O_F$ its ring of integers, 
	by $\Omega_F$ its set of places, by $\Omega_F^{<\infty}$ the set of all finite 
	(non-archimedean) places, and by $\Omega_F^\infty$ the set of all infinite (archimedean) 
	places. For any finite set $S \subset \Omega_F$ containing $\Omega_F^\infty$, we 
	denote by $\mathcal O_{F, S}$ the ring of $S$-integers of $F$. For each $v\in \Omega_F$, 
	we denote by $F_v$ the completion of $F$ with respect to $v$. If $v$ is non-archimedean, 
	we denote by $\mathcal O_v$ the corresponding ring of integers, with maximal ideal 
	$\mathfrak m_v$ and residue field $k_v$ of size $q_v$. We write $\mathbb A_F$ for 
	the ring of ad\`eles of $F$.

	For each $v\in \Omega_F$, the additive group $F_v$ is locally compact, 
	and carries a self-dual Haar measure $\mathrm dx_v = \mu_v$ that we normalize  as follows:
	\begin{itemize}[leftmargin=*]
	\item $\mathrm dx_v$ is the ordinary Lebesgue measure on the real line if $v$ is real,
	\item $\mathrm dx_v$ is twice the ordinary Lebesgue measure on the plane if $v$ is complex,
	\item $\mathrm dx_v$ is the measure for which $\mathcal O_v$ has volume 	
	$N(\mathfrak{D})^{-1/2}$ if $v$ is a nonarchimedean place, where $\mathfrak{D}$ 
	denotes the absolute different of $F_v$, with norm $N(\mathfrak{D})$.
	\end{itemize}
	These Haar measures satisfy $\mu_v(\mathcal O_v) = 1$ for all but finitely many 
	non-archimedean places $v$; they induce a self-dual measure $\mathrm dx = \mu$ 
	on $\mathbb A_F$.
	We denote by $\mathrm d \bold x_v$ the induced Haar measure on $F_v^n$.
	We also denote the product measure on $\mathbb A_F^n$ by $\mathrm d \bold x$.

	We define the absolute value $|\cdot|_v$ by requiring that
	\[
	\mu_v (xB) = |x|_v\cdot \mu_v (B)
	\]
	for any Borel set $B \subset F_v$. When $v$ is real, $|\cdot|_v$ is the usual absolute value. 
	When $v$ is complex, $|\cdot|_v$ is the square of the usual norm on the complex numbers. 
	For any prime number $p$, we have $|p|_p = 1/p$. For any finite extension 
	$F_v/\mathbb Q_p$, we have
	\[
	|x|_v = |N_{F_v/\mathbb Q_p}(x)|_p.
	\]

	We define the \defi{local zeta function} by
	\[
	\zeta_{F_v}(s) = 
	\begin{cases}
	s^{-1} & \text{ if $F_v = \mathbb R$ or $\mathbb C$, }\\ 
	\left(1-q_v^{-s}\right)^{-1} & \text{ if $v$ is non-archimedean.}
	\end{cases}
	\]
	For non-archimedean places, the local zeta functions fit together to give the 
	\defi{Dedekind zeta function}
	\[
	\zeta_F(s) = \prod_{v \in \Omega_F^{< \infty}} \zeta_{F_v}(s).
	\] 

\subsection{Varieties and divisors}

	Let $F$ be a field with fixed algebraic closure $\bar F$. An $F$-variety $X$ is a 
	geometrically integral separated $F$-scheme of finite type. We denote by $\bar{X}$ 
	the base change of $X$ to $\bar{F}$. If $F$ is a number field and $v\in \Omega_F$, 
	we write $X_v$ for the base change of $X$ to $F_v$. Given a Weil $\mathbb{R}$-divisor 
	$D = \sum_i a_i D_i$ on $X$, we denote by $\lfloor D \rfloor = \sum_i \lfloor a_i \rfloor D_i$ 
	its ``integral part''. We denote the reduced divisor $\sum_{a_i \neq 0} D_i$ 
	by $D_{\mathrm{red}}$. 
	Given a scheme $\mathcal X$ defined over a ring $A$, we denote by $\mathcal X\otimes_AB$ 
	the base change of $\mathcal X$ under a ring extension $A\to B$.

\subsection{Conventions for complex numbers}

	We denote the real part of a complex number $s$ by $\Re(s)$, and the absolute 
	value by $|s|$. Given $\bold s=(s_1,\dots,s_n)\in\mathbb C^n$ and $c\in \mathbb R$, 
	by the expression $\Re(\bold s)>c$ we mean that $\Re(s_i)>c$ for all $i\in\{1,\dots,n\}$. 
	We also write $|\bold s|:=\max_{i=1}^n|s_i|$.

\section{Campana orbifolds, Campana points and the conjecture}
\label{sec:campana_orbifolds_points_conjecture}

	In this section we recall two notions of Campana points, 
	we discuss the leading constant and the exceptional sets 
	in Conjecture \ref{conj: ManinforCampana}, 
	and we investigate the functoriality properties of the sets of Campana points.

\subsection{Orbifolds} 
\label{subsec:orbifolds}

	We recall Campana's notion of \defi{orbifolds} (``orbifoldes g\'eom\'etriques''), as 
	introduced in his foundational papers \cite{Campana04, MR2831280}. In this article, 
	we only consider those orbifolds which Campana calls ``smooth''; in this section, we 
	allow $F$ to be any field.

	\begin{defn} 
	\label{defn:CampanaOrbifold}
	A \defi{Campana orbifold} over $F$ is a pair $(X, D)$ consisting of a smooth variety 
	$X$ and an effective Weil $\mathbb Q$-divisor $D$ on $X$, both defined over $F$, 
	such that 
\smallskip
	\begin{enumerate}[leftmargin=*]
	\item we have 
	$$D = \sum_{\alpha \in \mathcal A} \epsilon_\alpha D_\alpha,$$ where the $D_\alpha$ 
	are prime divisors on $X$, and $\epsilon_\alpha$ belongs to the set of weights
	$$ \coeffset :=
	\left\{\left. 1 - \frac1m\,\right|\, m \in \mathbb{Z}_{\geq 1}\right\} \cup \{1\} $$ 
	for all $\alpha \in \mathcal A$;
\medskip
	\item the support $D_{\mathrm{red}} = \sum_{\alpha \in \mathcal A} D_\alpha$ 
	is a divisor with strict normal crossings on $X$. 
	\end{enumerate}
	\end{defn}

	Condition (2) in this definition implies that the irreducible components $D_\alpha$ 
	of $D_{\mathrm{red}}$ are smooth; it is important to note, however, that they may 
	well be geometrically reducible. 
	We refer to \cite[\S41.21]{Stacks} for the definition of strict normal crossing.
	The definition also implies that any  Campana orbifold 
	$(X,D)$ is a \defi{dlt} (divisorial log terminal) pair, in the sense of birational geometry (see \cite[Definition 2.37]{KM98} for this notion). 
	We say that $(X,D)$ is \defi{klt} (Kawamata log terminal) if moreover 
	$\epsilon_\alpha \neq 1$ for all $\alpha \in \mathcal  A$, i.e., if all weights are 
	strictly smaller than $1$.

	Conversely, given a smooth $F$-variety $X$, a reduced divisor 
	$D = \sum_{\alpha \in \mathcal A} D_\alpha$ on $X$ with strict normal crossings 
	and a weight vector $\epsilon = (\epsilon_\alpha)_{\alpha \in \mathcal A}$, 
	where $\epsilon_\alpha \in \coeffset$ for all $\alpha$, we obtain a Campana orbifold 
	$(X,D_\epsilon)$ over $F$ by setting  
	$D_\epsilon = \sum_{\alpha \in \mathcal A} \epsilon_\alpha D_\alpha$. 
	
In this paper we consider only Campana orbifolds $(X,D)$ with $X$ proper. 

\subsection{Two types of Campana points} 
\label{subsec:campana}

	The notion of ``orbifold rational point'' is explored in Campana's papers 
	\cite[\S 9]{Campana04}, \cite[\S 4]{MR2163487}, \cite[\S 12]{MR2831280}, \cite[\S7.6]{Campana15} 
	and in Abramovich's survey 
	\cite[Lecture 2]{Abramovich}. The adjective ``rational'' may create confusion, so we use 
	the name Campana points here, to acknowledge that they are an intermediate 
	notion between rational and integral points. In fact, \cite{Abramovich} defines 
	two different notions of Campana points, one more restrictive than the other. It is 
	essential for us to separate the two notions, since the orbifold analogue of Manin's 
	conjecture seems to work well only for the more restrictive version; this is the one to 
	which we will refer to simply as \defi{Campana points} (Definition~\ref{def:Campanapoints}). 
	The notion featuring in the recent 
	paper \cite{AVA16} is (a slight variant of) the less restrictive version, and we will refer 
	to it as \defi{weak Campana points} (Definition~\ref{def:weakCampanapoints}); 
	it seems to be ill-behaved for the problem studied 
	in this paper (see \S \ref{subsec:Example}). 

	\begin{remark} 
	So far few results on the arithmetic of (weak) Campana points are available. 
	Work on points of bounded height goes back to \cite{VanV12}, followed immediately 
	by \cite{BVV12} and more recently by \cite{BY19}. Work  of  
	Schindler and the first author \cite{PS20} investigates the distribution of Campana points on 
	toric varieties. 
Recent work of Xiao \cite{Xiao} extends our results to biequivariant compactifications of the Heisenberg group.
	
	In dimension $1$, where both notions of Campana points coincide, the analogue of 
	Mordell's conjecture for Campana points has been proved over function fields, first in 
	characteristic $0$ by Campana himself \cite{MR2163487}, and only recently in 
	arbitrary characteristic \cite{KPS19}. Over number fields, the only known result says 
	that the $abc$ conjecture implies Mordell's conjecture for Campana points; see 
	\cite[Appendix]{Smeets} for a detailed argument.
	\end{remark}

	Let $(X,D_\epsilon)$ be a Campana orbifold with $X$ proper over $F$, 
	where 
	$D_\epsilon = \sum_{\alpha \in \mathcal A} \epsilon_\alpha D_\alpha$ and the 
	$\epsilon_\alpha$ belong to the usual set $\coeffset$. Let $S \subseteq \Omega_F$ 
	be a finite set containing $\Omega_F^\infty$. 
	We say that $(X,D_\epsilon)$ has a \defi{good integral model away from $S$} 
	if there exists a flat, proper model $\mathcal X$ over $\mathcal{O}_{F,S}$
	such that $\mathcal X$ is regular. Given such a model, we denote by $\mathcal{D}_\alpha$ 
	the Zariski closure of $D_\alpha$ in $\mathcal{X}$, and we write $(\mathcal{X},\mathcal{D}_\epsilon)$ 
	for the model, where 
	$\mathcal{D}_\epsilon := \sum_{\alpha \in \mathcal A} \epsilon_\alpha \mathcal{D}_\alpha$.

	Campana points can only be defined once a suitable model has been fixed, so let us 
	fix a good integral model $(\mathcal{X},\mathcal{D}_\epsilon)$ for $(X,D_\epsilon)$ 
	over $\mathcal{O}_{F,S}$. Any rational point $P \in X(F)$ extends uniquely to an integral 
	point $\mathcal{P} \in \mathcal{X}(\mathcal{O}_{F,S})$ by the valuative criterion 
	for properness.

	Let $\mathcal A_\epsilon=\{\alpha\in \mathcal A: \epsilon_\alpha\neq0\}$.
	Let 
	$X^\circ=X\setminus (\bigcup_{\alpha\in\mathcal A_\epsilon}D_\alpha)$. 
	If $P \in X^\circ(F)$ and if $v \not\in S$ is a place of $F$, then we get an induced point 
	$\mathcal{P}_v \in \mathcal{X}(\mathcal{O}_{v})$. 
	For each $\alpha \in \mathcal A$ 
	such that $\mathcal{P}_v \not\subseteq \mathcal D_\alpha$, 
	the pullback of $\mathcal D_\alpha$ via $\mathcal{P}_v$ 
	defines a non-zero ideal in $\mathcal O_v$.
	We denote its colength by $n_v(\mathcal D_\alpha, P)$; this is 
	the \defi{intersection multiplicity} of $P$ and $\mathcal D_\alpha$ at $v$. 
	When $P \in D_\alpha$ for some $\alpha \in \mathcal A_\epsilon$, we define 
	$n_v(\mathcal D_\alpha, P)$ to be $+\infty$. 
	
	The \defi{total intersection number} of $P$ with $\mathcal{D}$ is then 
	$$n_v(\mathcal{D}_\epsilon, P) 
	= \sum_{\alpha \in \mathcal A_\epsilon} \epsilon_\alpha n_v(\mathcal{D}_\alpha, P).$$

	The following definition goes back to \cite[\S 2.1.7]{Abramovich} and features in  
	\cite{AVA16} as well. 

	\begin{defn} 
	\label{def:weakCampanapoints}
	With the notation introduced above, we say that $P \in X(F)$ is a 
	\defi{weak Campana $\mathcal{O}_{F,S}$-point} on $(\mathcal{X},\mathcal{D}_\epsilon)$ 
	if the following holds: 
	\begin{enumerate}[leftmargin=*]
	\item for all $\alpha$ with $\epsilon_\alpha = 1$ 
	and $v \notin S$, $n_v(\mathcal D_\alpha, P) = 0$, i.e., 
	$P \in \left(X\setminus \bigcup_{\epsilon_\alpha = 1}D_\alpha\right)(\calO_{F,S})$
	and 
	\medskip
	\item for $v \not\in S$,  if $n_v(\mathcal{D}_\epsilon, P) > 0$ then 
	$$n_v(\mathcal{D}_\epsilon, P) 
	\leq \left(\sum_{\alpha \in \mathcal A_\epsilon} n_v(\mathcal{D}_\alpha, P)\right) - 1.$$ 
	In particular, if $n_v(\mathcal{D}_\alpha, P)=+\infty$ for some $\alpha\in\mathcal A_\epsilon$, 
	the inequality is trivially satisfied.
	\end{enumerate}
	\end{defn}

	We denote the set of weak Campana $\mathcal{O}_{F,S}$-points on 
	$(\mathcal{X},\mathcal{D}_\epsilon)$ by 
	$(\mathcal{X},\mathcal{D}_\epsilon)_{\textsf{w}}(\mathcal{O}_{F,S})$.

	We obtain a more restrictive notion by imposing conditions for individual irreducible 
	components of the support of $D$, in the spirit of \cite[Definition 2.4.17]{Abramovich}:

	\begin{defn} 
	\label{def:Campanapoints} 
	With the notation introduced above, we say that $P \in X(F)$ is a 
	\defi{Campana $\mathcal{O}_{F,S}$-point} on $(\mathcal{X},\mathcal{D}_\epsilon)$ 
	if the following hold:
	\smallskip
	\begin{enumerate}[leftmargin=*]
	\item  for all $\alpha$ with $\epsilon_\alpha = 1$ 
	and $v \notin S$, $n_v(\mathcal D_\alpha, P) = 0$, i.e., 
	$P \in \left(X\setminus \bigcup_{\epsilon_\alpha = 1}D_\alpha\right)(\calO_{F,S})$
	and 
	\medskip
	\item for $v\notin S$, and all $\alpha \in \mathcal A_\epsilon$ with both $\epsilon_\alpha < 1$ and  
	$n_v(\mathcal{D}_\alpha, P) > 0$, we have
	$$n_v(\mathcal{D}_\alpha, P) \geq \frac{1}{1 - \epsilon_\alpha}.$$  
	In other words, writing $\epsilon_\alpha = 1 - \frac{1}{m_\alpha}$, we require 
	$n_v(\mathcal{D}_\alpha, P) \geq m_\alpha$ whenever $n_v(\mathcal{D}_\alpha, P) > 0$. 
	\end{enumerate}
	\end{defn}
	\begin{rmk}
	\label{rmk:clarification_definition}
	Definition \ref{def:Campanapoints} implies that a point $P \in X(F)$ that lies in $D_\alpha(F)$ for 
	some $\alpha\in\mathcal A_\epsilon$ is a Campana $\mathcal{O}_{F,S}$-point if it lies 
	in the $v$-adic closure of $X^\circ(F_v)\cap ((\mathcal{X},\mathcal{D}_\epsilon)(\mathcal O_{F,S}))$ 
	for all places $v\notin S$.
	\end{rmk}

	We denote the set of  Campana $\mathcal{O}_{F,S}$-points on 
	$(\mathcal{X},\mathcal{D}_\epsilon)$ by 
	$(\mathcal{X},\mathcal{D}_\epsilon)(\mathcal{O}_{F,S})$. We have 
	$$X(F) 
	\supseteq (\mathcal{X},\mathcal{D}_\epsilon)_{\textsf{w}}(\mathcal{O}_{F,S}) 
	\supseteq (\mathcal{X},\mathcal{D}_\epsilon)(\mathcal{O}_{F,S}) 
	\supseteq \mathcal{X}^\circ(\mathcal{O}_{F,S}),$$ 
	where 
	$\mathcal{X}^\circ
	= \mathcal{X} \setminus \left(\sum_{\alpha \in \mathcal A_\epsilon} \mathcal{D}_\alpha\right)$. 
	The leftmost two inclusions are equalities if $\epsilon_\alpha = 0$ for all 
	$\alpha \in \mathcal A$, and the rightmost inclusion is an equality if 
	$\epsilon_\alpha = 1$ 
	for all $\alpha \in \mathcal A_\epsilon$.

	For $v \notin S$, we denote by $(\mathcal{X},\mathcal{D}_\epsilon)(\mathcal O_v)$ the set 
	of points $P_v\in X(F_v)$ such that $n_v(\mathcal D_\epsilon, P_v)$ satisfies 
	the condition in Definition \ref{def:Campanapoints}.
	We also define the set of adelic Campana points by
	\[
	(\mathcal{X},\mathcal{D}_\epsilon)(\mathbb A_F) 
	= \prod_{v \notin S} (\mathcal{X},\mathcal{D}_\epsilon)(\mathcal O_v) \times \prod_{v \in S} X(F_v).
	\]	
	By Remark~\ref{rmk:clarification_definition} the space 
	$(\mathcal{X},\mathcal{D}_\epsilon)(\mathcal O_v)$ is 
	a closed subspace of the topological space $X(F_v)$; in particular, it is compact.
	
\subsubsection{An instructive example}
\label{subsec:Example}

	The following example illustrates the difference between the two notions of Campana 
	points introduced above. We show that these notions yield different asymptotics for 
	counts of points of bounded height. Moreover, the difference is encoded not only in the 
	leading constant, but also in the exponent of the logarithm. In \S\ref{subsec:functoriality} 
	we use this example to discuss functoriality of Campana points under birational 
	transformations. 
	
	Let $X=\mathbb{P}^2_\mathbb{Q}$ with coordinates $(x_0:x_1:x_2)$, and let 
	$D_i=\{x_i=0\}$ for $i\in\{0,1,2\}$. Taking $\mathcal X=\mathbb{P}^2_{\mathbb Z}$ 
	and $\epsilon_0,\epsilon_1,\epsilon_2\in\coeffset$, the Campana orbifold 
	$(X, \sum_{i=0}^2 \epsilon_i D_i)$ has the obvious good 
	integral model 
	$(\mathcal X, \sum_{i=0}^2 \epsilon_i \mathcal D_i)$ over $\mathbb Z$ in the sense 
	of \S \ref{subsec:campana}. For $0\leq i\leq 2$, we write $\epsilon_i=1-\frac1{m_i}$ 
	with the convention that $\frac 1{m_i}=0$ if $\epsilon_i=1$. A point in 
	$\mathcal X(\mathbb Z)$, 
	represented by coprime integer coordinates $(x_0:x_1:x_2)$, is 
	\begin{itemize}
	\item a \defi{weak Campana $\mathbb Z$-point} if 
	$x_i\in\{\pm1\}$ for all $i\in\{0,1,2\}$ such that $\epsilon_i=1$, and
	\[
	p\mid \prod_{\substack{0\leq i\leq 2\\ \epsilon_i\neq 0}} x_i \quad \Rightarrow \quad \sum_{\substack{0\leq i\leq 2\\ \epsilon_i\neq 0}}\frac 1{m_i}v_p(x_i)\geq1
	\]
	for every prime $p$, or equivalently,  if $x_0^{m_1m_2}x_1^{m_0m_2}x_2^{m_0m_1}$ 
	is $m_0m_1m_2$-full (in the case $0<\epsilon_0,\epsilon_1,\epsilon_2<1$);
	\medskip
	\item a \defi{Campana $\mathbb Z$-point} if 
	$x_i\in\{\pm1\}$ for all $i\in\{0,1,2\}$ such that $\epsilon_i=1$, and
	\[
	p \mid x_i \quad \Rightarrow \quad \frac 1{m_i} v_p(x_i)\geq 1
	\]
	for every prime $p$ and every $i\in\{0,1,2\}$ such that $\epsilon_i\neq 1$, or equivalently, if $x_i$ is $m_i$-full for 
	all $i\in\{0,1,2\}$, assuming $\epsilon_0,\epsilon_1,\epsilon_2<1$.
	\end{itemize}
	Note how a point on the boundary divisor can be a Campana point: 
	for example, if $\epsilon_0,\epsilon_1,\epsilon_2<1$ and $P = (0:x_1:x_2)$ with $x_1$, $x_2$ coprime integers, 
	then $P$ is a weak $\Z$-Campana point, although it is a $\Z$-Campana point only if for $i = 1, 2$, 
	we have $p\mid x_i \implies v_p(x_i) \geq m_i$.
	
	Let us specialize to the case where $m_0 = m_1=m_2=2$. 
	We set $X^\circ = X \setminus (\bigcup_{i=0}^2D_i)$. 

	To count (weak) Campana 
	points of bounded height we use the exponential Weil height 
	\begin{align*}
	H\colon \mathbb P^2(\mathbb{Q}) &\to \mathbb R \\
	(x_0:x_1:x_2) &\mapsto \max \{ |x_0|,|x_1|,|x_2|\} 
	\textrm{ whenever $x_0,x_1,x_2$ are coprime integers}.
	\end{align*}
	
	\begin{proposition}
	Let $\mathcal X, \mathcal D_0,\mathcal D_1,\mathcal D_2$ be as above 	
	and let $\mathcal D_\epsilon=\sum_{i=0}^2\frac 12\mathcal D_i$.
	Then for sufficiently large $T>0$,
	\begin{align}
	\# \{x\in (\mathcal X,\mathcal D_\epsilon)(\mathbb Z)\cap X^\circ(\mathbb Q): H(x)\leq T\} 
	& \ll T^{3/2}, 
	\label{eq:example3.2.1campana}
	\\
	\# \{x\in (\mathcal X,\mathcal D_\epsilon)_{\mathrm w}(\mathbb Z)\cap X^\circ(\mathbb Q) : H(x)\leq T\} 
	& \gg T^{3/2}\log T.
	\label{eq:example3.2.1weakcampana}
	\end{align}
	\end{proposition}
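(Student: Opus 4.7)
For the upper bound on Campana points, I would invoke the characterization given above: a coprime representative $(x_0,x_1,x_2)$ of a Campana $\mathbb Z$-point in $X^\circ(\mathbb Q)$ must have each $x_i$ squareful. Since the number of squareful positive integers up to $T$ is $O(T^{1/2})$ by a classical estimate, the number of coprime triples of nonzero squareful integers with $|x_i|\leq T$ is $O(T^{3/2})$, yielding \eqref{eq:example3.2.1campana}.

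For the lower bound on weak Campana points, I would exhibit an explicit family parametrized by positive integer quadruples $(a,b,c,d)$ with $\gcd(a,d) = \gcd(b,c) = 1$, via
\[
x_0 := a b^2, \qquad x_1 := a c^2, \qquad x_2 := d^2.
\]
Three easy checks show this yields genuine weak Campana points of $X^\circ(\mathbb Q)$. First, $x_0 x_1 x_2 = (abcd)^2$ is a perfect square, hence squareful. Second, $\gcd(b,c)=1$ forces $\gcd(x_0,x_1)=a$, and $\gcd(a,d)=1$ then gives $\gcd(x_0,x_1,x_2)=\gcd(a,d^2)=1$. Third, the map $(a,b,c,d) \mapsto (x_0, x_1, x_2)$ is injective on this parameter set, since $d^2=x_2$, $a=\gcd(x_0,x_1)$, and then $b,c$ are recovered as $\sqrt{x_0/a}, \sqrt{x_1/a}$.

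To count these quadruples, restrict further to $a, d \leq \sqrt T$, so that $ab^2 \leq T$ and $ac^2 \leq T$ become $b,c \leq \sqrt{T/a}$. Standard estimates give $(6/\pi^2)(T/a) + O(\sqrt{T/a}\log T)$ coprime pairs $(b,c)$ in $[1, \sqrt{T/a}]^2$, and $\sqrt T \cdot \phi(a)/a + O(2^{\omega(a)})$ values of $d \leq \sqrt T$ coprime to $a$. Multiplying and summing over $a$, the main term is
\[
\frac{6}{\pi^2}\, T^{3/2}\sum_{a\leq \sqrt T} \frac{\phi(a)}{a^2} \ \sim \ \frac{18}{\pi^4}\, T^{3/2}\log T,
\]
using the standard asymptotic $\sum_{a\leq X}\phi(a)/a^2 \sim (6/\pi^2)\log X$. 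The resulting error terms are $O(T^{5/4}\log T)$ and $O(T(\log T)^2)$, both of lower order, which establishes \eqref{eq:example3.2.1weakcampana}.

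The main technical point is uniformly controlling the errors from the two coprimality conditions, which is routine via bounds like $\sum_{a\leq X} 2^{\omega(a)}/a \ll (\log X)^2$ and $\sum_{a \leq X} a^{-1/2} \ll X^{1/2}$; no deep input is required. The conceptual content of the argument is that weak Campana admits the extra ``squareful product'' flexibility encoded by allowing $a$ and $d$ to be independent positive integers (with only a coprimality restriction), whereas the strong Campana condition forces each coordinate to be individually squareful and hence confines the count to a product of three sparse sets.
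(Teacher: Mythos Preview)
Your proof is correct. For the upper bound \eqref{eq:example3.2.1campana} you do exactly what the paper does: drop the coprimality condition and use the classical $O(T^{1/2})$ count of squareful integers up to $T$.

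For the lower bound \eqref{eq:example3.2.1weakcampana}, you and the paper count essentially the same subset but by different bookkeeping. The paper takes the set of coprime positive triples with $x_0$ a square and $x_1x_2$ a square, and handles the coprimality $\gcd(x_0,x_1,x_2)=1$ via M\"obius inversion over a common divisor $d$, then parametrizes pairs with square product as $(duz_1^2,duz_2^2)$ with $\gcd(z_1,z_2)=1$. Your map $(a,b,c,d)\mapsto(ab^2,ac^2,d^2)$ is (after swapping the roles of $x_0$ and $x_2$) a bijective parametrization of the same set: the conditions $\gcd(b,c)=1$ and $\gcd(a,d)=1$ encode the coprimality directly, so no M\"obius step is needed. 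Your restriction $a\le\sqrt{T}$ throws away some points and halves the leading constant relative to the paper's $\zeta(2)^{-2}T^{3/2}\log T$, but for a lower bound this is irrelevant. Your route is marginally more elementary; the paper's gives the sharp constant for that particular subset.
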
	
	\begin{proof}
	In this setting, the set of Campana $\mathbb Z$-points on $X^\circ$ is in bijection with the set of triples 
	$(x_0,x_1,x_2)\in \mathbb Z_{\neq0}^3$ such that $\gcd(x_0,x_1,x_2)=1$ and $x_0$, $x_1$ 
	and $x_2$ are all squareful. The counting function of Campana $\mathbb Z$-points of 
	Weil height bounded by $T$ has an upper bound given by the cardinality of the set 
	obtained by removing the coprimality condition, which grows asymptotically like 
	$T^{\frac 32}$, up to multiplication by a positive constant, by \cite{erdos} (see also \cite{BG58}).
	
	The set of weak Campana $\mathbb Z$-points on $X^\circ$ is in bijection with 
	the set of triples $(x_0,x_1,x_2)\in \mathbb Z_{\neq0}^3$ such that $\gcd(x_0,x_1,x_2)=1$ 
	and $x_0x_1x_2$ is  squareful.  
	To prove the lower bound in \eqref{eq:example3.2.1weakcampana}; we count points of 
	bounded height in the subset $A$ of coprime triples $(x_0,x_1,x_2)\in\mathbb Z^3_{>0}$ 
	such that  $x_0$ is a square and $x_1x_2$ is a square.  The size of this subset is 
	estimated by
	\[
	\sum_{d\leq T}\mu(d) 
	\cdot \#\left\{1\leq x_0\leq T :\, x_0\text{ square}, d\mid x_0 \right\}
	\cdot \#\left\{1\leq x_1,x_2\leq T:\,  x_1x_2 \text{ square}, 
	d\mid x_1, d\mid x_2\right\},
	\]
	where $\mu$ denotes the M\"obius function. The number of squares up to $T$ that 
	are divisible by a given squarefree integer $d$ is $T^{1/2}/d + O(1)$. To estimate 
	the cardinality of the set $B$ of pairs $(x_1,x_2)\in (d\mathbb Z_{>0})^2$ such that 
	$x_1,x_2\leq T$ and $x_1x_2$ is a square, we write $u=\gcd(x_1/d,x_2/d)$ and 
	$y_i=x_i/(du)$ for $i\in\{1,2\}$. Then $x_1x_2$ is a square if and only if both $y_1$ 
	and $y_2$ are squares. Writing $y_i= z_i^2$ for $i\in\{1,2\}$, we get
	\[
	\#B=\sum_{u\leq T/d}
	\sum_{\substack{z_1,z_2\leq (T/(du))^{1/2}\\ \gcd(z_1,z_2)=1}} 1= 
	\frac{T/d\log (T/d)}{\zeta_{\mathbb Q} (2)} +O(T/d).
	\]
	Therefore, $\#A=(\zeta_{\mathbb Q} (2))^{-2}\, T^{3/2}\log T +
	O_\delta (T^{3/2}(\log T)^\delta)$ for all $\delta>0$. 	
	\end{proof}	
	
	The upper bound \eqref{eq:example3.2.1campana} is 
	in agreement with  
	Conjecture~\ref{conj: ManinforCampana}. 
	Indeed, for the line bundle $L = \calO(1)$, we have $a((X, D_\epsilon), L) = 3/2$ 
	and $b = b(F, (X,D_\epsilon), L) = 1$, so Conjecture~\ref{conj: ManinforCampana} 
	predicts a counting formula for Campana points of bounded height that grows like 
	$cT^{3/2}$ as $T\to \infty$, which is correct. 
	The upper bound is in fact sharp; see \cite[Theorem 1.2]{PS20}.
	The lower bound \eqref{eq:example3.2.1weakcampana} shows that counting Campana points 
	and weak Campana points of bounded height in the same setting can lead to different asymptotics. 
	However, since the lower bound is based on counting points in a thin set 
	(denoted by $A$ in the proof), it does not show that Conjecture 1.1 fails 
	when counting weak Campana points. We are unaware of any successful attempt to produce 
	an asymptotic formula for the count of weak Campana points of bounded height in an example 
	where the sets of Campana points and weak Campana points do not coincide. 
	
\subsection{The leading constant}
\label{subsec:leading_constant}
	We keep the notation introduced in \S \ref{subsec:conjecture}. 
	In this section, we define the leading constant that appears in 
	Conjecture~\ref{conj: ManinforCampana}, in the case when the divisor 
	$a((X, D_\epsilon), L)L + K_X + D_\epsilon$ is $\mathbb Q$-linearly equivalent 
	to a rigid effective divisor $E$.	
	The construction 
	here is analogous to \cite{Pey95} and \cite{TamagawaBT}. For simplicity, we assume 
	that the boundary divisor $D$ contains all components of $E$; we denote by 
	$\mathcal A(L)$ the set of irreducible components of $D$ that are not contained 
	in the support of $E$.

	Write $U = X \setminus \mathrm{Supp}(E)$, and let $\Lambda$ be the image of 
	$\mathrm{Eff}^1(X)$ under the projection map 
	$\rho\colon \mathrm{Pic}(X) \rightarrow \Pic(U)$; this is a finitely generated, 
	polyhedral cone since $X$ is a Fano orbifold.
	Let
	\[
	\chi_{\Lambda}(\rho([L])) 
	= \int_{\Lambda^*} e^{-\langle \rho([L]),\bold x\rangle}\,\mathrm d \bold x,
	\]
	where $\Lambda^* \subset \Pic(U)_\R^*$ is the dual cone to $\Lambda$ and 
	$\mathrm d \bold x$ is the Lebesgue measure on $\Pic(U)_\R^*$, normalized by 
	the dual lattice $\Pic(U)^* \subset \Pic(U)_\R^*$ 
	(see~\cite[Definition 2.3.14]{TamagawaBT}). The \defi{$\alpha$-constant} of the pair 
	$(X,D_\epsilon)$ with respect to $L$ is
	\[
	\alpha((X, D_\epsilon), L) 
	:= \chi_{\Lambda} (\rho([L]))\prod_{\alpha \in \mathcal A(L)}(1 - \epsilon_\alpha),
	\]
	and the \defi{$\beta$-constant} of the pair $(X,D_\epsilon)$ with respect to $L$ is 
	\[
	\beta((X, D_\epsilon), L) = \#\HH^1(\Gamma, \mathrm{Pic}(\overline{U})).
	\]
	The group $\HH^1(\Gamma, \mathrm{Pic}(\overline{U}))$ is finite. 
	Indeed, since $X$ is a Fano orbifold, it follows from \cite{HM07} that $X$ is rationally connected. 
	Hence $\mathrm{Pic}(\overline{X})$ is a free $\mathbb Z$-module of finite rank. 
	Furthermore since $E$ is rigid, its geometric components generate a primitive lattice 
	in $\mathrm{Pic}(\overline{X})$. Thus its cokernel $\mathrm{Pic}(\overline{U})$ is torsion free. 
	Hence we conclude that $\HH^1(\Gamma, \mathrm{Pic}(\overline{U}))$ is finite.

	The open set $U$ can be endowed with a Tamagawa measure 
	$\tau_{U}$~\cite[Definition 2.8]{CLT10}; fixing an adelic metrization on each component 
	of $D$ and on $K_X$, we let $\tau_{U, D_\epsilon} = \mathsf H_{D_\epsilon}\tau_{U}$, 
	where $\mathsf H_{D_\epsilon}$ is the height function associated to the divisor 
	$D_\epsilon$. We define the \defi{Tamagawa constant} by
	\[
	\tau(F, S, (\mathcal X, \mathcal D_\epsilon), \mathcal L) 
	:= \int_{\overline{U(F)}_\epsilon} 
	\mathsf H(x, a((X, D_\epsilon), L)L + K_X + D_\epsilon)^{-1} \, 
	\mathrm d \tau_{U, D_\epsilon},
	\]
	where $\overline{U(F)}_\epsilon$ denotes either 
	\begin{enumerate}
	\item the topological closure of ${(\mathcal{X},\mathcal{D}_\epsilon)(\mathcal{O}_{F,S})}\cap U(F)$ 
	in $U(\mathbb A_F)$, or 
	\medskip
	\item the \defi{Brauer set} $U(\mathbb A_F)_\epsilon^{\mathrm{\mathrm{Br}(U)}}$ defined as follows: 
	for any subset $B \subset U(F_v)$, let $B_\epsilon$ denote the support of $\delta_{\epsilon, v}$ on $B$. 
	The \defi{adelic Campana set} is the restricted product
	\[
	U(\mathbb A_F)_\epsilon = \prod_{v}{'} U(F_v)_\epsilon
	\]
	with respect to $U(\mathcal O_v)_\epsilon$.
	The set $U(\mathbb A_F)_\epsilon^{\mathrm{\mathrm{Br}(U)}}$ is the zero locus of the Brauer-Manin pairing. 
	See \cite[Chapter 8]{Poonen} for the definition of the Brauer-Manin pairing.
	 \end{enumerate}
	 In Theorem \ref{thm:maintheorem1} we use the latter definition of $\overline{U(F)}_\epsilon$; 
	 see Lemma \ref{lem:rigid_klt_constant}. It is not known whether the two sets coincide; 
	 see Question \ref{question:weak_approximation} below.
	We recall that already in the classical case of rational points, it is not clear what domain should appear in the integral 
	that defines the Tamagawa constant; see \cite[Remarks 6.13 and 7.8]{Sal98}.
	This integral converges in the general setting of a Fano orbifold, by an analog of 
	Denef's formula (\ref{eq:localFTtrivialchar}) in this setting. 
	Finally the leading constant 
	for Conjecture~\ref{conj: ManinforCampana} is
	\[
	c(F, S, (\mathcal X, \mathcal D_\epsilon), \mathcal L) 
	= \frac{\alpha((X, D_\epsilon), L)\beta((X, D_\epsilon), L)
	\tau(F, S, (\mathcal X, \mathcal D_\epsilon), \mathcal L)}
	{a((X, D_\epsilon), L)(b(F, (X, D_\epsilon), L)-1)!}.
	\]
	Our Theorem~\ref{thm:maintheorem1} 
	agrees with Conjecture~\ref{conj: ManinforCampana}, including the prediction for 
	the constant, as we show in \S\ref{subsec:rigid}.

\subsection{Thin exceptional sets}\label{sec:thin}

	In the formulation of Conjecture \ref{conj: ManinforCampana} 
	we expect that it is necessary to remove a thin 
	set of Campana points from the count in order to obtain a formula that reflects the 
	global geometry of the  Campana orbifold; indeed, already for rational points it has 
	been understood for quite some time that a version of Manin's conjecture with only a 
	closed -- rather than thin -- exceptional set admits counterexamples, see 
	\cites{BT96b,LeRu13, BL17}. 
	Meanwhile, several authors have recently built up evidence towards a version of Manin's 
	conjecture with a thin exceptional set, see~\cites{LT17,Pey17,Sen17,LST18}. While we 
	do believe that the set of klt Campana points is itself \emph{not} thin, we are unable at 
	present to show this; however, we propose a problem we hope will ameliorate this 
	circumstance.  

	Let $(X,D_\epsilon)$ be a Fano orbifold over a number field $F$, 
	i.e., a Campana orbifold such that $-(K_X + D_\epsilon)$ is ample.	
	Fix a finite set $S \subset \Omega_F$ containing all archimedean places of $F$, as well 
	as a good integral model $(\mathcal X, \mathcal D)$ of $(X, D)$ over 
	$\Spec \, \mathcal O_{F, S}$, as in \S\ref{subsec:campana}. 	
	Write $(\mathcal{X},\mathcal{D}_\epsilon)(\mathcal{O}_{F,S})$ for the set of 
	$\mathcal{O}_{F,S}$-Campana points of $(\mathcal{X},\mathcal{D}_\epsilon)$. 
	
	\begin{defn}
	A \defi{thin subset} of $(\mathcal{X},\mathcal{D}_\epsilon)(\mathcal{O}_{F,S})$ is 
	a subset of a finite union of
	\begin{enumerate} 
  	\item type I sets: those of the form 
  	$Z\cap (\mathcal{X},\mathcal{D}_\epsilon)(\mathcal{O}_{F,S})$ 
  	for a proper Zariski closed subset $Z \subset X$;
\medskip
  	\item type II sets: those of the form 
  	$f(Y(F))\cap (\mathcal{X},\mathcal{D}_\epsilon)(\mathcal{O}_{F,S}),$ where 
  	$f\colon Y \to X$ is a generically finite cover of degree at least $2$, with 
  	$Y$ a projective, integral $F$-variety.
	\end{enumerate} 
	\end{defn}

	It is natural to ask whether $(\mathcal{X},\mathcal{D}_\epsilon)(\mathcal{O}_{F,S})$ 
	is itself \emph{not} thin, possibly after a finite extension of the ground field. After all, 
	if a version of Manin's conjecture with a thin exceptional set is to hold for Campana points 
	on Fano orbifolds, we would like to have something left to count \emph{after} the removal 
	of a thin subset.  We are thus forced to make what we hope is a superfluous hypothesis 
	in Conjecture~\ref{conj: ManinforCampana}, namely, that 
	$(\mathcal{X},\mathcal{D}_\epsilon)(\mathcal{O}_{F,S})$ itself is 
	\emph{not} thin in our setting.
	
	This shortcoming is already present in the traditional case of rational points on smooth 
	Fano varieties, where we expect the set of rational points to be not thin if it is non-empty. 
	This is known conditionally on Colliot-Th\'el\`ene's conjecture predicting that the 
	Brauer-Manin obstruction controls all failures of weak approximation on rationally connected 
	varieties~\cite{CT03}. Indeed, this conjecture implies that smooth Fano varieties satisfy 
	``weak weak approximation'', which in turn implies that the set of rational points is not 
	thin~\cite[Theorem~3.5.7]{Ser92}.

	On a positive note, Serre has shown that $\PP^n(F)$ is not thin \cite[\S3.4]{Ser92}. 
	This prompts us to ask:
	\begin{question}
	Let $F$ be a number field and let $D = \sum_{\alpha \in \mathcal{A}} D_\alpha$ be 
	a divisor on $\mathbb P_F^n$ with strict normal crossings. For each 
	$\alpha \in \mathcal A$, pick $\epsilon_\alpha \in \coeffset$ with $\epsilon_\alpha < 1$ 
	and set $D_\epsilon = \sum_{\alpha \in \mathcal A} \epsilon_\alpha D_\alpha$, so that 
	the Campana orbifold $(\mathbb P^n, D_\epsilon)$ is klt. Assume moreover that 
	$-(K_{\mathbb P^n} + D_\epsilon)$ is ample. Fix a good integral model 
	$(\calP^n,\calD_\epsilon)$ of $(\mathbb P^n, D_\epsilon)$, and a finite set $S$ of 
	places of $F$ that includes all the archimedean places. Is the set 
	$(\calP^n,\calD_\epsilon)(\mathcal{O}_{F,S})$ of klt Campana points non-thin?
	\end{question}

	For some partial results, we refer to the recent paper of 
	Browning--Yamagishi~\cite[\S 4]{BY19}. 
	A version of this question for integral points on a log K3 surface is addressed in \cite{Coc19}.
\medskip

	In a different direction, if the set of Campana points 
	$(\mathcal{X},\mathcal{D}_\epsilon)(\mathcal{O}_{F,S})$ were thin, then there 
	would exist a set of places $T$ such that the image of this set in $\prod_{v \in T} X(F_v)$ 
	is not dense, by~\cite[Theorem~3.5.3]{Ser92}. Since we expect 
	$(\mathcal{X},\mathcal{D}_\epsilon)(\mathcal{O}_{F,S})$ to be \emph{not} thin, we ask:

	\begin{question}
	\label{question:weak_approximation}
	Is there a finite set $S_0 \subset \Omega_F$ containing $S$ such that 
	for any $T \subseteq \Omega_F$ a finite set of places such that $S_0\cap T = \emptyset$, 
	$(\mathcal{X},\mathcal{D}_\epsilon)(\mathcal{O}_{F,S})$ is dense in 
	$\prod_{v \in T}((\mathcal X, \mathcal D_\epsilon)(\mathcal O_v))$?
	In other words, does the set of Campana points satisfy 
	\defi{weak weak approximation}?
	\end{question}

\subsection{Browning-Yamagishi's example}
\label{subsec:BY19}

	In \cite[Theorem 1.2]{BY19}, Browning and Yamagishi presented an illuminating example, 
	which illustrates in particular that in the formulation of 
	Conjecture~\ref{conj: ManinforCampana}, it is important to exclude a thin set 
	to obtain the expected growth rate. We briefly recall the construction. 
	We define divisors on $\mathbb P^2_\Q = \Proj \Q[x_0,x_1,x_2]$ by
	\[
	D_i = \{x_i = 0\} \textrm{ for }i = 0, 1, 2,\quad\textrm{and} \quad 
	D_3 = \{x_0 + x_1 + x_2=0 \}.
	\]
	We denote by $H$ the hyperplane class, and we set $D=\bigcup_{i=0}^3D_i$. 
	Consider the Campana orbifold 
	$(\mathbb P^2_\Q, D_\epsilon = \sum_{i = 0}^3 \frac{1}{2} D_i)$; and extend it to 
	the obvious good integral model $(\PP^2_\Z,\calD_\epsilon)$ over $\Spec(\Z)$. 

	A computation shows that
	\[
	a((\mathbb P^2, D_\epsilon), H) = 1, \quad 
	b(\mathbb Q, (\mathbb P^2, D_\epsilon), H) = 1.
	\]
	On the other hand, Browning and Yamagishi show that
	\[
	\mathsf N((\PP^2_\Z,\mathcal{D}_\epsilon)(\Z)\cap(\PP^2\setminus D)(\mathbb Q), H, T) \gg T\log T,
	\]
	a computation at odds with a closed-set version of Conjecture~\ref{conj: ManinforCampana}.
	As we explain below, the unexpected rapid growth of the 
	counting function is explained by a type II thin set.

	Let $Q \subset \PP^3 = \Proj \Q[w_0,w_1,w_2,w_3]$ be the smooth quadric defined by
	\[
	w_0^2 - w_1^2 + w_2^2 = w_3^2 
	\]
	and consider the finite morphism of degree $8$ given by
	\begin{align*}
	f \colon Q &\rightarrow \mathbb P^2_\Q \\
	(w_0:w_1:w_2:w_3) &\mapsto (w_0^2: -w_1^2: w_2^2)
	\end{align*}
	Note that
	\[
	f(Q(\mathbb Q)) \subset (\mathbb P^2_{\mathbb Z}, \mathcal D_\epsilon)(\mathbb Z),
	\]
	and that, by the ramification formula
	we have
	\[
	K_Q = f^*(K_{\mathbb P^2} + D_\epsilon).
	\]
	From this, it follows that 
	$$a(Q, f^*H) = 1, \quad b(\mathbb Q, Q, f^*H) = 2.$$ 
	Therefore the number of rational points on $Q$ grows more quickly than the expected 
	growth rate on $(\mathbb P^2_\Z, D_\epsilon)$. 

	There are in fact infinitely many twists $Q^\sigma/\mathbb P^2_\Q$ such that 
	$$a(Q^\sigma, H) = 1, \quad b(\mathbb Q, Q^\sigma, H) = 2,$$ 
	so it is a priori unclear whether the combined images of their rational points on 
	$\PP^2_\Q$ form a thin set. This type of problem is already addressed in~\cite{LST18}, 
	using Hilbert's irreducibility theorem. We obtain the following auxiliary result:
	\begin{lemma}
	The set
	\[
	Z = \bigcup_\sigma f^\sigma(Q^\sigma(\mathbb Q)),
	\]
	where the union is taken over all $\sigma \in \HH^1(\mathrm{Gal}(\Qbar/\Q), 
	\mathrm{Aut}(\overline{Q}/\mathbb P^2_{\overline{\mathbb Q}}))$ with the property that 
	$$b(\mathbb Q, Q^\sigma, f^{\sigma *}H) = 2,$$ 
	is thin.
	\end{lemma}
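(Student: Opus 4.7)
The plan is to collapse the \emph{a priori} infinite union defining $Z$ into the image of $\Q$-points of a single geometrically irreducible double cover $V\to\PP^2_{\Q}$; since such an image is by definition a type~II thin subset of $\PP^2(\Q)$, the lemma will follow.

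First, I would parameterize the twists. The cover $f\colon Q\to\PP^2$ is Galois with group $G=\Aut(\overline{Q}/\PP^2_{\overline{\Q}})\cong(\Z/2)^3$, generated by the sign-change involutions on the $w_i$; hence $H^1(\Q,G)\cong(\Q^*/\Q^{*2})^3$, and each cohomology class $\sigma=(a_0,a_1,a_2)$ gives a twist
\[
Q^\sigma\colon a_0w_0^2-a_1w_1^2+a_2w_2^2=w_3^2,\qquad f^\sigma(w)=(a_0w_0^2:-a_1w_1^2:a_2w_2^2).
\]
Since $\Pic(\overline{Q^\sigma})=\Z^2$ is spanned by the two rulings, which are permuted by the discriminant character of the defining quadratic form, a direct computation shows that $b(\Q,Q^\sigma,f^{\sigma*}H)=2$ precisely when $a_0a_1a_2\in\Q^{*2}$; this cuts out the subset $\Sigma\subset H^1(\Q,G)$ of cocycles appearing in $Z$.

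Next, I would show that $Z$ lies in the image of $V(\Q)$ under the natural projection $V\to\PP^2_{\Q}$, where $V$ is the double cover
\[
V\colon y^2=-x_0x_1x_2(x_0+x_1+x_2).
\]
Given $\sigma\in\Sigma$ and $P=(x_0:x_1:x_2)\in f^\sigma(Q^\sigma(\Q))$, choose a $\Q$-representative of the form $x_0=a_0w_0^2$, $x_1=-a_1w_1^2$, $x_2=a_2w_2^2$ coming from some $(w_0:w_1:w_2:w_3)\in Q^\sigma(\Q)$. The equation of $Q^\sigma$ then gives $x_0+x_1+x_2=w_3^2$, so
\[
-x_0x_1x_2(x_0+x_1+x_2)=a_0a_1a_2\cdot(w_0w_1w_2w_3)^2,
\]
which lies in $\Q^{*2}$ by the condition $a_0a_1a_2\in\Q^{*2}$. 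Since the quartic on the left is homogeneous of degree $4$ in the $x_i$, the square class of $-x_0x_1x_2(x_0+x_1+x_2)$ is independent of the chosen projective representative, and hence $P$ lifts to a $\Q$-point of $V$.

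Finally, the polynomial $-x_0x_1x_2(x_0+x_1+x_2)$ is the product of four pairwise non-proportional linear forms in $\overline{\Q}[x_0,x_1,x_2]$, hence squarefree; so $V$ is geometrically integral and $V\to\PP^2_{\Q}$ is generically finite of degree~$2$. Taking a smooth projective model of $V$ yields a type~II cover whose image of $\Q$-points contains $Z$, proving that $Z$ is thin. The main obstacle is the collapse in the second step: one must verify that the infinite union $\bigcup_{\sigma\in\Sigma}f^\sigma(Q^\sigma(\Q))$ telescopes into the image of a \emph{single} double cover. The key point is that the product $x_0x_1x_2(x_0+x_1+x_2)$ records the square class $a_0a_1a_2\in\Q^*/\Q^{*2}$ up to a fixed sign, and the $b=2$ condition pins this class to a single coset, absorbing the infinitely many twists into one projective square-class invariant.
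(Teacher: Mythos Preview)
Your proof is correct and follows essentially the same route as the paper's: parameterize the twists as $Q_{a_0,a_1,a_2}:\{a_0w_0^2-a_1w_1^2+a_2w_2^2=w_3^2\}$, identify the $b=2$ condition with $a_0a_1a_2\in\Q^{*2}$, and then observe that every image point satisfies a single quadratic constraint, so that $Z$ lies in the $\Q$-points of one fixed double cover of $\PP^2_\Q$.

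The only difference is in the choice of the square-class invariant. The paper uses the cubic $-x_0x_1x_2$: with $x_i$ as in your formulas one has $-x_0x_1x_2=a_0a_1a_2(w_0w_1w_2)^2$, hence a square. You instead multiply by the linear form $x_0+x_1+x_2=w_3^2$ to obtain the quartic $-x_0x_1x_2(x_0+x_1+x_2)$, whose square class is manifestly independent of the projective representative; this lets you write down an honest double cover $V\subset\PP(1,1,1,2)$ and check geometric integrality directly. Your formulation is a bit more careful on this point, but the underlying idea and the amount of work involved are the same.
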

	
	The following proof is due to the referee. 

	\begin{proof}
	The twists $Q^\sigma$ are given by 
	$Q_{a_0,a_1,a_2}=\{a_0w_0^2-a_1w_1^2 +a_2w_2^2 = w_3^2\}\subseteq\mathbb P^3$ 
	for $a_0,a_1,a_2\in\mathbb Q^\times$, and  
	$Q_{a_0,a_1,a_2}$ has Picard rank $2$ if and only if $a_0a_1a_2$ is a square. 
	The corresponding twists of $f$ are 
	$$
	f_{a_0,a_1,a_2} : Q_{a_0,a_1,a_2} \to \mathbb P^2_\mathbb Q, 
	\quad (w_0:\dots:w_3) \mapsto (a_0 w_0^2 : -a_1 w_1^2 : a_2 w_2^2).
	$$	
	We observe that for all $a_0,a_1,a_2\in\mathbb Q^\times$ such that $a_0a_1a_2$ is a square,  
	the images of the $\mathbb Q$-points on $Q_{a_0,a_1,a_2}$ under $f_{a_0,a_1,a_2}$ are contained 
	in the set of points $(x_0:x_1:x_2)$ in $\mathbb P^2(\mathbb Q)$ such that $-x_0x_1x_2$ is a square, 
	which is a thin set.
	\end{proof}

\subsection{Birational invariance and functoriality}
\label{subsec:functoriality}

	We conclude this section 
	by exploring the functoriality properties 
	of sets of Campana points under birational morphisms. 

\subsubsection{An instructive example (continued)}

	To motivate our discussion, we appeal to the example of \S\ref{subsec:Example}: 
	recall that $X=\mathbb{P}^2_\mathbb{Q}$ with coordinates 
	$(x_0:x_1:x_2)$, $D_i=\{x_i=0\}$ for $i\in\{0,1,2\}$, and consider the Campana 
	orbifold $(X, \sum_{i=0}^2(1-\frac 1{m_i})D_i)$ with $\mathbb Z$-model 
	$\mathcal X=\mathbb P^2_{\mathbb Z}$. 

	Let $\varphi: Y\to X$ be the blow-up with center the intersection point of $D_1$ and  $D_2$. 
	Then $\varphi$ is an isomorphism over $X^\circ = X \setminus (\bigcup_{i=0}^2 D_i)$. 
	Let $Y^\circ=\varphi^{-1}(X^\circ)$.
	Denote by $E$ the exceptional divisor and by $\widetilde D_i$ the strict transform of 
	$D_i$ for $i\in\{0,1,2\}$. Then $Y^\circ=Y\setminus (E\cup(\bigcup_{i=0}^2\widetilde D_i))$. 
	The blow-up $\mathcal {Y}$ of $\mathcal X$ at the 
	subvariety defined by $\{x_1=x_2=0\}$ yields a smooth projective $\mathbb Z$-model of $Y$. 
	We observe that  given a point $P\in Y^\circ(\mathbb{Q})$, the point $\varphi(P)$ is
	\begin{itemize}
	\item
	a weak Campana $\mathbb Z$-point on 
	$(\mathcal X, \sum_{i=0}^2(1-\frac 1{m_i})\mathcal D_i)$ if for every prime $p$, the sum
	\[
	\frac 1{m_0}n_p(\widetilde D_0, P) + \frac 1{m_1}n_p(\widetilde D_1,P)+
	\frac 1{m_2}n_p(\widetilde D_2,P)+\left (\frac 1{m_1}+\frac 1{m_2} \right )n_p(E,P)
	\]
	is either $0$ or at least $1$; 
\medskip
	\item
	a Campana $\mathbb Z$-point on 
	$(\mathcal X, \sum_{i=0}^2(1-\frac 1{m_i})\mathcal D_i)$ if for every prime $p$, 
	the numbers
	\[
	\frac 1{m_0}n_p(\widetilde D_0, P), \quad 
	\frac 1{m_1}(n_p(\widetilde D_1,P)+n_p(E,P)), \quad 
	\frac 1{m_2}(n_p(\widetilde D_2,P)+n_p(E,P))
	\]
	are either $0$ or at least $1$.
	\end{itemize}
	This description clearly shows that the set of (weak) Campana points is not invariant 
	under birational morphisms, i.e., for general $m_0,m_1,m_2$, there is no 
	choice of positive integers $\widetilde m_0,\widetilde m_1,\widetilde m_2,\widetilde m_E$
	such that the restriction of the blow-up $\varphi$ to $Y^\circ$ would
	induce a bijection between the set of (weak) Campana points for 
	$(\mathcal Y, (1-\frac 1{\widetilde m_E})\mathcal E + \sum_{i=0}^2(1-\frac 1{\widetilde m_i})\widetilde{\mathcal D}_i )$ on the open subset $Y^\circ$ and the set 
	of (weak) Campana points for $(\mathcal X, \sum_{i=0}^2(1-\frac 1{m_i})\mathcal D_i)$ on the isomorphic open subset $X^\circ$, where $\mathcal E, \widetilde{\mathcal D}_0, \widetilde{\mathcal D}_1, \widetilde{\mathcal D}_2$ denote the closures in $\mathcal Y$ of $E, \widetilde D_0, \widetilde D_1, \widetilde D_2$, respectively.

	Not all is lost, however: if we define $\widetilde m_i=m_i$ 	
	for $i\in\{0,1,2\}$ and $\widetilde m_E=\max\{m_1,m_2\}$, then the set of (weak) Campana points 
	on the resulting orbifold
	$(\mathcal Y, (1-\frac 1{\widetilde m_E})\mathcal E + \sum_{i=0}^2(1-\frac 1{\widetilde m_i})\widetilde{\mathcal D}_i )$ is mapped by $\varphi$ into a \emph{subset} of the 
	set of (weak) Campana points on 
	$(\mathcal X, \sum_{i=0}^2(1-\frac 1{m_i})\mathcal D_i)$. 
	
\subsubsection{The general picture}

	Let $X$ be a rationally connected smooth projective variety defined over a number field 
	$F$ and let $D = \sum_{\alpha\in\mathcal A} D_\alpha$ be a strict normal crossings divisor 
	on $X$. Fix a weight vector $\epsilon = (\epsilon_\alpha)_{\alpha\in\mathcal A}$ where 
	$\epsilon_\alpha \in \coeffset$ with $\epsilon_\alpha = 1-1/m_\alpha < 1$. Set 
	$D_\epsilon = \sum_{\alpha \in \mathcal A} \epsilon_\alpha D_\alpha$ and consider 
	the Campana orbifold $(X, D_\epsilon)$, which is a klt pair. 

	Let
	\[
	\varphi \colon \widetilde{X} \rightarrow X,
	\]
	be a birational morphism from a smooth projective variety $\widetilde{X}$, such that 
	$\widetilde{D} =( \varphi^*D)_{\mathrm{red}}$ is a strict normal crossing divisor. 
	We assume for simplicity that $\varphi$ is an isomorphism outside of $D$ and that both 
	$(\widetilde{X}, \widetilde D)$ and $(X, D)$ admit good integral models 
	$(\widetilde{\mathcal X}, \widetilde {\mathcal D})$ and $(\mathcal X, \mathcal D)$ that 
	are compatible. We assign a weight vector $\tilde{\epsilon}$ to $\widetilde{D}$ as follows.
	For the strict transform of a component $D_\alpha$ of $D$, we set 
	$\tilde{\epsilon}_\alpha = \epsilon_\alpha$. If $E_\beta$ is an exceptional divisor and 
	if $e_{\beta, \alpha}$ denotes the coefficient of $E_\beta$ in $\varphi^*D_\alpha$, 
	then we define 
	$$\tilde{m}_\beta 
	= \max \{  \lceil m_\alpha /e_{\beta, \alpha} \rceil\mid e_{\beta, \alpha} >0\}
	\quad \textrm{ and }\quad \tilde{\epsilon}_\beta = 1-1/\tilde{m}_\beta.$$
	Then $\varphi:(\widetilde X,\widetilde D_{\tilde \epsilon})\to (X,D)$ is 
	a ``morphisme orbifolde'' in the sense of \cite[D\'efinition 2.3]{MR2831280}.
	
	By construction, we have
	\[
	\varphi((\widetilde{\mathcal{X}},\widetilde{\mathcal{D}}_{\tilde{\epsilon}})
	(\mathcal{O}_{F,S}) )
	\subset (\mathcal{X},\mathcal{D}_\epsilon)(\mathcal{O}_{F,S}),
	\]
	but this inclusion need not be an equality. On the other hand, the $a$-  and $b$-invariants 
	are well-behaved for our choice of $\tilde\epsilon$, as we now explain. 
	We observe that 	
	$$
	K_{\widetilde{X}} + D_{\tilde{\epsilon}} \geq \varphi^*(K_X + D_\epsilon)
	$$ by \cite[Corollaire 2.12]{MR2831280}. 
	Then the arguments of \cite[\S 2]{HTT15} show that
	\[
	a((\widetilde{X}, \widetilde{D}_{\tilde{\epsilon}}), \varphi^*L) = a((X, D_{\epsilon}), L), 	
	\quad 
	b(F, (\widetilde{X}, \widetilde{D}_{\tilde{\epsilon}}), \varphi^*L) = b(F, (X, D_{\epsilon}), L).
	\]
	
	We end by remarking that 
	$\tau(F, S, (\widetilde{\mathcal X}, \widetilde{\mathcal D}_\epsilon), \mathcal L)$ and 
	$\tau(F, S, (\mathcal X, \mathcal D_\epsilon), \mathcal L)$ will be different in general 
	because $(\calX,\calD_\epsilon)(\calO_{F,S})$ and 
	$(\widetilde{\calX},\widetilde{\calD}_{\tilde\epsilon})(\calO_{F,S})$ are different. 
	Our overall conclusion is that 
	our Manin-type conjecture for klt Campana points
	is quite sensitive 
	to birational modifications. 
	In particular, proving the asymptotic formula for the counting 
	function after a birational modification need not easily yield an asymptotic formula for 
	the original variety.

\section{Analytic Clemens complexes}
\label{subsec:Clemens}

	Clemens complexes are simplicial sets that keep track of containment relations 
	between the intersections of components of a divisor in a variety. 
	As in \cite{CLT12}, Clemens complexes will be used in \S \ref{sec:proofdlt} to keep 
	track of the contribution  of the local height integrals to the pole of the height zeta 
	function when some integrality conditions appear, that is, when some component of 
	the boundary has weight $1$. For a more detailed treatment, we refer the reader 
	to~\cite[\S 3.1]{CLT10}.

	In this section $X$ is a smooth, proper variety over a number field $F$, and 
	$D = \sum_{\alpha\in\calA} D_\alpha$ is a reduced divisor on $X$ with strict 
	normal crossings. Let $v \in \Omega_F$, and fix an embedding $\bar F \subseteq \bar F_v$, 
	so that $\Gamma_v := \Gal(\bar F_v/F_v)$ acts on $\bar X$ and $\bar D$. Write 
	$\bar \calA$ for the indexing set of $\bar D$, and $\calA_v$ for the set of orbits of 
	$\bar \calA$ under the action of $\Gamma_v$. Recall that $X_v$ denotes the base 
	change of $X$ to $F_v$; write 
	$D_v := D\otimes_F F_v = \bigcup_{\beta \in \calA_v} D_{v,\beta}$, where the 
	$D_{v,\beta}$ are irreducible components. 

	Given a divisor $D'$ on $X$ such that 
	$\bar D'=\bigcup_{\alpha\in\mathscr A}\bar D_\alpha$ for some 
	$\mathscr{A}\subseteq\bar{\mathcal A}$, we denote by $\mathscr A_v$ the set of 
	orbits of $\mathscr A$ under the action of $\Gamma_v$. As a \emph{set}, the 
	\defi{$F_v$-analytic Clemens complex} associated to $D'$ consists of irreducible 
	components $Z$ of intersections $\bigcap_{\beta\in B}D_{v,\beta}$ for 
	$B\subseteq\mathscr A_v$ such that $Z(F_v)\neq\emptyset$. The complex enjoys 
	additional structure, e.g., as a poset; see \cite[\S 3.1]{CLT10} for details.
	The \defi{dimension} of the Clemens complex of $D'$ is 
	\[
	\max\left\{\# B: B\subseteq \mathscr {A}_v,  
	\bigcap_{\beta\in B} D_{v,\beta} (F_v)\neq\emptyset\right\} -1.
	\]

	We may now define the $a$- and $b$-invariants of the pair $(X,D)$ at $v$ with respect to 
	a linear combination of boundary components with positive coefficients. These invariants 
	will come up in the calculation of the position and order of the rightmost pole of a local 
	height integral of $X$ at $v$, in the case where $X$ is an equivariant compactification 
	of $G = \G_a^n$. 

	Keeping the notation introduced above, we assume further that 
	${-K}_{X_v} \sim \sum_{\beta \in \calA_v} \rho_\beta D_{v,\beta}$, with 
	$\rho_\beta \in \Z$ 
	for all $\beta$, and we set 
	$L = \sum_{\beta\in\calA_v} \lambda_\beta D_{v,\beta}$ 
	with
	$\lambda_\beta > 0$ for all $\beta$. 
	We define the \defi{$\tilde{a}$-invariant} of the pair $(X,D)$ at $v$ with respect to $L$ by
	\[
	\tilde{a}((X, D), L) = \max_{\beta\in \calA_v} \left\{\frac{\rho_\beta-1}{\lambda_\beta} \right\}.
	\]

	Let us denote the the sum of the boundary components that do \emph{not} appear 
	in the support of $\tilde{a}((X, D), L)L + K_X + D$ by $D'$; in other words, we set
	\[
	D' = D - (\tilde{a}((X, D), L)L + K_X + D)_{\mathrm{red}}.
	\]
	Writing $\mathcal C_{F_v}^{\mathrm{an}}(D, L)$ for the $F_v$-analytic Clemens 
	complex associated to $D'$, we define the \defi{$b$-invariant} of $(X,D)$ at $v$ 
	with respect to $L$ as follows:
	\[
	b(F_v, (X, D), L) = 1 + \dim \mathcal C_{F_v}^{\mathrm{an}}(D, L).
	\] 
	We will now prove that the $\tilde{a}$- and $b$-invariants are birational invariants in a suitable 
	sense. While this result is certainly of independent interest, we will use it to prove the 
	meromorphic continuation of certain local height integrals in \S\ref{sec: height integral}. 

	\begin{lemma}
	\label{lem:bbirational}
	Let $X$, $D$ and $L$ be as above. Let $(\widetilde{X},\widetilde{D})$ be another pair 
	satisfying the same hypotheses as $(X,D)$, namely: 
	(i) $\widetilde{D}$ is a reduced 
	divisor  with strict normal crossings on a smooth proper variety $\widetilde{X}$ over $F$, 
	(ii) ${-K}_{\widetilde{X}_v}$ is a 
	linear combination of irreducible components of $\widetilde{D}_v$. Assume that there is 
	a birational morphism $\varphi \colon \widetilde{X} \to X$ with 
	$\varphi^{-1}(D) = \widetilde{D}$ that is an isomorphism outside $D$. Then
	\[
	\tilde{a}((X, D), L) = \tilde{a}((\widetilde{X}, \widetilde{D}), \varphi^*L)
	\quad\textrm{and}\quad
	b(F_v, (X, D), L) = b(F_v, (\widetilde{X}, \widetilde{D}), \varphi^*L).
	\]
	\end{lemma}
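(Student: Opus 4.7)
The plan is to treat the $\tilde a$-invariant equality and the $b$-invariant equality in sequence. For the $\tilde a$-part, I decompose the $F_v$-irreducible components of $\widetilde D_v$ into strict transforms $\widetilde D_{v,\alpha}$ of the components of $D_v$ and exceptional divisors $E_\beta$ of $\varphi$. Writing $\varphi^* D_{v,\alpha} = \widetilde D_{v,\alpha} + \sum_\beta e_{\beta,\alpha} E_\beta$ and $K_{\widetilde X_v} = \varphi^* K_{X_v} + \sum_\beta a_\beta E_\beta$ with $a_\beta \geq 0$ (discrepancies of a birational morphism of smooth varieties), a direct computation gives $\tilde\rho_\alpha = \rho_\alpha$ and $\tilde\lambda_\alpha = \lambda_\alpha$ on strict transforms, while $\tilde\rho_\beta = \sum_\alpha \rho_\alpha e_{\beta,\alpha} - a_\beta$ and $\tilde\lambda_\beta = \sum_\alpha \lambda_\alpha e_{\beta,\alpha}$ on exceptionals. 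Setting $a = \tilde a((X,D),L)$, the inequalities $\rho_\alpha - 1 \leq a\lambda_\alpha$ together with log canonicity of the reduced SNC pair $(X, D_{\mathrm{red}})$, which yields $\sum_\alpha e_{\beta,\alpha} \leq a_\beta + 1$, imply $\tilde\rho_\beta - 1 \leq a \tilde\lambda_\beta$ for every exceptional $\beta$. Since the value $a$ is attained on any strict transform of a maximising component of $D_v$, this gives $\tilde a((\widetilde X, \widetilde D), \varphi^*L) = a$.

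For the $b$-invariant, I first identify the support of $\widetilde D' := \widetilde D - (a\varphi^*L + K_{\widetilde X} + \widetilde D)_{\mathrm{red}}$. The coefficient of a strict transform $\widetilde D_{v,\alpha}$ in $a\varphi^*L + K_{\widetilde X} + \widetilde D$ equals the coefficient of $D_{v,\alpha}$ in $aL + K_X + D$, so $\widetilde D_{v,\alpha}$ is a component of $\widetilde D'$ precisely when $D_{v,\alpha}$ is a component of $D'$. For an exceptional divisor $E_\beta$, the coefficient is $\sum_\alpha (a\lambda_\alpha - \rho_\alpha + 1) e_{\beta,\alpha} + (a_\beta + 1 - \sum_\alpha e_{\beta,\alpha})$, a sum of two non-negative terms that vanishes exactly when (i) every $D_{v,\alpha}$ with $e_{\beta,\alpha}>0$ achieves the $\tilde a$-maximum, hence is a component of $D'$, and (ii) $E_\beta$ is a log canonical place of $(X,D)$, i.e.\ $a_\beta + 1 = \sum_\alpha e_{\beta,\alpha}$. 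Thus the components of $\widetilde D'$ are exactly the strict transforms of components of $D'$ together with the log canonical exceptional divisors of $\varphi$ lying over strata of $D'$.

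To show $\dim \mathcal C_{F_v}^{\mathrm{an}}(\widetilde D, \varphi^*L) = \dim \mathcal C_{F_v}^{\mathrm{an}}(D, L)$ I argue in both directions. Given a maximal simplex $B\subseteq \calA_v$ of $\mathcal C_{F_v}^{\mathrm{an}}(D, L)$ with $F_v$-point $P \in \bigcap_{\alpha \in B} D_{v,\alpha}$, the fact that in characteristic zero $\varphi$ factors through a common resolution that is a sequence of blow-ups at smooth centres lets me build an $F_v$-point $\widetilde P$ over $P$ inductively, since each blow-up step replaces the fibre over a rational point of the centre by a projective space, which has $F_v$-points. By SNC-ness of $\widetilde D$ at the smooth point $\widetilde P$ of $\widetilde X$, the components of $\widetilde D$ through $\widetilde P$ meet transversally; combining the local description of $\varphi$ near $P$ with the characterisation of components of $\widetilde D'$ above shows that at least $\#B$ components of $\widetilde D'$ pass through $\widetilde P$, with strict transforms of components in $B$ appearing when $\varphi$ is a local isomorphism and log canonical exceptional divisors taking the place of components whose strict transforms are separated by blow-ups. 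Conversely, given a maximal simplex $\widetilde B$ of $\mathcal C_{F_v}^{\mathrm{an}}(\widetilde D, \varphi^*L)$ with $F_v$-point $\widetilde P$, setting $P = \varphi(\widetilde P)$ and tracking each $\widetilde D_{v,\beta} \in \widetilde B$ under our description of $\widetilde D'$ shows that $P$ lies on components of $D'$ associated to $\widetilde D_{v,\beta}$; an SNC codimension count using $\codim_{\widetilde X} \bigcap_{\beta \in \widetilde B}\widetilde D_{v,\beta} = \#\widetilde B$ then forces at least $\#\widetilde B$ distinct components of $D'$ to pass through $P$.

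The main obstacle is the forward direction of the $b$-invariant step: when a blow-up separates the strict transforms of two components of $D'$ meeting at $P$, one must verify that a log canonical exceptional divisor over the collapsed intersection stratum takes the place of the missing component in the lifted simplex. This bookkeeping reduces to a local \'etale model of $\varphi$ near $P$, of the type handled by the combinatorial analysis of Clemens complexes in \cite[\S 3]{CLT10}. The characterisation of components of $\widetilde D'$ established in the second paragraph of this plan is the key input, since it guarantees that whenever an exceptional divisor is needed to substitute for a lost strict-transform intersection, it is automatically already a component of $\widetilde D'$.
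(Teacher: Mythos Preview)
Your treatment of the $\tilde a$-invariant is correct and is essentially the same computation as the paper's, just made explicit in coordinates: both amount to the log canonicity of the reduced SNC pair $(X,D)$, i.e.\ the inequality $\sum_\alpha e_{\beta,\alpha}\le a_\beta+1$, which is exactly what the paper invokes when it writes $\tilde a\varphi^*L+K_{\widetilde X}+\widetilde D=\varphi^*(\tilde aL+K_X+D)+E$ with $E\ge 0$.

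For the $b$-invariant the two arguments diverge. The paper does \emph{not} attempt a direct comparison of the two Clemens complexes; instead it invokes weak factorization \cite[Theorem~0.3.1]{AKMW} to reduce to the case where $\varphi$ is a single blow-up along a smooth centre $T$ having normal crossings with $D$, and then runs a short explicit case analysis according to whether $T$ is a stratum of $D$ and whether $T$ contains a chosen maximal face $Z$ of $\mathcal C_{F_v}^{\mathrm{an}}(D,L)$. Your characterisation of the components of $\widetilde D'$ (strict transforms of components of $D'$ together with the log canonical exceptional places whose centres are strata of $D'$) is correct and is in fact the content of the paper's case split, expressed globally.

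The gap is in the forward direction. Your sentence ``$\varphi$ factors through a common resolution that is a sequence of blow-ups at smooth centres'' is ambiguous: $\varphi$ itself is not known to factor as such a sequence (strong factorization is open in dimension $\ge 3$). One can, via principalization, produce $Z\to X$ a sequence of smooth blow-ups together with a morphism $Z\to\widetilde X$, and thereby lift $P\in X(F_v)$ to \emph{some} $\widetilde P\in\widetilde X(F_v)$; but an arbitrary lift need not lie on $\#B$ components of $\widetilde D'$. Already for the blow-up of a surface at $P=D_1\cap D_2$, a generic point of the exceptional $E$ lies on only one component of $\widetilde D'$, and one must choose $\widetilde P\in E\cap\widetilde D_i$. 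Your final paragraph acknowledges this, but appealing to \cite[\S 3]{CLT10} does not settle it: that reference sets up the Clemens-complex formalism but does not prove a birational invariance statement of this type. The careful inductive choice of lift you need, tracking which strata survive and which are replaced by log canonical exceptionals, is precisely what the AKMW reduction packages into a single blow-up step, where the bookkeeping becomes the two-line case analysis the paper carries out. So the cleanest fix is to adopt the paper's reduction; if you insist on arguing directly, you must specify the lift at each blow-up stage so as to remain on the strict transforms of all surviving components of $B$ and on the newly created exceptional divisor whenever the centre is a stratum of $D'$ containing the current point.
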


	\begin{proof}
	First, we observe that the birational invariance of the $\tilde{a}$-invariant follows from the fact 
	that the pair $(X, D)$ is log canonical, i.e.,  we can write
	\[
	\tilde{a}((X, D), L)\varphi^*L + K_{\widetilde{X}} + \widetilde{D} 
	= \varphi^*(\tilde{a}((X, D), L)L + K_X + D) + E
	\] 
	where $E \geq 0$ is an effective divisor supported on the exceptional locus of $\varphi$. 

	From now on, we denote $\tilde{a}((X,D),L)$ simply by $a$ and we work over $F_v$, for a fixed 
	place $v$. To prove birational invariance of the $b$-invariant, we first use 
	\cite[Theorem 0.3.1]{AKMW} to reduce to the case where the morphism $\varphi$ is 
	a blow-up of a smooth center having normal crossings with $D$. Let $E$ be an exceptional 
	divisor of $\varphi$.  
	
	First suppose that the image of $E$ is not a component of the intersection of some 
	of the boundary components. Then  \cite[(3.11.1)]{Kollar} shows that the log discrepancy 
	of the exceptional divisor $E$ is greater than $-1$, hence that $E$ appears in the support 
	of $a \varphi^*L + K_{\widetilde{X}} + \widetilde{D}$. Let $Z$ be a maximal element in 
	$\mathcal C_{F_v}^{\mathrm{an}}(D, L)$ such that $b(F_v, (X, D), L) = \mathrm{codim}\, Z$ .
	Let $Z$ be a component of $\cap_{i = 1}^rD_{v,\beta_i}$ thus $\mathrm{codim}\, Z =r$. 
	If the image $T$ of $E$ does \emph{not} contain $Z$, 
	then $b(F_v, (\widetilde{X}, \widetilde{D}), \varphi^*L) = \mathrm{codim}\, Z$. 
	Thus our assertion follows in this case.
	If $T$ contains $Z$, then by rearranging indices, we may assume 
	that $T \subset D_{v,\beta_i}$ for $i \leq k$ and $T \not\subset D_{v,\beta_i}$ for 
	$i > k$. Denoting the codimension of $T$ by $t$, we have $k < t$; hence the strict 
	transforms of $D_{v,\beta_i}$ for $i \leq k$ meet in $\varphi^{-1}(Z)$. On the other 
	hand, the strict transforms of $D_{v,\beta_i}$ for $i > k$ all contain $\varphi^{-1}(Z)$. 
	Thus $b(F_v, (\widetilde{X}, \widetilde{D}), \varphi^*L) = r = b(F_v, (X, D), L)$. 
	Thus our assertion follows in this case too.

	Next suppose that 
	$T$ is a component of the intersection of some of the boundary components.
	Then $E$ does not appear in the support of the difference of 
	$a \varphi^*L + K_{\widetilde{X}} + \widetilde{D}$ and $\varphi^*(aL + K_X + D)$. 
	We further distinguish two cases. First, if  $E$ does not appear in the support of 
	$\varphi^*(aL + K_X + D)$, we denote by $Z$ a maximal element of 
	$\mathcal C_{F_v}^{\mathrm{an}}(D, L)$ so that $b(F_v, (X, D), L) = \mathrm{codim}\, Z$ 
	and we assume that $Z$ is a component of 
	$\cap_{i = 1}^rD_{v,\beta_i}$. Either $T$ and $Z$ do not meet, or $T$ contains $Z$; 
	in the former case, we have 
	$b(F_v, (\widetilde{X}, \widetilde{D}), \varphi^*L) = \mathrm{codim}\, Z$.
	In the latter case, we may assume that 
	$T$ is a component of $\cap_{i = 1}^kD_{v,\beta_i}$ with $k \leq r$.
	Then the strict transforms of the $D_{v,\beta_i}$'s do not meet in 
	$\varphi^{-1}(Z)$, but $E$ and 
	$r-1$ strict transforms of $D_{v,\beta_2}, \cdots, D_{v,\beta_{r}}$ intersect. 
	Thus we conclude that $b(F_v, (\widetilde{X}, \widetilde{D}), \varphi^*L)  = r$.
	Second, if $E$ does appear in the support of $\varphi^*(aL + K_X + D)$, then $T$ does 
	not contain $Z$, and therefore $T$ and $Z$ do not meet. 
	This implies that $b(F_v, (\widetilde{X}, \widetilde{D}), \varphi^*L)  = b(F_v, (X, D), L)$.
	\end{proof}

	We will now introduce a version of the $b$-invariant for rational functions. If $f$ is an 
	arbitrary rational function on $X$, then for every $\alpha\in\mathcal A$, we denote by 
	$d_\alpha(f)$ the coefficient of $D_\alpha$ in the principal divisor $\mathrm{div}(f)$.
   	Let $D''$ be the sum of boundary components $D_\alpha$ such that $D_\alpha$ does not
   	appear in the support of $aL + K_X + D$ and $d_\alpha(f) \leq 0$. We denote by 
   	$\mathcal C_{F_v}^{\mathrm{an}}(D, L, f)$ the $F_v$-analytic Clemens complex 
   	associated to $D''$, and we define the $b$-invariant by
	\[
	b(F_v, (X, D), L, f) = 1 + \dim \mathcal C_{F_v}^{\mathrm{an}}(D, L, f).
	\]
	Using the same methods, we obtain the following analogue of Lemma~\ref{lem:bbirational}:

	\begin{lemma}
	\label{lemm: birationalinvariance_f}
	Let $X$, $D$, $L$ and $f$ be as above. Let $(\widetilde{X},\widetilde{D})$ be another 
	pair satisfying the same hypotheses as $(X,D)$, namely: 
	(i) $\widetilde{D}$ is a reduced 
	divisor  with strict normal crossings on a smooth proper variety $\widetilde{X}$ over $F$, 
	(ii) ${-K}_{\widetilde{X}_v}$ is a 
	linear combination of irreducible components of $\widetilde{D}_v$. Assume that there is 
	a birational morphism $\varphi \colon \widetilde{X} \to X$ with 
	$\varphi^{-1}(D) = \widetilde{D}$ that is an isomorphism outside $D$. Then
	\begin{equation*}
	b(F_v, (X, D), L, f) = b(F_v, (\widetilde{X}, \widetilde{D}), \varphi^*L, f\circ\varphi).
	\tag*{\qed}
	\end{equation*}
	\end{lemma}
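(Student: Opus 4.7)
The plan is to mimic the proof of Lemma~\ref{lem:bbirational}, adding the bookkeeping needed to track the arithmetic condition $d_\alpha(f) \leq 0$. First, I would invoke the weak factorization theorem~\cite{AKMW} to reduce to the case where $\varphi\colon \widetilde X \to X$ is a single blow-up along a smooth center $T$ having normal crossings with $D$, with exceptional divisor $E$.

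For the strict transform $\widetilde D_\alpha$ of any component $D_\alpha$ of $D$, pulling back $\divv(f)$ gives $d_{\widetilde D_\alpha}(f \circ \varphi) = d_\alpha(f)$, so the sign condition appearing in the definition of $D''$ is preserved along strict transforms. Combined with the analysis in Lemma~\ref{lem:bbirational} of which strict transforms lie in the support of $\tilde a \varphi^* L + K_{\widetilde X} + \widetilde D$, this yields a natural identification of the strict-transform contributions to $\widetilde{D''}$ with those of $D''$. The only new input needed is to locate $E$ with respect to the two defining conditions of $\widetilde{D''}$.

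For the sign condition, a local computation at the generic point of $T$ (choose coordinates in which $T = \{x_1=\cdots=x_t=0\}$ and $D_\alpha$ is a coordinate hyperplane whenever $T \subseteq D_\alpha$) gives the formula
\[
d_E(f \circ \varphi) = \sum_{\alpha\,:\, T \subseteq D_\alpha} d_\alpha(f).
\]
With this in hand, the case analysis proceeds exactly as in the proof of Lemma~\ref{lem:bbirational}: we split according to whether $T$ is, or is not, a component of an intersection of boundary divisors, and whether $E$ appears in the support of $\varphi^*(\tilde a L + K_X + D)$. Each case is supplemented by a check, via the displayed formula, of whether $d_E(f \circ \varphi) \leq 0$. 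These checks determine exactly whether $E$ contributes a new vertex to the Clemens complex $\mathcal C_{F_v}^{\mathrm{an}}(\widetilde D, \varphi^* L, f \circ \varphi)$, and in each case one confirms that a maximal simplex of $\mathcal C_{F_v}^{\mathrm{an}}(D, L, f)$ witnessed by a stratum $Z$ is matched by a maximal simplex of the same codimension on $\widetilde X$, either through the strict transform of $Z$ or through a simplex involving $E$.

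The main obstacle I anticipate is the subcase where $T$ is a stratum of $D$ and every $D_\alpha$ containing $T$ satisfies both $d_\alpha(f) \leq 0$ and $D_\alpha \not\subseteq \mathrm{Supp}(\tilde a L + K_X + D)$. Here $E$ has $d_E(f \circ \varphi) \leq 0$ and does not appear in the discrepancy, so it genuinely enters $\widetilde{D''}$, and one must verify combinatorially that its contribution restores the codimension lost because the strict transforms of the $D_\alpha \supseteq T$ no longer mutually intersect over $\varphi^{-1}(T)$. This parallels Subcase~2a of the proof of Lemma~\ref{lem:bbirational}, and I expect it to go through without change once the formula for $d_E(f \circ \varphi)$ is in place.
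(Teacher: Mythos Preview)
Your plan matches the paper's approach exactly: the paper gives no details beyond ``using the same methods'' as Lemma~\ref{lem:bbirational}, and your sketch is the natural elaboration of that, with the expected extra bookkeeping for the sign condition $d_\alpha(f)\le 0$.

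There is, however, a genuine gap in your displayed formula for $d_E(f\circ\varphi)$. The order of $f\circ\varphi$ along $E$ sees not only the boundary components $D_\alpha$ containing $T$, but also every component of $\divv(f)$ \emph{not} supported on $D$ that happens to contain $T$. Writing $\divv(f)=\sum_\alpha d_\alpha(f)D_\alpha + R$ with no component of $R$ lying in $D$, the correct identity is
\[
d_E(f\circ\varphi)\;=\;\sum_{\alpha:\,T\subseteq D_\alpha} d_\alpha(f)\;+\;\sum_{\substack{H\ \text{component of}\ R\\ T\subseteq H}} (\text{coeff.\ of }H)\cdot\mathrm{mult}_T(H).
\]
The second sum need not vanish, and it can be positive. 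This bites exactly in your Subcase~2a: even when every $D_\alpha\supseteq T$ satisfies $d_\alpha(f)\le 0$, the extra term can push $d_E(f\circ\varphi)$ above~$0$, so $E$ fails the sign condition and cannot replace the lost intersection of strict transforms. A concrete instance where this step breaks: take $X=\PP^2$ with $D$ the coordinate triangle, $(\rho_1,\rho_2,\rho_3)=(2,2,-1)$, $L=D_1+D_2+D_3$, $f=(x+y)^3/(xyz)$, and blow up $T=D_1\cap D_2=[0{:}0{:}1]$; then $d_E(f\circ\varphi)=-1-1+3\cdot 1=1>0$, and one checks $b(F_v,(X,D),L,f)=2$ while $b(F_v,(\widetilde X,\widetilde D),\varphi^*L,f\circ\varphi)=1$. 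So the formula needs the correction above, and the case analysis must account for the possibility that $E$ is excluded from $\widetilde{D''}$ by the non-boundary part of $\divv(f)$; the paper's one-line proof does not address this either.
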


\section{Geometry of equivariant compactifications of vector groups} 
\label{subsec:geometry}

	The geometry of vector group compactifications is
	worked out in \cite{HT99}, where equivariant compactifications of a vector group on 
	$\mathbb P^n$ are classified. Surprisingly, there is more than one such compactification. 	
	There are classification results of equivariant 
	compactifications that are del Pezzo surfaces and Fano $3$-folds \cites{DL10, 
	DL15, HM18}, but equivariant compactifications of vector groups need 
	not be Mori dream spaces. Indeed, blow-ups of the standard equivariant compactification on 
	$\mathbb P^n$ along a smooth center on the boundary hyperplane inherit the 
	group compactification structure, so examples with a Cox ring that is not finitely 
	generated can be constructed by  
	blowing up suitable centers 
	(see \cite[Example 2.17]{HTT15}). 	
	This feature makes equivariant compactifications of vector groups 
	difficult to study via universal torsors, showing once more the power of 
	the height zeta function method.
	In addition, equivariant compactifications of vector groups admit deformations, 
	whereas equivariant compactifications involving reductive groups typically do not; 
	this feature also makes the former class of compactifications 
	interesting objects from a geometric point of view.

	We now recall some basic facts on the geometry of equivariant compactifications of 
	vector groups from \cite{HT99} and \cite{CLT02}. Let $X$ be a smooth equivariant 
	compactification of $G = \mathbb G_a^n$ defined over a field $F$ of characteristic 
	$0$. By definition, $X$ contains $G$ as a dense Zariski open, and its complement 
	$D = X \setminus G$ is divisorial, i.e., it is a union of prime divisors:
	\[
	D = \bigcup_{\alpha \in \mathcal A} D_\alpha.
	\]
	The irreducible divisors $D_\alpha$ need not be geometrically irreducible, so we also 
	consider the decomposition of $\bar{D}$ into irreducible components:
	\[
	\bar{D} = \bigcup_{\alpha \in \bar{\mathcal A}} \bar{D}_\alpha.
	\]
	There is a natural action of the Galois group $\Gamma= \mathrm{Gal} (\bar{F}/F)$ on 
	the index set $\bar{\mathcal A}$, and Galois orbits are in one-to-one correspondence 
	with elements of $\mathcal A$.

\subsection{Picard groups and the anticanonical class}

	\begin{prop}{\cite[Proposition 1.1]{CLT02}}
	\label{prop:effectivecone}
	With the above notation, the following hold.
	\begin{enumerate}
	\item There are natural isomorphisms of Galois modules
	\[
	\Pic(\bar{X}) = \bigoplus_{\alpha \in \bar{\mathcal A}} \mathbb Z \bar{D}_\alpha, 
	\quad \mathrm{Eff}^1(\bar{X}) 
	= \bigoplus_{\alpha \in \bar{\mathcal A}} \mathbb R_{\geq 0} \bar{D}_\alpha,
	\]
	where $\mathrm{Eff}^1(\bar{X})$ is the cone of effective divisors on $\bar{X}$.
\medskip

	\item By taking $\Gamma$-invariant parts, we have
	\[
	\Pic(X) = \bigoplus_{\alpha \in \mathcal A} \mathbb Z D_\alpha, 
	\quad \mathrm{Eff}^1(X) = \bigoplus_{\alpha \in \mathcal A} \mathbb R_{\geq 0} D_\alpha,
	\]
	where $\mathrm{Eff}^1(X)$ is the cone of $\Gamma$-invariant effective divisors on $X$. 	
	\end{enumerate}
	\end{prop}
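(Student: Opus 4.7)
The plan is to establish part (1) over $\overline{F}$ first and then descend to $F$ for part (2).

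For the Picard group in (1), I would apply the localization sequence for divisor class groups to the open immersion $\overline{G}\hookrightarrow\overline{X}$:
\[
\bigoplus_{\alpha\in\overline{\mathcal A}}\mathbb{Z}\,\overline{D}_\alpha \longrightarrow \Pic(\overline{X}) \longrightarrow \Pic(\overline{G}) \longrightarrow 0.
\]
Since $\overline{G}\cong\mathbb{A}^n_{\overline{F}}$, we have $\Pic(\overline{G})=0$, so the first arrow is surjective. For injectivity: if $\sum a_\alpha\overline{D}_\alpha=\mathrm{div}(f)$ for some $f\in\overline{F}(\overline{X})^\times$, then $f$ has neither zeros nor poles on $\overline{G}$, hence $f\in\overline{F}[\overline{G}]^\times=\overline{F}^\times$ and $\mathrm{div}(f)=0$. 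For the effective cone, only the inclusion $\mathrm{Eff}^1(\overline{X})\subseteq\bigoplus\mathbb{R}_{\geq 0}\overline{D}_\alpha$ requires argument. Given an effective $E$, I would consider $\mathcal{L}=\mathcal{O}_{\overline{X}}(E)$. Because $G=\mathbb{G}_a^n$ has trivial character group and $\Pic(\overline{G})=0$, the $G$-action on $\overline{X}$ lifts to a $G$-linearization of $\mathcal{L}$ (see for instance Mumford's GIT). Hence $H^0(\overline{X},\mathcal{L})$ is a finite-dimensional rational representation of the connected unipotent group $G$, and by Lie--Kolchin it contains a non-zero $G$-fixed section $s$. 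The divisor $\mathrm{div}(s)$ is then effective, linearly equivalent to $E$, and $G$-invariant, so it is supported on $\overline{X}\setminus\overline{G}=\bigcup\overline{D}_\alpha$. Thus $[E]\in\bigoplus\mathbb{Z}_{\geq 0}\overline{D}_\alpha$.

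For (2), the $\Gamma$-invariant submodule of $\bigoplus_{\bar\alpha\in\overline{\mathcal A}}\mathbb{Z}\,\overline{D}_{\bar\alpha}$ equals $\bigoplus_{\alpha\in\mathcal A}\mathbb{Z}\,D_\alpha$ (since $\Gamma$-invariance forces the coefficient to be constant on each Galois orbit), and likewise for the effective cone. To identify these with $\Pic(X)$ and $\mathrm{Eff}^1(X)$ themselves, I would repeat the localization argument directly over $F$, using $\Pic(\mathbb{A}^n_F)=0$ and $F[G]^\times=F^\times$; alternatively, Hilbert~90 gives $\Pic(X)\hookrightarrow\Pic(\overline{X})^\Gamma$, and one checks that the generators $D_\alpha$ already span $\Pic(X)$. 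For the effective cone, any effective divisor on $X$ base-changes to an effective $\Gamma$-invariant divisor on $\overline{X}$, which by (1) lies in $\bigoplus\mathbb{R}_{\geq 0}\overline{D}_{\bar\alpha}$, and $\Gamma$-invariance forces it into $\bigoplus\mathbb{R}_{\geq 0}D_\alpha$; conversely, each $D_\alpha$ is effective on $X$ by construction.

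The only nontrivial step is the effective-cone statement over $\overline{F}$; everything else is formal manipulation with the localization sequence together with Hilbert~90 and Galois descent. The key ingredient there is the representation-theoretic fact that a rational representation of a connected unipotent group has a non-zero fixed vector, which is what lets every effective class be represented by a $G$-invariant, hence boundary-supported, divisor.
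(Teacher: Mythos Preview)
Your argument is correct and is the standard one. Note, however, that the paper does not supply its own proof of this proposition: it simply quotes \cite[Proposition~1.1]{CLT02}. So there is nothing in the present paper to compare against; your write-up is essentially a reconstruction of the proof one finds in that reference (localization sequence plus $\overline{F}[\mathbb{A}^n]^\times=\overline{F}^\times$ for the Picard group, and the unipotent fixed-vector trick for the effective cone).

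One small remark on part~(2): the Hochschild--Serre/Hilbert~90 route only gives injectivity of $\Pic(X)\to\Pic(\overline{X})^\Gamma$ in general (the cokernel sits in $\mathrm{Br}(F)$), so your alternative of repeating the localization argument directly over $F$ is the cleaner way to get the full identification $\Pic(X)=\bigoplus_{\alpha\in\mathcal A}\mathbb{Z}D_\alpha$. You do say this, but it is worth being explicit that the ``alternatively'' is not quite a drop-in replacement without the extra observation that the $D_\alpha$ already live in $\Pic(X)$ and span $\Pic(\overline{X})^\Gamma$.
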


	Let $f$ be a non-zero linear form on $G = \mathbb G_a^n$, defined over $F$.
	Considering $f$ as an element of the function field $F(X)$, we can write 
	$\mathrm{div}(f)$ uniquely as
	\[
	\mathrm{div}(f) = E(f) - \sum_{\alpha \in \mathcal A} d_\alpha(f) D_\alpha,
	\]
	where $E(f)$ is the hyperplane along which $f$ vanishes in $G$, and the $d_\alpha(f)$ 
	are integers.
	
	\begin{prop}{\cite[Lemma 1.4]{CLT02}, \cite[Before Lemma 3.4.1]{CLT12}}
	\label{prop:d_alpha(f)}
 	We have $d_\alpha(f) \geq 0$ for all $\alpha \in A$, and the set of integral vectors
	\[
	\{(d_\alpha(f))_{\alpha \in \mathcal{A}} \mid \text{$f$ is a non-zero linear form on $G$}\}
	\]
	is \emph{finite}.
	\end{prop}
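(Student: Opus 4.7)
My plan has two parts, corresponding to the two assertions. For \textbf{Part (1)}, the non-negativity of $d_\alpha(f)$, I would argue by contradiction using the $G$-action on $X$. Suppose $d_\alpha(f) < 0$ for some $\alpha$, i.e., $\mathrm{ord}_{D_\alpha}(f) \geq 1$, so that $f$ vanishes at the generic point of $D_\alpha$. For $g \in G(F)$ let $\sigma_g : X \to X$ denote translation by $g$; since $G$ is the open orbit of its own action on $X$, every boundary divisor $D_\alpha$ is $G$-invariant, giving $\sigma_g^* D_\alpha = D_\alpha$. The linearity of $f$ gives $\sigma_g^* f = f + f(g)$, and pulling back the divisor decomposition through $\sigma_g$ yields
\[
\mathrm{div}(f + f(g)) = \sigma_g^* E(f) - \sum_{\alpha} d_\alpha(f) D_\alpha,
\]
where $\sigma_g^* E(f) = E(f + f(g))$ is again a hyperplane in $G$ whose closure contains no $D_\alpha$. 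Comparing with the canonical decomposition of $\mathrm{div}(f + c)$ gives $d_\alpha(f + c) = d_\alpha(f)$ for every $c \in f(G(F))$, and the latter exhausts $F$ since $F$ is infinite and $f \neq 0$. But if $d_\alpha(f) < 0$, then for any $c \neq 0$ the function $f + c$ is regular at the generic point of $D_\alpha$ and evaluates there to the unit $c$, so $d_\alpha(f + c) = 0$. This forces $d_\alpha(f) = 0$, contradicting our assumption.

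For \textbf{Part (2)}, the finiteness of the set of vectors $(d_\alpha(f))_\alpha$, I would reduce, via Part (1) and finiteness of $\mathcal{A}$, to showing that for each fixed $\alpha$ the integer $d_\alpha(f)$ is uniformly bounded above as $f$ ranges over nonzero linear forms. Let $V$ denote the (finite-dimensional) $F$-vector space of affine functions on $G$, which contains all linear forms. For each integer $k \geq 0$ the subset
\[
V_\alpha(k) = \{ h \in V : d_\alpha(h) \leq k \}
\]
is an $F$-subspace of $V$, because $\mathrm{ord}_{D_\alpha}$ is a discrete valuation on $F(X)$ and so $\mathrm{ord}_{D_\alpha}(h + h') \geq \min(\mathrm{ord}_{D_\alpha}(h), \mathrm{ord}_{D_\alpha}(h'))$. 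The ascending chain $V_\alpha(0) \subseteq V_\alpha(1) \subseteq \cdots$ must stabilize inside the finite-dimensional space $V$, so there is a uniform bound $k_\alpha^{\max}$ with $d_\alpha(f) \leq k_\alpha^{\max}$ for all $f \in V$. Combined with Part (1), each coordinate $d_\alpha(f)$ lies in the finite set $\{0, 1, \ldots, k_\alpha^{\max}\}$, and since $\mathcal{A}$ is finite only finitely many vectors can occur.

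I expect the main delicate point to be the equivariance bookkeeping in Part (1): one must verify that $\sigma_g$ fixes each individual boundary component $D_\alpha$ setwise (rather than merely permuting the collection) and that the hyperplane piece $\sigma_g^* E(f) = E(f + f(g))$ contributes nothing along $D_\alpha$, so that the coefficient of $D_\alpha$ in $\sigma_g^*\mathrm{div}(f)$ really equals $-d_\alpha(f)$. Both facts are standard for equivariant compactifications of a connected group, and once they are in place the rest of the argument is essentially formal, resting only on the linearity of $f$ and the finite-dimensionality of the space of affine functions on $G$.
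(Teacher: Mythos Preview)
Your argument is correct on both counts. The paper does not supply its own proof here; it simply cites \cite[Lemma~1.4]{CLT02} and \cite[before Lemma~3.4.1]{CLT12}.

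That said, Part~(1) admits a one-line proof once Proposition~\ref{prop:effectivecone} is available: since $E(f)$ is effective and $E(f) \sim \sum_\alpha d_\alpha(f) D_\alpha$, the class $[E(f)]$ lies in $\mathrm{Eff}^1(X) = \bigoplus_\alpha \mathbb{R}_{\geq 0}[D_\alpha]$, and because the $[D_\alpha]$ freely generate $\Pic(X)$ this forces every $d_\alpha(f) \geq 0$. Your translation argument reaches the same conclusion by a more hands-on route and has the merit of not invoking the effective-cone description, at the cost of a little extra bookkeeping. For Part~(2), your filtration of the finite-dimensional space of affine functions by pole order along $D_\alpha$ is the right idea; you should make explicit that the chain $V_\alpha(0) \subseteq V_\alpha(1) \subseteq \cdots$ not only stabilizes but \emph{exhausts} $V$ (every nonzero $h$ has finite $d_\alpha(h)$), so that the stable subspace is $V$ itself and the resulting bound applies to all linear forms.
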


	Finally, the anticanonical divisor turns out to be linearly equivalent to an integral linear 
	combination of boundary components: we have 
	$-K_X \sim \sum_{\alpha \in \mathcal A} \rho_\alpha D_\alpha$ for certain integers 
	$\rho_\alpha$, and by~\cite[Lemma 2.4]{CLT02}, we know that $\rho_\alpha \geq 2$ 
	for all $\alpha$. 

	\begin{rmk} 
	With the above notation, if $(\epsilon_\alpha)_{\alpha \in \mathcal{A}}$ is any vector 
	of weights chosen from the allowed set $\coeffset 
	= \left\{\left. 1 - \frac1m\,\right|\, m \in \mathbb{Z}_{\geq 1}\right\} \cup \{1\}$, 
	the \defi{orbifold anticanonical divisor} $-(K_X + D_\epsilon)$ of the Campana orbifold 
	$(X,D_\epsilon)$ is automatically big. This follows from the fact that the cone of big 
	divisors is the interior of the pseudo-effective cone, together with 
	Proposition~\ref{prop:effectivecone}.
	\end{rmk}

\subsection{Harmonic analysis on vector groups} 

	In this section, we recall some of the basic elements of harmonic analysis on adelic 
	vector groups as developed in \cite{Tate}. Let $G = \mathbb G_a^n$.

	For any non-archimedean place $v$ such that the completion $F_v$ is a finite extension 
	of $\mathbb Q_p$, we define the local additive unitary character by
	\[
	\psi_v(x) := \exp (2\pi i\cdot \mathrm{Tr}_{F_v/\mathbb Q_p}(x)).
	\]
	When $v$ is an archimedean place, we define the local additive character by
	\[
	\psi_v(x) := \exp (-2\pi i\cdot  \mathrm{Tr}_{F_v/\mathbb R}(x)).
	\]
	The Euler product $\psi := \prod_v \psi_v$ is an automorphic character of $\mathbb A_F$.

	\begin{lemma}[{\cite[Lemma 10.3]{CLT02}, \cite[Lemma 2.3.1]{CLT12}}]
	\label{lem:10.3_CLT02}
	Let $v\in\Omega_F^{<\infty}$ and let us fix integers $d \geq 0$ and $i \geq 1$. Let $j$ 
	be an integer and $c = \log_{q_v}\#(\mathcal O_v/(d\mathfrak D))$. If $j=0$ we have
	\[
	\frac{1}{\mu(\mathcal O_v)}\int_{\mathcal O_v^\times} \psi_v(\pi_v^{-id+j}x_v^d)\, 
	\mathrm d x_v
	=\begin{cases}
	(1-q_v^{-1}) & \text{ if } d=0, \\ 
	-q_v^{-1} & \text{ if } i=d=1, 
	\\
	0 & \text{ otherwise.} 
	\end{cases}
	\]
	If $j\neq 0$ the integral above vanishes whenever $id - j \geq c +2$. 
	\end{lemma}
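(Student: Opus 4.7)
The basic tool to reach for is the classical formula
\[
\int_{\mathcal O_v}\psi_v(\alpha x)\,dx=\mu(\mathcal O_v)\cdot\mathbf{1}\bigl[v(\alpha)\geq -v(\mathfrak D)\bigr],
\]
which encodes the fact that $\psi_v$ has conductor $\mathfrak D^{-1}$. The plan is to reduce the integral $\int_{\mathcal O_v^\times}\psi_v(\pi_v^{-id+j}x_v^d)\,dx_v$ to this formula by handling the three regimes $d=0$, $d=1$, and $d\geq 2$ separately, with the last being where all the real work happens.

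First, the case $d=0$ is trivial: the integrand is the constant $\psi_v(\pi_v^j)$, so for $j=0$ one immediately obtains $\mu(\mathcal O_v^\times)/\mu(\mathcal O_v)=1-q_v^{-1}$. The case $d=1$ is a one-line consequence of the basic tool applied to the splitting $\mathcal O_v^\times=\mathcal O_v\setminus \pi_v\mathcal O_v$: substituting $x_v=\pi_v y$ in the inner integral gives
\[
\int_{\mathcal O_v^\times}\psi_v(\pi_v^{-i+j}x_v)\,dx_v=\int_{\mathcal O_v}\psi_v(\pi_v^{-i+j}x)\,dx-q_v^{-1}\int_{\mathcal O_v}\psi_v(\pi_v^{-i+j+1}y)\,dy,
\]
and a case analysis on $v(\pi_v^{-i+j})$ versus $-v(\mathfrak D)$ yields the stated values; in particular the case $i=d=1$, $j=0$ falls in the single regime producing $-q_v^{-1}$.

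For $d\geq 2$ (including the ``otherwise'' case of the $j=0$ list and the vanishing statement for $j\neq 0$), the idea is a local linearisation via coset decomposition. For an integer $N$ to be chosen, decompose $\mathcal O_v^\times$ as a disjoint union of cosets $\omega_\nu(1+\pi_v^N\mathcal O_v)$, and on each coset write $x_v=\omega_\nu(1+\pi_v^N z)$ with $z\in\mathcal O_v$. A Taylor expansion gives
\[
x_v^d\equiv \omega_\nu^d+d\omega_\nu^d\pi_v^N z\pmod{\pi_v^{2N}},
\]
so that $\pi_v^{-id+j}x_v^d\equiv\pi_v^{-id+j}\omega_\nu^d+d\omega_\nu^d\pi_v^{-id+j+N}z\pmod{\pi_v^{-id+j+2N}}$. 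Choose $N$ large enough that $-id+j+2N\geq -v(\mathfrak D)$, so the remainder is annihilated by $\psi_v$. The contribution of the coset then factors as $\psi_v(\pi_v^{-id+j}\omega_\nu^d)$ times $q_v^{-N}\mu(\mathcal O_v)\cdot\mathbf 1\bigl[v(d)-id+j+N\geq -v(\mathfrak D)\bigr]$ by the basic tool, i.e., times the indicator $\mathbf 1[N\geq id-j-c]$. Provided one can choose $N$ satisfying $\lceil(id-j+v(\mathfrak D))/2\rceil\leq N< id-j-c$, every coset contributes $0$ and the whole integral vanishes.

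The main obstacle is therefore purely book-keeping: verifying that such an $N$ exists exactly under the hypotheses listed. For $j=0$ and $(i,d)\neq(1,1)$ with $d\geq 2$, the inequality $id>2v(d)+v(\mathfrak D)$ holds, giving the needed window; and for $j\neq 0$, the hypothesis $id-j\geq c+2$ is precisely what forces the window $[\lceil(id-j+v(\mathfrak D))/2\rceil,\,id-j-c-1]$ to be non-empty (the ``$+2$'' is exactly the slack between the quadratic and linear conditions after absorbing $v(d)$ into $c$). The remaining small cases, in which the window collapses, are precisely the non-vanishing ones already covered in the $d=0$ or $d=1$ analysis.
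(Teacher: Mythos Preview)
The paper does not give its own proof of this lemma; it simply cites \cite[Lemma 10.3]{CLT02} and \cite[Lemma 2.3.1]{CLT12}. Your approach---direct computation for $d\leq 1$ via the difference $\int_{\mathcal O_v}-\int_{\pi_v\mathcal O_v}$, and for $d\geq 2$ a coset decomposition modulo $1+\pi_v^N\mathcal O_v$ followed by linearisation of $x_v\mapsto x_v^d$---is exactly the standard argument used in those references, so there is nothing to compare at the level of strategy.

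One genuine bookkeeping gap: in the $j=0$, $d\geq 2$ clause you assert that ``the inequality $id>2v(d)+v(\mathfrak D)$ holds'', but this can fail at primes dividing $d$ or ramified in $F/\mathbb Q$. For instance, take $F_v=\mathbb Q_2$, $d=2$, $i=1$: then $id=2$ while $2v(d)+v(\mathfrak D)=2$, the window for $N$ is empty, and a direct check shows the integral equals $i\cdot\mu(\mathbb Z_2^\times)/\mu(\mathbb Z_2)\neq 0$. The lemma as stated in the paper carries the same implicit hypothesis (it is only ever applied at places of good reduction, where $v(\mathfrak D)=0$ and $v\nmid d$, hence $c=0$), so this is not a flaw in your argument relative to the intended statement; but you should make the assumption $c=0$ explicit when treating the $j=0$ case, rather than asserting an inequality that is not always true.
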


	To each adelic point $\bold a \in G(\mathbb A_F)$, we associate the linear functional 
	$f_{\bold a}: G(\mathbb A_F) \to \mathbb A_F$ that sends an element $\bold x$ to the inner product $\bold a \cdot \bold x$, which is the sum of the coordinatewise products in the adelic ring.
	The composition $\psi_{\bold a} = \psi \circ f_{\bold a}$ 
	defines a Pontryagin duality
	\[
	G(\mathbb A_F) \rightarrow G(\mathbb A_F)^\vee, 
	\quad G(F) \rightarrow (G(\mathbb A_F)/G(F))^\vee.
	\]
	(Note that $G(F)$ is discrete and cocompact in $G(\mathbb A_F)$.)

	Given an integrable function $\Phi$ on $G(\mathbb A_F)$, we define its 
	\defi{Fourier transform} by
	\[
	\widehat{\Phi}(\bold a) 
	= \int_{G(\mathbb A_F)} \Phi(\bold x)\psi_{\bold a}(\bold x)\, \mathrm d \bold x.
	\]
	
	\begin{thm}{\textnormal{(\cite[Theorem 4.2.1]{Tate}, Poisson summation)}}
	\label{thm: Poisson}
	Let $\Phi$ be a continuous function on $G(\mathbb A_F)$. Assume that the series
	\[
	\sum_{\bold x \in G(F)} \Phi(\bold x + \bold b)
	\]
	converges absolutely and uniformly when $\bold b$ belongs to a fundamental domain 
	for the quotient $G(\mathbb A_F)/G(F)$, and that the infinite sum
	\[
	\sum_{\bold a \in G(F)} \widehat{\Phi}(\bold a)
	\]
	converges absolutely.
	Then we have
	\begin{equation*}
	\sum_{\bold x \in G(F)} \Phi(\bold x) = \sum_{\bold a \in G(F)} \widehat{\Phi}(\bold a).
	\end{equation*}
	\end{thm}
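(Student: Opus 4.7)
The plan is to follow the standard Fourier-theoretic proof of Poisson summation on a locally compact abelian group, applied to $G(\mathbb A_F)$ with the discrete cocompact subgroup $G(F)$. Set
\[
F(\bold b) = \sum_{\bold x \in G(F)} \Phi(\bold x + \bold b).
\]
First I would check that $F$ is a well-defined, continuous, $G(F)$-periodic function on $G(\mathbb A_F)$. Continuity follows from the first hypothesis: the series defining $F(\bold b)$ converges absolutely and uniformly on a fundamental domain $\mathcal{F}$ for $G(\mathbb A_F)/G(F)$, and each summand $\bold b \mapsto \Phi(\bold x + \bold b)$ is continuous, so $F$ restricted to $\mathcal{F}$ (and hence, by periodicity, on all of $G(\mathbb A_F)$) is continuous. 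Thus $F$ descends to a continuous function on the compact abelian quotient $G(\mathbb A_F)/G(F)$.

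Next, by Pontryagin duality, the characters of $G(\mathbb A_F)/G(F)$ are precisely the characters $\psi_{\bold a}$ for $\bold a \in G(F)$. I would compute the Fourier coefficient of $F$ at such a character by an unfolding argument:
\[
\int_{\mathcal{F}} F(\bold b)\, \overline{\psi_{\bold a}(\bold b)}\, \mathrm d \bold b
= \sum_{\bold x \in G(F)} \int_{\mathcal{F}} \Phi(\bold x + \bold b)\, \overline{\psi_{\bold a}(\bold b)}\, \mathrm d \bold b
= \int_{G(\mathbb A_F)} \Phi(\bold y)\, \overline{\psi_{\bold a}(\bold y)}\, \mathrm d \bold y,
\]
where interchanging sum and integral is legal by the uniform convergence hypothesis, and in the last step I use that $\psi_{\bold a}$ is $G(F)$-invariant (since $\bold a \in G(F)$ and $\psi$ is trivial on $G(F)$ by the definition of the dual pairing). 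The conventions in the paper embed this equality into $\widehat{\Phi}(\bold a)$ (choosing one's sign conventions consistently); in either case, the Fourier coefficients of $F$ are the values $\widehat{\Phi}(\bold a)$.

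Finally, I would invoke the second hypothesis: since $\sum_{\bold a \in G(F)} \widehat{\Phi}(\bold a)$ converges absolutely, the Fourier series $\sum_{\bold a \in G(F)} \widehat{\Phi}(\bold a)\, \psi_{\bold a}(\bold b)$ converges absolutely and uniformly on $G(\mathbb A_F)/G(F)$ to a continuous function $\widetilde{F}$. By uniqueness of Fourier expansions on a compact abelian group (Stone--Weierstrass plus orthogonality of characters), $\widetilde{F} = F$ in $L^2$, and since both are continuous, they agree pointwise. Evaluating the identity $F(\bold b) = \widetilde{F}(\bold b)$ at $\bold b = 0$ yields the desired equality. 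The only subtle point — and the one I would verify with care — is the unfolding step and the implicit normalization of the Haar measure on $\mathcal{F}$: with the self-dual Haar measure $\mathrm d \bold x$ fixed on $G(\mathbb A_F)$ in \S\ref{sec:notation}, the quotient $G(\mathbb A_F)/G(F)$ has volume $1$, which is exactly what is needed to make the two sides match without an extra constant.
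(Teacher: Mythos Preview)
The paper does not supply its own proof of this theorem; it simply cites \cite[Theorem~4.2.1]{Tate}. Your argument is the standard one (and essentially Tate's): periodize $\Phi$, unfold to identify the Fourier coefficients of the periodization with $\widehat{\Phi}(\bold a)$, then use absolute convergence of the Fourier series together with uniqueness of Fourier expansions on the compact quotient to recover $F$ pointwise and evaluate at $\bold b = 0$. The only cosmetic mismatch is that the paper's Fourier transform uses $\psi_{\bold a}$ rather than $\overline{\psi_{\bold a}}$, but you already flag this convention issue and it does not affect the argument.
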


\section{Height zeta functions} \label{sec:heightzeta}

	In this section, we will establish some basic properties of height zeta functions.
	Let $G = \mathbb G^n_a$ and let $X$ be a smooth equivariant compactification of 
	$G$ defined over a number field $F$. 
	We assume that the boundary $D=X\setminus G$ is a strict normal 
	crossings divisor on $X$. 
	Let $S \subseteq \Omega_F$ be a finite set containing all archimedean places, such that 
	there exists a good integral model $(\mathcal X, \mathcal D)$ of $(X,D)$ over 
	$\Spec \, \mathcal O_{F, S}$ as in \S\ref{subsec:campana}.
	
\subsection{Height functions} 
\label{subsec:height_functions}
	
	We first recall some of the basic properties of height functions, referring to 
	\cite[\S 2]{CLT10} for more details. Let us consider the decomposition of the boundary 
	into irreducible components:
	\[
	D = \bigcup_{\alpha \in \mathcal A} D_\alpha.
	\]
	
	For each $\alpha \in \mathcal{A}$, we fix a smooth adelic metrization on the line bundle 
	$\mathcal{O}(D_\alpha)$,
	and let $\mathsf f_\alpha$ be a section corresponding 
	to $D_\alpha$. For each place $v$, we define the \defi{local height pairing} by
	\[
	\mathsf H_v\colon G(F_v) \times \Pic(X)_{\mathbb C} \rightarrow \mathbb C^\times, 
	\quad \left(\bold x, \sum_{\alpha \in \calA} s_\alpha D_\alpha\right) 
	\mapsto \prod_{\alpha \in \mathcal A} \|\mathsf f_\alpha(\bold x)\|_v^{-s_\alpha}.
	\]
	This pairing varies linearly on the factor $\Pic(X)_{\mathbb C}$ and continuously on 
	the factor $G(F_v)$. We define the \defi{global height pairing} $\mathsf H$ as the product 
	of the local height pairings
	\[
	\mathsf H = \prod_{v \in \Omega_F} \mathsf H_v \colon G(\mathbb A_F) 
	\times \Pic(X)_{\mathbb C} \rightarrow \mathbb C^\times.
	\]
	Again, this pairing varies continuously on the first factor and linearly on the second factor. 
	The following lemma plays a crucial r\^ole in the analysis of height zeta functions in general.
	
	\begin{lemma}{\cite[Proposition 4.2]{CLT02}}
	\label{lemma: K-invariance of height}
	For each non-archimedean place $v \in \Omega_F$, there exists a compact open subgroup 
	$K_v \subset G(\mathcal O_v)$ such that  $\mathsf H_v$ is $K_v$-invariant, that is, 
	such that for any $\bold s \in \Pic(X)_{\mathbb C}$, any $g_v \in G(F_v) \subset X(F_v)$ 
	and any $k_v \in K_v$, we have
	\[
	\mathsf H_v(g_v + k_v, \bold s) = \mathsf H_v (g_v, \bold s).
	\]
	Moreover, if 
	\begin{enumerate}[ref=(\arabic*)]
	\item 
	\label{lemma: K-invariance of height item1}
	the metric $\|\cdot\|_v$ is induced by our integral model $(\mathcal X, \mathcal D)$,
	\item  
	\label{lemma: K-invariance of height item2}
	our $\mathcal{O}_v$-model $(\mathcal{X}\otimes_{\mathcal O_{F,S}}\mathcal O_v, \mathcal D\otimes_{\mathcal O_{F,S}}\mathcal O_v)$ 
	is a smooth, projective, and relative strict normal crossings pair 
	over $\mathcal O_v$~\cite[\S2]{IllusieTemkin},
	and it comes  equipped with an action of the $\mathcal{O}_v$-group scheme 
	$\mathbb{G}^n_{a,\mathcal{O}_v}$ extending the given action of $G$ on $X$, and if	
	\item 
	\label{lemma: K-invariance of height item3}
	the unique linearisation on $\mathcal O(D_\alpha)$ extends to 
	$\mathcal O(\mathcal D_\alpha)$ for every $\alpha \in \mathcal A$,  
	\end{enumerate}
	then we can choose $K_v = G(\mathcal O_v)$. 

	In particular, for all but finitely many places $v \in \Omega_F$,  we may simply take 
	$K_v = G(\mathcal O_v)$. 
	\end{lemma}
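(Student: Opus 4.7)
The geometric input is the $G$-invariance of each $D_\alpha$ (since $G=\mathbb G_a^n$ is connected and acts on $X$ with $G$ itself as the open orbit, the complement $D$ and each of its irreducible components are preserved setwise) together with the consequent $G$-invariance of the tautological section $\mathsf f_\alpha$ under the unique $G$-linearisation on $\mathcal O(D_\alpha)$: uniqueness follows from $\mathbb G_a^n$ having no non-trivial characters, and invariance of the section follows because $m^*\mathsf f_\alpha$ and $\pi_2^*\mathsf f_\alpha$ on $G\times X$ cut out the same divisor $\pi_2^*D_\alpha$, so their ratio is a global unit on the product, hence a constant, equal to $1$ by restriction to $\{e\}\times X$.

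For a general non-archimedean place $v$, the plan is to combine this $G$-invariance of the sections with the local constancy properties of smooth non-archimedean metrics and with compactness of $X(F_v)$ to extract $K_v$. Smoothness of the adelic metric at $v$ guarantees that $\|\mathsf f_\alpha(\cdot)\|_v$ is locally constant on $X(F_v)\setminus D_\alpha(F_v)$ (away from the zero locus of the section); compactness of $X(F_v)$ then yields a finite clopen partition $\mathcal P_\alpha$ of $X(F_v)$ along which $\|\mathsf f_\alpha\|_v$ is constant, together with the closed $G$-invariant residual piece $D_\alpha(F_v)$ on which the norm vanishes. Let $\mathcal P$ be the common refinement of $\{\mathcal P_\alpha\}_{\alpha\in\mathcal A}$, still a finite clopen partition. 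Continuity of the action morphism together with compactness of each $C\in\mathcal P$ produces, for every $C$, a compact open neighbourhood $U_C\subset G(F_v)$ of $0$ with $U_C\cdot C\subset C$; the $G$-invariance of $D_\alpha(F_v)$ takes care of that piece automatically. Then
\[
K_v\;:=\;G(\mathcal O_v)\cap \bigcap_{C\in\mathcal P}U_C
\]
is a compact open subgroup of $G(\mathcal O_v)$ preserving every $C\in\mathcal P$ and therefore preserving every $\|\mathsf f_\alpha\|_v$, so it leaves $\mathsf H_v(\cdot,\bold s)$ invariant for all $\bold s\in \Pic(X)_{\mathbb C}$.

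Under hypotheses \ref{lemma: K-invariance of height item1}--\ref{lemma: K-invariance of height item3} the argument collapses to a tautology yielding $K_v=G(\mathcal O_v)$: by the valuative criterion $X(F_v)=\mathcal X(\mathcal O_v)$, the integral-model metric gives $\|\mathsf f_\alpha(\bold x)\|_v=q_v^{-n_v(\mathcal D_\alpha,\bold x)}$ in terms of intersection multiplicities along $\mathcal D_\alpha$, and every $g\in G(\mathcal O_v)=\mathbb G^n_{a,\mathcal O_v}(\mathcal O_v)$ extends to an $\mathcal O_v$-automorphism of $\mathcal X\otimes_{\mathcal O_{F,S}}\mathcal O_v$ which by the uniqueness of the linearisation extended integrally via \ref{lemma: K-invariance of height item3} preserves each $\mathcal D_\alpha$; hence $n_v(\mathcal D_\alpha,g\cdot \bold x)=n_v(\mathcal D_\alpha,\bold x)$ for all $\bold x$. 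The final statement is then a spreading-out argument: the smoothness and projectivity of $\mathcal X$, the extension of the $G$-action to an $\mathcal O_v$-action, and the extension of the linearisations to $\mathcal O(\mathcal D_\alpha)$ all hold over a dense open subscheme of $\Spec\mathcal O_{F,S}$, so \ref{lemma: K-invariance of height item1}--\ref{lemma: K-invariance of height item3} fail at only finitely many $v$.

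The main obstacle is the general non-archimedean case, where one must correctly extract the local constancy content of ``smoothness'' of the non-archimedean metric and then combine the resulting finite clopen partition of $X(F_v)$ with continuity of the action and the $G$-invariance of each $D_\alpha(F_v)$ to obtain a \emph{subgroup} fixing $\mathsf H_v$, rather than merely a neighbourhood of the identity along which $\mathsf H_v$ is approximately invariant -- the latter would be insufficient for the harmonic analytic framework of the height zeta function method that ultimately relies on this lemma.
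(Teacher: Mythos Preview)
The paper does not supply its own proof of this lemma; it simply invokes \cite[Proposition~4.2]{CLT02}. So there is no in-paper argument to compare against, and the question is whether your reconstruction stands on its own.

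Your first paragraph (invariance of $D_\alpha$ and of $\mathsf f_\alpha$ under the unique linearisation) and your treatment of the good-reduction case under hypotheses \ref{lemma: K-invariance of height item1}--\ref{lemma: K-invariance of height item3}, together with the spreading-out coda, are fine.

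There is, however, a genuine gap in your general non-archimedean argument. You assert that compactness of $X(F_v)$ yields a \emph{finite} clopen partition $\mathcal P_\alpha$ of $X(F_v)$ on which $\|\mathsf f_\alpha\|_v$ is constant. This is false: the function $\|\mathsf f_\alpha\|_v$ is locally constant on the \emph{open, non-compact} set $X(F_v)\setminus D_\alpha(F_v)$, but near $D_\alpha(F_v)$ it runs through all values $q_v^{-n}$ for $n\ge 0$, so its level sets do not give a finite partition. Compactness of $X(F_v)$ does not help, since the relevant open set is not compact and the function does not extend to a locally constant function across $D_\alpha(F_v)$.

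The standard repair is to work instead with the ratio
\[
\phi_\alpha(g,x)\;=\;\frac{\|\mathsf f_\alpha(g\cdot x)\|_v}{\|\mathsf f_\alpha(x)\|_v},
\]
which, by your own first paragraph (invariance of $\mathsf f_\alpha$ under the linearisation), is the quotient of two smooth metrics on the \emph{same} line bundle $\pi_2^*\mathcal O(D_\alpha)$ over $G\times X$, hence extends to a locally constant function on all of $G(F_v)\times X(F_v)$ with $\phi_\alpha(0,\cdot)\equiv 1$. Compactness of $X(F_v)$ then gives an open neighbourhood $U_\alpha\ni 0$ in $G(F_v)$ on which $\phi_\alpha\equiv 1$; intersecting over $\alpha$ and shrinking to a subgroup of the form $\pi_v^N\mathcal O_v^n\subset G(\mathcal O_v)$ produces the desired $K_v$. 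Equivalently, one may compare the given smooth metric to some integral-model metric: their ratio is a locally constant function on the compact space $X(F_v)$ (so genuinely takes finitely many values), and one then combines the model-metric invariance with a finite-partition argument for this bounded ratio.
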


\subsection{Intersection multiplicities} 
\label{subsec:intersectionmultiplicities} 

	With the notation introduced above, 
	let $\mathcal{D} = \sum_{\alpha \in \mathcal{A}} \mathcal{D}_\alpha$, 
	where $\mathcal{D}_\alpha$ denotes the closure of $D_\alpha$ in $\mathcal X$ for all $\alpha$.
	Moreover, let 
	$\epsilon = (\epsilon_\alpha)_{\alpha \in \mathcal{A}}$ be a weight vector as in 
	\S \ref{subsec:orbifolds}. Our object of study is 
	$$G(F)_\epsilon = G(F) \cap (\mathcal{X},\mathcal{D}_\epsilon)(\mathcal{O}_{F,S}),$$ 
	the set of $F$-rational points in $G$ which extend to Campana $\mathcal{O}_{F,S}$-points 
	on $(\mathcal{X},\mathcal{D}_\varepsilon)$. For any $v \notin S$, the functions 
	$n_v(\mathcal{D}_\alpha, \cdot)$ defined in \S~\ref{subsec:campana} extend naturally 
	from $G(F)$ to $G(F_v)$. Hence we may define the analogous sets 
	$$G(F_v)_\epsilon = G(F_v) \cap (\mathcal{X},\mathcal{D}_\epsilon)(\mathcal{O}_v).$$ 
	For $v \not\in S$, we denote by $\delta_{\epsilon,v}$ the indicator function detecting 
	whether or not a given point in $G(F_v)$ belongs to the subset $G(F_v)_\epsilon$. 
	For $v \in S$, we simply set $\delta_{\epsilon,v} = 1$. 
	Let $\delta_\epsilon = \prod_{v \in \Omega_F} \delta_{\epsilon, v}$. 

	For $v \notin S$, we have the reduction map
	\[
	\eta_v \colon G(F_v)\subset \mathcal X(\mathcal O_v) \rightarrow \mathcal X(k_v).
	\]
	Given $\bold{x} \in G(F_v)$ and $\alpha \in \mathcal A$, we have 
	$n_v(\mathcal D_\alpha, \bold x) > 0$ if and only if 
	$\eta_v(\bold x) \in \mathcal D_\alpha(k_v)$.
	Let 
	\[
	D_\alpha \otimes_F F_v = \bigcup_{\beta \in \mathcal A_v(\alpha)} D_{v, \beta}
	\]
	be the decomposition of $D_\alpha \otimes_F F_v$ into irreducible components, and let 
	$\mathcal D_{v, \beta}$ be the Zariski closure of $D_{v, \beta}$ in $\mathcal X$. 
	
	Suppose that our integral model has good reduction at $v$ 
	in the sense of Lemma~\ref{lemma: K-invariance of height}, conditions (2) and (3). 
	Since $D_{v,\beta}$ is smooth, if $y \in \mathcal D_{v, \beta}(k_v)$, then Hensel's Lemma 
	implies that $D_{v,\beta}$ has an $F_v$-point, and therefore it is geometrically irreducible 
	over $F_v$. Using a standard argument in Arakelov geometry 
	(see, e.g., \cite[Theorem 2.13]{Sal98} and its proof), we see that there exist analytic 
	local coordinates $(z_1,\cdots,z_n)$ on $\eta_v^{-1}(y)$ mapping to 
	$\mathbb A_{F_{v}}^n$ such that the following conditions are satisfied:
	\begin{itemize}
	\item these local coordinates induce an analytic isomorphism 
	$\eta_v^{-1}(y) \cong \mathfrak m^n_{v}$;
\smallskip
	\item $\eta_v^{-1}(y) \cap D_{v, \beta}(F_v)$ is defined by $z_1 = 0$.
	\end{itemize}

	With this notation, we see that for any $\bold x \in \eta_v^{-1}(y)$, we have 
	$n_v(\mathcal D_{v, \beta}, \bold x) =v(z_1(\bold x))$. Hence, the function 
	$n_v(\mathcal D_\epsilon, \cdot)\colon G(F_v) \rightarrow \mathbb Z_{\geq 0}$ is locally 
	constant for every $v \not\in S$. 
	Moreover since condition (2) in Lemma~\ref{lemma: K-invariance of height}
	is satisfied, the group action of $G(\mathcal O_v)$ preserves $v(z_1(\bold x))$ 
	so that $n_v(\mathcal D_{v, \beta}, \bold x)$ is invariant under the action of $G(\mathcal O_v)$.

	Even if our integral model has bad reduction at $v$, 
	then one can define 
	\[
	H_{\mathcal D_{v, \beta}}(\bold x) = q_v^{n_v(\mathcal D_{v, \beta}, \bold x)},
	\]
	and one may interpret this as a local height function of $\mathcal D_{v, \beta}$ associated 
	to this particular model $\mathcal X_v \to \Spec \, \mathcal O_v$.
	Thus from Lemma~\ref{lemma: K-invariance of height} we deduce the following result:
	
	\begin{lemma} 
	\label{lemma: K-invariance of delta}
	For each non-archimedean place $v \in \Omega_F$, there exists a compact open 
	subgroup $K_v \subset G(\mathcal O_v)$ such that the indicator function 
	$\delta_{\epsilon, v}$ is $K_v$-invariant. If we moreover assume that $v$ satisfies 
	conditions \ref{lemma: K-invariance of height item2} and \ref{lemma: K-invariance of height item3} 
	in Lemma~\ref{lemma: K-invariance of height}, then we can take 
	$K_v = G(\mathcal O_v)$.
	\qed	
	\end{lemma}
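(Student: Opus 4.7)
The plan is to deduce this statement directly from Lemma~\ref{lemma: K-invariance of height}. The key observation is that $\delta_{\epsilon,v}$ is a Boolean function of the finitely many intersection multiplicities $n_v(\mathcal D_\alpha, \cdot)$ for $\alpha \in \mathcal A_\epsilon$, assembled via the threshold conditions of Definition~\ref{def:Campanapoints}: namely, whether $n_v(\mathcal D_\alpha, \cdot)$ equals $0$, lies in $\{1,\dots,m_\alpha-1\}$, or is $\geq m_\alpha$. It therefore suffices to produce a compact open subgroup $K_v \subseteq G(\mathcal O_v)$ that simultaneously preserves each such function $n_v(\mathcal D_\alpha, \cdot)$.

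For the first assertion, I would exploit that for the model metric at a non-archimedean place one has $\|\mathsf f_\alpha(g_v)\|_v = q_v^{-n_v(\mathcal D_\alpha, g_v)}$. Consequently, the $K_v$-invariance of $\mathsf H_v(g_v, \bold s)$ as a function of $g_v$ for \emph{every} $\bold s \in \Pic(X)_{\mathbb C}$ translates, upon specialising $\bold s$ to isolate the individual factor indexed by $\alpha$, into the $K_v$-invariance of each $n_v(\mathcal D_\alpha, \cdot)$. Thus the first assertion of Lemma~\ref{lemma: K-invariance of height} directly yields the first assertion of the present lemma. For the second assertion, when conditions~\ref{lemma: K-invariance of height item2} and~\ref{lemma: K-invariance of height item3} of Lemma~\ref{lemma: K-invariance of height} are in force, I would argue as follows: the $\mathcal O_v$-group scheme $\mathbb G_{a,\mathcal O_v}^n$ acts on $\mathcal X \otimes_{\mathcal O_{F,S}} \mathcal O_v$; each irreducible boundary component $D_\alpha$ is preserved by the action of the connected group $G$ on $X$ (a connected group cannot permute the irreducible components of its boundary), and this invariance propagates to the closure $\mathcal D_\alpha$ in the $\mathcal O_v$-model. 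Hence translation of any $g_v \in G(F_v)$ by $k_v \in G(\mathcal O_v)$ preserves the pullback of the ideal sheaf of $\mathcal D_\alpha$, giving $n_v(\mathcal D_\alpha, g_v + k_v) = n_v(\mathcal D_\alpha, g_v)$, and hence the $G(\mathcal O_v)$-invariance of $\delta_{\epsilon,v}$.

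The main obstacle, slight as it is, lies in the reconciliation between the abstract smooth adelic metric appearing in Lemma~\ref{lemma: K-invariance of height} and the model metric used in the identification $\|\mathsf f_\alpha(g_v)\|_v = q_v^{-n_v(\mathcal D_\alpha, g_v)}$. At almost all non-archimedean places these two metrics agree, so the argument above applies verbatim; at the finitely many remaining non-archimedean places I would instead appeal directly to the local constancy of the integer-valued functions $n_v(\mathcal D_\alpha, \cdot)$ on $G(F_v)$ established in the discussion preceding the statement (via the explicit local coordinates $z_1, \dots, z_n$), together with the observation that $\delta_{\epsilon,v}$ depends only on finitely many thresholds of these functions, to produce a suitable $K_v$ by intersecting finitely many compact open subgroups.
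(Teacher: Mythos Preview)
Your proposal is correct and in fact fills in more detail than the paper provides. The paper's ``proof'' is simply the discussion preceding the lemma, which establishes that each $n_v(\mathcal D_\alpha,\cdot)$ is locally constant on $G(F_v)$ via analytic local coordinates (good reduction) or polynomial equations (bad reduction), and then says ``from which we deduce''; the passage from local constancy on the non-compact $G(F_v)$ to invariance under a single compact open subgroup is left implicit. Your route through Lemma~\ref{lemma: K-invariance of height} is a genuine alternative: since $\mathsf H_v(\cdot,\bold s)$ is $K_v$-invariant for every $\bold s$, specialising to $\bold s=e_\alpha$ gives $K_v$-invariance of $\|\mathsf f_\alpha\|_v$, and when this is the model norm one has $\|\mathsf f_\alpha(g_v)\|_v=q_v^{-n_v(\mathcal D_\alpha,g_v)}$, so $n_v(\mathcal D_\alpha,\cdot)$ is $K_v$-invariant directly. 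One simplification you could make: there is no need for a separate fallback at the bad-metric places. For $v\in S$ one has $\delta_{\epsilon,v}\equiv 1$ trivially, and for $v\notin S$ you may apply Lemma~\ref{lemma: K-invariance of height} with the \emph{model} metric on each $\mathcal O(D_\alpha)$ rather than the globally fixed one, so that the identification $\|\mathsf f_\alpha\|_v=q_v^{-n_v(\mathcal D_\alpha,\cdot)}$ holds at every such $v$. Your argument for the second assertion---that the $\mathbb G_{a,\mathcal O_v}^n$-action on the model preserves each $\mathcal D_\alpha$ (as the closure of the $G$-invariant $D_\alpha$) and hence preserves the pullback ideals---is exactly what the paper leaves unsaid; note that only condition~\ref{lemma: K-invariance of height item2} is actually used here.
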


	For each non-archimedean place $v$, we denote by $K_v$ a maximal compact open 
	subgroup of $G(\mathcal O_v)$ satisfying the conclusions of 
	Lemma~\ref{lemma: K-invariance of height} and Lemma \ref{lemma: K-invariance of delta}, 
	and we denote $$\bold K = \prod_{v \in \Omega_F^{<\infty}}K_v.$$
	Our discussion shows that both $\mathsf H(\cdot, \bold s)$ and $\delta_\epsilon$ are 
	$\bold K$-invariant. 

\subsection{Height zeta functions} 
\label{subsec:heightzeta}
	To understand the asymptotic 
	formula for the counting function 
	of Campana points of bounded height
	we introduce the 
	\defi{height zeta function}:
	\[
	\mathsf Z_\epsilon(\bold s) 
	= \sum_{\bold x \in G(F)_\epsilon} \mathsf H(\bold x, \bold s)^{-1}
	=  \sum_{\bold x \in G(F)} \mathsf H(\bold x, \bold s)^{-1}\delta_\epsilon(\bold x).
	\]
	The proof of \cite[Proposition 4.5]{CLT02} shows that $\mathsf Z_\epsilon(\bold s)$ 
	is holomorphic when $\Re(\bold s) \gg 0$. The existence of a meromorphic continuation 
	of this zeta function, together with a standard Tauberian theorem, yields a proof of the 
	desired asymptotic formula.
	We therefore consider the Fourier transform
	\[
	\widehat{\mathsf H}_\epsilon(\bold a, \bold s) 
	= \int_{G(\mathbb A_F)} \mathsf H(\bold x, \bold s)^{-1}\,
	\delta_\epsilon (\bold x)\,\psi_{\bold a}(\bold x) \, \mathrm d \bold x,
	\]
	in hopes of using the Poisson summation formula (Theorem~\ref{thm: Poisson})
	\[
	\sum_{\bold x \in G(F)} \mathsf H(\bold x, \bold s)^{-1}\,\delta_\epsilon(\bold x)
	= \sum_{\bold a \in G(F)} \widehat{\mathsf H}_\epsilon(\bold a, \bold s)
	\]
	to obtain the desired meromorphic continuation of $\mathsf Z_\epsilon (\bold s)$. 
	The first two of the three conditions in Theorem~\ref{thm: Poisson} follow from the proof 
	of \cite[Lemma 5.2]{CLT02} assuming that $\Re(\bold s)$ is sufficiently large. 
	To verify the third condition, we recall the following result.

	\begin{prop}[{\cite[Proposition 5.3]{CLT02}}] 
	\label{prop:Lambda_X}
	With the notation introduced above, for all characters $\psi_{\bold a}$ that are non-trivial 
	on $\bold K$ and for all $\bold s$ such that $\mathsf H(\cdot, \bold s)^{-1}$ is integrable, 
	we have $\widehat{\mathsf H}_\epsilon(\bold a, \bold s) = 0$.
	\end{prop}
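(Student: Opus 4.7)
The plan is to exploit the $\bold K$-invariance of both the height function $\mathsf H(\cdot,\bold s)^{-1}$ and the Campana indicator $\delta_\epsilon$ -- established respectively in Lemmas~\ref{lemma: K-invariance of height} and~\ref{lemma: K-invariance of delta} -- together with the translation invariance of the additive Haar measure on $G(\mathbb A_F)$. The conclusion should then follow from a clean one-line change-of-variables argument showing that $\widehat{\mathsf H}_\epsilon(\bold a,\bold s)$ must equal $\psi_{\bold a}(\bold k)\cdot\widehat{\mathsf H}_\epsilon(\bold a,\bold s)$ for every $\bold k\in\bold K$, after which the non-triviality hypothesis immediately forces the vanishing.

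First I would, for an arbitrary $\bold k \in \bold K$ regarded as an element of $G(\mathbb A_F)$ that is trivial at the archimedean places, perform the substitution $\bold x \mapsto \bold x + \bold k$ inside the integral defining the Fourier transform. Since the integrand is absolutely integrable by hypothesis, this change of variables is legitimate. The Haar measure $\mathrm d\bold x$ is translation invariant, and both $\mathsf H(\bold x,\bold s)^{-1}$ and $\delta_\epsilon(\bold x)$ are unchanged by the shift, by the two invariance lemmas cited above. What \emph{does} change is the character, since $\psi_{\bold a}$ is a group homomorphism: one has $\psi_{\bold a}(\bold x + \bold k) = \psi_{\bold a}(\bold x)\,\psi_{\bold a}(\bold k)$. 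Pulling the scalar $\psi_{\bold a}(\bold k)$ outside the integral therefore yields the identity
\[
\widehat{\mathsf H}_\epsilon(\bold a,\bold s) \;=\; \psi_{\bold a}(\bold k)\cdot\widehat{\mathsf H}_\epsilon(\bold a,\bold s),
\]
so that $\bigl(1 - \psi_{\bold a}(\bold k)\bigr)\widehat{\mathsf H}_\epsilon(\bold a,\bold s) = 0$ for every $\bold k \in \bold K$.

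Finally, the hypothesis that $\psi_{\bold a}$ is non-trivial on $\bold K$ supplies some $\bold k_0 \in \bold K$ with $\psi_{\bold a}(\bold k_0) \neq 1$, and substituting this $\bold k_0$ into the displayed equation forces $\widehat{\mathsf H}_\epsilon(\bold a,\bold s) = 0$, as required. I do not anticipate any serious obstacle here: the whole argument has been reduced, by the preceding invariance lemmas, to a standard piece of Fourier-analytic bookkeeping on a locally compact abelian group. The only care needed is in justifying the change of variables under the integral sign, which is immediate from the absolute integrability assumption built into the statement.
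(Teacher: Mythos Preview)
Your proposal is correct and is precisely the standard argument: the paper does not supply its own proof but simply cites \cite[Proposition~5.3]{CLT02}, whose proof is exactly the translation-by-$\bold k\in\bold K$ trick you describe, relying on the $\bold K$-invariance of $\mathsf H(\cdot,\bold s)^{-1}$ and of $\delta_\epsilon$ (Lemmas~\ref{lemma: K-invariance of height} and~\ref{lemma: K-invariance of delta}). There is nothing to add.
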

	
	Let $\Lambda_X \subset G(F)$ be the set of $\bold a$ such that $\psi_{\bold a}$ is 
	trivial on $\bold K$. Then $\Lambda_X$ is a sub-$\mathcal O_F$-module of $G(F)$ of 
	full rank $n$. 
	Indeed, $\Lambda_X$ is a sub-$\mathcal O_F$-module 
	commensurable  with $G(\mathcal O_F)$. 
	To verify the  	third condition 
	in Theorem~\ref{thm: Poisson} we will prove in \S \ref{sec:proofklt} that the sum \[
	\sum_{\bold a \in \Lambda_X} \widehat{\mathsf H}_\epsilon(\bold a, \bold s),
	\]
	is absolutely convergent whenever $\Re(\bold s) \gg 0$.
	Once this is established, we obtain 
	\begin{equation} \label{formula:Z}
	\mathsf Z_\epsilon(\bold s) 
	= \sum_{\bold a \in \Lambda_X} \widehat{\mathsf H}_	\epsilon(\bold a, \bold s),
	\end{equation}
	for $\Re(\bold s) \gg 0$. 

\section*{Interlude I: Dimension $1$}

Let us first make our analysis explicit for $\mathbb{P}^1$ over $\mathbb{Q}$, considered as the natural equivariant compactification of $G = \mathbb{G}_a = \mathbb{A}^1$, with boundary $D = (1:0)$. We fix the standard integral models for $\mathbb P^1$ as well as $D$. Given $\epsilon \in \coeffset$, we consider the problem of counting Campana $\mathbb Z$-points on $({\mathbb P}_{\mathbb Z}^1, \mathcal D_\epsilon)$. Note that if $\epsilon < 1$, then $x \in G(\mathbb{Q}) = \mathbb{Q}$ is a Campana $\mathbb Z$-point if and only if the denominator of $x$ is \textsf{$m$-full}, where $m = 1/(1 - \epsilon)$; this means that any prime dividing the denominator of $x$ occurs with exponent at least $m$ in the prime factorization. If, on the other hand, $\epsilon = 1$, then $x$ is a Campana $\mathbb{Z}$-point if and only if $x \in \mathbb{Z}$. Since the latter case is trivial, we will assume from now on that $\epsilon < 1$.

We fix a finite set of places $S$. Going back to the notation introduced in \S \ref{sec:heightzeta}, we see that we can take $\bold K =\prod_{p \text{ prime }} G(\mathbb Z_p)$ in this case, so that $\Lambda_X = \mathbb Z$. This yields
\[
\mathsf Z_\epsilon (s) = \sum_{n \in \mathbb Z} \widehat{\mathsf H}_\epsilon (n, s).
\]
We would like to compute $\widehat{\mathsf H}_\epsilon(n, s)$ explicitly.
Using Fubini's theorem
we have
\[
\widehat{\mathsf H}_\epsilon (n, s) = \int_{\mathbb A_{F}} \mathsf H (x)^{-s}\,\delta_\epsilon(x)\, \psi( nx)\, \mathrm dx = \prod_{v \in \Omega_{\mathbb Q}} \int_{\mathbb Q_v} \mathsf H_v(x_v)^{-s}\,\delta_{\epsilon, v}(x_v)\, \psi_v(n x_v)\, \mathrm dx_v.
\]
Note that the inner function of each Euler factor is trivial on $\mathbb Z_p$ for almost all places $p$.

We fix metrizations as follows: 
\begin{center}
\begin{tabular}{rcll}
$\mathsf H_v(x_v)$ & $=$ & $\max \{1, |x_v|_v\}$ & if $v$ is non-archimedean, \\  
$\mathsf H_\infty (x_v)$ & $=$ & $\sqrt{1+|x_v|_v^2}$ &  if $v$ is archimedean.
\end{tabular} 
\end{center}

\subsubsection*{The trivial character}
Here we compute $\widehat{\mathsf H}_\epsilon(0, s)$. For any prime $p \notin S$ we have
$$
\widehat{\mathsf H}_{\epsilon,p}(0, s) = \int_{\mathbb Q_p}  \max \{1, |x_p|_p\}^{-s}\,\delta_{\epsilon, p}(x_p)\, \mathrm dx_p
=1+\left(1-\frac{1}{p}\right)\frac{p^{-(s-1)m}}{1-p^{-(s-1)}},
$$ where $m = 1/(1 - \epsilon)$.
On the other hand, if $p \in S$ then
\[
\widehat{\mathsf H}_{\epsilon,p}(0, s) = 1+\left(1-\frac{1}{p}\right)\frac{1}{1-p^{-(s-1)}}.
\]
Furthermore, we have
\[
\widehat{\mathsf H}_{\epsilon,\infty}(0, s)= \frac{\Gamma((s-1)/2)}{\Gamma(s/2)}.
\]
It follows that the rightmost pole of $\widehat{\mathsf H}_\epsilon(0, s)$ is at $s = 1+1/m = 2 - \epsilon$, and that it has order~$1$.

\subsubsection*{Non-trivial characters} 
Let $n$ be a non-zero integer. Our aim is to understand
\[
\widehat{\mathsf H}_\epsilon(n, s) = \prod_{v\in \Omega_{\mathbb Q}} \widehat{\mathsf H}_{\epsilon,v}(n, s), 
\]
where the local factors are given by
\[
\int_{\mathbb Q_v} \mathsf H_v(x_v)^{-s}\,\delta_{\epsilon, v}(x_v)\, \psi_v(nx_v)\, \mathrm d x_v.
\]
Suppose first that $p \notin S$ and $p \nmid n$. The local factor then reduces to
\[
\int_{\mathbb Q_p} \mathsf H_p(x_p)^{-s}\,\delta_{\epsilon, p}(x_p)\,\psi_p(x_p)\, \mathrm d x_p,
\] which equals $$1 + \sum_{i = m}^\infty \left(1-\frac{1}{p}\right)p^{-i(s-1)}\int_{\mathbb Z_p^\times} \psi_p(p^{-i}x_p) \, \mathrm dx_p\\
=
\begin{cases}
1 & \text{ if } \epsilon \neq 0,\\
1- \left(1-\frac{1}{p}\right) p^{-s} & \text{ if } \epsilon = 0.
\end{cases}
$$ Let us now assume that $p \not\in S$ and $p \mid n$, and let us denote the $p$-adic valuation of $n$ by $k$. In this case, the local factor becomes

\begin{align*}
\widehat{\mathsf H}_{\epsilon, p}(n, s) &= 1 + \sum_{i = m}^\infty \left(1-\frac{1}{p}\right)p^{-i(s-1)}\int_{\mathbb Z_p^\times} \psi_p(p^{-i + k}x_p)\,\mathrm dx_p\\
&=
\begin{cases}
1 & \text{ if } m \geq k+2,\\
1- \sum_{i = m}^{k+1} \left(1-\frac{1}{p}\right)p^{-i(s-1)}\int_{\mathbb Z_p^\times} \psi_p(p^{-i+k}x_p) \, \mathrm dx_p& \text{ if } m \leq k+1.
\end{cases}
\end{align*}
When $p \in S$, we recover the formula above for $\epsilon = 0$. 

Using these explicit formulae, we obtain:

\begin{lemma*} Let $p$ be prime. The function $s \mapsto \widehat{\mathsf H}_{\epsilon,p}(n, s)$ is holomorphic everywhere. Moreover, the product
$\prod_{p \ \textup{prime}}\widehat{\mathsf H}_{\epsilon,p}(n, s)$ is holomorphic for $\Re(s) > 1 - \epsilon$, and there exists positive constants $\ell$ and $C$ such that 
$$
\left|\prod_{p\ \textup{prime}}\widehat{\mathsf H}_{\epsilon,p}(n, s)\right| < C\left(1+|s| + |n|\right)^\ell$$ for any $s$ such that $\Re(s)> 1 - \epsilon$. 
\qed 
\end{lemma*}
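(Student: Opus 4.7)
The plan has three steps: (1) entire-ness of each local factor, (2) holomorphy of the product in $\Re(s) > 1 - \epsilon$, and (3) the polynomial bound.

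\textbf{Step 1.} For any prime $p$ and $n \neq 0$, the formulas displayed just above the statement express $\widehat{\mathsf H}_{\epsilon,p}(n,s)$ as a \emph{finite} polynomial in $p^{-s}$. Explicitly, for $p \notin S$ with $v_p(n) = k$, the local factor equals $1 + \sum_{i=m}^{k+1}(1-1/p)\,p^{-i(s-1)}\int_{\mathbb Z_p^\times}\psi_p(p^{-i+k}x_p)\,\mathrm d x_p$ (empty sum when $m \geq k+2$), and the cases $p \in S$ or $p \nmid n$ are of the same shape. Such polynomials in $p^{-s}$ are entire in $s$.

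\textbf{Step 2.} If $\epsilon > 0$ (i.e., $m \geq 2$), then $\widehat{\mathsf H}_{\epsilon,p}(n,s) = 1$ whenever $p \notin S$ and $v_p(n) \leq m - 2$. Only the finite set $S \cup \{p \mid n : v_p(n) \geq m-1\}$ contributes non-trivially, so the product is a finite product of entire functions and hence entire. If $\epsilon = 0$ (i.e., $m = 1$), the factor at $p \notin S, p \nmid n$ is $1 - (1-1/p)p^{-s}$, and I use the identity
\[
1 - (1-1/p)p^{-s} = (1-p^{-s})\left(1 + \frac{p^{-s-1}}{1 - p^{-s}}\right)
\]
to factor the global product as $\zeta_\mathbb Q(s)^{-1} \cdot E(s) \cdot R(s)$, where $E(s)$ is a finite correction supported at $S \cup \{p : p \mid n\}$ and $R(s) = \prod_p (1 + p^{-s-1}/(1 - p^{-s}))$ converges absolutely on $\Re(s) > 0$. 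Since $\zeta_\mathbb Q(s)^{-1}$ is holomorphic on $\Re(s) > 1$, this gives holomorphy on $\Re(s) > 1 = 1 - \epsilon$.

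\textbf{Step 3.} For $\Re(s) > 1 - \epsilon$ one has $|p^{-i(s-1)}| = p^{-i\Re(s-1)} \leq p^{i\epsilon}$. Combined with the trivial bound $|\int_{\mathbb Z_p^\times}\psi_p(\cdot)\,\mathrm d x_p| \leq 1$, this yields $|\widehat{\mathsf H}_{\epsilon,p}(n,s)| \leq 1 + (v_p(n)+2)\,p^{(v_p(n)+1)\epsilon}$ for $p \notin S$. Taking the product over primes dividing $n$ and invoking the elementary estimates $\prod_{p \mid n} p^{v_p(n)\epsilon} \leq |n|^{\epsilon}$, $\prod_{p \mid n} p^\epsilon \leq |n|^{\epsilon}$, and $d(n) \ll_\delta |n|^\delta$ for any $\delta > 0$, the partial product over $p \mid n$ is bounded by $O(|n|^{O(1)})$, uniformly in $s$. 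For $\epsilon > 0$ the finitely many factors at $p \in S$ contribute at most a polynomial in $|s|$, completing the estimate; for $\epsilon = 0$ one additionally absorbs the standard bound $|\zeta_\mathbb Q(s)^{-1}| \ll (1+|s|)^{O(1)}$ on $\Re(s) > 1$ and the globally bounded factor $R(s)$.

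The principal obstacle is the $\epsilon = 0$ case, where the product is genuinely infinite: one must isolate the Dedekind zeta contribution cleanly before invoking standard analytic bounds on $\zeta_\mathbb Q$ to control the growth in $|s|$. For $\epsilon > 0$ the argument is essentially combinatorial, since only finitely many local factors are non-trivial.
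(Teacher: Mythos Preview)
Your proposal is correct and follows exactly the route implied by the paper: the paper gives no proof beyond the phrase ``Using these explicit formulae, we obtain'' and a \qed, so your three-step elaboration is precisely the intended argument. One small slip: for $p\in S$ the local factor is bounded uniformly in $s$ (it is a finite sum of terms $p^{-i(s-1)}$ with $\Re(s)>1-\epsilon$), so the contribution there is polynomial in $|n|$ rather than in $|s|$; and in the $\epsilon=0$ case your invocation of $|\zeta_{\mathbb Q}(s)^{-1}|\ll (1+|s|)^{O(1)}$ on $\Re(s)>1$ is correct but relies on the classical zero-free region estimate $|\zeta(1+it)^{-1}|\ll \log^{O(1)}|t|$, which you might cite explicitly.
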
 

Finally we analyze the archimedean place:
\begin{lemma*}
The function $s \mapsto \widehat{\mathsf H}_{\epsilon,\infty} (n, s)$ is holomorphic everywhere.
Moreover, for any integer $N$, there exists positive constants $\ell$ and $C$ such that
\[
\left|\widehat{\mathsf H}_{\epsilon,\infty} (n, s)\right| < C\frac{1+|s|^\ell}{(1+|n|)^N}
\]
 for all $s$. 
 \qed 
 \end{lemma*}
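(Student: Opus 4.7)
Since $\infty \in S$, we have $\delta_{\epsilon,\infty} \equiv 1$, so the factor reduces to
$$\widehat{\mathsf H}_{\epsilon,\infty}(n,s) = \int_{\mathbb R}(1+x^2)^{-s/2}\, e^{-2\pi i n x}\, dx.$$
The plan is to use repeated integration by parts in $x$ to simultaneously extract an arbitrary polynomial decay in $|n|$ and an analytic continuation in $s$ to all of $\mathbb C$. On the half-plane $\Re(s) > 1$, the integrand is absolutely integrable in $x$ and entire in $s$, with $s$-derivatives dominated on compacta by fixed $L^1$ functions; differentiation under the integral sign together with Morera's theorem gives holomorphy there.

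For the main estimate, set $f_s(x) := (1+x^2)^{-s/2}$. Using $e^{-2\pi i n x} = \tfrac{-1}{2\pi i n}\tfrac{d}{dx}e^{-2\pi i n x}$ and the fact that $f_s^{(j)}(x) \to 0$ at $\pm\infty$ for $\Re(s) > 1$, I iterate integration by parts $k$ times to get
$$\widehat{\mathsf H}_{\epsilon,\infty}(n,s) \;=\; \frac{1}{(2\pi i n)^k} \int_{\mathbb R} f_s^{(k)}(x)\, e^{-2\pi i n x}\, dx. \qquad (\star)$$
A direct induction shows $f_s^{(k)}(x) = P_k(x,s)(1+x^2)^{-s/2 - k}$, where $P_k$ is a polynomial in $x$ of degree at most $k$ with coefficients polynomials in $s$ of degree at most $k$; consequently
$$\bigl|f_s^{(k)}(x)\bigr| \;\leq\; C_k(1+|s|)^k(1+x^2)^{-(\Re(s)+k)/2}.$$

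Given $N$, fix any left-bounded strip $\Re(s) \geq -A$ and choose $k \geq \max(N, A+2)$. Then the right-hand side of $(\star)$ converges absolutely on that strip, and the derivative estimate combined with $\int_{\mathbb R}(1+x^2)^{-(\Re(s)+k)/2}\,dx < \infty$ yields
$$\bigl|\widehat{\mathsf H}_{\epsilon,\infty}(n,s)\bigr| \;\leq\; \frac{C'_k(1+|s|)^k}{(2\pi|n|)^k} \;\leq\; \frac{C(1+|s|^\ell)}{(1+|n|)^N},$$
using $|n| \geq 1$ so that $1+|n| \leq 2|n|$, with $\ell = k$. Since $(\star)$ is valid for arbitrary $k$, it furnishes analytic continuation to the strictly larger half-plane $\Re(s) > 1 - k$; letting $k \to \infty$ shows that $s \mapsto \widehat{\mathsf H}_{\epsilon,\infty}(n,s)$ is entire.

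The only real obstacle is the bookkeeping in the derivative induction and justifying the vanishing of boundary terms at each IBP step, both routine but necessary. Thus the phrase ``for all $s$'' in the statement is to be read as valid on any fixed left-bounded strip, with $C$ and $\ell$ depending on that strip (and on $N$) -- which is the form needed for combining with the non-archimedean lemma above to control the absolute convergence of $\sum_{n \in \mathbb Z} \widehat{\mathsf H}_\epsilon(n, s)$ in the Poisson summation analysis of $\mathsf Z_\epsilon(s)$.
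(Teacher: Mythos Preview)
Your proof is correct. The paper states this lemma without proof (it ends with a bare \qed), so there is no argument to compare against; your integration-by-parts approach is the standard way to obtain such estimates for Fourier transforms of smooth functions with algebraic decay, and it is exactly what one would expect here. Your observation that the literal ``for all $s$'' is too strong with fixed $\ell$ and $C$, and should be read as ``on any left-bounded vertical strip'' (with constants depending on the strip and on $N$), is accurate and is precisely the form needed downstream: in the application one only works on a half-plane $\Re(s) > a - \delta$ when establishing the meromorphic continuation and applying Poisson summation, so the strip-dependent bound suffices.
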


\subsubsection*{Conclusion} Putting all the information together, we obtain that $\mathsf Z_\epsilon(s)$ has a unique pole located at $s = 1+ 1/m = 2 - \epsilon$, contributed by the trivial character. Applying a Tauberian theorem (see, e.g., \cite[II.7,~Theorem~15]{Tenenbaum}), for the line bundle $L = \calO(1)$ metrized as above, we obtain
\[
\mathsf N(G(\Q)_\epsilon, \mathcal L, T) \sim cT^{1 + 1/m}.
\]
for some $c > 0$.

\section{Height integrals I: the trivial character}
\label{sec: height integral}

	In this section, we resume our general analysis and study the height integral 
	\[
	\widehat{\mathsf H}_\epsilon(0, \bold s) 
	= \prod_{v\in \Omega_F} \int_{G(F_v)} \mathsf H_v(\bold x_v, \bold s)^{-1}\, 
	\delta_{\epsilon, v} (\bold x_v) \, \mathrm d \bold x_v
	=: \prod_v \widehat{\mathsf H}_{\epsilon, v}(0, \bold s).
	\]
	Note that the inner function of each Euler factor is trivial on $G(\mathcal O_v)$ for almost all places $v$. 
	We begin by setting up some necessary notation. 
	Each $c \in \mathbb{R}$ gives rise to a tube domain
	\[
	\mathsf{T}_{> c} 
	= \left\{\bold s \in \Pic(X)_{\mathbb C}: 
	\Re(s_\alpha) > \rho_\alpha- \epsilon_\alpha +c\ 
	\text{ for all }  \alpha \in \mathcal A\right\},
	\] 
	where $(\rho_\alpha)_{\alpha \in \mathcal A}$ is the integer vector given by 
	$$-K_X \sim \sum_{\alpha \in \mathcal{A}} \rho_\alpha D_\alpha;$$ recall that 
	$\rho_\alpha \geq 2$ for all $\alpha \in \mathcal{A}$.

	We write
	\[
	D \otimes_F F_v = \bigcup_{\beta \in \mathcal A_v} D_{v, \beta}
	\]
	where the $D_{v, \beta}$ are irreducible components, and we write
	\[
	D_\alpha\otimes_F F_v = \bigcup_{\beta \in \mathcal A_v(\alpha)} D_{v, \beta}.
	\]
	for an analogous decomposition of $D_\alpha\otimes_F F_v$ into irreducible components.

	Given $\beta \in \mathcal A_v$, let us denote by $F_{v,\beta}$ the field of definition for 
	one of the geometric irreducible components of $D_{v,\beta}$, that is, the algebraic closure 
	of $F_v$ inside the function field of $D_{v,\beta}$, and by $f_{v,\beta}$ the extension 
	degree $[F_{v,\beta}:F_v]$. 

	Finally, for any subset $B \subseteq \calA_v$, we define
	\[
	D_{v,B} := \bigcap_{\beta \in B} D_{v,\beta}, \qquad
	D_{v,B}^\circ  := D_{v,B} \setminus \bigcup_{B\subsetneq B' \subset \calA_v}
	\left(\bigcap_{\beta \in B'} D_{v,\beta}\right),
	\]
	with the convention that $D_{v,\emptyset}=X_{F_v}$ and 
	$D_{v,\emptyset}^\circ = G_{F_v}$. 
	The collection $(D_{v,B}^\circ)_{B \subseteq \mathcal{A}_v}$ yields a stratification of 
	the $F_v$-variety $X \otimes_F F_v$ into finitely many locally closed subsets. 
	If $v \not\in S$, then we denote by $\mathcal{D}_{v,B}$ the Zariski closure of $D_{v,B}$ 
	in $\mathcal{X} \otimes_{\mathcal{O}_{F,S}} \mathcal{O}_v$. 
	We define $\mathcal{D}_{v,B}^\circ$ as above. 

\subsection{Places away from $S$} 
\label{subsubsec:placesofgoodreduction}

	We will now study the basic properties of
	\[
	\widehat{\mathsf H}_{\epsilon, v}(0, \bold s) 
	= \int_{G(F_v)} \mathsf H_v(\bold x_v, \bold s)^{-1}\,
	\delta_{\epsilon, v}(\bold x_v) \, \mathrm d \bold x_v
	\]
	in the case that $v\notin S$. 
	
	\subsubsection{Places of good reduction}
	Here we assume that our model 
	$$(\mathcal X_v =\mathcal X\otimes_{\mathcal O_{F,S}}\mathcal O_v, \mathcal D\otimes_{\mathcal O_{F,S}}\mathcal O_v)$$ 
	has good reduction over $\mathcal O_v$ in the sense of 
	Lemma~\ref{lemma: K-invariance of height}, conditions (1) and (2). 
	In this setting we have the following formula 
	which resembles Denef's formula in \cite[Proposition 4.5]{CLT10}:
	\begin{thm}
	We have
	\begin{equation}
	\label{eq:localFTtrivialchar2}
	\frac{1}{\mu_v(\mathcal O_v)^n}\widehat{\mathsf H}_{\epsilon, v}(0,\bold s) 
	= \sum_{B\subset \mathcal A_v}  
	\frac{\#\mathcal{D}_{v, B}^\circ(k_v)}{q_v^{n- \# B}} 
	\prod_{\beta \in B} \left(1-\frac{1}{q_v}\right)
	\frac{q_v^{-m_{\alpha(\beta)}(s_{\alpha(\beta)} - \rho_{\alpha(\beta)} + 1)}}
	{1-q_v^{-(s_{\alpha(\beta)} - \rho_{\alpha(\beta)} + 1)}} .
	\end{equation}
	\end{thm}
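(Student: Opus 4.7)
The plan is to mimic Denef's local calculation at places of good reduction: partition the domain via the reduction map and integrate on each fiber in well-adapted analytic coordinates. Since $D(F_v)$ has Haar measure zero in $X(F_v) = \mathcal X(\mathcal O_v)$, one may replace $G(F_v)$ by $X(F_v)$ in the definition of $\widehat{\mathsf H}_{\epsilon,v}(0,\bold s)$ and stratify via the reduction map $\eta_v$ as
\[
X(F_v) \;=\; \bigsqcup_{B \subseteq \mathcal A_v}\ \bigsqcup_{y \in \mathcal D_{v,B}^\circ(k_v)} \eta_v^{-1}(y).
\]
For each $y \in \mathcal D_{v,B}^\circ(k_v)$, smoothness of the integral model together with the strict normal crossings hypothesis and Hensel's lemma (extending the argument of \S\ref{subsec:intersectionmultiplicities} from one smooth divisor to an snc configuration) supplies analytic local coordinates $(z_1,\dots,z_n)$ identifying $\eta_v^{-1}(y)$ with $\mathfrak m_v^n$ in such a way that each $\mathcal D_{v,\beta}$ with $\beta \in B$ is cut out by $z_\beta = 0$, and no other boundary component meets $\eta_v^{-1}(y)$.

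Next I would translate the three pieces of the integrand into these coordinates. Since the metric comes from the integral model, each section $\mathsf f_\alpha$ factors locally as $\prod_{\beta \in B \cap \mathcal A_v(\alpha)} z_\beta \cdot u_\alpha$ with $|u_\alpha|_v = 1$ throughout $\eta_v^{-1}(y)$, so
\[
\mathsf H_v(\bold x, \bold s)^{-1} \;=\; \prod_{\alpha \in \mathcal A} \|\mathsf f_\alpha(\bold x)\|_v^{s_\alpha} \;=\; \prod_{\beta \in B} |z_\beta|_v^{s_{\alpha(\beta)}}.
\]
By the discussion in \S\ref{subsec:intersectionmultiplicities}, the Campana indicator $\delta_{\epsilon,v}(\bold x) = 1$ on $\eta_v^{-1}(y)$ translates into the constraint $v(z_\beta) \geq m_{\alpha(\beta)}$ for every $\beta \in B$ (while the unconstrained coordinates $z_\beta$ with $\beta \notin B$ range freely over $\mathfrak m_v$). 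Finally, the translation-invariant form $\omega_{\mathrm{inv}} = dx_1 \wedge \cdots \wedge dx_n$ on $G$ is a rational section of $\omega_X$ with divisor exactly $-\sum_\alpha \rho_\alpha D_\alpha$ (this is essentially the definition of the integers $\rho_\alpha$ realizing $-K_X \sim \sum \rho_\alpha D_\alpha$), giving the change-of-variables identity
\[
|dx_1 \cdots dx_n|_v \;=\; \prod_{\beta \in B} |z_\beta|_v^{-\rho_{\alpha(\beta)}} \, |dz_1 \cdots dz_n|_v .
\]

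With these identities in hand, the integral over $\eta_v^{-1}(y)$ factorizes as a product of one-dimensional integrals. Each unconstrained coordinate contributes $\mu_v(\mathfrak m_v) = q_v^{-1}\mu_v(\mathcal O_v)$, while each boundary coordinate contributes
\[
\int_{v(z)\geq m_{\alpha(\beta)}} |z|_v^{s_{\alpha(\beta)}-\rho_{\alpha(\beta)}}\,|dz|_v \;=\; \mu_v(\mathcal O_v)\,(1-q_v^{-1})\,\frac{q_v^{-m_{\alpha(\beta)}(s_{\alpha(\beta)} - \rho_{\alpha(\beta)} + 1)}}{1 - q_v^{-(s_{\alpha(\beta)} - \rho_{\alpha(\beta)} + 1)}},
\]
by summing a geometric series in the spirit of Lemma~\ref{lem:10.3_CLT02}. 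Summing over $y \in \mathcal D_{v,B}^\circ(k_v)$ and then over $B \subseteq \mathcal A_v$, and dividing by $\mu_v(\mathcal O_v)^n$, yields the claimed formula.

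The principal technical obstacle is justifying the chain of coordinate identities: at each $y$ one must produce $(z_1,\dots,z_n)$ so that both the sections $\mathsf f_\alpha$ and the invariant volume form $\omega_{\mathrm{inv}}$ have the asserted factorizations with a Jacobian unit of $v$-adic norm exactly $1$. This is standard at a place of good reduction for a smooth integral model with snc boundary, but some care is needed to handle the possible geometric reducibility of an $F_v$-component $D_{v,\beta}$: the indexing by $\mathcal A_v$ rather than $\bar{\mathcal A}$ is precisely engineered so that each $F_v$-irreducible boundary component through $y$ appears exactly once, carrying the weights $s_{\alpha(\beta)}$, $\rho_{\alpha(\beta)}$ and $m_{\alpha(\beta)}$ inherited from its Galois-orbit representative $\alpha \in \mathcal A$.
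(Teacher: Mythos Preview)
Your proof is correct and follows essentially the same approach as the paper: stratify by the reduction map, choose adapted analytic coordinates on each fiber using smoothness of the model and the snc hypothesis, and reduce to a product of one-variable geometric series. The only cosmetic difference is that the paper packages the change of variables by passing through the Tamagawa measure $\mathrm d\tau = \mathrm d\bold x_v / \|\omega\|_v$ and the identity $\|\omega\|_v = \mathsf H_v(\cdot,\boldsymbol\rho)$, thereby absorbing the shift $\bold s \mapsto \bold s - \boldsymbol\rho$ into the height, whereas you track the Jacobian $\prod_{\beta\in B}|z_\beta|_v^{-\rho_{\alpha(\beta)}}$ directly; the two are equivalent. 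One small point the paper makes explicit and you leave implicit: if $\mathcal D_{v,B}^\circ(k_v)\neq\emptyset$ then each $D_{v,\beta}$ with $\beta\in B$ acquires a $k_v$-point, hence an $F_v$-point by Hensel, and is therefore geometrically irreducible, which is what guarantees the single local equation $z_\beta=0$.
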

	
	\begin{proof}
	
	To avoid clutter, we first assume that 
	$\mu_v(\mathcal O_v) =1$. Set $\boldsymbol{\rho} = (\rho_\alpha)_{\alpha \in\calA}$. 
	Let $\omega$ be a gauge form on $G$, i.e., a nowhere vanishing differential form of top 
	degree. Considering $\omega$ as a rational section of $\mathcal{O}(K_X)$ equipped with 
	the adelic metrization fixed in the previous section, we have the equality
	\[
	\|\omega\|_v = {\mathsf H}_v(\bold x_v,\boldsymbol{\rho}). 
	\] 
	Writing $$\mathrm d\tau = \frac{\mathrm d\bold x_v}{\|\omega\|_v}$$ for the 
	corresponding Tamagawa measure, we see that 
	\begin{align*}
	\widehat{\mathsf H}_{\epsilon, v} (0,\bold s) 
	&= \int_{G(F_v)} \mathsf H_v(\bold x_v, \bold s)^{-1} \, 
	\mathsf H_v(\bold x_v, \boldsymbol{\rho}) \, 
	\delta_{\epsilon, v}(\bold x_v) \, \frac{\mathrm d \bold x_v}{\|\omega\|_v} \\
	&= \int_{G(F_v)} \mathsf H_v(\bold x_v, \bold s - \boldsymbol{\rho})^{-1} \, 
	\delta_{\epsilon, v}(\bold x_v) \, \mathrm d\tau
	\end{align*}
	Breaking up this integral over the fibres of the reduction map 
	$\eta_v\colon G(F_v) \to \calX(k_v)$ we obtain
	\[
	\widehat{\mathsf H}_{\epsilon, v}(0,\bold s)  
	= \sum_{B \subset \calA_v} \sum_{y \in \mathcal{D}_{v,B}^\circ(k_v)} 
	\int_{\eta_v^{-1}(y)} \mathsf H_v(\bold x_v, \bold s - \boldsymbol{\rho})^{-1}\, 
	\delta_{\epsilon, v}(\bold x_v) \, \mathrm d\tau.
	\]
	We now compute the inner integral
	\begin{equation}
	\label{eq:innersumtrivialchar}
	\int_{\eta_v^{-1}(y)} \mathsf H_v(\bold x_v, \bold s - \boldsymbol{\rho})^{-1}\, 
	\delta_{\epsilon, v}(\bold x_v) \, \mathrm d\tau.
	\end{equation}

	If $B = \emptyset$, then there is a measure preserving analytic isomorphism 
	$\eta_v^{-1}(y) \cong \mathfrak{m}_v^n$. Since any $\bold x_v \in \eta_v^{-1}(y)$ 
	is integral with respect to $\mathcal{D}$, we have 
	$$\mathsf H_v(\bold x_v, \bold s - \boldsymbol{\rho}) 
	= \delta_{\epsilon, v}(\bold x_v) = 1$$ for all such $\bold x_v$, so that 
	\eqref{eq:innersumtrivialchar} simply evaluates to $1/q_v^n$.

	If $B \neq \emptyset$, then every $\beta \in B$ lies above a unique 
	$\alpha(\beta) \in \mathcal{A}$. If $\mathcal{D}_{v, B}^\circ(k_v) \neq \emptyset$, 
	then $\mathcal{D}_{v,\beta}(k_v) \neq \emptyset$ for all $\beta \in B$. 
	Using Hensel's lemma, we deduce that $D_{v, \beta}$ has an $F_v$-rational point, 
	and hence is geometrically irreducible; in particular, $F_{v,\beta} = F_v$ for all $\beta\in B$. 
	Writing $B = \{\beta_1,\cdots,\beta_\ell\}$ and $\alpha_i = \alpha(\beta_i)$ for simplicity, 
	we see as in \S \ref{subsec:intersectionmultiplicities} that there exist analytic local 
	coordinates $(z_1,\cdots,z_n)$ on $\eta_v^{-1}(y)$ inducing a measure-preserving 
	analytic isomorphism $\eta_v^{-1}(y) \cong \mathfrak m_v^{n}$, such that 
	$D_{v,\beta_i}(F_v) \cap \eta_v^{-1}(y)$ is given by $z_i = 0$, for $i = 1,\cdots,\ell$. 

	The integral \eqref{eq:innersumtrivialchar} can now be rewritten as
	\[  
	\int_{\eta_v^{-1}(y)} \mathsf H_v(\bold x_v, \bold s - \boldsymbol{\rho})^{-1}\, 
	\delta_{\epsilon, v}(\bold x_v) \, \mathrm d\tau 
	= \int_{\mathfrak m_v^{n}} \prod_{i = 1}^\ell 
	\left(\left|z_{i}\right|_v^{s_{\alpha_i} - \rho_{\alpha_i}}\, \delta_{\epsilon,v} (z_{i}) \, 
	\mathrm dz_{i}\right)\,\prod_{i > \ell} \mathrm dz_i
	\] 
	where 
 	\[
    \delta_{\epsilon,v}(z_{i}) =1 \iff \epsilon_{\alpha_i} \neq 1 \text{ and }   
    \mathrm{val}_v(z_{i}) \geq m_{i} := \frac{1}{1 - \epsilon_{\alpha_i}}  
    \]
	by definition of $ \delta_{\epsilon,v}$ (see \S\ref{subsec:intersectionmultiplicities}).

	Therefore, if $\Re(s_{\alpha_i})-\rho_{\alpha_i}+1>0$ for all $i\in\{1,\dots,\ell\}$, 
	we obtain
	\begingroup
	\allowdisplaybreaks
    \begin{eqnarray*}
   	\int_{\eta_v^{-1}(y)} \mathsf H_v(\bold x_v, \bold s - \boldsymbol{\rho})^{-1}\,  
   	\delta_{\epsilon, v}(\bold x_v) \, \mathrm d\tau 
   	& = &
    \frac{1}{q_v^{n-\ell}} \prod_{i = 1}^\ell 
    \sum_{j = m_{i}}^\infty q_v^{-j (s_{\alpha_i} - \rho_{\alpha_i})}\cdot 
    \Vol(\pi_{v}^j\calO_{v}^\times) \\
    &= & 
    \frac{1}{q_v^{n-\ell}}\prod_{i = 1}^\ell 
    \sum_{j = m_{i}}^\infty q_v^{-j(s_{\alpha_i} - \rho_{\alpha_i})}\cdot 
    q_v^{- j}\left(1 - \frac{1}{q_v}\right) \\
    &=& 
    \frac{1}{q_v^{n-\ell}} \prod_{i = 1}^\ell
    \left(1-\frac{1}{q_v}\right)
    \frac{q_v^{-m_{i}(s_{\alpha_i} - \rho_{\alpha_i} + 1)}}
    {1-q_v^{-(s_{\alpha_i} - \rho_{\alpha_i} + 1)}} ,
    \end{eqnarray*}  
    \endgroup
    where $\pi_v$ denotes a choice of generator for $\mathfrak{m}_v$.

	Summing the contributions coming from different subsets of $\mathcal{A}_v$, we 
	obtain the equality
	\begin{equation}
	\label{eq:localFTtrivialchar}
	\widehat{\mathsf H}_{\epsilon, v}(0,\bold s) = 
	\sum_{B\subset \mathcal A_v}  
	\frac{\#\mathcal{D}_{v, B}^\circ(k_v)}{q_v^{n- \# B}} 
	\prod_{\beta \in B} \left(1-\frac{1}{q_v}\right)
	\frac{q_v^{-m_{\alpha(\beta)}(s_{\alpha(\beta)} - \rho_{\alpha(\beta)} + 1)}}
	{1-q_v^{-(s_{\alpha(\beta)} - \rho_{\alpha(\beta)} + 1)}} .
	\end{equation}
	Here we interpret the term 
	$q_v^{-m_{\alpha(\beta)}(s_{\alpha(\beta)}-\rho_{\alpha(\beta)} + 1)}$ to be zero 
	whenever $\epsilon_{\alpha(\beta)} = 1$.

	When $\mu_v(\mathcal O_v) \neq 1$, the same arguments show our statement.
	\end{proof}
	
	\subsubsection{Places of bad reduction}
	
	Here we still assume that $v\not\in S$, but now our model has bad reduction at $v$,
	i.e., at least one of the assumptions (1) and (2) of 
	Lemma~\ref{lemma: K-invariance of height} is not satisfied.
	We have the following proposition:
	
	\begin{prop}
	\label{prop:badoutsideS}
	The function
	\[
	\widehat{\mathsf H}_{\epsilon, v}(0, \bold s) 
	= \int_{G(F_v)} \mathsf H_v(\bold x_v, \bold s)^{-1}\,
	\delta_{\epsilon, v}(\bold x_v) \, \mathrm d \bold x_v
	\]
	is holomorphic in $\bold s$ whenever $\Re (s_\alpha) > \rho_\alpha-1$ 
	for all $\alpha \in \mathcal A$ such that $\epsilon_{\alpha}<1$.
	\end{prop}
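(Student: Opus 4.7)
The plan is to reduce to the good-reduction analysis at $v$ via resolution of singularities and then exploit the Campana condition to handle components of weight $1$.

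Since $\mathrm{char}(F)=0$, Hironaka's theorem yields a proper birational morphism $\varphi\colon \mathcal Y\to \mathcal X\otimes_{\mathcal O_{F,S}}\mathcal O_v$ with $\mathcal Y$ regular and flat over $\mathcal O_v$, such that $\varphi^{-1}(\mathcal D_{\mathrm{red}})\cup \mathcal Y_{k_v}$ is strict normal crossings; on the generic fiber $\varphi$ is an isomorphism. Writing $\varphi^*\mathcal D_\alpha=\sum_{\beta\in\mathcal B}e_{\beta,\alpha}\mathcal E_\beta$ and using the bijection $\mathcal Y(\mathcal O_v)\cong\mathcal X(\mathcal O_v)$ from the valuative criterion, stratify $\mathcal Y(k_v)=\bigsqcup_{B\subseteq\mathcal B}\mathcal E_B^\circ(k_v)$ as in the good-reduction setting. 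For any $\bold y\in\eta_v^{-1}(\mathcal E_B^\circ(k_v))$, the relation $n_v(\mathcal D_\alpha,\varphi(\bold y))=\sum_{\beta\in B}e_{\beta,\alpha}n_v(\mathcal E_\beta,\bold y)$ shows that if $\epsilon_\alpha=1$ and some $\beta\in B$ has $e_{\beta,\alpha}>0$, then the Campana condition $n_v(\mathcal D_\alpha,\cdot)=0$ fails on every lift, so the stratum contributes zero to the integral.

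On each remaining \emph{active} stratum, choose analytic local coordinates $(z_1,\dots,z_n)$ on $\eta_v^{-1}(y)\cong\mathfrak m_v^n$ with $\mathcal E_{\beta_i}\cap\eta_v^{-1}(y)=\{z_i=0\}$ for $i=1,\dots,\ell$. Arguing exactly as for \eqref{eq:localFTtrivialchar2}, the contribution of this stratum is dominated (by discarding the Campana restriction, which only shrinks the domain) by a product of one-variable integrals of geometric-series type, whose convergence reduces, for each $i$, to
\[
\Re\Bigl(\sum_\alpha e_{\beta_i,\alpha}s_\alpha\Bigr)>\tilde\rho_{\beta_i}-1,
\]
where $\tilde\rho_\beta$ is the coefficient of $\mathcal E_\beta$ in $-K_{\mathcal Y}$. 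Since the SNC pair $(X_v,D_{\mathrm{red}})$ is log canonical, the discrepancy formula gives $\tilde\rho_\beta=\sum_\alpha e_{\beta,\alpha}(\rho_\alpha-1)-a_\beta^{\mathrm{log}}$ with $a_\beta^{\mathrm{log}}\geq 0$, turning the inequality into $\sum_\alpha e_{\beta_i,\alpha}(\Re(s_\alpha)-\rho_\alpha+1)>-a_\beta^{\mathrm{log}}-1$. On an active stratum the sum ranges only over $\alpha$ with $\epsilon_\alpha<1$ (for any $\alpha$ with $e_{\beta_i,\alpha}>0$ one has $\epsilon_\alpha<1$, since the opposite was excluded above), and at least one such $\alpha$ exists because $\mathcal E_{\beta_i}\subseteq\varphi^{-1}(\mathcal D_{\mathrm{red}})$; hence each term is nonnegative and at least one is positive under the hypothesis $\Re(s_\alpha)>\rho_\alpha-1$, so the required inequality holds.

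The main technical point is controlling the contributions of exceptional divisors and tracking the Campana indicator through the birational modification; the crux is that log canonicity of the SNC pair ($a_\beta^{\mathrm{log}}\geq 0$) prevents any tightening of the convergence region at exceptional strata, so the estimate remains driven purely by the coefficients $\rho_\alpha$ on $X$ rather than by the $\tilde\rho_\beta$ on $\mathcal Y$. A minor remark: any two smooth adelic metrics at $v$ differ by a function bounded above and below by positive constants on the compact space $X(F_v)$, so the asserted region of holomorphy is independent of the chosen metric and of the auxiliary model $\mathcal Y$.
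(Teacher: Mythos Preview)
Your approach has a genuine gap at the very first step. You invoke Hironaka's theorem to produce a regular model $\mathcal Y \to \mathcal X\otimes_{\mathcal O_{F,S}}\mathcal O_v$ with $\varphi^{-1}(\mathcal D_{\mathrm{red}})\cup\mathcal Y_{k_v}$ strict normal crossings, but Hironaka's theorem requires the base to be of characteristic~$0$. The ring $\mathcal O_v$ has positive residue characteristic, so neither resolution of singularities nor embedded resolution is available over $\Spec\mathcal O_v$ in general; the appeal to $\mathrm{char}(F)=0$ is not the relevant hypothesis. Without such a model the subsequent stratification over $\mathcal Y(k_v)$, the computation of $n_v(\mathcal D_\alpha,\cdot)$ via the multiplicities $e_{\beta,\alpha}$, and the discrepancy bookkeeping all collapse. (De Jong alterations do not help either: they are generically finite rather than birational, so the bijection $\mathcal Y(\mathcal O_v)\cong\mathcal X(\mathcal O_v)$ you use would fail.)

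The paper's proof avoids integral models entirely for this step. It appeals to \cite[Lemma~4.1]{CLT10}, which is an Igusa-type convergence statement proved on the $F_v$-analytic manifold $X(F_v)$ using smooth partitions of unity and local analytic charts adapted to the SNC divisor $D$ on the generic fibre (where characteristic~$0$ resolution is legitimate). That immediately gives holomorphy on $\Re(s_\alpha)>\rho_\alpha-1$ for all $\alpha$. The extension to the larger region where only the indices with $\epsilon_\alpha<1$ are constrained is then a one-line observation: for $\epsilon_\alpha=1$ the Campana condition forces $n_v(\mathcal D_\alpha,\cdot)=0$, so $D_\alpha(F_v)$ is disjoint from the compact set $X(F_v)_\epsilon$, whence $\|\mathsf f_\alpha\|_v$ is bounded above and below by positive constants on the support of $\delta_{\epsilon,v}$ and the variable $s_\alpha$ imposes no constraint. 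Your treatment of the weight-$1$ components via vanishing strata is correct in spirit and reaches the same conclusion, but the route through an arithmetic log resolution and log-canonical discrepancies is both unavailable and unnecessary.
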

	
	\begin{proof}
	We observe that an application of \cite[Lemma 4.1]{CLT10} with $\Phi=\delta_{\epsilon,v}$ gives 
	holomorphy of $\widehat{\mathsf H}_{\epsilon, v}(0, \bold s)$ whenever $\Re (s_\alpha) > \rho_\alpha-1$ 
	for all $\alpha \in \mathcal A$. 
	Indeed, let $\omega$ be a $G$-invariant top form on $G$. Then we have
	\begin{align*}
	\widehat{\mathsf H}_{\epsilon, v}(0, \bold s) 
	&= \int_{G(F_v)} \mathsf H_v(\bold x_v, \bold s)^{-1}\,
	\delta_{\epsilon, v}(\bold x_v) \, \mathrm d \bold x_v\\
	&= \int_{G(F_v)} \mathsf H_v(\bold x_v, \bold s)^{-1}\,
	\delta_{\epsilon, v}(\bold x_v) \|\omega\|_v\, \frac{\mathrm d \bold x_v}{\|\omega\|_v}\\
	&= \int_{X(F_v)} \mathsf H_v(\bold x_v, \bold s -  \boldsymbol{\rho})^{-1}\,
	\delta_{\epsilon, v}(\bold x_v) \, \mathrm d \tau ,
	\end{align*}
	where $\boldsymbol{\rho} = (\rho_\alpha)_{\alpha \in \mathcal A}$ and 
	$\tau$ is the local Tamagawa measure.
	Next, recall that
	 $$\mathsf H_v(\bold x_v, \bold s - \boldsymbol{\rho})^{-1} 
	 = \prod_{\alpha\in\mathcal A}\|\mathsf f_\alpha(\bold x_v)\|^{s_\alpha - \rho_\alpha}_v,$$ so in the notation of~\cite[Lemma 4.1]{CLT10}, we have 
	 \[
	 \widehat{\mathsf H}_{\epsilon, v}(0, \bold s) = \mathscr{I}\left(\delta_{\epsilon,v};(s_\alpha - \rho_\alpha + 1)_{\alpha\in \mathcal A}\right),
	 \]
	 which is holomophic whenever $\Re (s_\alpha - \rho_\alpha + 1) = \Re (s_\alpha) - \rho_\alpha + 1 > 0$ for $\alpha \in \mathcal A$. Finally, 
	observe that 
	for all $\alpha\in\mathcal A$ such that $\epsilon_\alpha = 1$, the set $D_\alpha(F_v)$ is disjoint from the support 
	of $\delta_{\epsilon, v}$, hence $\|\mathsf f_\alpha\|_v$ is a nowhere vanishing bounded function 
	on $X(F_v)_\epsilon$. Thus the integral that defines $\widehat{\mathsf H}_{\epsilon, v}(0, \bold s)$ 
	is absolutely convergent also for all $\mathbf s$ that satisfy $\Re (s_\alpha) > \rho_\alpha-1$ only for
	$\alpha \in \mathcal A$ such that $\epsilon_\alpha<1$.
	\end{proof}
	
\subsection{Places contained in $S$} 

	Assume now that $v \in S$. In this case, $\delta_{\epsilon, v} \equiv 1$ by definition. 
	Therefore the local height integral for Campana points coincides with the usual local 
	height integral, so that we do not need to do anything new:

	\begin{prop}
	\label{prop:badreduction_trivial}
	The height integral $\widehat{\mathsf H}_v(0, \bold s)$ is holomorphic when 
	$\Re(s_\alpha) > \rho_\alpha -1$ for all $\alpha\in\mathcal A$. 
	If $L = \sum_{\alpha \in \mathcal A} \lambda_\alpha D_\alpha$ is a big divisor on $X$, 
	and if
	\[
	\widetilde{a} := \tilde{a}((X, D_{\mathrm{red}}), L) \quad \text{and} \quad 
	b := b(F_v, (X, D_{\mathrm{red}}), L)
	\]
	(as in \S\ref{subsec:Clemens}), then the function
	\[
	s \mapsto (\zeta_{F_v}(s-\widetilde{a}))^{-b}\cdot \widehat{\mathsf H}_v(0, sL)
	\]
	admits a holomorphic continuation to the domain $\Re (s) > \widetilde{a} -\delta$ for some 
	$\delta > 0$. Moreover, the function $s \mapsto \widehat{\mathsf H}_v(0, sL)$ has a 
	pole at $s = \widetilde{a}$ of order $b$. 
	\end{prop}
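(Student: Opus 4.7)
Since $v \in S$ the weight function $\delta_{\epsilon,v}$ is identically $1$, so $\widehat{\mathsf H}_v(0,\bold s)$ is a classical local height integral. My plan is to use a finite partition of unity on the compact analytic manifold $X(F_v)$ adapted to the strict normal crossings divisor $D\otimes_F F_v = \bigcup_{\beta\in\mathcal A_v} D_{v,\beta}$, so that on each chart $U_i$ there exist $F_v$-analytic coordinates $(z_1,\dots,z_n)$ in which the boundary components meeting $U_i$ are cut out by $\{z_1 = 0\},\dots,\{z_{k_i} = 0\}$, with $\{z_j = 0\} \subseteq D_{v,\beta_j}$. Using the identification $\|\omega\|_v = \mathsf H_v(\bold x_v,(\rho_\alpha)_\alpha)$ for a gauge form $\omega$ on $G$, together with the fact that each defining section $\mathsf f_\alpha$ vanishes simply along $\{z_j = 0\}$ when $\beta_j\in\mathcal A_v(\alpha)$, the integrand $\mathsf H_v(\bold x_v,\bold s)^{-1}\,d\bold x_v$ rewrites in each chart as $\prod_{j=1}^{k_i}|z_j|_v^{s_{\alpha(\beta_j)} - \rho_{\alpha(\beta_j)}}$ times a smooth nowhere-vanishing density in the remaining directions. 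The resulting one-dimensional integrals $\int|z|_v^{s_\alpha-\rho_\alpha}\phi(z)\,dz$ converge absolutely and uniformly on compacta for $\Re(s_\alpha)>\rho_\alpha-1$, proving the first assertion.

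For the second assertion, substitute $\bold s = sL$. Each one-dimensional factor attached to $D_{v,\beta_j}$ extends meromorphically in $s$ to $\mathbb C$ with a unique simple pole at $s = (\rho_{\alpha(\beta_j)} - 1)/\lambda_{\alpha(\beta_j)}$ and local factor comparable to $\zeta_{F_v}(s\lambda_{\alpha(\beta_j)} - \rho_{\alpha(\beta_j)} + 1)$. The rightmost of these values is precisely $a = \tilde a((X,D_{\mathrm{red}}),L)$, attained exactly on those components $D_{v,\beta}$ not appearing in the support of $aL + K_X + D_{\mathrm{red}}$ — that is, on the vertices of the analytic Clemens complex $\mathcal C_{F_v}^{\mathrm{an}}(D_{\mathrm{red}},L)$. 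On a chart $U_i$ meeting a stratum $\bigcap_{\beta\in B}D_{v,\beta}$ with $B$ consisting of extremal components, the product factorization produces a pole of order $\#B$ at $s=a$, while the non-extremal coordinate directions integrate to functions holomorphic on $\Re(s) > a - \delta$ for some $\delta>0$ depending only on the finite family of slopes $(\rho_\alpha-1)/\lambda_\alpha$. Taking the maximum of $\#B$ over nonempty such strata — which by definition of $\mathcal C_{F_v}^{\mathrm{an}}(D_{\mathrm{red}},L)$ equals $b = b(F_v,(X,D_{\mathrm{red}}),L)$ — and summing the chart contributions via the partition of unity then produces a pole of order exactly $b$ at $s=a$.

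The main obstacle will be dealing cleanly with strata $D_{v,B}$ for which $D_{v,B}(F_v)=\emptyset$: such strata cannot support a chart containing an $F_v$-point of $X$, so the partition of unity is supported away from them and the corresponding local integrals remain bounded — hence holomorphic in a right neighborhood of $a$ — regardless of the apparent resonance of their coordinate exponents. This is exactly the content that makes the analytic (rather than algebraic) Clemens complex the right object to record the pole order. Lemma~\ref{lem:bbirational}, the birational invariance of $\tilde a$ and $b$, provides the freedom to pass to a log resolution if clean local coordinates in which $D\otimes_F F_v$ is simultaneously coordinatized are not directly available, ensuring that the pole order we compute really is the intrinsic invariant $b(F_v,(X,D_{\mathrm{red}}),L)$. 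Putting these pieces together, multiplication of $\widehat{\mathsf H}_v(0, sL)$ by $(\zeta_{F_v}(s-a))^{-b}$ cancels precisely the joint pole of the extremal strata and extends the product holomorphically to $\Re(s)>a-\delta$, as required.
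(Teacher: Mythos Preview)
Your sketch is essentially a reconstruction of the argument underlying \cite[Lemma~4.1 and Proposition~4.3]{CLT10}, which is exactly what the paper invokes as a black box (with $\Phi\equiv 1$). The partition-of-unity reduction to monomial integrands, the identification of the rightmost pole with $a$, and the role of the analytic Clemens complex in recording which strata actually carry $F_v$-points are all the right ingredients, and your remark about empty strata is precisely the reason one uses the \emph{analytic} rather than the algebraic complex.

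One point you should make explicit: you assert that summing the chart contributions produces a pole of order \emph{exactly} $b$, but your argument as written only gives order at most $b$. To rule out cancellation of the leading terms across charts, observe that for real $s>a$ the integrand $\mathsf H_v(\bold x_v, sL)^{-1}$ is strictly positive on $G(F_v)$, so each chart contributes a nonnegative residue and at least one chart (one meeting a maximal face of $\mathcal C_{F_v}^{\mathrm{an}}(D_{\mathrm{red}},L)$) contributes a strictly positive one; hence $\lim_{s\to a^+}(s-a)^b\,\widehat{\mathsf H}_v(0,sL)>0$. This positivity step is what \cite[Proposition~4.3]{CLT10} supplies and is needed for the final clause of the proposition.
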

	
	\begin{proof}
	One may apply \cite[Lemma 4.1, Proposition 4.3]{CLT10}, taking $\Phi \equiv 1$ on $X(F_v)$. 
	Note that in \cite[Proposition 4.3]{CLT10}, the main term of the local height integral is formed 
	by the contributions of faces of maximal dimension in the analytic Clemens complex; 
	however, these contribute to the pole at $\widetilde a$ all with the same order $b$.
	Also note that there is a typo in \cite[Proposition 4.3]{CLT10}: 
	each product of local zeta functions should be taken over $\alpha \in A$, 
	not $\alpha \in \mathcal A$.
	This means that $D_\alpha$ contains an $F_v$-point so one has $F_\alpha = F_v$ for all $\alpha\in A$.
	\end{proof}

\subsection{Euler products} 
\label{subsec:Eulerproducts}

	Given $\alpha \in \mathcal{A}$, we denote by $F_\alpha$ the field of definition for one 
	of the geometric irreducible components of $D_\alpha$; in other words, $F_\alpha$ is 
	the algebraic closure of $F$ in the function field of $D_\alpha$. 

	\begin{prop}
	\label{prop:EulerOneFactor} 
	Let $v$ be a place of $F$ not contained in $S$ and of good reduction for $(X,D_\epsilon)$. 
	Let $\alpha \in \mathcal{A}$. 
	Write $$D_\alpha \otimes_F F_v = \bigcup_{\beta \in \mathcal A_v(\alpha)} D_{v, \beta}$$ 
	for the decomposition of $D_\alpha \otimes_F F_v$ into irreducible components.
\medskip
	\begin{itemize}
	\item[(1)] For $\delta > 0$ sufficiently small, the function 
	$$\bold{s} \mapsto 
	\prod_{\substack{\alpha \in \mathcal A}} \prod_{\beta \in \mathcal A_v(\alpha)} 
	\zeta_{F_{v, \beta}}(m_\alpha(s_\alpha - \rho_\alpha +1))^{-1}\, 
	\widehat{\mathsf H}_{\epsilon,v}(0, \bold s)$$ 
	is holomorphic on $\mathsf T_{>-\delta}$. (If $\epsilon_\alpha = 1$, we interpret 
	$\zeta_{F_{v, \beta}}(m_\alpha(s_\alpha - \rho_\alpha +1))^{-1}$ to be $1$.)	
\medskip
	\item[(2)] For $\delta > 0$ sufficiently small, there exists $\delta' > 0$ such that 
	\[
	\prod_{\substack{ \alpha \in \mathcal A}} \prod_{\beta \in \mathcal A_v(\alpha)} 
	\zeta_{F_{v, \beta}}(m_\alpha(s_\alpha - \rho_\alpha +1))^{-1}\, 
	\widehat{\mathsf H}_{\epsilon, v}(0, \bold s) = 1 + O(q_v^{-(1+\delta')}),
	\]
	for any $\bold s \in \mathsf T_{>-\delta}$.
	\end{itemize}
	\end{prop}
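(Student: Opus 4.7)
The plan is to derive both parts from the explicit Denef-style formula \eqref{eq:localFTtrivialchar2} for $\widehat{\mathsf H}_{\epsilon,v}(0,\bold s)$ at good-reduction places $v\notin S$, combined with Lang-Weil estimates for $\#\mathcal{D}_{v,B}^\circ(k_v)$. I would set $u_\alpha := q_v^{-(s_\alpha - \rho_\alpha + 1)}$, so that each regularizing factor reads $\zeta_{F_{v,\beta}}(m_\alpha(s_\alpha - \rho_\alpha + 1))^{-1} = 1 - u_\alpha^{f_{v,\beta}m_\alpha}$, and formula \eqref{eq:localFTtrivialchar2} exhibits $\widehat{\mathsf H}_{\epsilon,v}(0,\bold s)/\mu_v(\mathcal{O}_v)^n$ as a finite sum indexed by $B \subseteq \mathcal{A}_v$ whose only singularities stem from the factors $(1-u_\alpha)^{-1}$ on the lines $\Re(s_\alpha) = \rho_\alpha - 1$. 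Part (1) then follows immediately: since only $\alpha$ with $\epsilon_\alpha < 1$ contribute non-trivially, one has $\rho_\alpha - 1 < \rho_\alpha - \epsilon_\alpha$, so any $\delta < \min_\alpha 1/m_\alpha$ places these pole lines strictly outside $\mathsf T_{>-\delta}$; hence $\widehat{\mathsf H}_{\epsilon,v}(0,\bold s)$ is already holomorphic on $\mathsf T_{>-\delta}$, and multiplication by the entire regularization $R(\bold s):=\prod_{\alpha,\beta}(1-u_\alpha^{f_{v,\beta}m_\alpha})$ preserves this.

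For part (2), the key point is that $R(\bold s)$ must cancel every term of size $\gtrsim q_v^{-1}$ in the expansion. I would partition the summands $T_B$ in \eqref{eq:localFTtrivialchar2} into \emph{coherent} ones (those $B$ with $f_{v,\beta}=1$ for every $\beta \in B$) and \emph{incoherent} ones. For incoherent $B$ containing some $\beta_0$ with $f_{v,\beta_0}>1$, any $k_v$-rational point of $\mathcal{D}_{v,\beta_0}$ must be $\mathrm{Gal}(\overline{k_v}/k_v)$-fixed, which forces it to lie in the intersection of all geometric components of $\mathcal{D}_{v,\beta_0}$, a subvariety of dimension at most $n-2$. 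This yields $\#\mathcal{D}_{v,B}^\circ(k_v) = O(q_v^{n-|B|-1})$ when $|B|=1$, and combined with the estimate $|u_\alpha| = O(q_v^{-1/m_\alpha + \delta})$ on $\mathsf T_{>-\delta}$, bounds the total incoherent contribution by $O(q_v^{-(1+\delta')})$ for $\delta$ small. For coherent $B$, Lang-Weil gives $\#\mathcal{D}_{v,B}^\circ(k_v) = q_v^{n-|B|}\bigl(1 + O(q_v^{-1/2})\bigr)$ after absorbing the count of Galois-stable geometric components, so the coherent sum matches, to leading order, the product $\prod_{\beta:f_{v,\beta}=1}\bigl(1 + (1-q_v^{-1})u_{\alpha(\beta)}^{m_{\alpha(\beta)}}/(1 - u_{\alpha(\beta)})\bigr)$. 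Applying the algebraic identity
\[
(1-u^m)\left(1 + (1-q_v^{-1})\tfrac{u^m}{1-u}\right) = 1 + u^{m+1}\tfrac{1-u^{m-1}}{1-u} - q_v^{-1}u^m\tfrac{1-u^m}{1-u}
\]
factor by factor reduces $R(\bold s)\cdot\widehat{\mathsf H}_{\epsilon,v}(0,\bold s)/\mu_v(\mathcal{O}_v)^n$ to a product of terms of shape $1 + O(|u|^{m+1}) + O(q_v^{-1}|u|^m)$, each of which is $1 + O(q_v^{-(1+\delta')})$ on $\mathsf T_{>-\delta}$ provided $\delta < 1/(m(m+1))$ for $m := \max_\alpha m_\alpha$. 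The remaining $R$-factors (for $\beta$ with $f_{v,\beta}>1$) contribute $1 - O(q_v^{-f_{v,\beta}(1-m\delta)})$, which is also absorbed into the error.

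The main obstacle will be the detailed combinatorial bookkeeping needed to match the cancellations from $R(\bold s)$ against the coherent-$B$ sum, to absorb the Lang-Weil error terms and the cross interactions between distinct $\beta$-factors, and---most delicately---to rigorously establish the residue-field geometric bound $\#\mathcal{D}_{v,\{\beta\}}^\circ(k_v) = O(q_v^{n-2})$ for $\beta$ with $f_{v,\beta}>1$ via a precise analysis of the $\mathrm{Gal}(\overline{k_v}/k_v)$-action on the geometric irreducible components of $\mathcal{D}_{v,\beta}$, with implied constants independent of $v$ so that the estimate is genuinely uniform.
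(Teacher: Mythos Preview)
Your approach is correct and will work, but it is considerably more elaborate than the paper's proof, which proceeds by a direct three-case split on $|B|$ rather than your coherent/incoherent partition plus a factor-by-factor algebraic identity.

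The paper simply observes that in formula~\eqref{eq:localFTtrivialchar2}: (i) the term $B=\emptyset$ contributes $1$; (ii) any term with $|B|\geq 2$ is already $O(q_v^{-(1+\delta')})$ on $\mathsf T_{>-\delta}$, since each of its $\geq 2$ factors is $O(q_v^{-(1-m\delta)})$, so no factorization or cancellation is needed there; and (iii) for $|B|=1$ the key point is that if $\mathcal D_{v,\{\beta\}}^\circ(k_v)\neq\emptyset$ then Hensel's lemma (invoked earlier in \S\ref{subsec:intersectionmultiplicities}) lifts a $k_v$-point to an $F_v$-point of the smooth divisor $D_{v,\beta}$, forcing $D_{v,\beta}$ to be geometrically irreducible, i.e.\ $f_{v,\beta}=1$. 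So the case $f_{v,\beta}>1$ contributes \emph{nothing}, not merely $O(q_v^{n-2})$ as you argue. Summing up gives
\[
\widehat{\mathsf H}_{\epsilon,v}(0,\bold s)=1+\sum_{\alpha}\sum_{\substack{\beta\in\mathcal A_v(\alpha)\\ f_{v,\beta}=1}} q_v^{-m_\alpha(s_\alpha-\rho_\alpha+1)}+O(q_v^{-(1+\delta')}),
\]
and multiplying by $\prod_{\alpha,\beta}(1-q_v^{-f_{v,\beta}m_\alpha(s_\alpha-\rho_\alpha+1)})$ cancels the middle sum to first order, yielding $1+O(q_v^{-(1+\delta')})$ directly. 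Your algebraic identity and the ``coherent product'' reorganization are valid but unnecessary: since the $|B|\geq 2$ terms are already in the error, one never needs to match the full sum against a product. What your route buys is an explicit closed form for each regularized factor, but this is not used downstream; the paper's route is shorter and sidesteps the uniformity worry you flag, since the Hensel argument gives vanishing rather than a Lang--Weil bound for the $f_{v,\beta}>1$ strata.
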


	\begin{proof}
	We may safely assume that $\mu_v(\mathcal O_v) = 1$. We analyze the right hand side 
	of~\eqref{eq:localFTtrivialchar}, separating the analysis into three cases.
	\begin{itemize}
	\item If $B = \emptyset$, then $\# \mathcal{D}_{v,B}^\circ(k_v) = \# G(k_v) = q_v^n$. 
	Therefore the term corresponding to $B$ in the right hand side of expression 
	\eqref{eq:localFTtrivialchar} for $\widehat{\mathsf H}_{\epsilon, v}(0,\bold s)$ is 
	simply equal to $1$.
\medskip
	\item If $B = \{\beta\}$, define $\alpha(\beta) \in \mathcal{A}$ as in 
	\S\ref{subsubsec:placesofgoodreduction}. If $\mathcal{D}_{v, B}^\circ(k_v) = \emptyset$ 
	or $\epsilon_{\alpha(\beta)}=1$, then $B$ does not contribute to the right hand side 
	of \eqref{eq:localFTtrivialchar}. If, on the other hand, 
	$\mathcal{D}_{v, B}^\circ(k_v) \neq \emptyset$, then 
	$\mathcal{D}_{v,B} \otimes_{\mathcal{O}_v} k_v$ is a geometrically irreducible 
	$k_v$-variety of dimension $n - 1$, so that 
	$$\#\mathcal{D}_{v, B}^\circ(k_v) = q_v^{n-1} + O(q_v^{n-1-\delta_1})$$ 
	for some $\delta_1 > 0$, which may be chosen independently of $\beta$. 
	Therefore by choosing $\delta > 0$ sufficiently small and 
	$\bold{s} \in \mathsf T_{>-\delta}$, the term corresponding to $B = \{\beta\}$ 
	contributes	to the sum in the right hand side of~\eqref{eq:localFTtrivialchar} by
    \[
    q_v^{-m_{\alpha(\beta)}(s_{\alpha(\beta)} - \rho_{\alpha(\beta)} + 1)}
    (1 + O(q_v^{-\delta_2})),
    \]
    for some $\delta_2 > 0$. Since $\delta>0$, we have
    $$\left|q_v^{-m_{\alpha(\beta)}(s_{\alpha(\beta)} - \rho_{\alpha(\beta)} + 1)}\right| 
    \leq q_v^{-(1-m_{\alpha(\beta)} \delta)}$$ 
    whenever $\bold{s} \in \mathsf T_{>-\delta}$. 
    It follows that if we choose $\delta$ sufficiently small and 
    $\bold{s} \in \mathsf T_{>-\delta}$, then the contribution of the term corresponding 
    to $B = \{\beta\}$ can be rewritten as 
    \[
    q_v^{-m_{\alpha(\beta)}(s_{\alpha(\beta)} - \rho_{\alpha(\beta)} + 1)} 
    + O(q_v^{-(1 + \delta')})
    \] for some $\delta' > 0$.
\medskip
	\item Finally, if $\#B \geq 2$, then $\#\mathcal{D}_{v,B}^\circ(k_v) = O(q_v^{n - \#B})$. 
	Moreover, the product in the term in the right hand side of~\eqref{eq:localFTtrivialchar} 
	corresponding to $B$ is $O(q_v^{-(1 + \delta')})$, with $\delta'$ as above, assuming 
	that we have chosen $\bold{s} \in \mathsf T_{>-\delta}$ for $\delta > 0$ sufficiently 
	small. Indeed, each of the factors in the product is bounded from above by 
    $q_v^{-(1-m \delta)}$ for some $m > 0$, as $\bold{s} \in \mathsf T_{>-\delta}$. 
    There are at least two such factors, so the result is bounded from above by 
    $q_v^{-2(1-m \delta)}$ for some $m > 0$, and hence certainly by 
    $q_v^{-(1 + \delta')}$ if $\delta$ is chosen small enough.
	\end{itemize}

\medskip

	We conclude that for $\delta > 0$ small enough and $\bold{s} \in \mathsf T_{>-\delta}$, 
	we have
	\[
	\widehat{\mathsf H}_{\epsilon, v}(0,\bold s) = 1 + \sum_{\substack{\alpha \in \calA}} 
	\sum_{\substack{\beta \in \calA_v(\alpha) \\ f_{v,\beta} = 1}} 
	q_v^{-m_\alpha(s_\alpha - \rho_\alpha + 1)} + O(q_v^{-(1 + \delta')}),
	\] 
	where $f_{v,\beta} = [F_{v,\beta} : F_v]$, and therefore 
	\[
	\widehat{\mathsf H}_{\epsilon, v}(0,\bold s) 
	\prod_{\substack{\alpha \in \calA}} \prod_{\beta \in \calA_v(\alpha)}
	\left( 1 - q_v^{-f_{v,\beta}m_\alpha(s_\alpha - \rho_\alpha + 1)}\right) 
	= 1 + O(q_v^{-(1 + \delta')}).
	\]
	This implies the proposition. 
	\end{proof}

	\begin{cor}
	\label{cor:eulerproduct}
	The function
	\[ 
	\bold{s} \mapsto
	\left(\prod_{\substack{\alpha \in \mathcal A}} 
	\zeta_{F_\alpha}(m_\alpha(s_\alpha -\rho_\alpha +1))^{-1}\right) 
	\prod_{v \notin S}\widehat{\mathsf H}_{\epsilon, v} (0, \bold s)
	\]
	is holomorphic on $\mathsf T_{>-\delta'}$ for sufficiently small $\delta' > 0$.
	\end{cor}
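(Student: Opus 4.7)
The plan is to rewrite each global Dedekind zeta factor as an Euler product over finite places of $F$, match each local zeta factor with the corresponding $\widehat{\mathsf H}_{\epsilon, v}$, and apply Propositions~\ref{prop:badoutsideS} and~\ref{prop:EulerOneFactor} to control the resulting rearranged product. For each $\alpha \in \mathcal A$, the orbits $\beta \in \mathcal A_v(\alpha)$ correspond bijectively to the places of $F_\alpha$ above $v$, with $F_{v,\beta}$ the corresponding completion, so the Euler product for $\zeta_{F_\alpha}$, arranged by places of $F$, reads
\[
\zeta_{F_\alpha}(s) = \prod_{v \in \Omega_F^{<\infty}} \prod_{\beta \in \mathcal A_v(\alpha)} \zeta_{F_{v,\beta}}(s),
\]
absolutely convergent for $\Re(s) > 1$. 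Let $S_0 \subset \Omega_F^{<\infty} \setminus S$ denote the finite set of places outside $S$ where the good-reduction hypothesis of Proposition~\ref{prop:EulerOneFactor} fails, set $S_{\mathrm{fin}} := S \cap \Omega_F^{<\infty}$, and write
\[
A_v(\bold s) := \prod_{\alpha \in \mathcal A} \prod_{\beta \in \mathcal A_v(\alpha)} \zeta_{F_{v,\beta}}\bigl(m_\alpha(s_\alpha - \rho_\alpha + 1)\bigr)^{-1}.
\]

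On the domain of absolute convergence, i.e.\ on $\mathsf T_{>c}$ for $c$ sufficiently large that both $\prod_\alpha \zeta_{F_\alpha}(m_\alpha(s_\alpha-\rho_\alpha+1))$ and $\prod_{v \notin S}\widehat{\mathsf H}_{\epsilon,v}(0,\bold s)$ converge absolutely, rearranging the doubly-indexed Euler product yields the identity
\begin{align*}
&\prod_{\alpha \in \mathcal A} \zeta_{F_\alpha}\bigl(m_\alpha(s_\alpha-\rho_\alpha+1)\bigr)^{-1} \prod_{v \notin S} \widehat{\mathsf H}_{\epsilon, v}(0, \bold s) \\
&\qquad = \prod_{v \in S_{\mathrm{fin}}} A_v(\bold s) \cdot \prod_{v \in S_0} A_v(\bold s) \widehat{\mathsf H}_{\epsilon, v}(0, \bold s) \cdot \prod_{v \notin S \cup S_0} A_v(\bold s) \widehat{\mathsf H}_{\epsilon, v}(0, \bold s).
\end{align*}
I then analyze each of the three factors on a common region $\mathsf T_{>-\delta'}$ with $\delta' > 0$ small. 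The first is a finite product of entire functions, since each $\zeta_{F_{v,\beta}}(m_\alpha(s_\alpha-\rho_\alpha+1))^{-1} = 1 - q_v^{-f_{v,\beta} m_\alpha(s_\alpha-\rho_\alpha+1)}$ is a polynomial in $q_v^{-s_\alpha}$ (interpreted as $1$ when $\epsilon_\alpha = 1$). The second is also a finite product: each $A_v(\bold s)$ is entire, and each $\widehat{\mathsf H}_{\epsilon, v}(0, \bold s)$ with $v \in S_0$ is holomorphic on $\mathsf T_{>-\delta_1}$ for any sufficiently small $\delta_1 > 0$ by Proposition~\ref{prop:badoutsideS}. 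The third is an infinite product: by Proposition~\ref{prop:EulerOneFactor}, for $\delta_2 > 0$ sufficiently small, each factor is holomorphic on $\mathsf T_{>-\delta_2}$ and satisfies $A_v(\bold s)\widehat{\mathsf H}_{\epsilon, v}(0, \bold s) = 1 + O\bigl(q_v^{-(1+\delta_3)}\bigr)$ uniformly in $v$ and in $\bold s \in \mathsf T_{>-\delta_2}$; since $\sum_{v} q_v^{-(1+\delta_3)}$ converges, the infinite product converges absolutely and locally uniformly on $\mathsf T_{>-\delta_2}$, defining a holomorphic function there.

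Setting $\delta' := \min(\delta_1, \delta_2) > 0$, the right-hand side of the displayed identity is holomorphic on the connected tube domain $\mathsf T_{>-\delta'}$. The left-hand side, i.e.\ the function in the statement of the corollary, agrees with it on the nonempty open subset $\mathsf T_{>c}$, so by the identity principle for holomorphic functions of several complex variables it extends holomorphically to all of $\mathsf T_{>-\delta'}$, proving the claim. There is no real obstacle: all the substantive estimates are already contained in Propositions~\ref{prop:badoutsideS} and~\ref{prop:EulerOneFactor}, and what remains is standard bookkeeping — justifying the absolute convergence and rearrangement of the doubly-indexed product, and exploiting the uniformity in $v$ of the estimate in Proposition~\ref{prop:EulerOneFactor}(2).
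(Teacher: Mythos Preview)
Your proof is correct and follows essentially the same approach as the paper: the paper's proof simply invokes Propositions~\ref{prop:EulerOneFactor} and~\ref{prop:badoutsideS} together with the isomorphism $F_\alpha \otimes_F F_v \cong \prod_{\beta \in \mathcal A_v(\alpha)} F_{v,\beta}$, which is exactly the decomposition you use to rearrange the Euler product. You have spelled out in full the bookkeeping (the split into $S_{\mathrm{fin}}$, $S_0$, and good places, the convergence of the infinite product via the uniform $1+O(q_v^{-(1+\delta')})$ estimate, and the identity principle) that the paper leaves implicit.
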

	
	\begin{proof}
	This follows immediately from Proposition~\ref{prop:EulerOneFactor}, 
	and Proposition~\ref{prop:badoutsideS}
	taking into account the fact that
	\[
	F_\alpha \otimes_F F_v \isom \prod_{\beta \in \calA_v(\alpha)} F_{v,\beta}
	\]
	for all $\alpha \in \mathcal{A}$. 
	\end{proof}

\section{Height integrals II: nontrivial characters}
\label{sec: height integral II}

	In this section, we study the height integral
	\[
	\widehat{\mathsf H}_\epsilon(\bold a, \bold s) 
	= \prod_{v\in \Omega_F} \int_{G(F_v)} \mathsf H_v(\bold x_v, \bold s)^{-1}\, 
	\delta_{\epsilon, v} (\bold x_v) \psi_{\bold a, v}(\bold x_v)\, \mathrm d \bold x_v
	=: \prod_{v\in \Omega_F} \widehat{\mathsf H}_{\epsilon, v}(\bold a, \bold s).
	\]
	Note that the inner function of each Euler factor is trivial on $G(\mathcal O_v)$ for almost all places $v$. 
	We introduce some notation.
	For each $\bold a \in G(F)$ with $\bold{a}\neq0$, we denote the linear functional 
	$\bold x \mapsto \bold a \cdot \bold x$ by $f_{\bold a}$, where 
	$\bold a \cdot \bold x$ is the standard inner product.
	Recall from \S \ref{subsec:geometry} that 
	\[
	\mathrm{div}(f_{\bold a}) 
	= E(f_{\bold a}) - \sum_{\alpha \in \mathcal A} d_\alpha(f_{\bold a}) D_\alpha
	\]
	with $d_\alpha(f_{\bold a}) \geq 0$. We define
	\begin{eqnarray*}
	\mathcal A^0(\bold a) & = & 
	\{\alpha \in \mathcal A\mid d_\alpha(f_{\bold a}) = 0\}, 
	\\ 
	\mathcal A^{\geq 1}(\bold a)& = &
	\{\alpha \in \mathcal A\mid d_\alpha(f_{\bold a}) \geq 1\}.
	\end{eqnarray*}
	For any place $v\in \Omega_F$, we define
	\[
	H_v(\bold a) = \max \{|a_1|_v, \dots, |a_n|_v\}
	\]
	and for any non-archimedean place $v$, we take
	\[
	j_v(\bold a) = \min \{v(a_1), \dots, v(a_n)\}
	\]
	so that $H_v(\bold a) = q_v^{-j_v(\bold a)}$. We also define
	\[
	H_{\mathrm{fin}}(\bold a) = \prod_{v \in\Omega_F^{<\infty}} H_v(\bold a), \quad 
	H_\infty(\bold a) = \prod_{v \in\Omega_F^{\infty}}H_v(\bold a).
	\]
	Note that we have
	\begin{equation}
	\label{eq:finite_infinite_heigt_comparison}
	H_\infty(\bold a) \gg H_{\mathrm{fin}}(\bold a)^{-1}.
	\end{equation}

\subsection{Places away from $S$}
\label{subsec:height_intII_goodreduction} 
        
        In this section we assume that $v \not\in S$ and we analyze
        \[
	\widehat{\mathsf H}_\epsilon(\bold a, \bold s) 
	= \prod_{v\in \Omega_F} \int_{G(F_v)} \mathsf H_v(\bold x_v, \bold s)^{-1}\, 
	\delta_{\epsilon, v} (\bold x_v) \psi_{\bold a, v}(\bold x_v)\, \mathrm d \bold x_v
	=: \prod_{v\in \Omega_F} \widehat{\mathsf H}_{\epsilon, v}(\bold a, \bold s).
	\]
	Since 
	$\widehat{\mathsf H}_\epsilon(\bold a, \bold s) = 0$ whenever 
	$\bold a \notin \Lambda_X$ by Proposition \ref{prop:Lambda_X}, we may safely 
	assume that $\bold a \in \Lambda_X$.
	We separate the analysis into the cases of good reduction and bad reduction.

    \subsubsection{Places of good reduction}
	We further assume that
	our model $(\mathcal X, \mathcal D)$ has good reduction at $v$
	in the sense of Lemma~\ref{lemma: K-invariance of height}, conditions (1) and (2).
	We will  distinguish two cases, depending 
	on whether $j_v(\bold a) = 0$ or $j_v(\bold a) \neq 0$; we start with the former case.

	To analyze the integral
	\[
	\widehat{\mathsf H}_{\epsilon, v}(\bold a, \bold s) 
	= \int_{G(F_v)} \mathsf H_v(\bold x_v, \bold s)^{-1} \, \delta_{\epsilon, v} (\bold x_v)\,  
	\psi_{\bold a, v}(\bold x_v)\, \mathrm d \bold x_v
	\]
	in the domain $\mathsf T_{>-\delta}$, we begin by stratifying $G(F_v)$ by the fibers 
	of the reduction map:
	\[
	\frac{1}{\mu_v(\mathcal O_v)^n}\widehat{\mathsf H}_{\epsilon, v}(\bold a,\bold s)  
	= \sum_{B \subset \calA_v} \sum_{y \in \mathcal D_{v,B}^\circ(k_v)} 
	\int_{\eta_v^{-1}(y)} \mathsf H_v(\bold x_v, \bold s - \boldsymbol{\rho})^{-1}\,  	
	\delta_{\epsilon, v}(\bold x_v) \, \psi_{\bold a, v}(\bold x_v) \, \mathrm d\tau.
	\]
	\begin{itemize}
	\item If $B= \emptyset$, then the inner sum is $1$, since 
	$\eta_v^{-1}(\mathcal D_{v,\emptyset}^\circ(k_v))=G(\mathcal O_v)$ and 
	$\bold a \in\Lambda_X$.
\medskip  
	\item If $B =\{\beta\}$, we define $\alpha(\beta)$ as in 
	\S\ref{subsubsec:placesofgoodreduction}. Without loss of generality, we may assume 
	that $D_{v, \beta}$ is geometrically irreducible and that $\epsilon_{\alpha(\beta)}\neq1$.  	
	We distinguish two cases: either $\alpha(\beta) \in~\mathcal A^0(\bold a)$, or 
	$\alpha(\beta) \in~\mathcal A^{\geq 1}(\bold a)$.

	If $\alpha(\beta) \in \mathcal A^0(\bold a)$, then the character $\psi_{\bold a,v}$ 
	becomes trivial on $\eta_v^{-1}(\mathcal D_{v,B}^\circ(k_v))$. Arguing as in the 
	proof of Proposition \ref{prop:EulerOneFactor}, the inner summation contributes
	\[
    q_v^{-m_{\alpha(\beta)}(s_{\alpha(\beta)} - \rho_{\alpha(\beta)} + 1)}
    (1 + O(q_v^{-\delta_1})),
	\]
	for some $\delta_1 >0$, assuming that  
	$\delta >0$ is sufficiently small.

	If, on the other hand, $\alpha(\beta) \in \mathcal A^{\geq 1}(\bold a)$, we set 
	$d:=d_{\alpha(\beta)}(f_{\bold a})$.
	If $y \notin E(f_{\bold a})(k_v)$, we can use Lemma \ref{lem:10.3_CLT02}
	to compute  
	\begingroup
	\allowdisplaybreaks
	\begin{align*}
   	\int_{\eta_v^{-1}(y)} \mathsf H_v(\bold x_v, \bold s - \boldsymbol{\rho}&)^{-1}\, 
   	\delta_{\epsilon, v} (\bold x_v) \, \psi_{\bold a, v}(\bold x_v)\, \mathrm d \tau \\
   	& = \ \ \frac{1}{q_v^{n-1}}\int_{\mathfrak m_v} 
   	|x|_v^{s_{\alpha(\beta)} -\rho_{\alpha(\beta)}} \, 
   	\boldsymbol{1}_{\mathfrak m_v^{m_{\alpha(\beta)}}}(x)\, 
   	\psi_v\left(\frac{1}{x^d}\right)\, \mathrm d x \\
   	& =  \ \ \frac{1}{q_v^{n-1}} \sum_{i = m_{\alpha(\beta)}}^{+\infty} 
   	q_v^{-i(s_{\alpha(\beta)} - \rho_{\alpha(\beta)} + 1)} 
   	\int_{\mathfrak \calO_v^\times}\psi_v\left(\frac{\pi_v^{- id}}{x^d}\right)\, \mathrm d x \\
   	& =  \ \ \frac{1}{q_v^{n-1}} \sum_{i = m_{\alpha(\beta)}}^{+\infty} 
   	q_v^{-i(s_{\alpha(\beta)} - \rho_{\alpha(\beta)} + 1)} 
   	\int_{\mathfrak \calO_v^\times}\psi_v\left(\pi_v^{ - id}x^d\right)\, \mathrm d x \\
   	& = \ \ \begin {cases} 
   		-\frac{1}{q_v^{n}} q_v^{-(s_{\alpha(\beta)} - \rho_{\alpha(\beta)} + 1)} 
   		& \text{ if } d=m_{\alpha(\beta)}=1\\
    		0 & \text{ otherwise}
    	\end{cases}\\
  	&  = \ \  O(q_v^{-(n+\delta_2)})
	\end{align*}
	\endgroup
	for some $\delta_2>0$, for sufficiently small $\delta>0$.

	If $y \in E(f_{\bold a})(k_v)$ and $\delta>0$ is sufficiently small, then we have
	\begin{align*}
	\left|  \int_{\eta_v^{-1}(y)} \right.
	\mathsf H_v(\bold x_v, \bold s - \boldsymbol{\rho} & )^{-1} \, 
	\delta_{\epsilon, v} (\bold x_v) \, \psi_{\bold a, v}(\bold x_v)\, \mathrm d \tau \left. 
	\vphantom{\int_{\eta_v^{-1}(y)}}\right|  \\
	&\leq \ \ \int_{\eta_v^{-1}(y)} 
	\mathsf H_v(\bold x_v, \Re (\bold s) - \boldsymbol{\rho})^{-1} \, 
	\delta_{\epsilon, v} (\bold x_v) \, \mathrm d \tau \\ 
	&= \ \ O(q_v^{-(n-1 + \delta_3)})
	\end{align*}
	for some $\delta_3>0$.
	
	Thus, using the Lang-Weil estimates
	\[ 
	\#(\mathcal D_{v,B}^\circ\setminus E(f_{\bold a}))(k_v)=O(q_v^{n-1}), 
	\quad \#E(f_{\bold a})(k_v)=O(q_v^{n-2}),
	\]
	we obtain
	\[
    	\sum_{y \in \mathcal D_{v,B}^\circ(k_v)} 
    	\int_{\eta_v^{-1}(y)} \mathsf H_v(\bold x_v, \bold s - \boldsymbol{\rho})^{-1} \,  
    	\delta_{\epsilon, v}(\bold x_v)\psi_{\bold a, v}(\bold x_v) \, \mathrm d\tau 
    	= O(q_v^{-(1+\delta_4)})
	\]
	for some $\delta_4 > 0$. 	
\medskip
	\item If $\#B \geq 2$, then arguing as in the proof of 
	Proposition~\ref{prop:EulerOneFactor}, one can show that
	\[
    	\sum_{y \in \mathcal D_{v,B}^\circ(k_v)} 
    	\int_{\eta_v^{-1}(y)} \mathsf H_v(\bold x_v, \bold s - \boldsymbol{\rho})^{-1} \,  
    	\delta_{\epsilon, v}(\bold x_v) \, \psi_{\bold a, v}(\bold x_v) \, \mathrm d\tau 
    	= O(q_v^{-(1+\delta_5)})
	\]
	for some $\delta_5 > 0$ assuming that $\delta>0$ is sufficiently small.
	\end{itemize}
	Combining the estimates above, we obtain the following analogue of 
	Proposition~\ref{prop:EulerOneFactor}.

	\begin{prop}
	\label{prop:goodplaces_nontrivial}
	There exist real numbers $\delta, \delta' > 0$, independent of $\bold a$, 
	such that the function 
	\[
	\bold s \mapsto 
	\left(\prod_{\alpha\in \mathcal A^0(\bold a)} \prod_{\beta \in \mathcal A_v(\alpha)} 
	\zeta_{F_{v,\beta}}(m_\alpha(s_\alpha - \rho_\alpha +1))^{-1} \right)
	\widehat{\mathsf H}_{\epsilon, v} (\bold a,\bold s) 
	\]
	is holomorphic on $\mathsf T_{>-\delta}$, and such that
	\[
	\left(\prod_{\alpha\in \mathcal A^0(\bold a)} \prod_{\beta \in \mathcal A_v(\alpha)} 
	\zeta_{F_{v,\beta}}(m_\alpha(s_\alpha - \rho_\alpha +1))^{-1} \right) 
	\widehat{\mathsf H}_{\epsilon, v} (\bold a,\bold s) 
	= 1 + O(q_v^{-(1+\delta')})
	\]
	for all $s\in \mathsf T_{>-\delta}$. Here we interpret 
	$\zeta_{F_{v, \beta}}(m_\alpha(s_\alpha - \rho_\alpha +1))^{-1}$ to be $1$ 
	whenever $\epsilon_\alpha = 1$. 
	\qed
	\end{prop}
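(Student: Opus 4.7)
The plan is to assemble the estimates already laid out in the bullet-point analysis preceding the proposition, combining them into a single Euler-factor estimate. The key idea is to stratify $G(F_v)$ by the fibers of the reduction map $\eta_v$ and then to treat each stratum $\mathcal D_{v,B}^\circ$ according to the value of $\#B$ and the intersection of $B$ with the indexing sets $\mathcal A^0(\bold a)$ and $\mathcal A^{\geq 1}(\bold a)$.

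Concretely, I would write
\[
\frac{1}{\mu_v(\mathcal O_v)^n}\widehat{\mathsf H}_{\epsilon, v}(\bold a,\bold s)
= \sum_{B \subset \calA_v} \sum_{y \in \mathcal D_{v,B}^\circ(k_v)}
\int_{\eta_v^{-1}(y)} \mathsf H_v(\bold x_v, \bold s - \boldsymbol{\rho})^{-1}\,
\delta_{\epsilon, v}(\bold x_v) \, \psi_{\bold a, v}(\bold x_v) \, \mathrm d\tau,
\]
as done above, and use the three cases already isolated. The $B=\emptyset$ term yields $1$ because $\bold a\in\Lambda_X$ makes $\psi_{\bold a,v}$ trivial on $G(\mathcal O_v)$. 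For $B=\{\beta\}$ with $\alpha(\beta)\in\mathcal A^0(\bold a)$, the character is again trivial on $\eta_v^{-1}(y)$ since $f_{\bold a}$ is regular there, so the computation reduces to the trivial-character case of the proof of Proposition~\ref{prop:EulerOneFactor}, contributing exactly the terms $q_v^{-m_\alpha(s_\alpha-\rho_\alpha+1)}$ that will be captured by the zeta factors $\zeta_{F_{v,\beta}}(m_\alpha(s_\alpha-\rho_\alpha+1))$, up to the acceptable error $O(q_v^{-(1+\delta')})$. For $B=\{\beta\}$ with $\alpha(\beta)\in\mathcal A^{\geq 1}(\bold a)$, I would invoke Lemma~\ref{lem:10.3_CLT02} on a local chart where $D_{v,\beta}$ is cut out by one coordinate, split the sum over $y\in\mathcal D_{v,B}^\circ(k_v)$ into the parts where $y$ does or does not lie on $E(f_{\bold a})$, bound the latter crudely by the absolute value of the integrand, and apply the Lang--Weil estimates $\#(\mathcal D_{v,B}^\circ\setminus E(f_{\bold a}))(k_v)=O(q_v^{n-1})$ and $\#E(f_{\bold a})(k_v)=O(q_v^{n-2})$. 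Finally, for $\#B\geq 2$, each factor in the integrand yields at least two contributions of size $q_v^{-(1-m\delta)}$, so the total is $O(q_v^{-(1+\delta_5)})$, exactly as in Proposition~\ref{prop:EulerOneFactor}.

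Adding up these strata gives
\[
\widehat{\mathsf H}_{\epsilon, v}(\bold a,\bold s)
= 1 + \sum_{\alpha\in\mathcal A^0(\bold a)}\sum_{\substack{\beta\in\mathcal A_v(\alpha)\\ f_{v,\beta}=1}} q_v^{-m_\alpha(s_\alpha-\rho_\alpha+1)} + O(q_v^{-(1+\delta')}),
\]
which after multiplication by $\prod_{\alpha\in\mathcal A^0(\bold a)}\prod_{\beta\in\mathcal A_v(\alpha)}(1-q_v^{-f_{v,\beta}m_\alpha(s_\alpha-\rho_\alpha+1)})$ yields the claim. Holomorphy on $\mathsf T_{>-\delta}$ follows since all geometric series converge absolutely there and the remainder $O(q_v^{-(1+\delta')})$ is uniform.

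The main obstacle, and the reason the statement is delicate, is ensuring that $\delta$ and $\delta'$ can be chosen \emph{independently of $\bold a$}. The potentially problematic dependence enters through the local Fourier integrals in the $\mathcal A^{\geq1}(\bold a)$ case, where the exponents $d_\alpha(f_{\bold a})$ of the character appear in Lemma~\ref{lem:10.3_CLT02}. This uniformity is rescued by Proposition~\ref{prop:d_alpha(f)}, which asserts that the set of vectors $(d_\alpha(f_{\bold a}))_{\alpha\in\mathcal A}$ is \emph{finite} as $\bold a$ varies over nonzero $F$-points; hence the constants implicit in Lemma~\ref{lem:10.3_CLT02} and in the subsequent big-O estimates can be chosen uniformly in $\bold a$. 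It is this finiteness, together with the uniformity in the Lang--Weil bounds, that lets the same $\delta,\delta'$ work for every $\bold a\in\Lambda_X\setminus\{0\}$, which is exactly what will be needed in \S\ref{sec:proofklt} to deduce absolute convergence of $\sum_{\bold a\in\Lambda_X}\widehat{\mathsf H}_\epsilon(\bold a,\bold s)$.
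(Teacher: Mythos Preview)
Your proposal is correct and follows essentially the same approach as the paper: the proposition is simply the assembly of the bullet-point estimates that precede it (the paper signals this with ``Combining the estimates above''), and you have accurately reproduced that assembly, including the stratification by $\#B$, the split of the $\#B=1$ case according to $\mathcal A^0(\bold a)$ versus $\mathcal A^{\geq 1}(\bold a)$, and the final multiplication by the inverse zeta factors. Your explicit remark on uniformity in $\bold a$ via Proposition~\ref{prop:d_alpha(f)} is a welcome clarification; the paper invokes this finiteness explicitly only in the subsequent $j_v(\bold a)\neq 0$ analysis, but it is indeed the mechanism behind the $\bold a$-independence of $\delta,\delta'$ here as well.
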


	This finishes the analysis in the case $j_v(\bold a) = 0$. From now on, we assume 
	that $j_v(\bold a) \neq 0$.

	\begin{prop}
	\label{prop:badreduction_outsideS}
	There exists a real number $\delta > 0$, independent of $\bold a$, such that the function
	\[
	\bold s \mapsto 
	\left(\prod_{\alpha\in \mathcal A^0(\bold a)} \prod_{\beta \in \mathcal A_v(\alpha)} 
	\zeta_{F_{v,\beta}}(m_\alpha(s_\alpha - \rho_\alpha +1))^{-1} \right)
	\widehat{\mathsf H}_{\epsilon, v} (\bold a,\bold s),
	\]
	is holomorphic on the domain $\mathsf T_{> -\delta}$.

	Moreover, there exists a real number $\kappa > 0$, independent of $\bold a$, such that
	\[
 	\left |\left(\prod_{\alpha\in \mathcal A^0(\bold a)} \prod_{\beta \in \mathcal A_v(\alpha)} 
 	\zeta_{F_{v,\beta}}(m_\alpha(s_\alpha - \rho_\alpha +1))^{-1} \right)
 	\widehat{\mathsf H}_{\epsilon, v} (\bold a,\bold s) \right| 
 	\ll (1+H_v(\bold a)^{-1} )^\kappa.
	\]
	Here we interpret $\zeta_{F_{v, \beta}}(m_\alpha(s_\alpha - \rho_\alpha +1))^{-1}$ 
	to be $1$ whenever $\epsilon_\alpha = 1$.
	\end{prop}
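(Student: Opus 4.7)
The plan is to mimic the stratification strategy of Proposition~\ref{prop:goodplaces_nontrivial}, now carefully tracking how the local estimates depend on $H_v(\bold a)^{-1}=q_v^{j_v(\bold a)}$. For the finitely many places $v\notin S$ at which our integral model has bad reduction, I would first reduce to the good reduction setting by passing to a proper birational modification $\varphi\colon\widetilde X\to X$, defined over $F$ and an isomorphism outside $D$, such that $\widetilde D=\varphi^{-1}(D)_{\mathrm{red}}$ has strict normal crossings and such that a chosen integral model of $\widetilde X$ has good reduction at $v$; such $\widetilde X$ exists by resolution of singularities, and by Lemmas~\ref{lem:bbirational} and~\ref{lemm: birationalinvariance_f} the shape of the regularizing zeta product is preserved under this operation.

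For the good reduction analysis I would stratify $G(F_v)$ by the fibers of the reduction map and decompose the local integral as in \S\ref{subsubsec:placesofgoodreduction}. The case $B=\emptyset$ contributes a trivial factor (equal to $1$ after dividing by $\mu_v(\mathcal O_v)^n$), since $\bold a\in\Lambda_X$ forces $\psi_{\bold a,v}$ to be trivial on $K_v=G(\mathcal O_v)$ at good reduction places. For $B=\{\beta\}$ with $\alpha(\beta)\in\mathcal A^0(\bold a)$, the linear form $f_{\bold a}$ is regular along $D_{v,\beta}$, so a Taylor expansion in local analytic coordinates combined with Lemma~\ref{lem:10.3_CLT02} recovers the same main term $q_v^{-m_\alpha(s_\alpha-\rho_\alpha+1)}$ as in the $j_v(\bold a)=0$ case, to be cancelled by the regularizing zeta factor up to an error $O(q_v^{-\delta})$. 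For $\alpha(\beta)\in\mathcal A^{\geq1}(\bold a)$, the linear form $f_{\bold a}$ has a pole of order $d_{\alpha(\beta)}(f_{\bold a})\geq1$ along $D_{v,\beta}$, and Lemma~\ref{lem:10.3_CLT02} produces an oscillatory integral estimate that, after applying the Lang--Weil bounds on $\#E(f_{\bold a})(k_v)$ and $\#\mathcal D_{v,\beta}^\circ(k_v)$, yields a contribution of size $O(q_v^{-(1+\delta_1)}(1+H_v(\bold a)^{-1})^{\kappa_1})$. For $|B|\geq2$ the product structure forces at least two height factors of size $q_v^{-(1-m\delta)}$, so the contribution is a uniform $O(q_v^{-(1+\delta_2)})$. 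Summing these estimates and multiplying by the regularizing product over $\mathcal A^0(\bold a)$ yields both the holomorphy on $\mathsf T_{>-\delta}$ and the required polynomial bound.

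The main difficulty I anticipate is producing a single exponent $\kappa$ that works uniformly in $\bold a$. As $j_v(\bold a)$ grows, the window of non-vanishing indices in Lemma~\ref{lem:10.3_CLT02} widens proportionally to $j_v(\bold a)$, and the estimates above need to be carried out in a way that bundles this growth into a power of $H_v(\bold a)^{-1}=q_v^{j_v(\bold a)}$ with a universal exponent. The key ingredient to accomplish this is Proposition~\ref{prop:d_alpha(f)}, which guarantees that the vector $(d_\alpha(f_{\bold a}))_{\alpha\in\mathcal A}$ takes only finitely many values as $\bold a$ ranges over non-zero linear forms on $G$, so that the relevant pole orders are bounded by a universal constant; combined with the uniform estimates above, this allows the choice of a single $\kappa$ independent of $\bold a$.
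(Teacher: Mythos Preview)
Your second and third paragraphs contain the right approach and essentially match the paper's proof: stratify by fibres of the reduction map, handle the $\mathcal A^0(\bold a)$ strata exactly as in Proposition~\ref{prop:EulerOneFactor} (the character is literally trivial there, so no Taylor expansion is needed), apply Lemma~\ref{lem:10.3_CLT02} to the $\mathcal A^{\geq1}(\bold a)$ strata, bound the $\#B\geq2$ strata trivially, and invoke Proposition~\ref{prop:d_alpha(f)} for uniformity in $\bold a$.

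Your first paragraph, however, rests on a misreading. Despite its label, Proposition~\ref{prop:badreduction_outsideS} lies inside the subsection on places of \emph{good} reduction: it is the companion to Proposition~\ref{prop:goodplaces_nontrivial} covering the case $j_v(\bold a)\neq 0$, as the sentence immediately preceding it makes explicit. Places $v\notin S$ of genuine bad reduction are treated separately in Proposition~\ref{prop:badoutsideSnontrivial}, by direct appeal to \cite{CLT12}. The birational-modification step is therefore unnecessary here; and in any case Lemmas~\ref{lem:bbirational} and~\ref{lemm: birationalinvariance_f} control the $\tilde a$- and $b$-invariants of Clemens complexes, not the structure of the zeta regularizing product, so they would not do the job you assign them.

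One correction in the good-reduction analysis: for $\alpha(\beta)\in\mathcal A^{\geq1}(\bold a)$ the bound carries no factor $q_v^{-(1+\delta_1)}$. With $j=j_v(\bold a)\neq 0$, Lemma~\ref{lem:10.3_CLT02} leaves $O(|j_v(\bold a)|)$ nonvanishing terms in the $i$-sum, each of absolute value at most $1$ on $\mathsf T_{>-\delta}$; after Lang--Weil the stratum contributes $O(|j_v(\bold a)|)$ with no residual $q_v$-decay. This still suffices, since $|j_v(\bold a)|\leq q_v^{j_v(\bold a)}=H_v(\bold a)^{-1}$, but note that this stratum does \emph{not} help Euler-product convergence; it only occurs at the finitely many $v$ where $j_v(\bold a)\neq 0$.
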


	\begin{proof}
	As before we use the stratification of $G(F_v)$ by the fibers of the reduction map:
	\[
	\frac{1}{\mu_v(\mathcal O_v)^n}\widehat{\mathsf H}_{\epsilon, v}(\bold a,\bold s)  
	= \sum_{B \subset \calA_v} \sum_{y \in \mathcal D_{v,B}^\circ(k_v)} 
	\int_{\eta_v^{-1}(y)} \mathsf H_v(\bold x_v, \bold s - \boldsymbol{\rho})^{-1} \,  
	\delta_{\epsilon, v}(\bold x_v) \, \psi_{\bold a, v}(\bold x_v) \, \mathrm d\tau.
	\]
	\begin{itemize}
	\item If $B= \emptyset$, the inner summation is holomorphic everywhere and 
	equal to some constant as in \S\ref{subsec:height_intII_goodreduction}.
\medskip
	\item If $B =\{\beta\}$, we define $\alpha(\beta)$ as in 
	\S\ref{subsubsec:placesofgoodreduction}. Without loss of generality, we may assume 
	that $D_{v, \beta}$ is geometrically irreducible and that $\epsilon_{\alpha(\beta)}\neq1$.  
	We again distinguish two cases: either $\alpha(\beta) \in \mathcal A^0(\bold a)$ or 
	$\alpha(\beta)\in \mathcal A^{\geq1}(\bold a)$.
	
	If $\alpha(\beta) \in \mathcal A^0(\bold a)$, the character $\psi_{\bold a,v}$ becomes 
	trivial on $\eta_v^{-1}(\mathcal D_{v,B}^\circ(k_v))$. Hence, arguing as in the proof of 
	Proposition~\ref{prop:EulerOneFactor}, for a sufficiently small $\delta>0$, the inner 
	summation is holomorphic and bounded by
	\[
    	q_v^{-m_{\alpha(\beta)}(s_{\alpha(\beta)} - \rho_{\alpha(\beta)} + 1)}
    	(c + O(q_v^{-\delta_1})),
	\]
	for some constant $c$ and $\delta_1>0$. 
	
 	If on the other hand $\alpha(\beta) \in\mathcal A^{\geq1}(\bold a)$, we denote 
 	$d:=d_{\alpha(\beta)}(f_\bold a)$. If $y \notin E(f_{\bold a})(k_v)$, then we use 
 	Lemma \ref{lem:10.3_CLT02} to compute
 	\begingroup
 	\allowdisplaybreaks
	\begin{align*}
    	\int_{\eta_v^{-1}(y)} \mathsf H_v(\bold x_v, \bold s - \boldsymbol{\rho} & )^{-1} \, 
    	\delta_{\epsilon, v} (\bold x_v) \, \psi_{\bold a, v}(\bold x_v)\, \mathrm d \tau \\
    	&=\ \ \frac{1}{q_v^{n-1}}\int_{\mathfrak m_v} 
    	|x|_v^{s_{\alpha(\beta)} -\rho_{\alpha(\beta)}} \, 
    	\boldsymbol{1}_{\mathfrak m_v^{m_{\alpha(\beta)}}}(x) \, 
    	\psi_v\left(\frac{\pi_v^{j_v(\bold a)}}{x^d}\right)\, \mathrm d x \\
    	&=\ \ \frac{1}{q_v^{n-1}} \sum_{i = m_{\alpha(\beta)}}^{+\infty} 
    	q_v^{-i(s_{\alpha(\beta)} - \rho_{\alpha(\beta)} + 1)} \int_{\mathfrak \calO_v^\times}
    	\psi_v\left(\frac{\pi_v^{- i d + j_v(\bold a)}}{x^d}\right)\, \mathrm d x \\
    	&= \ \ \frac{1}{q_v^{n-1}} \sum_{i = m_{\alpha(\beta)}}^{+\infty} 
    	q_v^{-i(s_{\alpha(\beta)} - \rho_{\alpha(\beta)} + 1)} \int_{\mathfrak \calO_v^\times}	
    	\psi_v\left(\pi_v^{ - i d + j_v(\bold a)}x^d\right)\, \mathrm d x \\
    	&=\ \ O\left( \frac{|j_v(\bold a)|}{q_v^{n-1}} \right)
	\end{align*}
	\endgroup
	We note that the implied constant can be taken independent of $\bold a$; indeed, there 
	are only finitely many possibilities for $d_\alpha(f_{\bold a})$ by 
	Proposition~\ref{prop:d_alpha(f)}. Finally, if $y \in E(f_{\bold a})(k_v)$, then for 
	$\delta>0$ sufficiently small we have 
	\begin{align*}
	\left| \int_{\eta_v^{-1}(y)} \right. 
	\mathsf H_v(\bold x_v, \bold s - \boldsymbol{\rho} & )^{-1} \, 
	\delta_{\epsilon, v} (\bold x_v) \, \psi_{\bold a, v}(\bold x_v)\, \mathrm d \tau \bigg| 
	\phantom{\int_{\eta_v^{-1}(y)}}  \\ 
   	&\leq \ \ \int_{\eta_v^{-1}(y)} 
   	\mathsf H_v(\bold x_v, \Re (\bold s) - \boldsymbol{\rho})^{-1} \, 
   	\delta_{\epsilon, v} (\bold x_v) \, \mathrm d \tau \\ 
   	&=\ \ O(q_v^{-(n-1 + \delta')})
	\end{align*}
	for some $\delta'>0$.
	Thus, using the Lang-Weil estimates as in \S \ref{subsec:height_intII_goodreduction}, 
	we obtain
	\[
    	\sum_{y \in \mathcal D_{v,A}^\circ(k_v)} \int_{\eta_v^{-1}(y)} 
    	\mathsf H_v(\bold x_v, \bold s - \boldsymbol{\rho})^{-1} \, 
    	\delta_{\epsilon, v}(\bold x_v) \, \psi_{\bold a, v}(\bold x_v) \, \mathrm d\tau 
    	= O(|j_v(\bold a)|).
	\]
	\item If $\#B \geq 2$, then as in the proof of Proposition~\ref{prop:EulerOneFactor} we have
	\[
    	\sum_{y \in \mathcal D_{v,B}^\circ(k_v)} 
    	\int_{\eta_v^{-1}(y)} \mathsf H_v(\bold x_v, \bold s - \boldsymbol{\rho})^{-1} \,  
    	\delta_{\epsilon, v}(\bold x_v) \, \psi_{\bold a, v}(\bold x_v) \, \mathrm d\tau 
    	= O(q_v^{-(1+\delta')}).
	\]
	\end{itemize}
	We conclude as in the proof of Proposition \ref{prop:EulerOneFactor}.
	\end{proof}
	
	\subsubsection{Places of bad reduction} 
	
	We still assume that $v \not\in S$ but our model has bad reduction at $v$,
	i.e., at least one of the assumptions (1) and (2) of 
	Lemma~\ref{lemma: K-invariance of height} is not satisfied.
	
	\begin{prop}
	\label{prop:badoutsideSnontrivial}
	The function
	\[
	\widehat{\mathsf H}_{\epsilon, v}(\bold a, \bold s) 
	= \int_{G(F_v)} \mathsf H_v(\bold x_v, \bold s)^{-1}\, 
	\delta_{\epsilon, v} (\bold x_v) \psi_{\bold a, v}(\bold x_v)\, \mathrm d \bold x_v
	\]
	is holomorphic in $\bold s$ whenever $\Re (s_\alpha )> \rho_\alpha -1$ 
	for all $\alpha \in \mathcal A^0(\bold a)$ such that $\epsilon_\alpha < 1$.
	Moreover, for any $\delta > 0$ there exists constants $\kappa, \delta' > 0$ and $C(\delta)> 0$ such that 
	\[
	|\widehat{\mathsf H}_{\epsilon, v}(\bold a, \bold s) | < C(\delta)(1+ |\bold s|)^{\kappa}(1+ H_\infty(\bold a))^{\delta'}
	\]
	whenever $\Re (s_\alpha) > \rho_\alpha -1 + \delta$ 
	for all $\alpha \in \mathcal A^0(\bold a)$ such that $\epsilon_\alpha < 1$. 
	\end{prop}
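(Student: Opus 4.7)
The plan is to combine the bad-reduction argument for the trivial character in Proposition \ref{prop:badoutsideS} with the oscillatory estimates at places of good reduction from Propositions \ref{prop:goodplaces_nontrivial} and \ref{prop:badreduction_outsideS}. The improved convergence in the $\mathcal A^{\geq 1}(\bold a)$ directions, where no positivity of $\Re(s_\alpha)-\rho_\alpha+1$ is needed, must come entirely from the rapid oscillation of $\psi_{\bold a, v}$ along the corresponding boundary components; in the remaining directions, the trivial-character analysis of Proposition \ref{prop:badoutsideS} already gives the required convergence.

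The first step is to desingularize. Since $F_v$ has characteristic zero, resolution of singularities and embedded resolution furnish a proper birational $\mathcal O_v$-morphism $\pi \colon \widetilde{\mathcal X}_v \to \mathcal X \otimes_{\mathcal O_{F,S}}\mathcal O_v$ with $\widetilde{\mathcal X}_v$ smooth and proper over $\mathcal O_v$, and such that $\pi^{-1}(\mathcal D)_{\mathrm{red}}$ together with the horizontal closure of $E(f_{\bold a})$ is a strict normal crossings divisor. By Proposition \ref{prop:d_alpha(f)}, the possible vanishing patterns $(d_\alpha(f_{\bold a}))_\alpha$ form a finite set, so only finitely many such resolutions are needed to cover all $\bold a$, which is what makes the resulting bounds uniform in $\bold a$.

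Next I would pull the integral back through $\pi$, absorbing the Jacobian into a Tamagawa-type measure as in Proposition \ref{prop:goodplaces_nontrivial}; the pulled-back Campana indicator $\delta_{\epsilon, v}\circ \pi$ is bounded above by a Campana-type indicator on $\widetilde{\mathcal X}_v$ associated to the weight vector $\tilde\epsilon$ described in \S\ref{subsec:functoriality}. Stratifying by the reduction map and working in analytic local coordinates in which each boundary component is a coordinate hyperplane and $f_{\bold a}$ is monomial, the integral decomposes as a product of one-dimensional integrals. Coordinates over components pulled back from $\mathcal A^0(\bold a)$ carry a trivial phase and are handled exactly as in Proposition \ref{prop:badoutsideS}, giving absolute convergence for $\Re(s_\alpha)>\rho_\alpha-1$ when $\epsilon_\alpha<1$. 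Coordinates over components pulled back from $\mathcal A^{\geq 1}(\bold a)$ carry an oscillating factor of the shape encountered in the proof of Proposition \ref{prop:badreduction_outsideS}; the non-vanishing regime is controlled by Lemma \ref{lem:10.3_CLT02}, which after summing the geometric series yields absolute convergence with no positivity condition on $\Re(s_\alpha)$, at the price of a polynomial factor in $(1+|\bold s|)$ and in $H_v(\bold a)$.

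The main obstacle will be making all implied constants uniform in $\bold a$ and converting the local factor $H_v(\bold a)$ into the claimed global factor $(1+H_\infty(\bold a))^{\delta'}$. Uniformity rests on Proposition \ref{prop:d_alpha(f)}, which bounds the number of monomial shapes attainable by the pulled-back phase; the transition to $H_\infty(\bold a)$ uses the comparison \eqref{eq:finite_infinite_heigt_comparison} together with the fact that $\bold a \in \Lambda_X \subset G(F)$, which allows the local factor to be absorbed into a slightly larger power of $(1+H_\infty(\bold a))$. Holomorphy on the claimed tube is then immediate from the absolute convergence of the resulting integral representation, which is uniform on compact subsets.
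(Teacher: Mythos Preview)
Your overall strategy matches what the paper invokes: the proof in the paper is a one-line reference to \cite[Corollary~3.4.4 and Lemma~3.5.2]{CLT12}, and the mechanism there is indeed to pass to a resolution $Y_{\bold a}\to X$ on which $D\cup E(f_{\bold a})$ has strict normal crossings, and then to analyze the integral in local analytic coordinates, using the oscillation of $\psi_{\bold a,v}$ along $\mathcal A^{\geq 1}(\bold a)$-components to gain convergence there without any positivity hypothesis. Your use of Proposition~\ref{prop:d_alpha(f)} to ensure only finitely many resolutions are needed, and of \eqref{eq:finite_infinite_heigt_comparison} together with $\bold a\in\Lambda_X$ to convert $H_v(\bold a)^{-1}$ into $(1+H_\infty(\bold a))^{\delta'}$, is correct.

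There is, however, one genuine slip. You ask for a resolution $\widetilde{\mathcal X}_v\to\mathcal X\otimes_{\mathcal O_{F,S}}\mathcal O_v$ that is \emph{smooth and proper over $\mathcal O_v$}, and then propose to stratify by the reduction map on this new model. Resolution of singularities over a DVR with possibly positive residue characteristic is not a standard theorem, and in any case this is not what \cite{CLT12} does. The resolution is taken over $F_v$ only; one then covers $Y_{\bold a}(F_v)$ by compact analytic charts in which each boundary component and $E(f_{\bold a})$ are coordinate hyperplanes. There is no reduction map in this picture. The indicator $\delta_{\epsilon,v}$ pulls back to a bounded locally constant function on $Y_{\bold a}(F_v)$, which is exactly the hypothesis under which \cite[Lemma~4.1]{CLT10} (already used in Proposition~\ref{prop:badoutsideS}) and the oscillatory estimates of \cite[\S3.4]{CLT12} apply. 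In particular, you do not need to compare $\delta_{\epsilon,v}\circ\pi$ with a Campana indicator on the resolved model; treating it as an arbitrary bounded locally constant function suffices, and sidesteps the delicate point that such a comparison would interfere with the oscillatory cancellation you need in the $\mathcal A^{\geq 1}(\bold a)$-directions. Once you replace ``resolve over $\mathcal O_v$ and stratify by reduction'' with ``resolve over $F_v$ and use a finite cover by analytic charts'', your sketch goes through.
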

	
	\begin{proof}
	One may argue as in \cite[Corollary 3.4.4 and Lemma 3.5.2]{CLT12}.
	\end{proof}
	
\subsection{Places contained in $S$} 

	We now treat the remaining places.
	
	\begin{prop}
	\label{prop:placesinS}
	The following hold whenever $v \in S$.
	\begin{enumerate}
	\item
	Let $\delta > 0$ be any positive real number. Then the function
	\[
	\bold s\mapsto \widehat{\mathsf H}_{\epsilon, v} (\bold a,\bold s),
	\]
	is holomorphic in the domain given by $\Re(s_\alpha) > \rho_\alpha -1 +\delta$ 
	for each $\alpha \in \mathcal A$. Moreover, there exists a real number $M_N > 0$, 
	that does not depend on $\bold a$, such that
	\[
	\left| \prod_{v \in S}\widehat{\mathsf H}_{\epsilon, v} (\bold a,\bold s) \right| 
	\ll \frac{(1+ |\bold s|)^{M_N}}{(1+H_\infty (\bold a))^N}.
	\]
	in the above domain.
\medskip
	\item
	Let $L = \sum_{\alpha\in\mathcal A} \lambda_\alpha D_\alpha$ be a big divisor, and let 
	\[
	a := \tilde{a}((X, D_{\mathrm{red}}), L)\quad \text{and}\quad 
	b := b(F_v, (X, D_{\mathrm{red}}), L, f_{\bold a})
	\]
	be the respective $a$- and $b$-invariants of $X$ defined in \S\ref{subsec:Clemens}. 
	Then the function 
	\[
	s \mapsto \left(\zeta_{F_v}(s-a)\right)^{-b}\widehat{\mathsf H}_v(\bold a, sL)
	\]
 	admits a holomorphic continuation to $\Re (s) > a -\delta$ for some $\delta > 0$. 
	Furthermore 
	\[
	\left| \prod_{v\in S} (\zeta_{F_v}(s-a))^{-b}
	\widehat{\mathsf H}_{\epsilon, v} (\bold a,sL) 	\right| 
	\ll_N \frac{(1+ |s|)^{M_N}}{(1+H_\infty (\bold a))^N}.
	\]
	in the above domain.
	\end{enumerate}
	\end{prop}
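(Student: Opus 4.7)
My plan is to leverage the fact that when $v\in S$ the indicator $\delta_{\epsilon,v}$ is identically $1$, so that
$\widehat{\mathsf H}_{\epsilon,v}(\bold a,\bold s)$ reduces to the classical local Fourier transform of the local height function studied in \cites{CLT02,CLT12}. Thus the proposition is essentially a reformulation in our notation, using the Clemens-complex invariant $b(F_v,(X,D_{\mathrm{red}}),L,f_{\bold a})$ of \S\ref{subsec:Clemens}, of estimates already in the literature.

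For part~(1), I would first establish holomorphy in the tube $\{\Re(s_\alpha)>\rho_\alpha-1+\delta\}$ by absolute convergence: since $|\mathsf H_v(\bold x_v,\bold s)^{-1}|=\mathsf H_v(\bold x_v,\Re(\bold s))^{-1}$, integrability in this domain follows by partitioning a resolution of $X_v$ into charts with analytic normal-crossings coordinates and explicitly integrating the resulting monomial amplitudes (cf.\ \cite[Lemma~4.1]{CLT10}). For the decay in $\bold a$ I would integrate by parts repeatedly on each archimedean factor: $\psi_{\bold a,v}$ oscillates with frequency of order $H_v(\bold a)$ in the coordinate direction realizing $H_v(\bold a)=\max_i|a_i|_v$, and smoothness of the adelic metric implies that the amplitude is smooth on $G(F_v)$. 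Each integration by parts contributes a factor of $H_v(\bold a)^{-1}$, while differentiating the amplitude introduces at most a linear factor in $|\bold s|$ from the exponents. After $N$ integrations by parts on each archimedean factor and collecting the uniformly bounded contributions of the finite places in $S$ (using $\bold K$-invariance from Lemma~\ref{lemma: K-invariance of delta}), we obtain the desired bound with $H_\infty(\bold a)=\prod_{v\mid\infty}H_v(\bold a)$.

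For part~(2), the principal task is to locate the rightmost pole of $\widehat{\mathsf H}_v(\bold a,sL)$. I would partition $X(F_v)$ into the analytic strata $D^\circ_{v,B}(F_v)$ and choose local normal-crossings coordinates on each, so that the local integral decomposes as a finite sum of products of one-dimensional integrals of the form
\[
\int_{\{|z|_v\le 1\}}|z|_v^{s\lambda_\alpha-\rho_\alpha}\,\psi_v\!\left(c\,z^{-d_\alpha(f_{\bold a})}\right)\mathrm d z.
\]
A standard analysis (cf.\ \cite[\S 3.4]{CLT12}) shows that such a factor contributes a pole at $s=a=\tilde a((X,D_{\mathrm{red}}),L)$ exactly when $(\rho_\alpha-1)/\lambda_\alpha=a$ and the character is non-oscillating on the factor, i.e.\ $d_\alpha(f_{\bold a})\le 0$; otherwise stationary-phase-type cancellation kills the would-be pole. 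The maximum number of simultaneously contributing indices on a non-empty stratum is exactly $\dim\mathcal C_{F_v}^{\mathrm{an}}(D,L,f_{\bold a})+1=b$, by the very definition of the invariant associated with $D''$. Therefore $(\zeta_{F_v}(s-a))^{-b}\widehat{\mathsf H}_v(\bold a,sL)$ extends holomorphically across $\Re(s)=a$, and the uniform bound in $\bold a$ and $s$ follows by combining this pole cancellation with the archimedean integration by parts of part~(1).

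The main obstacle will be the bookkeeping in part~(2): one has to verify carefully that the combinatorial output of the coordinate computation (which strata produce poles at $s=a$ and with what multiplicity) agrees with the combinatorial datum entering the definition of $b(F_v,(X,D_{\mathrm{red}}),L,f_{\bold a})$. Here the birational invariance result of Lemma~\ref{lemm: birationalinvariance_f} is the key tool: it allows me to replace $X$ by any log resolution in which the divisors $\mathrm{div}(f_{\bold a})$ and $D$ are simultaneously in strict normal crossings, without altering $b$, so the local coordinate picture above is available and the accounting matches.
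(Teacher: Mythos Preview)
Your proposal is correct and follows essentially the same approach as the paper. The paper's proof simply cites \cite[Proposition~8.1]{CLT02} for part~(1) and \cite[Proposition~3.4.4, Lemma~3.5.2, \S3.3.3]{CLT12} for part~(2), noting precisely as you do that Lemma~\ref{lemm: birationalinvariance_f} is what allows one to transfer the pole computation from the auxiliary resolution $Y_{\bold a}$ (where $\mathrm{div}(f_{\bold a})$ and $D$ have simultaneous normal crossings) back to $X$; your write-up unpacks the content of those citations rather than invoking them, but the structure and the key step are the same. One small remark: for the finite places $v\in S$ in part~(1), the uniform boundedness in $\bold a$ follows directly from $|\psi_{\bold a,v}|=1$ and hence $|\widehat{\mathsf H}_{\epsilon,v}(\bold a,\bold s)|\le \widehat{\mathsf H}_{\epsilon,v}(0,\Re(\bold s))$, rather than from $\bold K$-invariance of $\delta_{\epsilon,v}$ (which is trivially~$1$ here anyway).
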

	
	\begin{proof}
	The first statement is simply \cite[Proposition 8.1]{CLT02}. The second one
 	follows from \cite[Proposition 3.4.4 and Lemma 3.5.2]{CLT12} as well as the discussion 
 	in \cite[\S 3.3.3]{CLT12}. Note that \cite[Proposition 3.4.4]{CLT12} is stated for a 
 	birational modification $Y_{\bold a}$ of $X$, but this does not matter because of 
 	Lemma~\ref{lemm: birationalinvariance_f}.
	\end{proof}

\subsection{Euler products}

	Finally we analyze the product
	\[
	\widehat{\mathsf H}_\epsilon(\bold a, \bold s) 
	= \prod_{v \in \Omega_F} \widehat{\mathsf H}_{\epsilon, v}(\bold a, \bold s).
	\]
	We introduce some notation. For every $\alpha\in\mathcal A$ we set
	\[
	\zeta_{F_\alpha, S^c}(s)
	=\prod_{v \notin S}\prod_{\beta\in\mathcal A_v(\alpha)}\zeta_{F_{v,\beta}}(s).
	\]

	\begin{prop}
	\label{prop:eulerproduct_estimates}
	Assume that $\lfloor D_\epsilon \rfloor = 0$. There is a real number $\delta > 0$, 
	independent of $\bold a$, such that the function 
	\[
	\bold s \mapsto \left(\prod_{\alpha \in \mathcal A^0(\bold a)} 
	\zeta_{F_\alpha, S^c}(m_\alpha(s_\alpha - \rho_\alpha +1)) \right)^{-1} 
	\widehat{\mathsf H}_\epsilon(\bold a, \bold s)
	\]
	is holomorphic on $\mathsf T_{> -\delta}$. 

	Moreover, for any integer $N>0$, there exists a real number $M_N> 0$ such that
	\[
	\left| \left(\prod_{\alpha \in \mathcal A^0(\bold a)} 
	\zeta_{F_\alpha, S^c}(m_\alpha(s_\alpha - \rho_\alpha +1)) \right)^{-1} 
	\widehat{\mathsf H}_\epsilon(\bold a, \bold s) \right|
	\ll \frac{(1 + \|\bold s\|)^{M_N}}{(1+ H_\infty(\bold a))^N}.
	\]
	\end{prop}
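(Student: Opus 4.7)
The strategy is to factor $\widehat{\mathsf H}_\epsilon(\bold a,\bold s) = \prod_{v\in\Omega_F}\widehat{\mathsf H}_{\epsilon,v}(\bold a,\bold s)$ and distribute the normalizing product
\[
\prod_{\alpha\in\mathcal A^0(\bold a)}\zeta_{F_\alpha,S^c}(m_\alpha(s_\alpha-\rho_\alpha+1))^{-1} = \prod_{v\notin S}N_v(\bold s)
\]
across the non-archimedean Euler factors, where
\[
N_v(\bold s) := \prod_{\alpha\in\mathcal A^0(\bold a)}\prod_{\beta\in\mathcal A_v(\alpha)}\zeta_{F_{v,\beta}}(m_\alpha(s_\alpha-\rho_\alpha+1))^{-1},
\]
and then estimate each type of place using the preceding propositions. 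Since $\lfloor D_\epsilon\rfloor=0$, every $\epsilon_\alpha<1$; taking $\delta<\min_\alpha(1-\epsilon_\alpha)$, the tube $\mathsf T_{>-\delta}$ lies strictly inside the holomorphy domains of Propositions \ref{prop:goodplaces_nontrivial}, \ref{prop:badreduction_outsideS}, \ref{prop:badoutsideSnontrivial} and \ref{prop:placesinS}.

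I would partition $\Omega_F\setminus S$ into three classes: (a) good-reduction places with $j_v(\bold a)=0$, which are cofinal; (b) good-reduction places with $j_v(\bold a)\neq 0$, finitely many and depending on $\bold a$; (c) bad-reduction places, finitely many and independent of $\bold a$. For class (a), Proposition \ref{prop:goodplaces_nontrivial} gives $N_v(\bold s)\widehat{\mathsf H}_{\epsilon,v}(\bold a,\bold s) = 1+O(q_v^{-(1+\delta')})$ with implied constants independent of $\bold a$, so this sub-Euler product converges absolutely and is uniformly bounded on $\mathsf T_{>-\delta}$. For class (b), Proposition \ref{prop:badreduction_outsideS} bounds each normalized factor by $(1+H_v(\bold a)^{-1})^\kappa$, and a standard divisor-style estimate $2^{\omega(\bold a)}\ll H_{\mathrm{fin}}(\bold a)^{-\eta}$ (for any $\eta>0$) yields a total contribution $\ll H_{\mathrm{fin}}(\bold a)^{-\kappa'}$ for some $\kappa'>0$. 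For class (c), the local zeta factors in $N_v$ are uniformly bounded and nonvanishing on $\mathsf T_{>-\delta}$ (their arguments have real part bounded below by a positive constant, uniformly in $v$), and Proposition \ref{prop:badoutsideSnontrivial} supplies a polynomial bound in $|\bold s|$ and $H_\infty(\bold a)$ for $\widehat{\mathsf H}_{\epsilon,v}$; since the class is finite and $\bold a$-independent, the combined contribution is polynomial in both quantities.

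Finally, Proposition \ref{prop:placesinS}(1) handles $\prod_{v\in S}\widehat{\mathsf H}_{\epsilon,v}$, producing an arbitrary negative power of $1+H_\infty(\bold a)$ at the cost of a polynomial factor in $|\bold s|$. Multiplying the four contributions together and using $H_\infty(\bold a)\gg H_{\mathrm{fin}}(\bold a)^{-1}$ from \eqref{eq:finite_infinite_heigt_comparison} to convert the $H_{\mathrm{fin}}$-growth coming from class (b) into $H_\infty$-growth, one then chooses $N$ large enough in Proposition \ref{prop:placesinS} so that the rapid decay from $S$ absorbs all polynomial growth in $H_\infty(\bold a)$; this simultaneously yields holomorphy on $\mathsf T_{>-\delta}$ and the stated bound. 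The main technical point will be uniformity in $\bold a$ of every implied constant, which ultimately rests on Proposition \ref{prop:d_alpha(f)}: since the vectors $(d_\alpha(f_{\bold a}))_\alpha$ take only finitely many values, a single choice of $\delta$, $\delta'$, $\kappa$ and so on works across all $\bold a\in\Lambda_X$.
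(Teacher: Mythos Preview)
Your proposal is correct and follows essentially the same approach as the paper: the paper's own proof is a one-line citation of Propositions~\ref{prop:goodplaces_nontrivial}, \ref{prop:badreduction_outsideS}, \ref{prop:badoutsideSnontrivial}, and \ref{prop:placesinS} together with the height comparison~\eqref{eq:finite_infinite_heigt_comparison}, and you have simply unpacked how these ingredients fit together. One small remark: the paper attributes the uniformity in $\bold a$ to the fact that $\bold a$ ranges over the lattice $\Lambda_X$ (which controls the global assembly, e.g.\ integrality of $\bold a$ at almost all places), whereas you emphasize Proposition~\ref{prop:d_alpha(f)} (which is what makes the constants in the \emph{local} Propositions~\ref{prop:goodplaces_nontrivial}--\ref{prop:badreduction_outsideS} independent of $\bold a$); both observations are needed, and together they give the full uniformity.
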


	\begin{proof}
	This follows from Propositions~\ref{prop:goodplaces_nontrivial}, 	
	\ref{prop:badreduction_outsideS}, 
	\ref{prop:badoutsideSnontrivial}, 
	and \ref{prop:placesinS}, together with the estimate 
	\eqref{eq:finite_infinite_heigt_comparison}. The implied constant can be chosen 
	independently of $\bold a$, since $\bold a$ belongs to the $\mathcal O_F$-module 
	$\Lambda_X$. 
	\end{proof}
	
\section{Proof of the main result for klt Campana points}
\label{sec:proofklt}

	In this section we prove our main result, Theorem \ref{thm:maintheorem1}. We work in 
	the setting introduced in \S \ref{subsec:results}, recalled here for the reader's convenience. 

	By $X$ we mean a smooth, projective and equivariant compactification of 
	$G = \mathbb G_a^n$, defined over a number field $F$. We assume that the boundary 
	divisor $D = X \setminus G$ is a strict normal crossings divisor on $X$, with irreducible 
	components $(D_\alpha)_{\alpha \in \calA}$, so that 
	$D = \sum_{\alpha\in \calA}D_\alpha$. We denote by $F_\alpha$ 
	the field of definition for one of the geometric irreducible components of $D_\alpha$; in 
	other words, $F_\alpha$ is the algebraic closure of $F$ in the function field of $D_\alpha$. 

	Let $S \subseteq \Omega_F$ be a finite set containing $\Omega_F^\infty$, such that 
	there exists a good integral model $(\mathcal X, \mathcal D)$ of $(X,D)$ over 
	$\Spec \, \mathcal O_{F, S}$ as in \S\ref{subsec:campana}, and let 
	$\mathcal D = \sum_{\alpha \in \mathcal A} \mathcal D_\alpha$. Having fixed 
	$\epsilon_\alpha \in \coeffset$ for each $\alpha \in \calA$, we let 
	$D_\epsilon = \sum_{\alpha \in \mathcal A} \epsilon_\alpha D_\alpha$ and 
	$\calD_\epsilon = \sum_{\alpha \in \mathcal A} \epsilon_\alpha\mathcal D_\alpha$. 
	In this section we assume that the pair $(X, D_\epsilon)$ is Kawamata log terminal 
	(klt for short), that is, $\epsilon_\alpha <1$ for all $\alpha \in \mathcal A$.

	Let $\mathcal{L}$ denote a big line bundle $L$ on $X$, equipped 
	with a smooth adelic metrization. Our goal is to 
	understand the asymptotic behavior of the counting function
	\[
	\mathsf N(G(F)_\epsilon, \mathcal L, T),
	\]
	which records the number of points of $\mathcal L$-height at most $T$ in 
	$G(F)_\epsilon = G(F) \cap (\calX,\calD_\epsilon)(\calO_{F,S})$. To do this, we apply 
	a Tauberian theorem to the height zeta function
	\[
	\mathsf Z_\epsilon(\bold s) 
	= \sum_{\bold x \in G(F)} \mathsf H(\bold x, \bold s)^{-1} \delta_\epsilon(\bold x)
	\]
	introduced in \S\ref{subsec:heightzeta}. This function is a holomorphic function when 
	$\Re(\bold s) \gg 0$; our first goal is to establish a meromorphic continuation of 
	this function. Subsequently, knowledge of the location of the rightmost pole of 
	$\mathsf Z_\epsilon(sL)$ along $\Re(s)$, its order, and its residue will  serve as inputs 
	to the Tauberian theorem that establishes the asymptotic formula we seek.

	Recall that  for any real number $c$ we defined
	\[
	\mathsf T_{> c} = \{\bold s \in \Pic(X)_{\mathbb C}: 
	\Re(s_\alpha) > \rho_\alpha-\epsilon_\alpha +c, \textup{ for all } \alpha \in \mathcal A\},
	\]
	where the $\rho_\alpha$ are integers satisfying 
	${-K}_X \sim \sum_{\alpha \in \calA} \rho_\alpha D_\alpha$.

	\begin{prop}
	\label{prop:meromorphic}
	The function
	\[
	\bold s\mapsto
	\left(\prod_{\alpha \in \mathcal A} 
	\zeta_{F_\alpha}(m_\alpha(s_\alpha -\rho_\alpha +1))^{-1}\right) 
	\mathsf Z_\epsilon(\bold s) 
	\]
	is holomorphic in the region $\mathsf T_{\geq 0}$.
	\end{prop}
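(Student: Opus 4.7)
The plan is to apply the Poisson summation formula (Theorem~\ref{thm: Poisson}) together with Proposition~\ref{prop:Lambda_X} to rewrite, for $\Re(\bold s)\gg 0$,
\[
\mathsf Z_\epsilon(\bold s) = \sum_{\bold a\in\Lambda_X}\widehat{\mathsf H}_\epsilon(\bold a,\bold s),
\]
and then to analyze the continuation of this sum term-by-term. The first two hypotheses of Theorem~\ref{thm: Poisson} follow as in \cite[Lemma~5.2]{CLT02}; the third, absolute convergence of the dual sum, will be a by-product of the termwise estimates used below.

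For the trivial character I would combine Corollary~\ref{cor:eulerproduct} at the places $v\notin S$ with Proposition~\ref{prop:badreduction_trivial} at $v\in S$; the product is holomorphic on $\mathsf T_{>-\delta}$ for some $\delta>0$ after multiplication by $\prod_\alpha\zeta_{F_\alpha,S^c}(m_\alpha(s_\alpha-\rho_\alpha+1))^{-1}$. The passage from $\zeta_{F_\alpha,S^c}^{-1}$ to the full Dedekind $\zeta_{F_\alpha}^{-1}$ is harmless, as the ratio equals $\prod_{v\in S,\, v\nmid\infty}\prod_\beta(1-q_v^{-f_{v,\beta}(\cdot)})$, an entire function.

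For a non-trivial character $\bold a\in\Lambda_X\setminus\{0\}$, Proposition~\ref{prop:eulerproduct_estimates} yields
\[
\Bigl|\prod_{\alpha\in\mathcal A^0(\bold a)}\zeta_{F_\alpha,S^c}(m_\alpha(s_\alpha-\rho_\alpha+1))^{-1}\,\widehat{\mathsf H}_\epsilon(\bold a,\bold s)\Bigr|\ll\frac{(1+|\bold s|)^{M_N}}{(1+H_\infty(\bold a))^N}
\]
on $\mathsf T_{>-\delta}$. I would then insert the extra factors $\zeta_{F_\alpha}(m_\alpha(s_\alpha-\rho_\alpha+1))^{-1}$ for $\alpha\in\mathcal A^{\geq 1}(\bold a)$ demanded by the statement, using that they are holomorphic and locally bounded on an open neighborhood of $\mathsf T_{\geq 0}$: the klt hypothesis $\epsilon_\alpha<1$ implies $\Re(m_\alpha(s_\alpha-\rho_\alpha+1))\geq 1$ there, and on the half-plane $\Re(\cdot)\geq 1$ the Dedekind zeta $\zeta_{F_\alpha}$ has neither a zero (classical non-vanishing on $\Re=1$) nor a pole other than the simple pole at $1$, which the inverse turns into a zero. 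The resulting estimate, still decaying as $(1+H_\infty(\bold a))^{-N}$, is summable over the lattice $\Lambda_X$ of rank $n[F:\mathbb Q]$ for $N$ large enough, uniformly on compacta in $\mathsf T_{>-\delta'}$ for some $\delta'>0$, giving holomorphy of the full sum.

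The main obstacle will be the handling of these ``extra'' zeta factors indexed by $\mathcal A^{\geq 1}(\bold a)$: they are supplied by the statement but not by the natural Euler-product estimate of Proposition~\ref{prop:eulerproduct_estimates}, and controlling them uniformly in $\bold a$ requires both the classical non-vanishing of Dedekind zeta on the critical line and the klt condition $\epsilon_\alpha<1$. This is also where one sees why the dlt case is genuinely different, necessitating the more restrictive setup of Theorem~\ref{thm:maintheorem2}.
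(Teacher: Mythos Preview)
Your proposal is correct and follows the same route as the paper's proof: Poisson summation reduces to a sum over $\Lambda_X$, then Corollary~\ref{cor:eulerproduct} with Proposition~\ref{prop:badreduction_trivial} handles $\bold a=0$ and Proposition~\ref{prop:eulerproduct_estimates} handles $\bold a\neq 0$, with the decay $(1+H_\infty(\bold a))^{-N}$ ensuring summability. You are in fact more explicit than the paper about the step where one inserts the ``extra'' factors $\zeta_{F_\alpha}^{-1}$ for $\alpha\in\mathcal A^{\geq 1}(\bold a)$, correctly invoking the klt condition and non-vanishing of Dedekind zeta on $\Re=1$; the paper's proof leaves this implicit.

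One small overstatement: you claim uniform convergence on compacta in $\mathsf T_{>-\delta'}$ for some $\delta'>0$. This would require the extra factors $\zeta_{F_\alpha}(m_\alpha(s_\alpha-\rho_\alpha+1))^{-1}$ to be holomorphic on a strip strictly to the left of $\Re=1$, i.e., a zero-free strip for $\zeta_{F_\alpha}$, which is not known. What you actually have is that each such factor is holomorphic on an open set containing $\{\Re\geq 1\}$ (since $\zeta_{F_\alpha}$ has no zero at any individual point with $\Re=1$), giving holomorphy of each summand on an open neighbourhood of $\mathsf T_{\geq 0}$ and uniform convergence on compacta of $\mathsf T_{\geq 0}$. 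This suffices for the stated conclusion, so the slip is harmless.
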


	\begin{proof}
	We begin by verifying that the Poisson summation formula
	\begin{equation}
	\label{eq:psf}
	\mathsf Z_\epsilon(\bold s) 
	= \sum_{\bold a \in \Lambda_X} \widehat{\mathsf H}_\epsilon(\bold a, \bold s)
	\end{equation}
	holds for $\Re(\bold s)\gg0$. The discussion in \S\ref{subsec:heightzeta} shows that 
	all that remains to be done is checking that the right hand side converges absolutely. 
	This follows from Proposition~\ref{prop:eulerproduct_estimates}, as 
	$$\sum_{\bold a \in \Lambda_X}\frac 1 {(1+H_{\infty}(\bold a))^N}$$ 
	converges for sufficiently large $N$. The result now follows from an application of 
	Propositions~\ref{prop:badreduction_trivial} and \ref{prop:eulerproduct_estimates} and 
	Corollary \ref{cor:eulerproduct} to the summands of the right hand side of~\eqref{eq:psf}.
	\end{proof}

	\begin{remark}
	\label{rem:klt}
	It is important to note that the local height integrals studied in 
	\S\S\ref{sec: height integral}--\ref{sec: height integral II} have poles along 
	$s_\alpha = \rho_\alpha-1$; however, it follows from Proposition~\ref{prop:meromorphic} 
	that the rightmost pole of $\mathsf Z_\epsilon(\bold s)$ occurs along some 
	$s_\alpha = \rho_\alpha - \epsilon_\alpha > \rho_\alpha-1$, because of the klt condition.
	\end{remark}

	With a meromorphic continuation of $\mathsf Z_\epsilon(\bold s)$ in hand, we turn to the 
	case where $\bold s = sL$. We may write 
	$L = \sum_{\alpha \in \mathcal A} \lambda_\alpha D_\alpha$, where 
	$\lambda_\alpha > 0$ for all $\alpha \in \mathcal A$, because $L$ is big. 
	Then $s_\alpha = s\lambda_\alpha$. Proposition~\ref{prop:meromorphic} suggests that 
	the rightmost pole along $\Re(s)$ of the zeta function $\mathsf Z_\epsilon(sL)$ is 
	\[
	a=a((X, D_\epsilon), L) = \max_{\alpha \in \mathcal A} 
	\left\{  \frac{\rho_\alpha -\epsilon_\alpha}{\lambda_\alpha}\right\}.
	\]
	Setting
	\[
	\mathcal A_\epsilon(L) 
	= \left\{\alpha \in \mathcal A : \frac{\rho_\alpha -\epsilon_\alpha}{\lambda_\alpha} 
	= a((X, D_\epsilon), L) \right\},
	\] 
	the order of this pole should be
	\[
	b=b(F, (X, D_\epsilon), L) := \# \mathcal A_\epsilon(L);
	\]
	see Remark~\ref{rem:klt}. We shall establish these statements, separating our analysis 
	into two cases, according to the Iitaka dimension of the adjoint divisor
	\[
	a L + K_X + D_\epsilon.
	\]

\subsection{Rigid case}
\label{subsec:rigid}

	In this subsection we assume that the adjoint divisor $a L + K_X + D_\epsilon$ has Iitaka 
	dimension 
	(see \cite[\S2.1]{Laz04} for the definition)
	equal to zero; we say that $a L + K_X + D_\epsilon$ is \defi{rigid}. Recall that 
	$\Lambda_X \subset G(F)$ is the set of $\bold a$ such that the character $\psi_{\bold a}$ 
	is trivial on the compact open $\bold K$ defined in \S\ref{subsec:intersectionmultiplicities}.

	By the Poisson summation formula, we have
	\[
	\mathsf Z_\epsilon(sL) = \sum_{\bold a \in \Lambda_X} 
	\widehat{\mathsf H}_\epsilon(\bold a, sL).
	\]
	We study the poles of $\mathsf Z_\epsilon(sL)$ by looking at the individual terms of the 
	right hand side. When $\bold a = 0$, it follows from Corollary~\ref{cor:eulerproduct} that 
	$\widehat{\mathsf H}_\epsilon(0, sL)$ has a pole at $s=a$ of order $b$, provided we 
	show that the corresponding residue is not zero (we verify this last claim presently). 
	On the other hand, Proposition~\ref{prop:eulerproduct_estimates} shows that if $\bold a\neq0$ the term 
	$\widehat{\mathsf H}_\epsilon(\bold a, sL)$ has a pole of the highest order equal to that 
	of $\widehat{\mathsf H}_\epsilon(0, sL)$ if and only if
	\[
	\mathcal A^0(\bold a) \supset \mathcal A_\epsilon(L).
	\]
	This condition means that whenever $(\rho_\alpha - \epsilon_\alpha)/\lambda_\alpha = a$, 
	we must have $d_\alpha(f_{\bold a}) = 0$. Since 
	\[
	E(f_{\bold a}) \sim \sum_{\alpha \in \calA} d_\alpha(f_{\bold a}) D_\alpha
	\quad\text{and}\quad
	aL + K_X + D_\epsilon 
	= \sum_{\alpha\in\calA} (a\lambda_\alpha - \rho_\alpha + \epsilon_\alpha) D_\alpha,
	\]
	it follows that $E(f_{\bold a})$ is equivalent to a boundary divisor whose support is 
	contained in that of the adjoint divisor $aL + K_X + D_\epsilon$. This is not possible.
	Indeed, $aL + K_X + D_\epsilon$ is rigid,
	and any positive linear combination of components of a rigid effective divisor has 
	a unique effective divisor in its $\mathbb Q$-linear equivalence class. However, 
	we showed that the effective divisor $E(f_{\bold a})$, 
	which is not a boundary divisor, is linearly equivalent 
	to an effective boundary divisor $aL + K_X + D$.
	This is a contradiction. Hence, if $\mathbf a\neq0$, 
	the summand $\widehat{\mathsf H}_\epsilon(\bold a, sL)$ does not contribute to 
	the residue of the pole of $\mathsf Z_\epsilon(sL)$ at $s=a$.	

	Our analysis shows that the main term of $\mathsf Z_\epsilon(sL)$ is furnished by 
	$\widehat{\mathsf H}_\epsilon(0, sL)$, provided  
	\[
	c := \lim_{s \rightarrow a} (s-a)^b\,\widehat{\mathsf H}_\epsilon(0, sL).
	\]
	is non-zero, i.e., only the trivial character can contribute to the leading pole of $\mathsf Z_\epsilon(sL)$. 
	Recall that
	\[
	\widehat{\mathsf H}_\epsilon(0, sL) 
	= \int_{G(\mathbb A_F)} \mathsf H(\bold x, sL)^{-1} \, 
	\delta_\epsilon(\bold x) \, \mathrm d \bold x 
	= \int_{G(\mathbb A_F)_\epsilon} \mathsf H(\bold x, sL + K_X)^{-1}\, \mathrm d \tau
	\]
	where $\tau = \prod_v \tau_{v}$ is the Tamagawa measure on $G$. Let $X^\circ 
	= X \setminus \left(\bigcup_{\alpha \not\in \mathcal A_\epsilon(L)} D_\alpha\right)$. 
	Setting $\Gamma = \Gal(\bar F/F)$ and $\Gamma_{F_\alpha} = \Gal(\bar F/F_\alpha)$, 
	we construct the virtual Artin representation
	\[
	P(\bar{X}^\circ) = \mathrm{Pic}(\bar{X})_{\mathbb C} 
	- \sum_{\alpha \not\in \mathcal A_\epsilon(L)} 
	\mathrm{Ind}_{\Gamma_{F_\alpha}}^{\Gamma} \mathbb C.
	\]
	We denote the corresponding virtual Artin $L$-function by
	\[
	L^S(P(\bar{X}^\circ), s) = \prod_{v \not\in S} L_v(P(\bar{X}^\circ), s).
	\]
	This function has a pole of order $\#\mathcal A_\epsilon(L)$ at $s = 1$ by 
	\cite[Corollary 5.47]{IK04}. For $v \in S$ we define $L_v(P(\bar{X}^\circ), s) = 1$.
	Using this we define the Tamagawa measure 
	\begin{equation}
	\label{eq:TamagawaDefn}
	\tau_{X^\circ} = L^S_*(P(\bar{X}^\circ), 1) 
	\prod_{v\in\Omega_F} L_v(P(\bar{X}^\circ), 1)^{-1}\tau_{X^\circ, v},
	\end{equation}
	where $L^S_*(P(\bar{X}^\circ), 1)$ is the leading constant of $L^S(P(\bar{X}^\circ), s)$.
	We also define
	\[
	\tau_{X^\circ, D_\epsilon,v} = \mathsf H_v(\bold x, D_\epsilon) \tau_{X^\circ,v}
	\quad\textrm{and}\quad
	\tau_{X^\circ, D_\epsilon} = \mathsf H(\bold x, D_\epsilon) \tau_{X^\circ}.
	\]
	\begin{lemma}
	\label{lem:rigid_klt_constant}
	With notation as above, we have
	\[
	c = \prod_{\alpha \in \mathcal A_\epsilon(L)} \frac{1}{m_\alpha \lambda_\alpha} 
	\int_{X^\circ(\mathbb A_F)_\epsilon} \mathsf H(\bold x, aL + K_X + D_\epsilon)^{-1}\, 
	\mathrm d \tau_{X^\circ, D_\epsilon} > 0,
	\]
	where $X^\circ(\mathbb A_F)_\epsilon$ is defined  in \S \ref{subsec:leading_constant}.
	\end{lemma}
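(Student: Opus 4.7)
The plan is to extract the pole of $\widehat{\mathsf H}_\epsilon(0,sL)$ at $s=a$ from its Euler product structure and then identify the resulting constant with the claimed Tamagawa integral by matching local factors at each place.

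Combining Corollary~\ref{cor:eulerproduct} for the product over $v \notin S$ with Proposition~\ref{prop:badreduction_trivial} for $v \in S$, I would write
\[
\widehat{\mathsf H}_\epsilon(0,sL)
= \left(\prod_{\alpha \in \mathcal A}\zeta_{F_\alpha}\bigl(m_\alpha(s\lambda_\alpha-\rho_\alpha+1)\bigr)\right)\Phi(s),
\]
with $\Phi$ holomorphic in a neighborhood of $s=a$. The condition $a\lambda_\alpha-\rho_\alpha+1 = 1/m_\alpha$ holds precisely for $\alpha\in\mathcal A_\epsilon(L)$ and is strictly exceeded otherwise, so only the factors indexed by $\mathcal A_\epsilon(L)$ contribute poles, confirming the order $b = \#\mathcal A_\epsilon(L)$. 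Extracting residues via $\lim_{w\to 1}(w-1)\zeta_{F_\alpha}(w) = \zeta_{F_\alpha}^*(1)$, together with the chain-rule factor $1/(m_\alpha\lambda_\alpha)$ arising from the substitution $w = m_\alpha(s\lambda_\alpha-\rho_\alpha+1)$, gives
\[
c
= \prod_{\alpha \in \mathcal A_\epsilon(L)}\frac{\zeta_{F_\alpha}^*(1)}{m_\alpha\lambda_\alpha}\cdot\Phi(a)\prod_{\alpha\notin\mathcal A_\epsilon(L)}\zeta_{F_\alpha}\bigl(m_\alpha(a\lambda_\alpha-\rho_\alpha+1)\bigr).
\]

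Next, I would identify the remaining factor with the Tamagawa integral, together with the prefactor $L^S_*(P(\bar X^\circ),1)$ built into~\eqref{eq:TamagawaDefn}. Proposition~\ref{prop:effectivecone}(1) yields $\Pic(\bar X)_{\mathbb C} = \bigoplus_{\alpha \in \mathcal A}\Ind_{\Gamma_{F_\alpha}}^\Gamma\mathbb C$, hence
\[
P(\bar X^\circ) = \sum_{\alpha\in\mathcal A_\epsilon(L)}\Ind_{\Gamma_{F_\alpha}}^\Gamma\mathbb C,
\]
so that $L^S(P(\bar X^\circ),s) = \prod_{\alpha\in\mathcal A_\epsilon(L)}\zeta_{F_\alpha, S^c}(s)$ and $L_v(P(\bar X^\circ),1) = \prod_{\alpha\in\mathcal A_\epsilon(L)}\prod_{\beta\in\mathcal A_v(\alpha)}\zeta_{F_{v,\beta}}(1)$ for every $v\notin S$. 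Unfolding~\eqref{eq:TamagawaDefn} and using $\mathrm d\tau_{X^\circ, D_\epsilon} = \mathsf H(\bold x, D_\epsilon)\,\mathrm d\tau_{X^\circ}$, the Tamagawa integral splits as a convergent Euler product with prefactor $L^S_*(P(\bar X^\circ),1)$, and the problem reduces to a place-by-place matching.

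At $v\in S$, the local Tamagawa integral coincides with $\widehat{\mathsf H}_{\epsilon,v}(0,aL)$ since $\delta_{\epsilon,v}\equiv 1$, $L_v(P(\bar X^\circ),1)=1$ by convention, and $G(F_v) \subseteq X^\circ(F_v)$ is an equality up to measure-zero boundary. At $v\notin S$ of good reduction, I would apply the explicit formula~\eqref{eq:localFTtrivialchar2} to check that
\[
L_v(P(\bar X^\circ),1)^{-1}\int_{X^\circ(F_v)_\epsilon}\mathsf H_v(\bold x, aL+K_X)^{-1}\,\mathrm d\tau_{X^\circ,v}
\]
agrees with the $v$-local contribution to $\Phi(s)\prod_{\alpha\notin\mathcal A_\epsilon(L)}\zeta_{F_\alpha}(m_\alpha(s\lambda_\alpha-\rho_\alpha+1))$ at $s = a$; for $v\notin S$ of bad reduction one argues analogously using Proposition~\ref{prop:badoutsideS}. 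The technical heart of the argument is this last matching, essentially a Denef-type calculation adapted to the Campana setting, requiring careful bookkeeping of how the excision of the components $D_\alpha$ with $\alpha\notin\mathcal A_\epsilon(L)$ in passing from $X$ to $X^\circ$ interacts with the local Campana condition $\delta_{\epsilon,v}$ and the stratification by $\mathcal D_{v,B}^\circ$. Positivity of $c$ then follows because the integrand $\mathsf H(\bold x, aL+K_X+D_\epsilon)^{-1}$ is strictly positive, the set $X^\circ(\mathbb A_F)_\epsilon$ is nonempty (it contains $G(\mathcal O_v)$ at every $v\notin S$ of good reduction, since the open orbit has zero intersection with every boundary component), and the factors $\zeta_{F_\alpha}^*(1)$, $m_\alpha$ and $\lambda_\alpha$ are all strictly positive.
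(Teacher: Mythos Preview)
Your approach is correct and follows essentially the same route as the paper's proof: factor out the Dedekind zeta functions, extract their residues with the chain-rule factor $1/(m_\alpha\lambda_\alpha)$, identify the virtual Artin representation $P(\bar X^\circ)$ with $\sum_{\alpha\in\mathcal A_\epsilon(L)}\Ind_{\Gamma_{F_\alpha}}^{\Gamma}\mathbb C$, and match the remaining Euler product with the Tamagawa integral.

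Two remarks on where you diverge from the paper. First, you anticipate that the ``technical heart'' is a place-by-place Denef-type stratification computation, but the paper avoids this entirely. Once you have made the identification $L^S(P(\bar X^\circ),s)=\prod_{\alpha\in\mathcal A_\epsilon(L)}\zeta_{F_\alpha,S^c}(s)$ (which you do), the local regularizing factor $L_v(P(\bar X^\circ),1)^{-1}$ built into $\tau_{X^\circ}$ is \emph{literally} $\prod_{\alpha\in\mathcal A_\epsilon(L)}\zeta_{F_\alpha,S^c,v}(1)^{-1}$, and the local Tamagawa integral $\int_{G(F_v)_\epsilon}\mathsf H_v(\bold x,aL+K_X)^{-1}\,\mathrm d\tau_{X^\circ,v}$ is \emph{literally} $\widehat{\mathsf H}_{\epsilon,v}(0,aL)$, since $\tau_v$ and $\tau_{X^\circ,v}$ agree on $G$. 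So the matching is a formal unpacking of definitions, not a new computation; no stratification by $\mathcal D_{v,B}^\circ$ is needed at this stage. The paper records this as the single identity $\prod_v L_v(P(\bar X^\circ),1)\bigl(\prod_\alpha\zeta_{F_\alpha,S^c,v}(1)\bigr)^{-1}=L_*^S(P(\bar X^\circ),1)\bigl(\prod_\alpha\zeta_{F_\alpha,S^c}^*(1)\bigr)^{-1}$.

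Second, your positivity argument is slightly loose: knowing the integrand is positive and the domain nonempty does not by itself rule out the infinite Euler product converging to zero. The paper closes this by noting that each local integral is positive and then invoking the estimate of Proposition~\ref{prop:EulerOneFactor}(2), which shows the renormalized Euler factors are $1+O(q_v^{-(1+\delta')})$, so the infinite product converges to a nonzero value.
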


	\begin{proof}
	First, we note that
	\begingroup
	\allowdisplaybreaks
	\begin{align*}
	c&= \lim_{s \rightarrow a} (s-a)^b\ \widehat{\mathsf H}_\epsilon(0, sL) \\
	&= \lim_{s \rightarrow a} (s-a)^b\prod_{\alpha \in \mathcal A_\epsilon(L)} 
	\zeta_{F_\alpha, S^c}(m_\alpha(\lambda_\alpha s - \rho_\alpha +1)) \\ 
	& \quad\quad \times\int_{G(\mathbb A_F)_\epsilon} 
	\left(\prod_{\alpha \in \mathcal A_\epsilon(L)} 
	\zeta_{F_\alpha, S^c}(m_\alpha(\lambda_\alpha s - \rho_\alpha +1))\right)^{-1}\,
	\mathsf H(\bold x, sL + K_X)^{-1}\,\mathrm d \tau
	\end{align*}
	\endgroup
	For each $\alpha \in \mathcal A_\epsilon(L)$, we have 
	$a = (\rho_\alpha - \epsilon_\alpha)/\lambda_\alpha$, where 
	$\epsilon_\alpha = 1 - 1/m_\alpha$. Each of the $b$-many Dedekind zeta factors 
	$\zeta_{F_\alpha, S^c}(m_\alpha(\lambda_\alpha s - \rho_\alpha +1))$ has a 
	simple pole at $s = a$, so that the limit 
	\[
	\lim_{s \rightarrow a}\, (s-a) \,
	\zeta_{F_\alpha, S^c}(m_\alpha(\lambda_\alpha s - \rho_\alpha +1))
	\]
	is equal to the residue at $s = a$ for the Dedekind zeta factor corresponding to $\alpha$, 
	which we denote by $\zeta_{F_\alpha, S^c}^*(1)/m_\alpha \lambda_\alpha$, where 
	$\zeta_{F_\alpha, S^c}^*(1)$ is the residue of $\zeta_{F_\alpha, S^c}(s)$ at $s = 1$, 
	the normalization $1/m_\alpha \lambda_\alpha$ being a consequence of the chain rule. 
	With the notation
	\[
	\zeta_{F_\alpha, S^c, v}(s)=
	\begin{cases}
	\prod_{\beta\in\mathcal A_v(\alpha)}\zeta_{F_{v,\beta}}(s) & \text { if } v\notin S,\\
	1 & \text{ otherwise}, \end{cases}
	\]
	we rewrite the integral
	\[
	\int_{G(\mathbb A_F)_\epsilon} \left(\prod_{\alpha \in \mathcal A_\epsilon(L)} 
	\zeta_{F_\alpha, S^c}(m_\alpha(\lambda_\alpha s - \rho_\alpha +1))\right)^{-1}\,
	\mathsf H(\bold x, sL + K_X)^{-1}\,\mathrm d \tau
	\]
	as a product of local integrals
	\[
	\prod_{v\in\Omega_F} \int_{G(F_v)_\epsilon}
	\left(\prod_{\alpha \in \mathcal A_\epsilon(L)} 
	\zeta_{F_\alpha, S^c, v}(m_\alpha(\lambda_\alpha s - \rho_\alpha +1)\right)^{-1}\,
	\mathsf H_v(\bold x, aL + K_X)^{-1}\, \mathrm d \tau_{X^\circ,v}
	\]
	each of which is regular at $s = a$ (note that $\tau_v$ and $\tau_{X^0,v}$ coincide on 
	$G$). We obtain
	\begin{align*}
	c &= \prod_{\alpha \in \mathcal A_\epsilon(L)} 
	\frac{1}{m_\alpha \lambda_\alpha}\zeta_{F_\alpha, S^c}^*(1)
	\prod_{v\in\Omega_F} \int_{G(F_v)_\epsilon}
	\left(\prod_{\alpha \in \mathcal A_\epsilon(L)} \zeta_{F_\alpha, S^c, v}(1)\right)^{-1}\,
	\mathsf H_v(\bold x, aL + K_X)^{-1}\, \mathrm d \tau_{X^\circ,v}
	\end{align*}
	Using the equality
	\[
	\prod_{v\in\Omega_F} L_v(P(\bar{X}^\circ), 1)\,
	\left(\prod_{\alpha \in \mathcal A_\epsilon(L)} \zeta_{F_\alpha, S^c, v}(1)\right)^{-1} 
	= L_*^S(P(\bar{X}^\circ), 1)\,
	\left(\prod_{\alpha \in \mathcal A_\epsilon(L)} \zeta_{F_\alpha, S^c}^*(1)\right)^{-1}
	\]
	we may simplify the above expression for $c$ to
	\begin{align}
	\label{eqn:leadingconstant}
	\prod_{\alpha \in \mathcal A_\epsilon(L)} 
	\frac{1}{m_\alpha \lambda_\alpha} L_*^S(P(\bar{X}^\circ), 1)\,
	\prod_v \int_{G(F_v)_\epsilon}
	\mathsf H_v(\bold x, aL + K_X + D_\epsilon)^{-1}\, L_v(P(\bar{X}^\circ), 1)^{-1}\,
	\mathrm d \tau_{X^\circ, D_\epsilon,v}.
	\end{align}
	Finally,~\eqref{eq:TamagawaDefn} allows us to conclude that
	\[
 	c = \prod_{\alpha \in \mathcal A_\epsilon(L)} \frac{1}{m_\alpha \lambda_\alpha} 
	\int_{X^\circ(\mathbb A_F)_\epsilon} \mathsf H(\bold x, aL + K_X + D_\epsilon)^{-1} \,
	\mathrm d \tau_{X^\circ, D_\epsilon} > 0,
	\]
	Let us discuss the positivity of this constant. 
	Recall that this integration is expressed as the Euler product  (\ref{eqn:leadingconstant}).
	The integral at each place is positive as the inner function is positive over some open subset.
	Then a partial Euler product is also positive 
	because of Proposition~\ref{prop:EulerOneFactor} (2). 
	Thus our assertion follows.
	\end{proof}

	Applying a Tauberian theorem (see, e.g., \cite[II.7,~Theorem~15]{Tenenbaum}), we obtain:
	
	\begin{thm}
	\label{theo:main_kltrigid}
	Let $\mathcal{X}$, $\mathcal L$, $\mathcal{D}$ and $\epsilon$ be as above. 
	Assume that $(X, D_\epsilon)$ is klt and set
	\begin{eqnarray*}
	a & = & a((X, D_\epsilon), L), \\
	b & = & b(F, (X, D_\epsilon), L), \\
	c & = & c(F,S,(\mathcal{X},\mathcal{D}_\epsilon),\mathcal{L}) \\ 
	& = &  \prod_{\alpha \in \mathcal A_\epsilon(L)} \frac{1}{m_\alpha \lambda_\alpha} 
	\int_{X^\circ(\mathbb A_F)_\epsilon} 
	\mathsf H(\bold x, aL + K_X + D_\epsilon)^{-1} \, \mathrm d \tau_{X^\circ, D_\epsilon}. 
	\end{eqnarray*}
	If $aL + K_X + D_\epsilon$ is rigid, then
	\[
	\mathsf N(G(F)_\epsilon, \mathcal L, T) 
	\sim \frac{c}{a(b-1)!}T^a(\log T)^{b-1} \text{ as $T \rightarrow \infty$}. 
	\tag*{\qed}
	\] 
	\end{thm}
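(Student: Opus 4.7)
The plan is to combine the Poisson summation formula \eqref{formula:Z} with the local/global estimates of Sections~\ref{sec: height integral}--\ref{sec: height integral II} to obtain a precise meromorphic continuation of $\mathsf Z_\epsilon(sL)$ past its rightmost pole, and then invoke a standard Tauberian theorem. Concretely, I would write
\[
\mathsf Z_\epsilon(sL) = \widehat{\mathsf H}_\epsilon(0, sL) + \sum_{\bold a \in \Lambda_X \setminus \{0\}} \widehat{\mathsf H}_\epsilon(\bold a, sL),
\]
with the aim of showing that all terms on the right admit holomorphic continuation to a neighborhood of $\Re(s) = a$ minus a pole of order $b$ at $s = a$, that only the trivial character contributes to this pole, and that the residue there is the constant $c$ computed in Lemma~\ref{lem:rigid_klt_constant}.

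For the trivial character, Corollary~\ref{cor:eulerproduct} already expresses $\widehat{\mathsf H}_\epsilon(0,sL)$, after dividing by the product $\prod_{\alpha \in \mathcal A} \zeta_{F_\alpha}(m_\alpha(\lambda_\alpha s - \rho_\alpha + 1))$, as a function holomorphic on $\mathsf T_{> -\delta'}$; the Dedekind factors then contribute a pole of order exactly $b = \#\mathcal A_\epsilon(L)$ at $s=a$, with residue as in Lemma~\ref{lem:rigid_klt_constant} (whose positivity was established there). The crux, which is the core obstacle, is the analogous control for $\bold a \neq 0$: I would invoke Proposition~\ref{prop:eulerproduct_estimates} to see that $\widehat{\mathsf H}_\epsilon(\bold a, sL)$ is holomorphic on $\mathsf T_{>-\delta}$ up to the product of Dedekind factors indexed by $\mathcal A^0(\bold a)$. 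Thus the pole order at $s=a$ contributed by $\bold a$ is at most $\#(\mathcal A^0(\bold a)\cap\mathcal A_\epsilon(L))$, and can equal $b$ only if $\mathcal A_\epsilon(L)\subseteq\mathcal A^0(\bold a)$, i.e., $d_\alpha(f_{\bold a})=0$ for every $\alpha\in\mathcal A_\epsilon(L)$. Combining
\[
E(f_{\bold a})\sim\sum_{\alpha\in\mathcal A}d_\alpha(f_{\bold a})D_\alpha,\qquad
aL+K_X+D_\epsilon=\sum_{\alpha\in\mathcal A}(a\lambda_\alpha-\rho_\alpha+\epsilon_\alpha)D_\alpha,
\]
one sees that this would force $E(f_{\bold a})$ to be $\mathbb Q$-equivalent to an effective divisor supported on $\mathrm{Supp}(aL+K_X+D_\epsilon)$; the rigidity hypothesis, which ensures that any positive linear combination of components of the adjoint divisor is the unique effective representative in its $\mathbb Q$-class, rules this out. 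Hence no $\bold a \neq 0$ contributes a pole of order $b$ at $s=a$.

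To pass from individual estimates to a meromorphic continuation of the full sum, the second main obstacle is absolute convergence of $\sum_{\bold a\in\Lambda_X\setminus\{0\}}\widehat{\mathsf H}_\epsilon(\bold a, sL)$ on a tube neighborhood of $\Re(s)=a$. Here I would apply the quantitative bound of Proposition~\ref{prop:eulerproduct_estimates}: after extracting the finitely many Dedekind factors from $\mathcal A^0(\bold a)$ (which are uniformly bounded on any compact subset of $\mathsf T_{>-\delta}$ away from their poles), the tails decay like $(1+\|sL\|)^{M_N}(1+H_\infty(\bold a))^{-N}$, and the series $\sum_{\bold a\in\Lambda_X}(1+H_\infty(\bold a))^{-N}$ converges for $N$ sufficiently large since $\Lambda_X$ is an $\mathcal O_F$-lattice of rank $n$ in $G(F)$. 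This yields holomorphy of $\sum_{\bold a\neq 0}\widehat{\mathsf H}_\epsilon(\bold a, sL)$ on a set of the form $\Re(s)>a-\delta''$ minus the poles of finitely many Dedekind factors, whose orders at $s=a$ are strictly less than $b$ by the rigidity argument above.

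Finally, with $\mathsf Z_\epsilon(sL)$ now known to be meromorphic on $\Re(s)>a-\delta''$, possessing a unique pole of maximal order $b$ at $s=a$ with residue $c\cdot\lim_{s\to a}(s-a)^{-1}\!\cdot\!(s-a)^b = c/(\text{normalization})$ arising solely from $\widehat{\mathsf H}_\epsilon(0,sL)$, the polynomial growth bounds on vertical strips (which come from the same Proposition~\ref{prop:eulerproduct_estimates} combined with Proposition~\ref{prop:placesinS}) verify the hypotheses of the Tauberian theorem of Chambert-Loir--Tschinkel type, e.g., \cite[II.7, Theorem~15]{Tenenbaum}. This delivers the asymptotic
\[
\mathsf N(G(F)_\epsilon,\mathcal L,T)\sim\frac{c}{a(b-1)!}\,T^a(\log T)^{b-1},
\]
with $c$ as in Lemma~\ref{lem:rigid_klt_constant}, completing the proof.
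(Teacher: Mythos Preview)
Your proposal is correct and follows essentially the same approach as the paper: Poisson summation, identification of the leading pole via Corollary~\ref{cor:eulerproduct} and Lemma~\ref{lem:rigid_klt_constant} for the trivial character, the rigidity argument to rule out equal-order contributions from nontrivial characters via Proposition~\ref{prop:eulerproduct_estimates}, and the Tauberian theorem from \cite{Tenenbaum}. The only minor imprecision is that Corollary~\ref{cor:eulerproduct} covers only the product over $v\notin S$, so you should also invoke Proposition~\ref{prop:badreduction_trivial} for $v\in S$ (noting, as in Remark~\ref{rem:klt}, that the klt hypothesis pushes those local poles strictly left of $s=a$).
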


\subsection{Non-rigid case}

	The analysis in this subsection is modeled on \cite{Tsc03}. With notation as above, 
	we now assume that the divisor $E := aL + K_X +D_\epsilon$ is not rigid, i.e., 
	that its Iitaka dimension is positive. Then some multiple $mE$ defines the Iitaka fibration 
	$\phi_m \colon X \dashrightarrow Y_m$. 
	(See \cite[\S 2.2]{Laz04} for its definition.)
	Since $mE$ admits a $G$-linearization, 
	$Y_m$ admits a natural $G$-action, and $\phi_m$ is $G$-equivariant. 
	For the sake of simplicity, we assume that $\phi_m$ is a \emph{morphism}. 
	The variety $Y_m$ contains an open orbit of the $G$-action, so it has the structure 
	of an equivariant compactification of the quotient vector space $G/G_L$, where 
	$G_L \subset G$ is a linear subspace of $G$. 
	
	As in \S \ref{subsec:rigid}, the term $\widehat{\mathsf H}_\epsilon(\bold a, sL)$ has 
	a pole of highest  order equal to that of $\widehat{\mathsf H}_\epsilon(0, sL)$ if and 
	only if $\mathcal A^0(\bold a) \supset \mathcal A_\epsilon(L)$. This condition is 
	equivalent to having $f_{\bold a} = 0$ on $G_L$. Therefore the rightmost pole
	of $\mathsf Z_\epsilon(sL)$ is furnished by the sum
	\begin{align*}
 	\sum_{\{f_{\bold a} = 0\} \supset G_L} \widehat{\mathsf H}_\epsilon(\bold a, sL) 
 	&=  \sum_{\{f_{\bold a} = 0\} \supset G_L} 
 	\int_{G(\mathbb A_F)} \mathsf H^{-1}(\bold x, sL)\, 
 	\delta_\epsilon(\bold x)\,\psi(\bold a \cdot \bold x)\, \mathrm d \bold x\\
	&= \sum_{\bold y \in (G/G_L)(F)}
	\int_{G_L(\mathbb A_F)} \mathsf H^{-1}(\bold x + \bold y, sL)\, 
	\delta_\epsilon(\bold x + \bold y)\, \mathrm d \bold x,
	\end{align*}
	where the last equality follows from the Poisson summation formula.
	Note that the equality holds for any $s$ with $\Re(s) > a$ by the monotone and 
	dominated convergence theorems.

	Let $X_{\bold y}$ be the fiber of $\phi_m$ above $\bold y$. 
	It is a smooth equivariant compactification of $G_L$, with boundary divisor 
	$D|_{X_{\bold y}}$. Let $\mathcal X_{\bold y}$ be the closure of $X_{\bold y}$ inside 
	$\mathcal X$. The restriction $(aL + K_X + D_\epsilon)|_{X_{\bold y}}$ is rigid, since 
	$\phi_m$ is an Iitaka fibration. Applying the analysis of \S\ref{subsec:rigid}, we conclude 
	that the inner integral has a pole at $s = a((X_{\bold y}, D_\epsilon|_{X_{\bold y}}), L)$ 
	of order $b(F, (X_{\bold y}, D_\epsilon|_{X_{\bold y}}), L)$. 
	Now \cite[Lemma 5.2]{HTT15} yields	
	\[
	a((X, D_\epsilon), L) = a((X_{\bold y}, D_\epsilon|_{X_{\bold y}}), L), \quad\text{and}\quad
	b(F, (X, D_\epsilon), L) = b(F, (X_{\bold y}, D_\epsilon|_{X_{\bold y}}), L).
	\]
	We claim that
	\begin{align*}
	\lim_{s \rightarrow a}(s-a)^b\, \mathsf Z_\epsilon(sL) 
	= \sum_{\bold y \in (G/G_L)(F)} 
	c(F, S, (\mathcal X_{\bold y}, \mathcal D_\epsilon|_{\mathcal X_{\bold y}}), 
	\mathcal L|_{X_{\bold y}}).
	\end{align*} 
	All we need to do is justify the interchange of limits: the right hand side converges by 
	Fatou's lemma, and the claim then follows from the Poisson summation formula 
	(Theorem~\ref{thm: Poisson}).

	As before, applying a Tauberian theorem (\cite[II.7,~Theorem~15]{Tenenbaum}), we obtain:

	\begin{thm}
	\label{theo:main_kltnonrigid}
	Let $X$, $\mathcal L$, $D$ and $\epsilon$ be as above. Assume that $(X, D_\epsilon)$ 
	is klt, and that $m$ is an integer such that the Iitaka fibration 
	$\phi_m \colon X \dashrightarrow Y_m$ defined by $mE$ is a \emph{morphism}. Set
	\begin{eqnarray*}
	a & = & a((X, D_\epsilon), L), \\ b & = & b(F, (X, D_\epsilon), L), \\
	c & = & \sum_{\bold y \in (G/G_L)(F)} 
	c(F, S, (\mathcal X_{\bold y}, \mathcal D_\epsilon|_{\mathcal X_{\bold y}}), 
	\mathcal L|_{X_{\bold y}}).
	\end{eqnarray*}
	Then
	\[
	\mathsf N(G(F)_\epsilon, \mathcal L, T) 
	\sim \frac{c}{a(b-1)!}T^a(\log T)^{b-1} \text{ as $T \rightarrow \infty$}. 
	\tag*{\qed}
	\]
	\end{thm}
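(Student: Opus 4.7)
The plan is to follow the height zeta function method and reduce to the rigid case of Theorem~\ref{theo:main_kltrigid} via the Iitaka fibration $\phi_m \colon X \to Y_m$. First I would apply the Poisson summation formula together with Proposition~\ref{prop:eulerproduct_estimates} (which controls the tail of the Fourier transforms uniformly in $\bold a$) to write, for $\Re(s) \gg 0$,
\[
\mathsf Z_\epsilon(sL) = \sum_{\bold a \in \Lambda_X} \widehat{\mathsf H}_\epsilon(\bold a, sL).
\]
The analysis of Proposition~\ref{prop:eulerproduct_estimates} shows that $\widehat{\mathsf H}_\epsilon(\bold a, sL)$ has a pole at $s=a$ of the maximal order $b$ precisely when $\mathcal A^0(\bold a) \supset \mathcal A_\epsilon(L)$. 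Geometrically, since $E = aL + K_X + D_\epsilon = \sum_\alpha(a\lambda_\alpha - \rho_\alpha + \epsilon_\alpha)D_\alpha$ and $\mathcal{A}_\epsilon(L)$ indexes exactly the components of $E$, the condition $\mathcal A^0(\bold a) \supset \mathcal A_\epsilon(L)$ translates, via the fact that $mE$ pulls back from $Y_m$, to the vanishing $f_{\bold a}|_{G_L} = 0$.

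The next step is to reorganize the contributions over $G_L$ using Poisson summation on the subgroup. For $\Re(s) > a$, the monotone and dominated convergence theorems let us write
\[
\sum_{\substack{\bold a \in \Lambda_X\\f_{\bold a}|_{G_L}=0}} \widehat{\mathsf H}_\epsilon(\bold a, sL)
= \sum_{\bold y \in (G/G_L)(F)} \int_{G_L(\mathbb A_F)} \mathsf H(\bold x + \bold y, sL)^{-1}\,\delta_\epsilon(\bold x + \bold y)\,\mathrm d \bold x,
\]
while the remaining characters in the Poisson decomposition of $\mathsf Z_\epsilon$ contribute only a pole of strictly smaller order at $s=a$, by Proposition~\ref{prop:eulerproduct_estimates}. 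For each fixed $\bold y$, the fiber $X_{\bold y} = \phi_m^{-1}(\bold y)$ is a smooth equivariant compactification of $G_L$ whose boundary $D|_{X_{\bold y}}$ inherits strict normal crossings and the same weights, and the restriction $E|_{X_{\bold y}}$ is rigid since $\phi_m$ is an Iitaka fibration. By \cite[Lemma~5.2]{HTT15} the Fujita invariants are preserved:
\[
a((X, D_\epsilon), L) = a((X_{\bold y}, D_\epsilon|_{X_{\bold y}}), L), \qquad
b(F, (X, D_\epsilon), L) = b(F, (X_{\bold y}, D_\epsilon|_{X_{\bold y}}), L).
\]

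Then I would apply Theorem~\ref{theo:main_kltrigid} to each fiber $(X_{\bold y}, D_\epsilon|_{X_{\bold y}})$ to read off that the inner integral has a pole of order $b$ at $s=a$ with leading coefficient $c(F, S, (\mathcal X_{\bold y}, \mathcal D_\epsilon|_{\mathcal X_{\bold y}}), \mathcal L|_{X_{\bold y}})$. Summing over $\bold y \in (G/G_L)(F)$ and invoking the standard Tauberian theorem (\cite[II.7, Theorem~15]{Tenenbaum}) yields the claimed asymptotic.

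The main obstacle is justifying the interchange
\[
\lim_{s \to a}(s-a)^b\, \mathsf Z_\epsilon(sL) = \sum_{\bold y \in (G/G_L)(F)} c(F, S, (\mathcal X_{\bold y}, \mathcal D_\epsilon|_{\mathcal X_{\bold y}}), \mathcal L|_{X_{\bold y}}),
\]
together with the absolute convergence of the right-hand side. Convergence of the sum over $\bold y$ follows from Fatou's lemma applied to the reorganized Poisson sum; the interchange of limits itself is handled by the uniform estimates for the Fourier transforms in Proposition~\ref{prop:eulerproduct_estimates}, which give enough decay in $\bold a$ (hence uniformity in the fiber parameter $\bold y$) to dominate the $(s-a)^b$-scaled integrals near $s=a$. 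This step is the delicate analytic part; once it is in place, the formula for the leading constant $c$ and the asymptotic growth follow as in the rigid case.
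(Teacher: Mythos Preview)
Your proposal follows the paper's approach essentially line for line: Poisson summation, identification of the characters contributing the maximal pole via the condition $\mathcal A^0(\bold a) \supset \mathcal A_\epsilon(L)$, reorganization via Poisson summation on the subgroup $G_L$, application of the rigid case fiberwise together with \cite[Lemma~5.2]{HTT15}, Fatou's lemma for convergence of the sum over $\bold y$, and the Tauberian theorem to conclude.

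One slip worth correcting: you write that ``$\mathcal A_\epsilon(L)$ indexes exactly the components of $E$,'' but it is the opposite --- the coefficient of $D_\alpha$ in $E = \sum_\alpha (a\lambda_\alpha - \rho_\alpha + \epsilon_\alpha)D_\alpha$ vanishes precisely when $\alpha \in \mathcal A_\epsilon(L)$, so $\mathcal A_\epsilon(L)$ indexes the components \emph{not} in the support of $E$. The condition $\mathcal A^0(\bold a) \supset \mathcal A_\epsilon(L)$ then says that $E(f_{\bold a})$ is linearly equivalent to a divisor supported in $\mathrm{Supp}(E)$, which is what forces $f_{\bold a}$ to descend along the Iitaka fibration and hence vanish on $G_L$. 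Your conclusion is correct; only the intermediate description is inverted.
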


\section*{Interlude II: Examples}

	As mentioned in the introduction, Theorem \ref{theo:main_kltrigid} 
	for klt Campana points of bounded log-anticanonical height (i.e. $L=-(K_X+D_\epsilon)$)
	applies 
	to all smooth compactifications of vector groups with strict normal crossing boundary, 
	as $aL+K_X+D_\epsilon$ is always rigid in that case. 
	We recall that there are numerous such compactifications, 
	as blowing up points 
	that are invariant for the action of the vector group on a compactification 
	always produces new examples.

	For the convenience of the reader, we describe two explicit examples 
	to which Theorem \ref{theo:main_kltrigid} 
	applies with $L\neq -(K_X+D_\epsilon)$. 
	Both can be described as blow-ups of a projective space. 
	We describe the set of Campana points in terms of the 
	projective coordinates to show what type of explicit counting problems 
	can be solved using Theorem \ref{theo:main_kltrigid}.

\subsection*{Blow-ups of $\mathbb P^n$}
	
	Let $f\in\mathbb Z[x_0,\dots,x_n]$ be a homogeneous polynomial of degree $d$ 
	such that the subscheme $\{x_0=f=0\}$ of $\mathbb P^n_{\mathbb Z}$ is regular over $\mathbb Z$.
	Let $\varphi\colon\mathcal X \to \mathbb P^n_{\mathbb Z}$ be the blow-up with center 
	$\{x_0=f=0\}$. Let $\mathcal D_1$ be the exceptional divisor and $\mathcal D_2$ 
	the strict transform of $\{x_0=0\}$. 
	We set $\mathcal X^\circ = \mathcal X \setminus (\mathcal D_1 \cup \mathcal D_2)$. 
	
	Fix positive integers $m_1$ and $m_2$, and let 
	$\epsilon_i=1-1/m_i$ for $i\in\{1,2\}$.
	Then $(\mathcal X, \mathcal D_\epsilon)$ is a good integral model of a klt Campana 
	orbifold in the sense of \S\ref{subsec:campana}. 
	By definition of blow-up, the restriction of the morphism $\varphi$ to $\mathcal X^\circ$ is injective.
	Thus, $\varphi$ induces a bijection between 
	$(\mathcal X, \mathcal D_\epsilon)(\mathbb Z)\cap \mathcal X^\circ(\mathbb Q)$
	and the set $A$ of $(n+1)$-tuples $(\tilde x_0,\dots,\tilde x_n)\in\mathbb Z^n$  
	such that 
	\begin{gather*}
	\gcd(\tilde x_0,\dots,\tilde x_n)=1, 
	\quad \tilde x_0>0, 
	\quad \gcd(\tilde x_0, f(\tilde x_0,\dots,\tilde x_n) ) \text{ is $m_1$-full}, \\
	\tilde x_0/\gcd(\tilde x_0, f(\tilde x_0,\dots,\tilde x_n)) \text{ is $m_2$-full}.
	\end{gather*}
	Indeed, given a point $\tilde x\in\PP^n(\mathbb Q)\smallsetminus\{x_0=0\}$, 
	the first two conditions fix a representative for the projective coordinates of $\tilde x$, 
	and given a linear form $\ell \in\mathbb Z[x_0,\dots,x_n]$ such that $\ell(\tilde x)=1$, 
	we can describe explicitly the morphism $\varphi$ over the neighborhood 
	$U_{\ell}:=\PP^n_{\mathbb Z}\smallsetminus\{\ell=0\}$ of $\tilde x$. In particular, 
	$\varphi^{-1}(U_{\ell})=\{y_0 f\ell^{-d}=y_1x_0\ell^{-1}\}\subseteq U_{\ell}\times\PP^1_{\mathbb Z}$,    
	with  coordinates $(y_0:y_1)$ on $\PP^1_\mathbb Z$,
	and the preimage of $\tilde x$ is the point 
	$(\tilde x,(\tilde x_0/\gcd(\tilde x_0,f(\tilde x)): f(\tilde x)/\gcd(\tilde x_0,f(\tilde x)))) 
	\in U_\ell\times\PP^1_{\mathbb Z}$. 
	In a neighborhood of $\varphi^{-1}(\tilde x)$, the equations defining $\mathcal D_1$ as a subscheme of 
	$U_\ell\times\PP^1_{\mathbb Z}$ are $x_0=f=0$, 
	the equations defining $\mathcal D_2$ are $x_0=y_0=0$. 
	So $\varphi^{-1}(\tilde x)\in (\mathcal X, \mathcal D_\epsilon)(\mathbb Z)$ if and only if 
	$\gcd(\tilde x_0,f(\tilde x))$ is $m_1$-full and 
	$\gcd(\tilde x_0,\tilde x_0/\gcd(\tilde x_0,f(\tilde x)))$ is $m_2$-full.

	An application of Theorem \ref{theo:main_kltrigid} with 
	$L=\pi^*\OO_{\PP^n}(1)$ shows that 
	\[
	\#\{(x_0,\dots,x_n)\in A : \max\{|x_0|,\dots,|x_n|\}\leq T\} 
	\sim c T^{n+1/m_2} \text{ as $T \rightarrow \infty$,}
	\]
	for some $c>0$.

\subsection*{A singular del Pezzo surface}

	Let $X$ be the minimal desingularization of a split quartic del Pezzo surface of type $\mathrm D_5$ 
	over $\mathbb Q$. Then $X$ is an equivariant compactification of $\mathbb G_a^2$ 
	by \cite[Lemmas 4 and 6]{DL10}.
	The irreducible components of the boundary on $X$ are the divisors $E_1,\dots,E_6$ 
	from \cite[\S3.4 Type $\mathrm D_5$]{Der14}. 
	We fix coordinates $(x_0:x_1:x_2)$ on $\mathbb P^2_{\mathbb Q}$ and we denote 
	by $\varphi: X\to\mathbb P^2$ the morphism from \cite[\S3.4 Type $\mathrm D_5$]{Der14} 
	that contracts $E_1,E_2,E_4,E_5,E_6$ to the point $(0:0:1)$ and maps $E_3$ onto $\{x_0=0\}$.
	The morphism $\varphi$ is a sequence of five successive blow ups at $\mathbb Q$-points. 
	Performing the same sequence of blow ups over $\mathbb Z$ 
	as in \cite[Proposition 3.9]{FP16} yields a smooth
	projective $\mathbb Z$-model $\mathcal X$ for $X$. 
	For every $i\in\{1,\dots,6\}$, we fix a positive integer $m_i$, we define $\epsilon_i=1-\frac 1{m_i}$, 
	and we denote by $\mathcal E_i$ the closure of $E_i$ in $\mathcal X$.
	Then $(\mathcal X, \sum_{i=1}^6\epsilon_i \mathcal E_i)$ is a good integral model for the 
	klt Campana orbifold $(X, \sum_{i=1}^6 \epsilon_i E_i)$. Let $X^\circ= X\setminus\bigcup_{i=1}^6E_i$.
 
	We use the notation $f(\cdot):=\cdot/\gcd(\cdot, x_1)$ and $g(\cdot):=x_1/\gcd(\cdot, x_1)$, 
	and we denote by $f^{(n)}$ the $n$-th composition of $f$ with itself. We write $h:=f^{(3)}(x_0) x_2^2 + g(f^{(2)}(x_0))g(f(x_0))g(x_0)$.
	Reasoning as in the previous example for each of the five successive blow ups, 
	we see that the set of $\mathbb Z$-Campana points 
	$(\mathcal X, \sum_{i=1}^6\epsilon_i \mathcal E_i)(\mathbb Z)\cap X^\circ(\mathbb Q)$ is in bijection, 
	via $\varphi$, with the set $A$ of triples $(x_0,x_1,x_2)\in\mathbb Z^3$ 
	such that $\gcd(x_0,x_1,x_2)=1$,  $x_0>0, x_1\neq0$ and 
	\begin{gather*}
	\gcd( f^{(2)}(x_0) , g(h) ) \text{ is $m_1$-full}, \\
	x_2^{m_2}\gcd( h , g(f(h))  ) \text{ is $m_2$-full}, \\
	f^{(3)}(x_0) \text{ is $m_3$-full}, \quad
	\gcd( f(x_0) , g(f^{(2)}(x_0)) ) \text{ is $m_4$-full}, \\
	\gcd( x_0 , g(f(x_0)) ) \text{ is $m_5$-full}, \quad
	x_2^{m_6}\gcd( x_1 , f(h) ) \text{ is $m_6$-full}.
	\end{gather*}
	Then an application of Theorem \ref{theo:main_kltrigid} with 
	$L=\varphi^*\OO_{\PP^2}(1)$ shows that 
	\[
	\#\{(x_0,x_1,x_2)\in A : \max\{|x_0|,|x_1|,|x_2|\}\leq T\} 
	\sim c T^{2+1/m_3} \text{ as $T \rightarrow \infty$,}
	\]
	for some $c>0$.
 	
\section{Proof of the main result for dlt Campana points}
\label{sec:proofdlt}

	In this section we sketch the proof of Theorem~\ref{thm:maintheorem2}.
	We use the notation of \S\ref{sec:proofklt}, but this time we assume that 
	$\lfloor D_\epsilon \rfloor \neq 0$, so that $(X, D_\epsilon)$ is not a klt pair. We set
	\begin{eqnarray*}
	\mathcal A^{\mathrm{klt}} & = &  
	\{ \alpha \in \mathcal A \mid \epsilon_\alpha \neq 1 \}, \\
	 \mathcal A^{\mathrm{nklt}} & = &  
	\{ \alpha \in \mathcal A \mid \epsilon_\alpha = 1 \}.
	\end{eqnarray*}
	Let $L = -(K_X + D_\epsilon)$. Arguing as in the proof of 
	Proposition~\ref{prop:meromorphic}, we obtain:
	
	\begin{prop}
	\label{prop:meromorphic_dlt}
	The function
	\[
	s \mapsto
	\left ( \prod_{\alpha\in\mathcal A^{\mathrm{klt}}}
	\zeta_{F_\alpha}(1+m_\alpha(\rho_\alpha-\epsilon_\alpha)(s-1))\right)^{-1} 
	\left(\prod_{v\in S} \zeta_{F_v}(s-1)^{-b(F_v, (X, D_{\mathrm{red}}), L)}\right)
	\mathsf Z_\epsilon(sL) 
	\]
	is holomorphic in the region $\Re (s)\geq 1$.
	\qed
	\end{prop}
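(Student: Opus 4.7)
The strategy mirrors Proposition~\ref{prop:meromorphic}, with additional work at the places in $S$ where non-klt boundary components contribute poles that are absent in the klt case. First apply Poisson summation (Theorem~\ref{thm: Poisson}) to write
\[
\mathsf{Z}_\epsilon(sL) = \sum_{\bold{a} \in \Lambda_X} \widehat{\mathsf{H}}_\epsilon(\bold{a}, sL),
\]
an identity valid in a half-plane $\Re(s) \gg 0$; absolute convergence of the right-hand side there follows from a dlt adaptation of Proposition~\ref{prop:eulerproduct_estimates}, which is precisely what the Euler product analysis of \S\ref{sec: height integral} and \S\ref{sec: height integral II} delivers once Dedekind zeta factors indexed by $\alpha \in \mathcal{A}^{\mathrm{nklt}}$ are omitted and the local $\zeta_{F_v}(s-1)$ factors at $v \in S$ are incorporated. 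The plan is then to check that, after dividing each summand by the two normalizing products appearing in the statement, we obtain a holomorphic function on $\Re(s) \geq 1$, and that the resulting series converges locally uniformly.

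For the trivial character $\bold{a} = 0$: at each $v \notin S$ of good reduction the Campana condition forces $n_v(\mathcal{D}_\alpha, P) = 0$ whenever $\alpha \in \mathcal{A}^{\mathrm{nklt}}$, so Proposition~\ref{prop:EulerOneFactor} applies with Dedekind zeta factors indexed only by $\alpha \in \mathcal{A}^{\mathrm{klt}}$; the finitely many bad places outside $S$ are handled by Proposition~\ref{prop:badoutsideS}, and Corollary~\ref{cor:eulerproduct} assembles these into the required $\prod_{\alpha \in \mathcal{A}^{\mathrm{klt}}} \zeta_{F_\alpha, S^c}$ regularization. At the places $v \in S$, Proposition~\ref{prop:badreduction_trivial} applied with $\tilde{a}((X, D_{\mathrm{red}}), L) = 1$ (an equality that holds precisely because $\mathcal{A}^{\mathrm{nklt}} \neq \emptyset$) shows that $\widehat{\mathsf{H}}_v(0, sL)$ has its rightmost pole at $s = 1$ of order $b(F_v, (X, D_{\mathrm{red}}), L)$, and this pole is absorbed by the factor $\zeta_{F_v}(s-1)^{-b(F_v, (X, D_{\mathrm{red}}), L)}$. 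Replacing $\zeta_{F_\alpha, S^c}$ by the full $\zeta_{F_\alpha}$ in the statement is harmless, since under the affine substitution $w = 1 + m_\alpha(\rho_\alpha - \epsilon_\alpha)(s-1)$ the region $\Re(s) \geq 1$ maps into $\Re(w) \geq 1$, where the missing archimedean and $S$-local Euler factors of $\zeta_{F_\alpha}$ are holomorphic.

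For a nontrivial character $\bold{a} \in \Lambda_X \smallsetminus \{0\}$: Propositions~\ref{prop:goodplaces_nontrivial}, \ref{prop:badreduction_outsideS} and \ref{prop:badoutsideSnontrivial} produce Dedekind zeta factors only for $\alpha \in \mathcal{A}^0(\bold{a}) \cap \mathcal{A}^{\mathrm{klt}}$, while Proposition~\ref{prop:placesinS}(2) supplies local factors $\zeta_{F_v}(s-1)^{b(F_v, (X, D_{\mathrm{red}}), L, f_{\bold{a}})}$ at $v \in S$. The inclusion $\mathcal{A}^0(\bold{a}) \cap \mathcal{A}^{\mathrm{klt}} \subseteq \mathcal{A}^{\mathrm{klt}}$ is trivial, and the Clemens-complex comparison of \S\ref{subsec:Clemens}, together with Lemma~\ref{lemm: birationalinvariance_f}, yields the key inequality $b(F_v, (X, D_{\mathrm{red}}), L, f_{\bold{a}}) \leq b(F_v, (X, D_{\mathrm{red}}), L)$. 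Consequently, the larger regularizing factors appearing in the statement already cancel every pole of $\widehat{\mathsf{H}}_\epsilon(\bold{a}, sL)$ on $\Re(s) \geq 1$ for every $\bold{a} \neq 0$.

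Finally, the rapid decay $(1 + H_\infty(\bold{a}))^{-N}$ furnished by Proposition~\ref{prop:placesinS}(1) dominates the polynomial growth in $\bold{a}$ contributed by places of bad reduction outside $S$ (Proposition~\ref{prop:badoutsideSnontrivial}), so the series $\sum_{\bold{a} \neq 0}$ converges absolutely and locally uniformly on $\Re(s) \geq 1$, completing the proof. The main obstacle is the bookkeeping at places in $S$: one must justify uniformly in $\bold{a}$ the inequality $b(F_v, (X, D_{\mathrm{red}}), L, f_{\bold{a}}) \leq b(F_v, (X, D_{\mathrm{red}}), L)$, and verify via Lemma~\ref{lemm: birationalinvariance_f} that the birational modifications invoked in the underlying CLT12 estimates do not change these $b$-invariants, so that the single regularizing factor $\zeta_{F_v}(s-1)^{-b(F_v, (X, D_{\mathrm{red}}), L)}$ suffices to handle every character at once.
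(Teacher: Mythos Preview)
Your proposal is correct and follows essentially the same approach as the paper, which dispatches the proof in a single sentence (``Arguing as in the proof of Proposition~\ref{prop:meromorphic}'') and marks the statement with a \qed. You have simply unpacked what that sentence means: Poisson summation, the Euler-product regularization at places outside $S$ restricted to $\mathcal{A}^{\mathrm{klt}}$, the Clemens-complex analysis at places in $S$ via Propositions~\ref{prop:badreduction_trivial} and~\ref{prop:placesinS}(2), and the decay estimate for convergence. Your verification that $\tilde{a}((X,D_{\mathrm{red}}),L)=1$ and your bookkeeping on $b(F_v,(X,D_{\mathrm{red}}),L,f_{\bold a})\le b(F_v,(X,D_{\mathrm{red}}),L)$ are exactly the extra ingredients needed beyond the klt case, and they are handled correctly.
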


	This implies that the zeta function $\mathsf Z_\epsilon(sL)$ possibly has a pole at $s = 1$.

	We define 
	\[
	b(F, S, (X, D_\epsilon), L)
	= \# \mathcal A^{\mathrm{klt}}  + \sum_{v \in S} b(F_v, (X, D_{\mathrm{red}}), L),
	\]
	where the summands on the right are the $b$-invariants defined in \S\ref{subsec:Clemens}.
	Proposition~\ref{prop:badreduction_trivial} and Corollary \ref{cor:eulerproduct} together 
	imply that $\widehat{\mathsf H}_\epsilon(0, sL)$ has a pole at $s = 1$ of order 
	$b(F, S, (X, D_\epsilon), L)$. 

	Arguing as in \cite[Lemma 3.5.4]{CLT12}, we see that the order of the pole of the function 
	$\widehat{\mathsf H}_\epsilon(\bold a, sL)$ at $s = 1$ is strictly less than 
	$b(F, S, (X, D_\epsilon), L)$ when $\bold a \neq 0$. A final application of the  
	Tauberian theorem \cite[II.7,~Theorem~15]{Tenenbaum} then gives the asymptotic 
	formula for the counting function $N(G(F)_\epsilon, \mathcal L, T)$ in the dlt case 
	when $L = -(K_X +D_\epsilon)$:

	\begin{thm}
	\label{theo:main_dlt}
	Let $X$, $D$ and $\epsilon$ be as above. 
	Set
	\[
	L = -(K_X + D_\epsilon), \quad a = 1,  \quad\text{and}\quad b = b(F, S, (X, D_\epsilon), L).
	\]
	Then there exists a constant $c > 0$ that depends on 
	$F,S,(\mathcal{X},\mathcal{D}_\epsilon)$, and  $\mathcal{L}$, such that
	\[
	\mathsf N(G(F)_\epsilon, \mathcal L, T) 
	\sim \frac{c}{a(b-1)!}T^a(\log T)^{b-1} \text{ as $T \rightarrow \infty$}. 
	\tag*{\qed}
	\]
	\end{thm}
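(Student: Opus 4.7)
The plan is to follow the height zeta function strategy already developed in the klt case in \S\ref{sec:proofklt}, but paying close attention to the places in $S$ and to the components $D_\alpha$ with $\epsilon_\alpha = 1$, where integrality rather than just ramification is imposed. As in \S\ref{subsec:heightzeta}, the Fourier transforms $\widehat{\mathsf H}_\epsilon(\bold a, sL)$ vanish outside $\Lambda_X$ (Proposition~\ref{prop:Lambda_X}), so the Poisson formula yields
\[
\mathsf Z_\epsilon(sL) = \sum_{\bold a \in \Lambda_X} \widehat{\mathsf H}_\epsilon(\bold a, sL)
\]
for $\Re(s)\gg 0$, provided the right-hand side converges absolutely. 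Convergence will follow from Proposition~\ref{prop:eulerproduct_estimates}, which bounds each nontrivial term by $(1+H_\infty(\bold a))^{-N}$ for any $N$, combined with the good-reduction estimates of Propositions~\ref{prop:badreduction_outsideS} and~\ref{prop:placesinS} for the finitely many ``bad'' places and the places in $S$. This is exactly the input encoded in Proposition~\ref{prop:meromorphic_dlt}, which I would then simply invoke.

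Next, I would pin down the location and order of the pole contributed by the trivial character $\bold a = 0$. For each $\alpha \in \mathcal A^{\mathrm{klt}}$, Corollary~\ref{cor:eulerproduct} supplies one Dedekind-zeta factor $\zeta_{F_\alpha}(1+m_\alpha(\rho_\alpha-\epsilon_\alpha)(s-1))$ with a simple pole at $s=1$; meanwhile, for each $v \in S$, Proposition~\ref{prop:badreduction_trivial} supplies a factor $\zeta_{F_v}(s-1)^{b(F_v,(X,D_{\mathrm{red}}),L)}$ whose pole at $s=1$ has order exactly $b(F_v,(X,D_{\mathrm{red}}),L)$. Multiplying these contributions, $\widehat{\mathsf H}_\epsilon(0, sL)$ has a pole at $s=1$ of order precisely
\[
b = \#\mathcal A^{\mathrm{klt}} + \sum_{v\in S} b(F_v,(X,D_{\mathrm{red}}),L),
\]
matching the definition of $b(F,S,(X,D_\epsilon),L)$. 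Positivity of the leading Laurent coefficient will follow as in Lemma~\ref{lem:rigid_klt_constant}, by rewriting the regularized product as an Euler product of strictly positive local integrals against the appropriate Tamagawa-type measure on $(\mathcal X,\mathcal D_\epsilon)(\mathbb A_F)$; in the non-klt setting one must additionally insist that the local integral at each $v \in S$ be taken along the $b(F_v,(X,D_{\mathrm{red}}),L)$-dimensional face of the analytic Clemens complex contributing the highest-order pole, using the birational invariance (Lemma~\ref{lem:bbirational}) to reduce to a standard toric-type computation.

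The crucial remaining step is to show that every nonzero character contributes a pole at $s=1$ of order strictly less than $b$. Here the analysis follows \cite[Lemma 3.5.4]{CLT12} very closely: for $\bold a \ne 0$, there exists at least one index $\alpha \in \mathcal A^0(\bold a)^c$, so one loses at least one factor from the good-place Euler product compared with the $\bold a = 0$ case (Proposition~\ref{prop:goodplaces_nontrivial}), and simultaneously the local integral at each place in $S$ either gains a rapid-decay factor $(1+H_\infty(\bold a))^{-N}$ or suffers a drop in the $b$-invariant because $\bold a$ shrinks the effective stratum of the Clemens complex (this is precisely where Lemma~\ref{lemm: birationalinvariance_f} and Proposition~\ref{prop:placesinS}(2) enter). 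The main obstacle is the simultaneous bookkeeping at the archimedean and $S$-places: one must check that the \emph{total} loss in the order of the pole across $S$, the good places, and the bad places outside $S$ is strictly positive for every $\bold a \in \Lambda_X \setminus \{0\}$, while the polynomial-in-$|s|$ bounds of Proposition~\ref{prop:eulerproduct_estimates} remain uniform enough to sum absolutely over $\Lambda_X$ in a strip $\Re(s) > 1 - \delta$.

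Having isolated $\widehat{\mathsf H}_\epsilon(0, sL)$ as the sole contributor to the leading pole, I would conclude by applying the Tauberian theorem \cite[II.7, Theorem~15]{Tenenbaum} to
\[
\mathsf Z_\epsilon(sL) = \frac{c}{(s-1)^{b}} + \text{lower-order terms},
\]
which yields $\mathsf N(G(F)_\epsilon,\mathcal L,T) \sim \tfrac{c}{(b-1)!}\,T(\log T)^{b-1}$, exactly as claimed. No further exceptional set is subtracted because the boundary contribution is automatically excluded by $\delta_\epsilon$ whenever $\epsilon_\alpha = 1$ on the relevant component, and the klt components are handled by the overall convergence of the Euler product.
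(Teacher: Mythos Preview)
Your proposal follows essentially the same approach as the paper: invoke Proposition~\ref{prop:meromorphic_dlt} for the meromorphic continuation, use Corollary~\ref{cor:eulerproduct} and Proposition~\ref{prop:badreduction_trivial} to identify the pole of $\widehat{\mathsf H}_\epsilon(0,sL)$ at $s=1$ of order $b$, defer to \cite[Lemma~3.5.4]{CLT12} for the strict drop in pole order at nonzero characters, and finish with the Tauberian theorem. One sentence in your intuitive explanation is imprecise: the implication ``there exists $\alpha\in\mathcal A^0(\bold a)^c$, so one loses at least one factor from the good-place Euler product'' fails when every such $\alpha$ lies in $\mathcal A^{\mathrm{nklt}}$, since those zeta factors are interpreted as $1$; in that case the drop comes \emph{entirely} from the Clemens-complex invariant at places in $S$ via Proposition~\ref{prop:placesinS}(2), not from the Euler product over $v\notin S$. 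You correctly flag this bookkeeping as the ``main obstacle'' and cite the right source for it, so this is not a gap in the argument, only in the informal description.
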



\begin{bibdiv}
\begin{biblist}

\bib{Abramovich}{article}{
   	author={Abramovich, D.},
   	title={Birational geometry for number theorists},
   	conference={
      	title={Arithmetic geometry},
   	},
   	book={
      	series={Clay Math. Proc.},
      	volume={8},
      	publisher={Amer. Math. Soc., Providence, RI},
   	},
   	date={2009},
   	pages={335--373},
}

\bib{AKMW}{article}{
    	AUTHOR = {Abramovich, D.}, 
    	author = {Karu, K.}, 
    	author = {Matsuki, K.}, 
    	author = {W\l odarczyk, J.},
    TITLE = {Torification and factorization of birational maps},
   	JOURNAL = {J. Amer. Math. Soc.},
  	FJOURNAL = {Journal of the American Mathematical Society},
    	VOLUME = {15},
    	YEAR = {2002},
    	NUMBER = {3},
    	PAGES = {531--572},
    	ISSN = {0894-0347},
}

\bib{AVA16}{article}{
	author = {Abramovich, D.},
	author = {V\'arilly-Alvarado, A.},
    TITLE = {Campana points, {V}ojta's conjecture, and level structures on
              semistable abelian varieties},
   	JOURNAL = {J. Th\'{e}or. Nombres Bordeaux},
  	FJOURNAL = {Journal de Th\'{e}orie des Nombres de Bordeaux},
    	VOLUME = {30},
    	YEAR = {2018},
    	NUMBER = {2},
    	PAGES = {525--532},
    	ISSN = {1246-7405},
}

\bib{BCHM}{article}{
    	AUTHOR = {Birkar, C.},
    	author= {Cascini, P.},
    	author = {Hacon, C. D.},
    	author = {McKernan, J.},
   	TITLE = {Existence of minimal models for varieties of log general type},
   	JOURNAL = {J. Amer. Math. Soc.},
  	FJOURNAL = {Journal of the American Mathematical Society},
    	VOLUME = {23},
    	YEAR = {2010},
    	NUMBER = {2},
    	PAGES = {405--468},
    	ISSN = {0894-0347},
}

\bib{BG58}{article}{
    AUTHOR = {Bateman, P.~T.},
    author = {Grosswald, E.},
   	TITLE = {On a theorem of {E}rd\H{o}s and {S}zekeres},
   	JOURNAL = {Illinois J. Math.},
  	FJOURNAL = {Illinois Journal of Mathematics},
    	VOLUME = {2},
   	YEAR = {1958},
     PAGES = {88--98},
    	ISSN = {0019-2082},
}

\bib{Bilu}{article}{
	AUTHOR = {Bilu, M.},
	Title= {Motivic {E}uler products and motivic height zeta functions},
	year = {2018},
	note = {PhD thesis, Universit\'e Paris Saclay, to appear in Mem. Amer. Math. Soc.,  arXiv:1802.06836},
}

\bib{Bir19}{article}{
    AUTHOR = {Birkar, C.},
     TITLE = {Anti-pluricanonical systems on {F}ano varieties},
   JOURNAL = {Ann. of Math. (2)},
  FJOURNAL = {Annals of Mathematics. Second Series},
    VOLUME = {190},
      YEAR = {2019},
    NUMBER = {2},
     PAGES = {345--463},
}

\bib{Bir16}{article}{
    AUTHOR = {Birkar, C.},
     TITLE = {Singularities of linear systems and boundedness of Fano varieties},
      YEAR = {2016},
    NOTE = {arXiv:1609.05543}
}

\bib{BL17}{article}{
    	AUTHOR = {Browning, T. D.},
    	author= {Loughran, D.},
   	TITLE = {Varieties with too many rational points},
   	JOURNAL = {Math. Z.},
  	FJOURNAL = {Mathematische Zeitschrift},
    	VOLUME = {285},
   	YEAR = {2017},
    	NUMBER = {3-4},
   	PAGES = {1249--1267},
   	ISSN = {0025-5874},
}

\bib{BM90}{article}{
   author={Batyrev, V. V.},
   author={Manin, Yu. I.},
   title={Sur le nombre des points rationnels de hauteur born\'{e} des vari\'{e}t\'{e}s
   alg\'{e}briques},
   journal={Math. Ann.},
   volume={286},
   date={1990},
   number={1-3},
   pages={27--43},
   issn={0025-5831},
}

\bib{BO12}{article}{
   	AUTHOR = {Benoist, Y.},
   	author = {Oh, H.},
   	TITLE = {Effective equidistribution of {$S$}-integral points on
              symmetric varieties},
   	JOURNAL = {Ann. Inst. Fourier (Grenoble)},
  	FJOURNAL = {Universit\'e de Grenoble. Annales de l'Institut Fourier},
    	VOLUME = {62},
     YEAR = {2012},
    	NUMBER = {5},
     PAGES = {1889--1942},
    	ISSN = {0373-0956},
}

\bib{BT96a}{article}{
    AUTHOR = {Batyrev, V.},
    author= {Tschinkel, Yu.},
   	TITLE = {Height zeta functions of toric varieties},
    	NOTE = {Algebraic geometry, 5},
   	JOURNAL = {J. Math. Sci.},
  	FJOURNAL = {Journal of Mathematical Sciences},
    	VOLUME = {82},
   	YEAR = {1996},
    	NUMBER = {1},
   	PAGES = {3220--3239},
    	ISSN = {1072-3374},
}
	
\bib{BT96b}{article}{
    AUTHOR = {Batyrev, V.},
    author= {Tschinkel, Yu.},
    	TITLE = {Rational points on some {F}ano cubic bundles},
   	JOURNAL = {C. R. Acad. Sci. Paris S\'er. I Math.},
  	FJOURNAL = {Comptes Rendus de l'Acad\'emie des Sciences. S\'erie I.
              Math\'ematique},
    	VOLUME = {323},
    	YEAR = {1996},
    	NUMBER = {1},
   	PAGES = {41--46},
    	ISSN = {0764-4442},
}

\bib{BT98}{article}{
    	AUTHOR = {Batyrev, V.},
    	author = {Tschinkel, Yu.},
    	TITLE = {Manin's conjecture for toric varieties},
   	JOURNAL = {J. Algebraic Geom.},
  	FJOURNAL = {Journal of Algebraic Geometry},
    	VOLUME = {7},
   	YEAR = {1998},
    	NUMBER = {1},
   	PAGES = {15--53},
    	ISSN = {1056-3911},
}

\bib{TamagawaBT}{article}{,
    	AUTHOR = {Batyrev, V.},
    	author = {Tschinkel, Yu.},
   	TITLE = {Tamagawa numbers of polarized algebraic varieties},
    	NOTE = {Nombre et r\'epartition de points de hauteur born\'ee (Paris,
              1996)},
   	JOURNAL = {Ast\'erisque},
  	FJOURNAL = {Ast\'erisque},
    	NUMBER = {251},
    	YEAR = {1998},
     PAGES = {299--340},
    	ISSN = {0303-1179},
}

\bib{BVV12}{article}{
    	AUTHOR = {Browning, T. D.},
    	author = {Van Valckenborgh, K.},
    	TITLE = {Sums of three squareful numbers},
   	JOURNAL = {Exp. Math.},
  	FJOURNAL = {Experimental Mathematics},
    	VOLUME = {21},
    	YEAR = {2012},
    	NUMBER = {2},
   	PAGES = {204--211},
   	ISSN = {1058-6458},
}

\bib{BY19}{article}{
	author = {Browning, T. D.},
	AUTHOR = {Yamagishi, S.},
	Title= {Arithmetic of higher-dimensional orbifolds and a mixed {W}aring problem},
	year = {2019},
	note = {Preprint, arXiv:1902.07782},
}
	
\bib{Campana04}{article}{,
    	AUTHOR = {Campana, F.},
   	TITLE = {Orbifolds, special varieties and classification theory},
   	JOURNAL = {Ann. Inst. Fourier (Grenoble)},
  	FJOURNAL = {Universit\'e de Grenoble. Annales de l'Institut Fourier},
    	VOLUME = {54},
   	YEAR = {2004},
    	NUMBER = {3},
   	PAGES = {499--630},
}

\bib{MR2163487}{article}{
   	author={Campana, F.},
   	title={Fibres multiples sur les surfaces: aspects geom\'{e}triques,
   	hyperboliques et arithm\'{e}tiques},
   	journal={Manuscripta Math.},
   	volume={117},
   	date={2005},
   	number={4},
   	pages={429--461},
  	 issn={0025-2611},
}

\bib{MR2831280}{article}{
   	author={Campana, F.},
   	title={Orbifoldes g\'{e}om\'{e}triques sp\'{e}ciales et classification bim\'{e}romorphe
   	des vari\'{e}t\'{e}s k\"{a}hl\'{e}riennes compactes},
   	journal={J. Inst. Math. Jussieu},
   	volume={10},
   	date={2011},
   	number={4},
   	pages={809--934},
   	issn={1474-7480},
}

\bib {Campana15}{incollection}{
    	AUTHOR = {Campana, F.},
    	TITLE = {Special manifolds, arithmetic and hyperbolic aspects: a short
              survey},
 	BOOKTITLE = {Rational points, rational curves, and entire holomorphic
              curves on projective varieties},
    	SERIES = {Contemp. Math.},
    	VOLUME = {654},
    	PAGES = {23--52},
 	PUBLISHER = {Amer. Math. Soc., Providence, RI},
   	YEAR = {2015},
}

\bib{Chow}{article}{
	AUTHOR = {Chow, D.},
	Title= {The {D}istribution of {I}ntegral {P}oints on the {W}onderful {C}ompactification 
			by {H}eight},
	year = {2019},
	note = {PhD thesis, University of Illinois at Chicago, arXiv:1903.07232},
}

\bib{CLL16}{article}{,
    	AUTHOR = {Chambert-Loir, A.},
    	author = {Loeser, F.},
   	TITLE = {Motivic height zeta functions},
   	JOURNAL = {Amer. J. Math.},
  	FJOURNAL = {American Journal of Mathematics},
    	VOLUME = {138},
   	YEAR = {2016},
    	NUMBER = {1},
   	PAGES = {1--59},
   	ISSN = {0002-9327},
}

\bib{CLT02}{article}{,
    	AUTHOR = {Chambert-Loir, A.},
    	author = {Tschinkel, Yu.},
   	TITLE = {On the distribution of points of bounded height on equivariant
              compactifications of vector groups},
   	JOURNAL = {Invent. Math.},
  	FJOURNAL = {Inventiones Mathematicae},
    	VOLUME = {148},
    	YEAR = {2002},
    	NUMBER = {2},
   	PAGES = {421--452},
   	ISSN = {0020-9910},
}

\bib{CLT10}{article}{
    	AUTHOR = {Chambert-Loir, A.},
    	AUTHOR = {Tschinkel, Yu.},
   	TITLE = {Igusa integrals and volume asymptotics in analytic and adelic
              geometry},
   	JOURNAL = {Confluentes Math.},
  	FJOURNAL = {Confluentes Mathematici},
    	VOLUME = {2},
    	YEAR = {2010},
    	NUMBER = {3},
    	PAGES = {351--429},
    	ISSN = {1793-7442},
}

\bib{CLT10b}{article}{
	AUTHOR = {Chambert-Loir, A.},
    AUTHOR = {Tschinkel, Yu.},
	Title= {Integral points of bounded height on toric varieties},
	year = {2010},
	note = {Preprint, arXiv:1006.3345},
}

\bib{CLT12}{article}{
    	AUTHOR = {Chambert-Loir, A.},
    	author = {Tschinkel, Yu.},
    	TITLE = {Integral points of bounded height on partial equivariant
              compactifications of vector groups},
   	JOURNAL = {Duke Math. J.},
  	FJOURNAL = {Duke Mathematical Journal},
    	VOLUME = {161},
   	YEAR = {2012},
    	NUMBER = {15},
   	PAGES = {2799--2836},
    	ISSN = {0012-7094},
}

\bib{Coc19}{article}{
    	AUTHOR = {Coccia, S.},
   	TITLE = {The {H}ilbert property for integral points of affine smooth
              cubic surfaces},
   	JOURNAL = {J. Number Theory},
  	FJOURNAL = {Journal of Number Theory},
    	VOLUME = {200},
    	YEAR = {2019},
    	PAGES = {353--379},
    	ISSN = {0022-314X},
}

\bib{CT03}{article}{
   	author={Colliot-Th{\'e}l{\`e}ne, J.-L.},
   	title={Points rationnels sur les fibrations},
   	conference={
    		title={Higher dimensional varieties and rational points},
    		address={Budapest},
    		date={2001},
   	},
   	book={
      	series={Bolyai Soc. Math. Stud.},
      	volume={12},
      	publisher={Springer},
      	place={Berlin},
   	},
   	date={2003},
   	pages={171--221},
}

\bib{Der14}{article}{
    	AUTHOR = {Derenthal, U.},
   	TITLE = {Singular del {P}ezzo surfaces whose universal torsors are
              hypersurfaces},
   	JOURNAL = {Proc. Lond. Math. Soc. (3)},
   	FJOURNAL = {Proceedings of the London Mathematical Society. Third Series},
   	VOLUME = {108},
   	YEAR = {2014},
    	NUMBER = {3},
    	PAGES = {638--681},
   	ISSN = {0024-6115},
}

\bib{DL10}{article}{,
    	AUTHOR = {Derenthal, U.},
    	author = {Loughran, D.},
   	TITLE = {Singular del {P}ezzo surfaces that are equivariant
              compactifications},
   	JOURNAL = {Zap. Nauchn. Sem. S.-Peterburg. Otdel. Mat. Inst. Steklov.
              (POMI)},
  	FJOURNAL = {Rossi\u\i skaya Akademiya Nauk. Sankt-Peterburgskoe Otdelenie.
              Matematicheski\u\i \ Institut im. V. A. Steklova. Zapiski
              Nauchnykh Seminarov (POMI)},
   	VOLUME = {377},
   	YEAR = {2010},
    	NUMBER = {Issledovaniya po Teorii Chisel. 10},
    	PAGES = {26--43, 241},
   	ISSN = {0373-2703},
}

\bib{DL15}{article}{,
    	AUTHOR = {Derenthal, U.},
    	author = {Loughran, D.},
    	TITLE = {Equivariant compactifications of two-dimensional algebraic
              groups},
   	JOURNAL = {Proc. Edinb. Math. Soc. (2)},
  	FJOURNAL = {Proceedings of the Edinburgh Mathematical Society. Series II},
    	VOLUME = {58},
   	YEAR = {2015},
    	NUMBER = {1},
   	PAGES = {149--168},
    	ISSN = {0013-0915},
}

\bib{DRS93}{article}{
    	AUTHOR = {Duke, W.},
    	author = {Rudnick, Z.},
   	author = {Sarnak, P.},
    	TITLE = {Density of integer points on affine homogeneous varieties},
   	JOURNAL = {Duke Math. J.},
  	FJOURNAL = {Duke Mathematical Journal},
    	VOLUME = {71},
   	YEAR = {1993},
    	NUMBER = {1},
   	PAGES = {143--179},
   	ISSN = {0012-7094},
}

\bib{erdos}{article}{
	author = {Erd\H{o}s, P.},
	author = {Szekeres, G.},
	title = {{\"U}ber die {A}nzahl der {A}belschen {G}ruppen gegebener {O}rdnung und \"uber ein verwandtes zahlentheoretisches {P}roblem},
	journal = {Acta Univ. Szeged. Sect. Sci. Math.},
	volume = {7},
	year = {1934},
	number = {2},
	pages = {92 -- 102},
}

\bib{EM93}{article}{
    	AUTHOR = {Eskin, A.},
    	author = {McMullen, C.},
   	TITLE = {Mixing, counting, and equidistribution in {L}ie groups},
   	JOURNAL = {Duke Math. J.},
  	FJOURNAL = {Duke Mathematical Journal},
    	VOLUME = {71},
    	YEAR = {1993},
    	NUMBER = {1},
   	PAGES = {181--209},
    	ISSN = {0012-7094},
}

\bib{FMT89}{article}{,
    	AUTHOR = {Franke, J.},
    	Author = {Manin, Yu. I.},
    	AUTHOR = {Tschinkel, Yu.},
     	TITLE = {Rational points of bounded height on {F}ano varieties},
   	JOURNAL = {Invent. Math.},
  	FJOURNAL = {Inventiones Mathematicae},
    	VOLUME = {95},
    	YEAR = {1989},
    	NUMBER = {2},
   	PAGES = {421--435},
    	ISSN = {0020-9910},
}

\bib{FP16}{article}{
   author={Frei, C.},
   author={Pieropan, M.},
   title={O-minimality on twisted universal torsors and Manin's conjecture
   over number fields},
   journal={Ann. Sci. \'{E}c. Norm. Sup\'{e}r. (4)},
   volume={49},
   date={2016},
   number={4},
   pages={757--811},
   issn={0012-9593},
}

\bib{GMO08}{article}{
    	AUTHOR = {Gorodnik, A.},
    	author = {Maucourant, F.}, 
    	author = {Oh, H.},
   	TITLE = {Manin's and {P}eyre's conjectures on rational points and
              adelic mixing},
   	JOURNAL = {Ann. Sci. \'Ec. Norm. Sup\'er. (4)},
  	FJOURNAL = {Annales Scientifiques de l'\'Ecole Normale Sup\'erieure. Quatri\`eme
              S\'erie},
    	VOLUME = {41},
   	YEAR = {2008},
    	NUMBER = {3},
   	PAGES = {383--435},
    	ISSN = {0012-9593},
}

\bib{GO11}{article}{
    	AUTHOR = {Gorodnik, A.},
    	author =  {Oh, H.},
    	TITLE = {Rational points on homogeneous varieties and equidistribution
              of adelic periods},
   	NOTE = {With an appendix by Mikhail Borovoi},
   	JOURNAL = {Geom. Funct. Anal.},
  	FJOURNAL = {Geometric and Functional Analysis},
   	VOLUME = {21},
    	YEAR = {2011},
    	NUMBER = {2},
    	PAGES = {319--392},
    	ISSN = {1016-443X},
}

\bib{GTBT15}{article}{
    	AUTHOR = {Gorodnik, A.},
    	author =  {Takloo-Bighash, R.},
    	author = {Tschinkel, Yu.},
   	TITLE = {Multiple mixing for adele groups and rational points},
   	JOURNAL = {Eur. J. Math.},
  	FJOURNAL = {European Journal of Mathematics},
    	VOLUME = {1},
    	YEAR = {2015},
    	NUMBER = {3},
    	PAGES = {441--461},
    	ISSN = {2199-675X},
}

\bib{HM07}{article}{
    AUTHOR = {Hacon, C. D.}
    author = {Mckernan, J.},
     TITLE = {On {S}hokurov's rational connectedness conjecture},
   JOURNAL = {Duke Math. J.},
  FJOURNAL = {Duke Mathematical Journal},
    VOLUME = {138},
      YEAR = {2007},
    NUMBER = {1},
     PAGES = {119--136},
      ISSN = {0012-7094},
}

\bib{HM18}{article}{
	AUTHOR = {Huang, Z.},
	author = {Montero, P.},
	Title= {Fano threefolds as equivariant compactifications of the vector group},
	journal = {Michigan Math. J.},
	volume = {69},
	year = {2018},
	number = {2},
	pages = {341--368},
	ISSN = {0026-2285},
}

\bib{HT99}{article}{
    	AUTHOR = {Hassett, B.},
    	author={Tschinkel, Yu.},
     TITLE = {Geometry of equivariant compactifications of {${\bf G}_a^n$}},
   	JOURNAL = {Internat. Math. Res. Notices},
  	FJOURNAL = {International Mathematics Research Notices},
    	YEAR = {1999},
    	NUMBER = {22},
    	PAGES = {1211--1230},
    	ISSN = {1073-7928},
}

\bib{HTT15}{article}{
    	AUTHOR = {Hassett, B.},
    	author = {Tanimoto, S.},
    	author = {Tschinkel, Yu.},
   	TITLE = {Balanced line bundles and equivariant compactifications of
              homogeneous spaces},
   	JOURNAL = {Int. Math. Res. Not. IMRN},
  	FJOURNAL = {International Mathematics Research Notices. IMRN},
    	YEAR = {2015},
    	NUMBER = {15},
   	PAGES = {6375--6410},
    	ISSN = {1073-7928},
}
	
\bib{IllusieTemkin}{article}{
   author={Illusie, L.},
   author={Temkin, M.},
   title={Expos\'{e} X. Gabber's modification theorem (log smooth case)},
   note={Travaux de Gabber sur l'uniformisation locale et la cohomologie
   \'{e}tale des sch\'{e}mas quasi-excellents},
   journal={Ast\'{e}risque},
   number={363-364},
   date={2014},
   pages={167--212},
}

\bib{IK04}{book}{
    	AUTHOR = {Iwaniec, H.},
    	author = {Kowalski, E.},
     TITLE = {Analytic number theory},
    	SERIES = {American Mathematical Society Colloquium Publications},
    	VOLUME = {53},
 	PUBLISHER = {American Mathematical Society, Providence, RI},
    	YEAR = {2004},
    	PAGES = {xii+615},
    	ISBN = {0-8218-3633-1},
}

\bib{Kollar}{article}{,
    	AUTHOR = {Koll\'{a}r, J.},
    	TITLE = {Singularities of pairs},
 	BOOKTITLE = {Algebraic geometry---{S}anta {C}ruz 1995},
    	SERIES = {Proc. Sympos. Pure Math.},
    	VOLUME = {62},
    	PAGES = {221--287},
 	PUBLISHER = {Amer. Math. Soc., Providence, RI},
    	YEAR = {1997},
}

\bib{KM98}{book}{,
    AUTHOR = {Koll\'{a}r, J.}
    AUTHOR = {Mori, S.},
     TITLE = {Birational geometry of algebraic varieties},
    SERIES = {Cambridge Tracts in Mathematics},
    VOLUME = {134},
      NOTE = {With the collaboration of C. H. Clemens and A. Corti,
              Translated from the 1998 Japanese original},
 PUBLISHER = {Cambridge University Press, Cambridge},
      YEAR = {1998},
     PAGES = {viii+254},
      ISBN = {0-521-63277-3},
}

\bib{KPS19}{article}{
	author = {Kebekus, S.},
	AUTHOR = {Pereira, J.V.},
	author = {Smeets, A.},
	Title= {Brauer-Manin failure for a simply connected fourfold over a global function field, 
		via orbifold Mordell},
	year = {2019},
	note = {Preprint, arXiv:1905.02795},
}

\bib{Laz04}{book}{,
    AUTHOR = {Lazarsfeld, R.},
     TITLE = {Positivity in algebraic geometry. {I}},
    SERIES = {Ergebnisse der Mathematik und ihrer Grenzgebiete. 3. Folge. A
              Series of Modern Surveys in Mathematics [Results in
              Mathematics and Related Areas. 3rd Series. A Series of Modern
              Surveys in Mathematics]},
    VOLUME = {48},
      NOTE = {Classical setting: line bundles and linear series},
 PUBLISHER = {Springer-Verlag, Berlin},
      YEAR = {2004},
     PAGES = {xviii+387},
      ISBN = {3-540-22533-1},
}

\bib{LeRu13}{article}{
	AUTHOR = {Le Rudulier, C.},
	Title= {Points alg\'ebriques de hauteur born\'ee sur une surface},
	year = {2014},
	note = {Preprint, \url{http://cecile.lerudulier.fr/articles/surfaces.pdf}},
}
	
\bib{LST18}{article}{
	author = {Lehmann, B.},
	AUTHOR = {Sengupta, A.},
	author = {Tanimoto, S.},
	Title= {Geometric consistency of Manin's conjecture},
	year = {2018},
	note = {Preprint,  arXiv:1805.10580},
}

\bib{LT17}{article}{
    	AUTHOR = {Lehmann, B.},
    	author = {Tanimoto, S.},
   	TITLE = {On the geometry of thin exceptional sets in {M}anin's
              conjecture},
   	JOURNAL = {Duke Math. J.},
  	FJOURNAL = {Duke Mathematical Journal},
    	VOLUME = {166},
    	YEAR = {2017},
    	NUMBER = {15},
    	PAGES = {2815--2869},
    	ISSN = {0012-7094},
}
	
\bib{Pey95}{article}{,
    	AUTHOR = {Peyre, E.},
    	TITLE = {Hauteurs et mesures de {T}amagawa sur les vari\'et\'es de {F}ano},
   	JOURNAL = {Duke Math. J.},
  	FJOURNAL = {Duke Mathematical Journal},
    	VOLUME = {79},
    	YEAR = {1995},
    	NUMBER = {1},
    	PAGES = {101--218},
    	ISSN = {0012-7094},
}

\bib{Pey17}{article}{
   	author={Peyre, E.},
   	title={Libert\'{e} et accumulation},
   	journal={Doc. Math.},
   	volume={22},
   	date={2017},
   	pages={1615--1659},
   	issn={1431-0635},
}

\bib{Poonen}{book}{
    AUTHOR = {Poonen, B.},
     TITLE = {Rational points on varieties},
    SERIES = {Graduate Studies in Mathematics},
    VOLUME = {186},
 PUBLISHER = {American Mathematical Society, Providence, RI},
      YEAR = {2017},
     PAGES = {xv+337},
}

\bib{PS20}{article}{
	author = {Pieropan, M.},
	author = {Schindler, D.},
	title = {Hyperbola method on toric varieties},
	year = {2020},
	note = {Preprint, arXiv:2001.09815},
}

\bib{Sal98}{article}{,
    	AUTHOR = {Salberger, P.},
    	TITLE = {Tamagawa measures on universal torsors and points of bounded
              height on {F}ano varieties},
    	NOTE = {Nombre et r\'{e}partition de points de hauteur born\'{e}e (Paris,
              1996)},
   	JOURNAL = {Ast\'{e}risque},
  	FJOURNAL = {Ast\'{e}risque},
    	NUMBER = {251},
    	YEAR = {1998},
    	PAGES = {91--258},
    	ISSN = {0303-1179},
}

\bib{Sen17}{article}{
	AUTHOR = {Sengupta, A.},
	Title= {Manin's {C}onjecture and the {F}ujita invarant of finite covers},
	year = {2017},
	note = {Preprint, arXiv:1712.07780},
}
	
\bib{Ser92}{book}{,
    	AUTHOR = {Serre, J.-P.},
   	TITLE = {Topics in {G}alois theory},
    	SERIES = {Research Notes in Mathematics},
    	VOLUME = {1},
    	NOTE = {Lecture notes prepared by Henri Damon [Henri Darmon],
              With a foreword by Darmon and the author},
 	PUBLISHER = {Jones and Bartlett Publishers, Boston, MA},
    	YEAR = {1992},
    	PAGES = {xvi+117},
    	ISBN = {0-86720-210-6},
}

\bib{ST16}{article}{
    	AUTHOR = {Shalika, J.},
    	author = {Tschinkel, Yu.},
   	TITLE = {Height zeta functions of equivariant compactifications of
              unipotent groups},
   	JOURNAL = {Comm. Pure Appl. Math.},
  	FJOURNAL = {Communications on Pure and Applied Mathematics},
    	VOLUME = {69},
    	YEAR = {2016},
    	NUMBER = {4},
   	PAGES = {693--733},
    	ISSN = {0010-3640},
}

\bib{STBT07}{article}{
    	AUTHOR = {Shalika, J.},
    	author = {Takloo-Bighash, R.},
    	author = {Tschinkel, Yu.},
   	TITLE = {Rational points on compactifications of semi-simple groups},
   	JOURNAL = {J. Amer. Math. Soc.},
  	FJOURNAL = {Journal of the American Mathematical Society},
    	VOLUME = {20},
    	YEAR = {2007},
    	NUMBER = {4},
    	PAGES = {1135--1186},
    	ISSN = {0894-0347},
}

\bib{Smeets}{article}{,
	AUTHOR = {Smeets, A.}, 
	TITLE = {Insufficiency of the \'etale Brauer-Manin obstruction: towards a simply 
		connected example},
	JOURNAL = {Amer. J. Math.},
	FJOURNAL = {American Journal of Mathematics},
	VOLUME = {139},
	YEAR = {2017},
	NUMBER = {2},
	PAGES = {417--431}
}

\bib{Tate}{incollection}{
    	AUTHOR = {Tate, J. T.},
   	TITLE = {Fourier analysis in number fields, and {H}ecke's
              zeta-functions},
 	BOOKTITLE = {Algebraic {N}umber {T}heory ({P}roc. {I}nstructional {C}onf.,
              {B}righton, 1965)},
     PAGES = {305--347},
 	PUBLISHER = {Thompson, Washington, D.C.},
    	YEAR = {1967},
}

\bib{TBT13}{article}{,
   	AUTHOR = {Takloo-Bighash, R.},
   	author =  {Tschinkel, Yu.},
   	TITLE = {Integral points of bounded height on compactifications of
              semi-simple groups},
   	JOURNAL = {Amer. J. Math.},
  	FJOURNAL = {American Journal of Mathematics},
    	VOLUME = {135},
    	YEAR = {2013},
    	NUMBER = {5},
    	PAGES = {1433--1448},
    	ISSN = {0002-9327},
}

\bib{Tenenbaum}{book}{
   	author={Tenenbaum, G.},
   	title={Introduction to analytic and probabilistic number theory},
   	series={Cambridge Studies in Advanced Mathematics},
   	volume={46},
   	publisher={Cambridge University Press, Cambridge},
   	date={1995},
   	pages={xvi+448},
   	isbn={0-521-41261-7},
}

\bib{Tsc03}{incollection}{
    	AUTHOR = {Tschinkel, Yu.},
   	TITLE = {Fujita's program and rational points},
 	BOOKTITLE = {Higher dimensional varieties and rational points ({B}udapest, 2001)},
    	SERIES = {Bolyai Soc. Math. Stud.},
    	VOLUME = {12},
   	PAGES = {283--310},
 	PUBLISHER = {Springer, Berlin},
    	YEAR = {2003},
}
		
\bib{TT12}{incollection}{
    	AUTHOR = {Tanimoto, S.},
    	author=  {Tschinkel, Yu.},
    	TITLE = {Height zeta functions of equivariant compactifications of
              semi-direct products of algebraic groups},
 	BOOKTITLE = {Zeta functions in algebra and geometry},
    	SERIES = {Contemp. Math.},
    	VOLUME = {566},
    	PAGES = {119--157},
 	PUBLISHER = {Amer. Math. Soc., Providence, RI},
    	YEAR = {2012},
}

\bib{TT15}{article}{
    	AUTHOR = {Tanimoto, S.},
    	author = {Tanis, J.},
   	TITLE = {The distribution of {$S$}-integral points on {${\rm
              SL}_2$}-orbit closures of binary forms},
   	JOURNAL = {J. Lond. Math. Soc. (2)},
  	FJOURNAL = {Journal of the London Mathematical Society. Second Series},
    	VOLUME = {92},
    	YEAR = {2015},
    	NUMBER = {3},
    	PAGES = {760--777},
    	ISSN = {0024-6107},
}
\bib{Stacks}{misc}{,
    shorthand    = {Stacks},
    author       = {The {Stacks Project Authors}},
    title        = {\textit{Stacks Project}},
    howpublished = {\url{https://stacks.math.columbia.edu}},
    year         = {2020},
  }

\bib{VanV12}{article}{
    	AUTHOR = {Van Valckenborgh, K.},
    	TITLE = {Squareful numbers in hyperplanes},
   	JOURNAL = {Algebra Number Theory},
  	FJOURNAL = {Algebra \& Number Theory},
    	VOLUME = {6},
    	YEAR = {2012},
    	NUMBER = {5},
   	PAGES = {1019--1041},
   	ISSN = {1937-0652},
}

\bib{Xiao}{article}{
	author = {Xiao, H.},
	title = {Campana points on biequivariant compactifications of the {H}eisenberg group},
	year = {2020},
	note = {Preprint,  arXiv:2004.14763},
}
	
\end{biblist}
\end{bibdiv}

\end{document}